\newcommand{\leqnomode}{\tagsleft@true}
\newcommand{\reqnomode}{\tagsleft@false}
\newcommand{\dd}{\mathbbm{d}}
\newcommand{\RR}{\mathbb{R}}
\newcommand{\B}{{{B}}}
\newcommand{\BB}{\boldsymbol{B}}
\newcommand{\PP}{\mathbb{P}}
\newcommand{\PE}{\widehat{\PP}}
\newcommand{\PPmarked}{{\accentset{\times}{\PP}}}
\newcommand{\EE}{\mathbb{E}}
\newcommand{\QQ}{\mathbb{Q}}
\newcommand{\NN}{\mathbb{N}}
\newcommand{\conditional}{\left \vert \right.}
\newcommand{\X}{\boldsymbol{X}}
\newcommand{\Xtrunc}{\boldsymbol{B}_{R^\varepsilon}}
\newcommand{\Ytrunc}{\boldsymbol{W}_{\hspace{-.1cm} R^\varepsilon}}
\newcommand{\Y}{{\boldsymbol{Y}}}
\newcommand{\T}{{\cal T}}
\newcommand{\TrX}{{{\cal T}(\X(t))}}
\newcommand{\TrY}{{{\cal T}(\Y(t))}}
\newcommand{\Trpre}{{{\cal T}(\Xtrunc(t))}}
\newcommand{\VotemajY}{\mathbb{V}_p(\Y(t))}
\newcommand{\VotemajXnop}{\mathbb{V}(\X(t))}
\newcommand{\Voteprenop}{\widehat{\V}(\Xtrunc(t))}
\newcommand{\V}{{\mathbb{V}}}
\newcommand{\VM}{\mathbb{V}^\times_p}
\newcommand{\VV}{\mathbb{V}_p}
\newcommand{\Votepre}{\widehat{\V}_p(\Xtrunc(t))}
\newcommand{\Votemarknop}{\mathbb{V}^{\times}(\BB_{R^\varepsilon}(t))}
\newcommand{\PPt}{\PPmarked^{_{_{\text{\scriptsize{$t$}    } }}}}
\newcommand{\PPttau}{\PPmarked^{_{_ {\text{\scriptsize{$t\hspace{-.2em}-\hspace{-.2em}\tau$}}}}}}
\newcommand{\gee}{{g_\times}}
\newcommand{\geeone}{g^{(1)}_\times}
\newcommand{\geen}{g^{(n)}_\times}
\newcommand{\geenplusone}{g^{(n+1)}_\times}
\newcommand{\Indicator}{{\mathbbm{1}_{\{x\geq0\}}}}
\newcommand{\Zbf}{\boldsymbol{Z}}
\begin{document}

%%%%%%%%%%%%%%%%%%%%%%%%%%%%%%%%%%%%%%%%%%%%%%%%%%%%%%%%%%%%%%%%%%%
%%                                                               %%
%% No need for \maketitle.                                       %%
%%                                                               %%
%%%%%%%%%%%%%%%%%%%%%%%%%%%%%%%%%%%%%%%%%%%%%%%%%%%%%%%%%%%%%%%%%%%

%%%%%%%%%%%%%%%%%%%%%%%%%%%%%%%%%%%%%%%%%%%%%%%%%%%%%%%%%%%%%%%%%%%
%%                                                               %%
%% Please replace what follows by the body of your article       %%
%% (up to the bibliography):                                     %%
%%                                                               %%
%%%%%%%%%%%%%%%%%%%%%%%%%%%%%%%%%%%%%%%%%%%%%%%%%%%%%%%%%%%%%%%%%%%

\section{Introduction}
Reaction-diffusion equations have been used to study a wide range of phenomena within the natural sciences, and are a topic of great mathematical intrigue in their own right. They appear as models for the spread of populations \cite{fisher1937wave, kolmogorov1937etude, aronson1978multidimensional}, phase transitions \cite{henkel2004non, wang, soner}, combustion \cite{berestycki1985traveling, berestycki1989multi}, and chemical reactions \cite{allen1979microscopic, bramson1991asymptotic}. In this work, we explore \textit{fractional} reaction-diffusion equations that model populations that exhibit long range dispersal. Fractional reaction-diffusion equations are reaction-diffusion equations in which the diffusive term is replaced by the generator of a pure jump process (namely, a stable process). As a result, they present new challenges that have not yet been fully explored in a probabilistic context. \\
\indent In recent decades, fractional reaction-diffusion equations and reaction-diffusion equations with anomalous diffusion have surged in popularity. This is in part due to their relevance as physical models. From a purely mathematical viewpoint, they pose technical difficulties that classical parabolic equations like the Fisher-KPP equation do not. Much of the theoretical work on these topics, to date, has been led by the PDE community \cite{mellet, gui2015traveling, cabre, cabre1, cabre2, Mellet2}. The effect of long range dispersal on various models has also been studied numerically \cite{Mancinelli, del2009truncation, hallatschek2014acceleration}, and applied to data from epidemics \cite{brockmann2013hidden} and plant populations \cite{clark2003estimating, marco2011comparing, cannas2006long}. \\ 
\indent In the context of mathematical biology, fractional reaction-diffusion equations arise naturally as models for populations that exhibit long range dispersal (i.e. the capacity for offspring to, on rare occasions, establish very far away from their parent). This behaviour is ubiquitous in nature and is a crucial survival mechanism for many organisms, particularly those insular species that must travel vast distances to populate new regions. Examples include the dispersal of plant seeds (which can travel by wind, water, and can be transported internally or externally by animals) \cite{Cain:2000}, fungi \cite{Buschbom:2007}, and small insects such as flies, moths, and bees (which have the secondary effect of facilitating the hybridisation of the flora they pollinate) \cite{barrett1996reproductive,reatini2021influence}. The ability for certain organisms to populate islands through long range dispersal can have a profound impact on the biological composition of the land, increasing the genetic diversity of isolated regions \cite{weigelt2015global,heaney2000dynamic,reatini2021influence}. One example of this is the Hawaiian angiosperm flora, that cannot be attributed to a single mainland source, but instead have the genetic composition of flora from across circum-Pacific regions \cite{carlquist1967biota, weigelt2015global}. Another recently observed instance of long range dispersal was that of a single finch that travelled over 100 km to an island in the Gal\'apagos where it went on to produce hybrid offspring with the resident population \cite{lamichhaney2018rapid,reatini2021influence}.\\
\indent In this work, we use the fractional Allen--Cahn equation to model the motion of \textit{hybrid zones} in populations exhibiting long range dispersal. Hybrid zones are narrow geographical regions where two genetically distinct species meet and reproduce, resulting in individuals of mixed ancestry (hybrid individuals).  Hybrid zones have been observed extensively in nature. Examples include the European house mouse \cite{hunt1973biochemical} and North American warbler birds \cite{birds} (see \cite{barton1989adaptation} for an extensive list of examples). There are two primary mechanisms acting to maintain hybrid zones. This first is due to a change in environment where the two populations meet. The second, which will be of interest to this work, is when selection acts against hybrid individuals. In this setting, the hybrid zone is maintained for large times through a balance of negative selection with the dispersal of individuals. We will show that the long-time behaviour of hybrid zones maintained by selection in populations that exhibit long range dispersal converges to motion by mean curvature flow under a large range of possible spatial scalings. This new family of scalings, as well as our explicit description of the interface width and speed of convergence, distinguishes our work from that of \cite{imbert2009phasefield}, where it was shown that the solution interface of the fractional Allen--Cahn equation with a diffusive scaling converges to mean curvature flow with respect to the scaling parameter.
%adapt the techniques of Etheridge et al. \cite{etheridge2017branching} to prove probabilistically that the solution interface of the scaled fractional Allen--Cahn equation, with index $\alpha\in (1,2)$, converges to motion by mean curvature flow. We will see that this result can be interpretted in terms of \textit{hybrid zones} of populations that exhibit long range dispersal. 
\subsection{The Allen--Cahn equation and hybrid zones}
The one-dimensional {Allen--Cahn equation} is a reaction-diffusion equation that takes the form \begin{align}\label{allencahnequation} \partial_t {u^\varepsilon(t,x) }= \Delta u^\varepsilon(t,x) - \frac{1}{\varepsilon^{2}}f(u^\varepsilon(t,x)) \end{align}
for $\varepsilon>0$ fixed and all $t>0$, $x\in \RR$. This equation can be obtained from the unscaled equation $\partial_t u(t,x) = \Delta u(t,x) - f(u(t,x))$ by defining $u^\varepsilon(t,x):= u(\varepsilon^2 t, \varepsilon x).$ Here, $f\in C^2(\RR)$ is assumed to have precisely three zeros, $v_-, v_0$ and $v_+$, such that \begin{align*} &f<0 \text{ on } (-\infty, v_-)\cup (v_0, v_+), \\  &f>0 \text{ on } (v_-, v_0)\cup (v_+, \infty), \\ &f'(v_-), f'(v_+)>0 \text{ and } f'(v_0)<0. \end{align*} 
\indent In equation~(\ref{allencahnequation}), the diffusive term $\Delta$ acts to smooth solutions (in the context of a biological model, this could be viewed as the `mixing' of the population), while the nonlinearity $f$ drives solutions towards one of two states, $v_-$ or $v_+$ (in a population, these states might correspond to the dominance of a particular allele). These opposing effects, which are characteristic of reaction-diffusion equations, create a solution interface separating the two states. Over large spatial and temporal scales (corresponding to $\varepsilon\to 0$) this interface appears sharp and one can study its motion.\\
\indent The Allen--Cahn equation was originally introduced in \cite{allen1979microscopic} to model the motion of curved \textit{antiphase boundaries} (APB) in crystalline solids. These are defective regions in the crystal lattice where atoms have the opposite configuration to that predicted by their lattice system, producing a positive excess of free energy in the system \cite{allen1979microscopic}. This is a non-equilibrium state of the lattice, resulting in the diffusive movement of the APB to minimise the total area of the boundary. The motion of the APB can then be modelled by the solution interface of the Allen--Cahn equation. In fact, in their original work \cite{allen1979microscopic}, Allen and Cahn already note the relevance of long range dispersal to interfacial motion. Allen and Cahn mention that interfacial motion sometimes requires long range dispersal, citing the growth of a (solid) precipitate from a supersaturated solution and the motion of interfaces with impurities as two examples. It was observed by Allen and Cahn in \cite{allen1979microscopic} that, in the case of local dispersion, the velocity of the interfacial motion described by equation~(\ref{allencahnequation}) was proportional to the mean curvature of the boundary. Bronsard and Kohn \cite{bronsard1990, bronsard1991} and Demottoni and Schatzman \cite{mottoni1989, mottoni1990} provided a rigorous proof of this under a variety of dimensional and regularity restrictions, and in 1992, Chen proved this result in all dimensions under relatively weak regularity assumptions \cite{chen}.\\
\indent By viewing the Allen--Cahn equation as a model for a {hybrid zone} in a population with local dispersal, Chen's result has a biological interpretation. As explained in \cite{etheridge2017branching}, the connection between the hybrid zones and the Allen--Cahn equation can be motivated as follows. Consider a single genetic locus in diploid population with allele types $a$ and $A$ that is in Hardy-Weinberg proportions. That is, if the proportion of $a$-alleles in the parental population is $w$, then the proportions of parents of type $aa$, $aA$ and $AA$ are $w^2, 2w(1-w)$ and $(1-w)^2$, respectively. To reflect our assumption that the hybrid zone is maintained by selection against heterozygotes, we assign to each of the allele pairs $aa, aA$ and $AA$ the relative fitnesses $1, 1-s$ and $1$, for $s>0$ a small selection parameter.  These fitnesses refer to the relative proportion of germ cells produced by heterozygotes and homozygotes during reproduction. It follows that if $w$ is the proportion of type $a$ alleles before reproduction, then the proportion of type $a$ alleles after reproduction is $$w^* = \frac{w^2 + w(1-w)(1-{s})}{1 - 2{s}w(1-w)} = w+ {s}w(1-w)(2w-1)+\mathcal{O}(s^2).$$ Taking ${s}= \frac{1}{N}$ and measuring time in units of $N$ generations, the above calculation implies that, as $N\to \infty$, $\frac{dw}{dt} = w(1-w)(2w-1).$ Adding (local) dispersal and applying a diffusive scaling $t\mapsto \varepsilon^2 t$, $x\mapsto \varepsilon x$ for $t>0$ and $x\in \RR^2$ gives \begin{align} \label{ACE} \frac{\partial w}{\partial t} = \Delta w + \frac{1}{\varepsilon^2}w(1-w)(2w-1). \end{align} Chen's result tells us that solutions to scaled Allen--Cahn equation (\ref{ACE}) (also known as the Nagumo's equation, see \cite{nagumo1962active, nagumoref}) converge as $\varepsilon\to 0$ to the indicator function of a set whose boundary evolves according to motion by mean curvature flow. Since the hybrid zone of a diploid population should correspond to the level set $w(t,x) = \tfrac{1}{2}$ for $w$ a solution of (\ref{ACE}), Chen's result tells us that hybrid zones will follow motion by mean curvature flow over large spatial and temporal scales. We note that, although $x\in \RR^2$ is the biologically relevant case, Chen's result and our own hold in all spatial dimensions $\dd \geq 2$.\\
\indent In Etheridge et al. \cite{etheridge2017branching}, the authors used purely probabilstic techniques to reprove Chen's result. This was accomplished by constructing a probabilistic dual to (\ref{ACE}) in terms of ternary branching Brownian motions. Additionally, their proof could be adapted to incorporate genetic drift, which refers to the randomness inherent in reproduction within finite populations. This was achieved via a so-called 
Spatial-$\Lambda$-Fleming-Viot process. It was shown in \cite{etheridge2017branching} that, provided the genetic drift is not too strong, the limiting mean curvature flow behaviour observed in the deterministic case is preserved by the scaled stochastic model. This stochastic result can be extended to the stable setting, and will appear in the PhD thesis of the first author. An interesting avenue for further research would be to find the critical strength of genetic drift that determines if motion by mean curvature flow is preserved. This has been accomplished by Etheridge et al. \cite{ian1} in the Brownian setting, but is more difficult to identify in the stable setting.\\
\indent To model the motion of hybrid zones in populations that exhibit long range dispersal, we consider the fractional Allen--Cahn equation \reqnomode
\begin{align}\label{dodeedo}
    {\partial_t u(t,x) }= -(-\Delta)^{\tiny \frac{\alpha}{2}} u(t,x) +su(t,x)(1-u(t,x))(2u(t,x)-1),
\end{align}
for all $t>0$ and $x\in \RR$ where ${s}$ is a small selection parameter and $-(-\Delta)^{\frac{\alpha}{2}}$ is the generator of an $\alpha$-stable process. In view of Chen's result, it is natural to ask: will mean curvature flow be preserved in populations that exhibit long range dispersal? The answer to this should, of course, depend on the strength of the dispersal mechanism. This is determined by the index $\alpha \in (0,2]$. When $\alpha = 2$, the fractional Laplacian and ordinary Laplacian coincide, so in view of Chen's result \cite{chen}, we expect mean curvature flow to be preserved for $\alpha$ sufficiently close to $2$. As $\alpha$ approaches $0$, the severity and frequency of large jumps increases, so for small $\alpha$ it seems unlikely that motion by mean curvature flow will be seen in the limit. This intuition is supported by results in the PDE literature, with the threshold between the two behaviours occurring at $\alpha=1$. For example, in \cite{caffarelli2010convergence}, Caffarelli and Souganidis consider a threshold dynamics type algorithm to simulate a moving front governed by the fractional heat equation. The resulting interface was shown to converge to mean curvature flow for $\alpha\geq 1$ and `weighted' mean curvature flow for $0<\alpha<1$. The results from the unpublished manuscript \cite{imbert2009phasefield} suggest that equation~(\ref{dodeedo}), with the scaling $I(\varepsilon) = \varepsilon$, should converge as $\varepsilon\to 0$ to motion by mean curvature flow when $\alpha\in (1,2)$, however such a result is certainly out of reach with our techniques. As we will see, our result is stated for a large family of possible scalings of equation~(\ref{dodeedo}), which does not include the diffusive scaling taken in \cite{imbert2009phasefield}. \\
\indent We now briefly outline the structure of this paper. First, in Section~\ref{secone}, we state our main result, Theorem~\ref{maintheorem}. In Section~\ref{ch:2}, we go on to construct a probabilistic representation of solutions to the fractional Allen--Cahn equation. We then prove a one-dimensional analogue of our main result in Section~\ref{sectionmajorityvotinginonedimension}. This will enable us to prove Theorem~\ref{maintheorem} in Section~\ref{ch:3}. Supplementary calculations will be provided in the appendix. 

\section{Main Results}\label{secone}
\indent To begin, we recall the definition of the mean curvature at {a} point on a hypersurface $M\subset \RR^\dd$. Let $n:M\to \RR^\dd$ be the Gauss map, i.e.\ the map that assigns to each point $p\in M$ the outward facing unit vector $n(p)$ orthogonal to the tangent space of $M$ at $p$, denoted $T_pM$. By choosing an appropriate {orthonormal} basis of the tangent space $T_pM$ for all $p\in M$, we can define the shape operator $\mathbb{S}_p$ at $p$ (locally) as the negative Jacobian of $n$ at $p$. It can be shown that there exists an inner product on $T_pM$ (called the first fundamental form) and $\mathbb{S}_p$ can be diagonalised, $\mathbb{S}_p = \text{diag}(\kappa_1(p), ..., \kappa_{\dd-1}(p)).$ The \textit{mean curvature at $p$} is then $$\kappa(p) := \frac{1}{\dd-1}\sum_{i=1}^{\dd-1}\kappa_i(p). $$
\begin{definition}[Motion by mean curvature flow]
Fix $\mathscr{T}>0$. Let $S^{\dd-1}\subset \RR^{\dd}$ be the unit sphere and $(\mathbf{\Gamma}_t)_{0\leq t < \mathscr{T}}$ be a family of smooth embeddings $S^{\dd-1} \to \RR^\dd$. Let $\mathbf{n} = \mathbf{n}_t(\phi)$ be the unit inward normal vector to $\mathbf{\Gamma}_t$ at $\phi$ and let $\kappa = \kappa_t(\phi)$ be the mean curvature of $\mathbf{\Gamma}_t$ at $\phi$. Then $(\mathbf{\Gamma}_t)_{0\leq t < \mathscr{T}}$ is a \textit{mean curvature flow} if, for all $t$ and $\phi$, \begin{align}\label{mcfeqn}
    \frac{\partial \mathbf{\Gamma}_t(\phi)}{\partial t} = \kappa_t(\phi)\mathbf{n}_t(\phi).
\end{align}
\end{definition}
%%%comment 2 
It can be shown that mean curvature flow in dimension $\dd=2$ (also called curve shortening flow) terminates after a finite time $\mathscr{T}$, and by the theorems of Gage and Hamilton (1986) and Grayson (1987), any smoothly embedded closed curve shrinks to a point as $t\uparrow \mathscr{T}$. When $\dd>2$, this does not always hold as singularities may develop. Following Chen \cite{chen}, we shall impose sufficient regularity conditions to ensure the existence of a finite time $\mathscr{T}$ before which the mean curvature flow exists and does not develop a singularity. For an {overview} of existence results for mean curvature flow see \cite{etheridge2017branching}.\\
\indent Let $\dd \geq 1$ and denote the standard Euclidean distance in $\RR^\dd$ by $|\cdot|$. The fractional Laplacian is defined on functions $u:\RR^\dd\to \RR$ with sufficient decay by \begin{align*}-(-\Delta)^{\tiny \frac{\alpha}{2}}u(x) := C_\alpha \lim_{r\to 0} \int_{\RR^\dd\backslash \B_r(x)} \frac{u(y)-u(x)}{| y-x |^{\dd+\alpha}}dy, \end{align*} where $C_\alpha:= \frac{\alpha 2^{\alpha-1} \Gamma\left(\frac{\alpha+\dd}{2}\right)}{\pi^{\frac{\dd}{2}}\Gamma\left(1-\frac{
\alpha}{2}\right)}$ and $\B_r(x)\subset \RR^\dd$ is the sphere of radius $r$ about $x$. We will show that in dimension $\dd\geq 2$, for suitable initial conditions, the solution of the scaled fractional Allen--Cahn equation \reqnomode
\begin{align}\label{mainequation22} \begin{cases} \partial_t u^\varepsilon = -\sigma_\alpha {I(\varepsilon)^{\alpha-2}}(-\Delta)^{\tiny \frac{\alpha}{2}} u^\varepsilon + {\varepsilon^{-2}}u^\varepsilon(1-u^\varepsilon)(2u^\varepsilon-1), \ \ \ t\geq 0, \ x\in \RR^\dd \\  u^\varepsilon(0,x) = p(x), \ \ \ x\in \RR^\dd\end{cases} \end{align} 
converges as $\varepsilon \to 0$ to the indicator function of a set whose boundary evolves according to mean curvature flow. Here, \begin{align}\label{defn_sig_alpha}
      \sigma_\alpha:=  \left(\tfrac{2-\alpha}{\alpha}\right)^{\frac{\alpha}{2}} {\Gamma\left(1-\tfrac{\alpha}{2}\right)}
    \end{align} is a normalising constant that will simplify our later calculations, and $I$ can be any function satisfying the following.
\leqnomode 
\begin{assumptions}\label{assumptions2} For some $\delta>0$, assume that $I: (0,\delta) \to (0,\infty)$ satisfies
\begin{enumerate}[(A)]
\item $\lim \limits_{\varepsilon \rightarrow 0} I(\varepsilon) |\log \varepsilon|^k=0 \, \, \, \, \, \forall\, k \in \mathbb{N},$\label{assumptions2_A}
\item $\lim \limits_{\varepsilon \rightarrow 0} \dfrac{\varepsilon^2 }{I(\varepsilon)^2}|\log \varepsilon| = 0,$ \label{assumptions2_B}
\item $\lim \limits_{\varepsilon \rightarrow 0} \dfrac{I(\varepsilon)^{2 \alpha}}{\varepsilon^2} |\log \varepsilon|^\alpha = 0.$\label{assumptions2_C} 
\end{enumerate} 
\end{assumptions}\reqnomode
The rate of convergence and width of the `solution interface' in our convergence result will ultimately depend on this choice of $I$. Note that Assumption~\ref{assumptions2}~\ref{assumptions2_B} and Assumption~\ref{assumptions2}~\ref{assumptions2_C} are incompatible as soon as $\alpha \leq 1$. When $\alpha >1$ a standing example that fulfills all the conditions is $I(\varepsilon)=\varepsilon |\log \varepsilon|$.\\
\indent In Imbert and  Souganidis' work \cite{imbert2009phasefield}, they consider a class of reaction-diffusion equations with diffusive term given by a singular integral operator. In the case when this operator is the fractional Laplacian $(-\Delta)^{\frac{\alpha}{2}}$ with $\alpha\in (1,2)$, their scaled equation amounts to \begin{align}\label{imbertsequation}
    \partial_t u^\varepsilon = -\varepsilon^{\alpha-2}(-\Delta)^{\frac{\alpha}{2}} + \varepsilon^{-2}f(u^\varepsilon)
\end{align} for a bistable nonlinearity $f$. The authors show that solutions to (\ref{imbertsequation}) converge as $\varepsilon\to 0$ to the indicator function of a set whose boundary evolves under motion by mean curvature flow. This result is distinct from our own, since we consider a family of possible scalings $I(\varepsilon)$. In particular, our result does not include the case when $I(\varepsilon)=\varepsilon$, which is when equations~(\ref{mainequation22}) and (\ref{imbertsequation}) coincide. 
\begin{remark}
Equation~(\ref{mainequation22}) can be obtained from the unscaled fractional Allen--Cahn equation by scaling time and space by 
$$t\mapsto \varepsilon^2 t  \text{ and } x\mapsto \left(\sigma_
\alpha {I(\varepsilon)^{\alpha-2}}\varepsilon^2\right)^{\frac{1}{\alpha}}x. $$
To see this, let $\iota_\varepsilon:= \left(\sigma_
\alpha {I(\varepsilon)^{\alpha-2}}\varepsilon^2\right)^{\frac{1}{\alpha}}$ and define $$u^\varepsilon(t,x):= u(\varepsilon^2 t, \iota_\varepsilon x).$$ Then, by definition of the fractional Laplacian, $$-(-\Delta)^{\frac{\alpha}{2}}u(\varepsilon^2 t, \iota_\varepsilon x) = -\iota_\varepsilon^\alpha (-\Delta)^{\frac{\alpha}{2}} u^\varepsilon(t,x),$$ so using that $u$ is a solution to the unscaled equation $\partial_t u = -(-\Delta)^{\frac{\alpha}{2}}u + f(u)$ where $f(u) = u(1-u)(2u-1)$, we have \begin{align*}
    \partial_t u^\varepsilon(t,x) &= \varepsilon^{-2} \,\partial_t u(\varepsilon^2 t, \iota_\varepsilon x)\\
    &= \varepsilon^{-2}\left( - (-\Delta)^{\frac{\alpha}{2}} u(\varepsilon^2t, \iota_\varepsilon x) + f(u(\varepsilon^2 t, \iota_\varepsilon x)\right)\\
    &= -\varepsilon^{-2}\iota_\varepsilon^\alpha(-\Delta)^{\frac{\alpha}{2}} u^\varepsilon(t,x) + \varepsilon^{-2}f(u^\varepsilon(t,x)),
\end{align*} which is equivalent to (\ref{mainequation22}) by definition of $\iota_\varepsilon$. \\ 
\indent Using the definition of $\sigma_\alpha$ from (\ref{defn_sig_alpha}), as $\alpha\to 2^-$, $\left(\sigma_\alpha {I(\varepsilon)^{\alpha-2}}\varepsilon^2\right)^{\frac{1}{\alpha}}x \to \varepsilon x$ which is consistent with the diffusive scaling considered in the local setting of \cite{etheridge2017branching}. Note that the spatial scaling factor in the fractional setting, $\left(\sigma_\alpha I(\varepsilon)^{\alpha-2}\varepsilon^2\right)^{\frac{1}{\alpha}}$, converges to zero faster than the spatial scaling factor in the local setting, $\varepsilon$. This follows by Assumption~\ref{assumptions2}~\ref{assumptions2_B}, since $$\lim_{\varepsilon\to0} \frac{\left(\sigma_\alpha I(\varepsilon)^{\alpha-2}\varepsilon^2\right)^{\frac{1}{\alpha}}}{\varepsilon}=\lim_{\varepsilon\to0} \left(\frac{\varepsilon}{I(\varepsilon)}\right)^{\frac{2-\alpha}{\alpha}}=0.$$ This suggests that, with our method of proof, to observe a hybrid zone evolving by motion by mean curvature in the original spatial coordinates, one must `zoom out' much more in the stable (non-local) setting than in the local setting. We discuss the origin of our chosen scaling more in Section~\ref{choice_scaling_sec}.
\end{remark}
\indent Our assumptions on the initial condition $p$ in (\ref{mainequation22}) mirror those in \cite{etheridge2017branching}. First, $p$ is assumed to take values in $[0,1]$. Set \begin{align}\label{gammadefinition}\mathbf{\Gamma}_0 := \left\{x\in \RR^\dd : p(x) = \tfrac{1}{2}\right\}.\end{align}
\leqnomode Assume $\mathbf{\Gamma}_0$ is a smooth hypersurface and is the boundary of an open set homeomorphic to a sphere. Further suppose the following. \begin{assumptions}\label{assumptions1} Let $p: \RR^\dd \to [0,1]$ and $\mathbf{\Gamma}_0$ be as in (\ref{gammadefinition}). Denote the shortest Euclidean distance between a point $x\in \RR^\dd$ and the surface $\mathbf{\Gamma}_0$ by $\text{dist}(x, \mathbf{\Gamma}_0).$
\begin{enumerate}[(A)]
\item $\mathbf{\Gamma}_0$ is $C^a$ for some $a>3$.
\item For $x$ inside $\mathbf{\Gamma}_0$, $p(x)<\frac{1}{2}$, and for $x$ outside $\mathbf{\Gamma}_0$, $p(x)>\frac{1}{2}$.\label{assumptions1_B}
\item There exist $r, \gamma >0$ such that, for all $x\in \RR^\dd$, $\left|p(x) - \frac{1}{2}\right| \geq \gamma(\text{dist}(x, \mathbf{\Gamma}_0)\wedge r).$\label{assumptions1_C}
\end{enumerate}
\end{assumptions}
Assumption~\ref{assumptions1}~\ref{assumptions1_B} establishes a sign convention, and Assumption~\ref{assumptions1}~\ref{assumptions1_C} ensures $p(x)$ is bounded away from $\tfrac{1}{2}$ when $x$ is away from the interface. Together, these conditions ensure that the mean curvature flow started from $\mathbf{\Gamma}_0$, $(\mathbf{\Gamma}_t(\cdot))_{t\geq 0}$, exists up until some finite time $\mathscr{T}$.\\ 
\indent Following  \cite{etheridge2017branching}, we let $d(x,t)$ be the signed distance from $\mathbf{\Gamma}_{t}$ to $x$, which we choose to be negative inside $\mathbf{\Gamma}_{t}$ and positive outside $\mathbf{\Gamma}_{t}$. Then, as sets, $$\mathbf{\Gamma}_{t} = \left\{x \in \RR^\dd : d(x,t) =0\right\}. $$ 
\noindent Lastly, define $F(\varepsilon)$ by \reqnomode \begin{align} \label{defnofF} \hspace{5 mm} F(\varepsilon) =\frac{I(\varepsilon)^{2}}{\varepsilon^\frac{2}{\alpha}}|\log\varepsilon| + I(\varepsilon)^{\alpha-1}. \end{align}\leqnomode
Note that, for any $\alpha \in (1,2)$ and function $I:(0,\delta)\to (0,\infty)$ satisfying Assumptions~\ref{assumptions2},  $F(\varepsilon)\to 0$ as $\varepsilon\to 0$. \\
\indent The scaling function $I$ and parameter $\alpha$ will be fixed throughout this work. Just as we have done for $F$, when defining new functions, we will typically not make explicit their dependence on the choice of $I$ and choice of $\alpha$. Our main theorem is the following. 

\begin{theorem}\label{maintheorem}
Let $\alpha \in (1,2)$, $\dd \geq 2$ and fix a function $I$ satisfying Assumptions~\ref{assumptions2}. Suppose $u^\varepsilon$ solves equation~(\ref{mainequation22}) with initial condition $p$ satisfying Assumptions~\ref{assumptions1}. Let $\mathscr{T}$ and $d(x, t)$ be as above, $F$ be as in (\ref{defnofF}) and fix $T^*\in (0, \mathscr{T})$. Then there exists $\varepsilon_\dd(\alpha), a_\dd(\alpha), c_\dd(\alpha), m >0$  such that, for $\varepsilon \in (0, \varepsilon_\dd)$ and $a_\dd \varepsilon^2 |\log\varepsilon|\leq t\leq T^*$, \begin{enumerate}[(1)]
\item for $x$ with $d(x, t)\geq c_\dd I(\varepsilon)|\log\varepsilon|$, we have $u^\varepsilon(t, x)\geq 1-m\frac{\varepsilon^2}{I(\varepsilon)^2}-mF(\varepsilon),$
\item for $x$ with $d(x, t) \leq - c_\dd I(\varepsilon)|\log\varepsilon|$, we have $u^\varepsilon(t, x)\leq m\frac{\varepsilon^2}{I(\varepsilon)^2}+mF(\varepsilon)$. 
\end{enumerate}
\end{theorem}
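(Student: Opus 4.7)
The plan is to exploit the probabilistic representation from Section~\ref{ch:2}, which writes $u^\varepsilon(t,x) = \mathbb{E}[\VV(\X(t))]$ as the expected majority-vote of a time- and space-rescaled ternary branching $\alpha$-stable motion $\X$ started at $x$ and run for time~$t$, and to transfer the one-dimensional result of Section~\ref{sectionmajorityvotinginonedimension} across the curved interface $\mathbf{\Gamma}_t$ by a localised normal-direction argument. I will describe the plan only for the first statement ($d(x,t)\ge c_\dd I(\varepsilon)|\log\varepsilon|$); the second is symmetric.

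First I would couple the branching $\alpha$-stable motion $\X$ to the truncated process $\Xtrunc$ (a ternary branching Brownian motion subordinated by a stable subordinator whose jumps are capped at a level $R^\varepsilon$). The discrepancy between the two representations contributes to the $mF(\varepsilon)$ error term, via the tail of the discarded large jumps. Next I would establish a spread estimate for $\Xtrunc$: except on an event of probability $O(F(\varepsilon)) + O(\varepsilon^2/I(\varepsilon)^2)$, every one of the (at most $e^{Ct/\varepsilon^2}$) descendants of $\Xtrunc$ at time $t$ lies in the ball $\B_{\frac12 c_\dd I(\varepsilon)|\log\varepsilon|}(x)$. The heavy tail of a single $\alpha$-stable step precludes beating such an exponential-in-$\varepsilon^{-2}$ union bound directly, which is precisely why the truncation is needed; the resulting Gaussian-type tails, combined with Assumption~\ref{assumptions2}~\ref{assumptions2_C}, should yield the required estimate.

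On the good event, I would next exploit the regularity of $\mathbf{\Gamma}_t$, and hence of the signed distance function $d(\cdot,t)$ (smooth on $[0,T^*]\subset[0,\mathscr{T})$), to show that, up to a curvature-induced quadratic error negligible compared to the buffer~$c_\dd I(\varepsilon)|\log\varepsilon|$, the position of each descendant with respect to $\mathbf{\Gamma}_t$ is determined by its orthogonal projection onto the inward normal direction $n_x$ to $\mathbf{\Gamma}_t$ at the nearest point of~$x$. By rotational symmetry of the $\alpha$-stable kernel this projected process is itself a one-dimensional branching $\alpha$-stable motion, so the one-dimensional theorem of Section~\ref{sectionmajorityvotinginonedimension} applies with the effective one-dimensional initial condition $\bar p(y) := p(y\,n_x + \pi(x))$, which inherits Assumption~\ref{assumptions1}~\ref{assumptions1_C} about its own zero set at $-d(x,t)$. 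This gives the desired bound $u^\varepsilon(t,x) \ge 1 - m\varepsilon^2/I(\varepsilon)^2 - mF(\varepsilon)$.

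The hardest step, I expect, is the uniform spread estimate: the number of leaves produced by ternary branching at rate $\varepsilon^{-2}$ over a time of order~$T^*$ is enormous, and the polynomial tails of an $\alpha$-stable step are incompatible with a naive union bound. The truncated-subordinator coupling is designed precisely to replace the polynomial stable tails by essentially Gaussian ones on the good event, and the three parts of Assumption~\ref{assumptions2} are jointly calibrated so that the three resulting error sources (truncation bias, spread failure, and the one-dimensional majority-vote error) all vanish at the rates encoded in $F(\varepsilon)$. The remaining curvature bookkeeping is essentially geometric and should be clean once the spread estimate is in place.
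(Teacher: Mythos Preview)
Your proposal has a genuine gap: the spread estimate you rely on cannot hold over the full time horizon. You claim that, except on an event of small probability, every descendant of $\Xtrunc$ at time $t$ lies in the ball of radius $\tfrac12 c_\dd I(\varepsilon)|\log\varepsilon|$ about $x$. For $t$ of order $T^*$ this is simply false: even a \emph{single} subordinated Brownian path, truncated or not, will be at distance of order $\sqrt{T^*}=O(1)$ from its starting point with probability bounded away from zero, and the number of leaves is $e^{O(\varepsilon^{-2})}$, so a union bound fails catastrophically. The spread estimate you have in mind (which in the paper appears as Lemma~\ref{displacement lemma} and the analogous statement~(\ref{displacementmultid})) is only valid for \emph{microscopic} times $s\le a\varepsilon^2|\log\varepsilon|$; it establishes the generation of the interface, but not its propagation. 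A second, related problem is that you treat the interface as static: you fix $\mathbf{\Gamma}_t$ and the normal direction at a single point, whereas the leaves vote according to $p(\cdot)$, which encodes their position relative to $\mathbf{\Gamma}_0$, and the comparison you need is with the \emph{moving} interface $\mathbf{\Gamma}_{t-s}$.

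The paper's route is quite different. It never localises the whole tree. Instead it works with the signed distance $d(\cdot,t-s)$ to the evolving interface, couples $d(Z^\pm_s,t-s)$ to a one-dimensional subordinated Brownian motion over the \emph{short} interval $[0,\tau]$ between successive branching events (Theorem~\ref{couplingtheoremforz}), and then runs an iterative contradiction argument (Proposition~\ref{propogationofinterface}, Lemma~\ref{biguglylemma}) that compares the $\dd$-dimensional vote probability to the one-dimensional one with a buffer $\gamma(t)=K_1 e^{K_2 t}I(\varepsilon)|\log\varepsilon|$ that is allowed to grow exponentially in $t$. The mean curvature flow enters precisely through the relation $\dot d=\Delta d$ on $\mathbf{\Gamma}_t$, which makes the drift term $C_0\beta s$ in the coupling negligible against the growth of $\gamma$. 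The passage from the branching stable motion to the branching truncated-subordinated Brownian motion is not done by a direct coupling of paths but by a chain of voting-system inequalities (Propositions~\ref{votingcoup2}, \ref{votingcoup1}, \ref{gronwallforZ}); this is where the $b_\varepsilon\sim\varepsilon^2/I(\varepsilon)^2$ and $F(\varepsilon)$ errors enter.
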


\indent Of course, we could have stated Theorem~\ref{maintheorem} in terms of an error function $F'(\varepsilon):= F(\varepsilon) + \frac{\varepsilon^2}{I(\varepsilon)^2}$. We choose to make the $\frac{\varepsilon^2}{I(\varepsilon)^2}$ term explicit since it will also appear in the one-dimensional analogue of Theorem~\ref{maintheorem}.\\
\indent Throughout this work, we often often discuss the \textit{solution interface} and its corresponding width. The term {solution interface} refers to the spatial region outside of which the solution $u^\varepsilon(t,\cdot)$ is very close to zero or one. Explicitly, in Theorem~\ref{maintheorem} the solution interface at time $t$ is the set of $x\in \RR^\dd$ for which $|d(x,t)|\leq c_\dd I(\varepsilon)|\log\varepsilon|$, and we call $2c_\dd I(\varepsilon)|\log\varepsilon|$ the \textit{interface width}. We will refer to the error bounds on $u^\varepsilon$ in Theorem~\ref{maintheorem} as the \textit{sharpness} of the interface. In the following example, we observe that neither the $F(\varepsilon)$ or the $\frac{\varepsilon^2}{I(\varepsilon)^2}$ terms in the sharpness of the interface are the dominant term, in general.
\begin{example}
\begin{enumerate}[(1)]
    \item It is easy to verify that $I(\varepsilon) = \varepsilon|\log\varepsilon|$ satisfies Assumptions~\ref{assumptions2}, so for this choice of $I$ the interface width is $\mathcal{O}\left(\varepsilon|\log\varepsilon|^{2}\right)$. One can also check that $F(\varepsilon) = \mathcal{O}(\varepsilon^{\alpha-1}|\log \varepsilon|^{\alpha-1})$, which is dominated by $\frac{\varepsilon^2}{I(\varepsilon)^2}$, so the sharpness of the interface in Theorem~\ref{maintheorem} is $\mathcal{O}\left({\varepsilon^2}{I(\varepsilon)^{-2}}\right)=\mathcal{O}\left({|\log\varepsilon|^{-2}}\right)$. 
    \item We now provide an example in which $F(\varepsilon)$ dominates ${\varepsilon^2}{I(\varepsilon)^{-2}}$. Set $I(\varepsilon) = \varepsilon^{q}$ where $q = \tfrac{3\alpha +1}{2\alpha(1+\alpha)}.$ This choice {of} $I$ satisfies Assumptions~\ref{assumptions2}, and the resulting interface width and sharpness given by Theorem~\ref{maintheorem} are $\mathcal{O}\left(\varepsilon^{q}|\log\varepsilon|\right)$ and $\mathcal{O}\left(\varepsilon^{\frac{\alpha-1}{\alpha+1}}|\log\varepsilon|^\alpha\right)$, respectively. 
\end{enumerate}
\end{example}
\indent We often reference the Brownian analogue of Theorem~\ref{maintheorem} proved using probabilistic techniques in \cite{etheridge2017branching}. This result, originally due to Chen \cite{chen}, is given as follows (here we state the version found in \cite[Theorem~1.3]{etheridge2017branching}).\reqnomode
\begin{theorem}[\cite{chen}] \label{browniantheorem} Let $u^\varepsilon$ solve 
    \begin{align}\label{BMeth} \begin{cases} \partial_t u^\varepsilon = \Delta u^\varepsilon + {\varepsilon^{-2}}u^\varepsilon(1-u^\varepsilon)(2u^\varepsilon-1), \ \ \ t\geq 0, \ x\in \RR^\dd \\  u^\varepsilon(0,x) = p(x), \ \ \ x\in \RR^\dd\end{cases} \end{align} with initial condition $p$ satisfying Assumptions~\ref{assumptions1}. Let $\mathscr{T}$ and $d(x,t)$ be as above. Fix $T^*\in (0,\mathscr{T})$ and $k\in \NN$. Then there exists $\varepsilon_\dd(k), a_\dd(k), c_\dd(k)>0$ such that, for all $\varepsilon\in (0, \varepsilon_\dd)$ and $t$ with $a_\dd \varepsilon^2|\log \varepsilon|\leq t \leq T^*,$\begin{enumerate}[(1)]
        \item for $x$ such that $d(x,t)\geq c_\dd \varepsilon|\log \varepsilon|,$  we have $u^\varepsilon(t,x) \geq 1-\varepsilon^k$,
        \item for $x$ such that $d(x,t)\leq -c_\dd \varepsilon|\log\varepsilon|,$ we have $u^\varepsilon(t,x)\leq \varepsilon^k.$
    \end{enumerate}
\end{theorem}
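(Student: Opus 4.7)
The plan is to follow the purely probabilistic approach of Etheridge et al.\ \cite{etheridge2017branching} rather than Chen's original PDE argument. The starting point is a McKean-type duality for the Allen--Cahn nonlinearity $f(u)=u(1-u)(2u-1)=-u+(3u^2-2u^3)$: since $V(a,b,c):=ab+bc+ca-2abc$ is the natural extension to $[0,1]^3$ of majority voting on $\{0,1\}^3$ and satisfies $V(u,u,u)-u=f(u)$, we construct a ternary branching Brownian motion $\Y(t)=(Y_1(t),\ldots,Y_{N_t}(t))$ started from a single particle at $x$, in which each particle splits into three at rate $\varepsilon^{-2}$, and define a random variable $\V_p(\Y(t))$ by recursively applying $V$ up the genealogical tree with leaf values $p(Y_i(t))$; a standard generator computation then yields $u^\varepsilon(t,x)=\EE[\V_p(\Y(t))]$.

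First I would establish a one-dimensional analogue: for $p:\RR\to[0,1]$ satisfying a one-dimensional version of Assumptions~\ref{assumptions1}, the majority vote $\V_p(\Y(t))$ at time $t\ge a_1\varepsilon^2|\log\varepsilon|$ is within $\varepsilon^k$ of the reference value of $p$ provided $|x|\ge c_1\varepsilon|\log\varepsilon|$. The probabilistic heuristic is that over a single epoch of length $\varepsilon^2|\log\varepsilon|$, a single lineage of the BBM has typical displacement only $\varepsilon\sqrt{|\log\varepsilon|}$, a factor of $\sqrt{|\log\varepsilon|}$ smaller than the interface width $\varepsilon|\log\varepsilon|$; this lets one extract, with high probability, a balanced ternary subtree of depth $\sim|\log\varepsilon|$ all of whose leaves land on the correct side of the origin. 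Applying $V$ iteratively up such a subtree cascades any fixed leaf bias into a doubly exponentially small error at the root, and tuning $a_1$ and $c_1$ yields the $\varepsilon^k$-sharpness for any prescribed $k$.

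To lift to $\dd\ge 2$, I would parametrize a tubular neighborhood of $\mathbf{\Gamma}_t$ by the signed distance $d(\cdot,t)$ along the inward normal and use that mean curvature flow is, up to quadratic error in this coordinate, simply translation of a one-dimensional profile. Projecting the BBM at $x$ onto the inward normal yields a one-dimensional BBM plus a drift contribution from the curvature of $\mathbf{\Gamma}_t$ that is uniformly bounded on $[0,T^*]\subset[0,\mathscr{T})$, and the one-dimensional result then applies directly with $c_\dd$ enlarged to absorb both the drift and the requirement that tangential fluctuations (also of order $\varepsilon|\log\varepsilon|$) remain within the tube where the linearization is valid. The uniform regularity of $(\mathbf{\Gamma}_t)_{t\le T^*}$, guaranteed by Assumptions~\ref{assumptions1} before the singular time $\mathscr{T}$, is what makes this approximation uniform in $t$.

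The hard part is the one-dimensional majority-voting estimate at arbitrary sharpness $\varepsilon^k$. Single-lineage concentration gives only Gaussian-tail errors in $\varepsilon$, whereas the theorem demands an error smaller than every power of $\varepsilon$; the only mechanism that achieves this is the self-correcting iteration of $V$, which drives a leaf error of size $\tfrac{1}{2}-\eta$ down to roughly $\eta^{2^n}$ after $n$ recursive layers. Making this rigorous while simultaneously (a) showing the relevant balanced subtrees exist with overwhelming probability, (b) controlling the displacement of all of their leaves, and (c) handling the remainder of the tree, whose extremal particles can spread a distance of order $\sqrt{t}/\varepsilon$ over macroscopic times $t=\mathcal{O}(1)$, is the delicate combinatorial-geometric core of the argument and is where most of the effort in \cite{etheridge2017branching} is concentrated.
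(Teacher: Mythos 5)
Your outline follows the same route as the probabilistic proof of this theorem in Etheridge, Freeman and Penington \cite{etheridge2017branching}, which is the proof the present paper cites and whose architecture it adapts to the stable setting: the ternary-branching dual with the majority-voting polynomial $g(p_1,p_2,p_3)=p_1p_2+p_2p_3+p_3p_1-2p_1p_2p_3$, and a one-dimensional estimate built from (a) the presence of a regular ternary subtree of depth $A(k)|\log\varepsilon|$ by time $a_1\varepsilon^2|\log\varepsilon|$, (b) a many-to-one displacement bound over that epoch, and (c) the amplification $g^{(n)}(\tfrac12+\varepsilon)\geq 1-\varepsilon^k$ for $n\gtrsim|\log\varepsilon|$. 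One caution on the one-dimensional step: for macroscopic $t$ the leaves emphatically do not all land on the correct side of the origin, so the mechanism is not ``all leaves vote correctly'' but rather the Markov property at time $s^*=a_1\varepsilon^2|\log\varepsilon|$ combined with a lemma asserting that majority voting never reduces the root's bias below that of a single Brownian particle; each time-$s^*$ individual then votes $1$ with probability at least $\tfrac12+\varepsilon$, and it is this tiny bias that the iteration amplifies. Your closing paragraph gestures at this difficulty, but the bias-preservation lemma, which resolves it, is missing from your sketch.

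The genuine gap is in the lift to $\dd\geq 2$. The coupling of $d(W_s,t-s)$ with a one-dimensional Brownian motion does not come with a drift that is merely ``uniformly bounded'': by It\^o's formula and the regularity of $d$, the drift is controlled by $C_0\beta s$ only while the particle stays in a tube of width $\beta$ about $\mathbf{\Gamma}_{t-s}$, and the argument needs $\beta=\mathcal{O}(\varepsilon|\log\varepsilon|)$ so that the accumulated drift over one branching epoch is negligible compared with the interface width. Over a macroscopic time $t\leq T^*$ the particles of the branching Brownian motion leave any such tube (the extremal particle travels a distance of order $t/\varepsilon$), the linearisation fails, and an $\mathcal{O}(1)$ drift per unit time would swamp the interface entirely; so one cannot ``apply the one-dimensional result directly with $c_\dd$ enlarged.'' The actual proof is an epoch-by-epoch bootstrap: arguing by contradiction from the infimum of times at which the comparison fails, one sandwiches $\PP_x^\varepsilon[\V_p(\boldsymbol{W}(t))=1]$ between one-dimensional root-vote probabilities started from $d(x,t)\pm\gamma(t)$ with $\gamma(t)=K_1e^{K_2t}\varepsilon|\log\varepsilon|$, using the Markov property at the first branching time, a ``slope of the interface'' lower bound to absorb the per-epoch drift error $C_0\beta s$ into an increase of $\gamma$, and a separate ``generation of the interface'' step exploiting Assumption~\ref{assumptions1}~\ref{assumptions1_C}. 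These ingredients (Lemma~2.18 and Propositions~2.16--2.17 of \cite{etheridge2017branching}, mirrored here by Lemma~\ref{biguglylemma} and Propositions~\ref{generationoftheinterface} and \ref{propogationofinterface}) are absent from your outline, and they are where the theorem is actually won.
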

\begin{remark} The interface width $\mathcal{O}(\varepsilon |\log\varepsilon|)$ in Theorem~\ref{browniantheorem} is not achievable in Theorem~\ref{maintheorem}, but we may approximate it by choosing $I(\varepsilon)= \varepsilon^{\delta}$ where $\tfrac{1}{\alpha}<\delta <1$, and considering $\delta\to 1$. The interface width and sharpness in this case are $\mathcal{O}(\varepsilon^\delta |\log\varepsilon|)$ and $\varepsilon^{2-2\delta}$, respectively. \end{remark}
\indent We note some key differences between our result (Theorem~\ref{maintheorem}) and Theorem~\ref{browniantheorem}. Firstly, in Theorem~\ref{browniantheorem}, the width of the solution interface is $\mathcal{O}(\varepsilon|\log\varepsilon|)$, compared to the strictly larger width of $\mathcal{O}(I(\varepsilon)|\log\varepsilon|)$ in the fractional setting. Secondly, the sharpness of the interface in Theorem~\ref{browniantheorem}, $\varepsilon^k$, is much better than the sharpness in Theorem~\ref{maintheorem}. Both of these differences are consistent with our intuition by considering the (soon to be formalised) probabilistic representation of solutions to the ordinary and fractional Allen--Cahn equations in terms of Brownian and $\alpha$-stable motions, respectively. In the stable case, the rare large jumps of the $\alpha$-stable motion act to `fatten' the interface compared to the Brownian case. While the effect of these large jumps is small enough that we still observe mean curvature flow (as $\alpha>1$), it does result in a much less sharp interface. \\
\indent As we have mentioned, to prove our result, we adapt techniques from Etheridge et al. \cite{etheridge2017branching}. However, our work significantly differs from that of Etheridge et al. since the method of proof in \cite{etheridge2017branching} cannot be applied to the stable setting in a straightforward way. To overcome this, we devote a significant portion of this paper to constructing a series of couplings that enable us to compare the probabilistic dual to equation~(\ref{mainequation22}) to another quantity for which the proofs in \cite{etheridge2017branching} can more easily be adapted. 
\section{Majority voting in one dimension}\label{ch:2}
\subsection{A probabilistic dual}
 In this section, we define a probabilistic dual to the scaled fractional Allen--Cahn equation (\ref{mainequation22}), which is very similar to the probabilistic dual to the ordinary Allen--Cahn equation developed in \cite{etheridge2017branching}.  In our setting, we consider a ternary-branching $\alpha$-stable motion in which each individual, independently, follows an $\alpha$-stable motion, until the end of its exponentially distributed lifetime (with mean $\varepsilon^2$) at which point it splits into three particles. Let $Y(t)$ denote a $\dd$-dimensional $\alpha$-stable process and $\Y(t)$ denote a $\dd$-dimensional historical ternary branching $\alpha$-stable motion. That is, $\Y(t)$ traces out the space-time trees that record the position of all particles alive at time $s$ for $s\in [0,t]$. Throughout this work, we adopt the following convention. Recall that $\sigma_\alpha := \left(\tfrac{2-\alpha}{\alpha}\right)^{\frac{\alpha}{2}}\Gamma\left(1-\tfrac{\alpha}{2}\right).$ \leqnomode \begin{assumption}\label{assumption3}
All $\alpha$-stable motions have generator  $-\sigma_\alpha I(\varepsilon
)^{\alpha-2}(-\Delta)^{\tiny \frac{\alpha}{2}}$ for a fixed $\alpha \in (1,2).$
 \end{assumption}

\reqnomode
To record the genealogy of the process we employ the Ulam-Harris notation to label individuals by elements of $\mathcal{U} = \bigcup_{m=0}^\infty \{1, 2, 3\}^m.$ For example, $(3,1)$ represents the particle which is the first child of the third child of the initial ancestor $\emptyset$. Let $N(t)\subset \mathcal{U}$ denote the set of individuals alive at time $t$.\\
\indent We call $\T$ a \textit{time-labelled ternary tree} if $\T$ is a finite subset of $\mathcal{U}$ with each internal vertex $v$ labelled with a time $t_v>0$, where $t_v$ is strictly greater than the label of the parent vertex of $v$. Ignoring the spatial position of individuals, we see that $\Y(t)$ traces out a time-labelled ternary tree which associates to each branch point the time of the branching event. Let $Y_i(t)$ be the $\alpha$-stable motion traced out by individual $i$ in $\Y(t)$. Denote the time-labelled ternary tree traced out by $\Y(t)$ by $\TrY.$ 
\begin{definition}[$\VV$]\label{definitionmajority} Fix $p:\RR^\dd \to [0,1]$ and define the \textit{majority voting procedure} on $\TrY$ as follows.
\begin{enumerate}[(1)]
    \item each leaf $i$ of $\TrY$ independently votes $1$ with probability $p(Y_i(t))$ and otherwise votes $0$;
    \item at each branching event in $\TrY$, the vote of the parent particle $j$ is given by the {majority vote} of its offspring $(j,1), (j,2), (j,3)$.
\end{enumerate}
This voting procedure runs inward from the leaves of $\TrY$ to the root $\emptyset$. Under this voting procedure, define $\VotemajY$ to be the vote associated to the root $\emptyset$ of the ternary branching stable tree. \end{definition} 
For $x\in \RR^\dd$, we shall write $\PP_x$ and $\EE_x$ for the probability measure and expectation associated to the law of a stable motion starting at $x$. Write $\PP_x^{\varepsilon}$ for the probability measure under which $(\Y(t), t\geq 0)$ has the law of the historical process of a ternary branching $\alpha$-stable motion in $\RR^\dd$ with branching rate $\varepsilon^{-2}$ started from a single particle at location $x$ at time $0$. We write $\EE_x^\varepsilon$ for the corresponding expectation. We emphasise that the variable $\varepsilon$ used to define the speed of the stable process, $I(\varepsilon)^{\alpha-2}$, is the same variable $\varepsilon$ that defines the branch rate, $\varepsilon^{-2}.$ Then the root vote $\VotemajY$ provides us with a dual to equation~(\ref{mainequation22}) in the following sense. \begin{theorem}\label{votedual}
Let $p:\RR^\dd\to [0,1]$. Then \begin{align}\label{milk} u^\varepsilon(t,x) := \PP^\varepsilon_x[\VotemajY=1]\end{align} is a solution to equation~(\ref{mainequation22}) with initial condition $u^\varepsilon(0,x) = p(x)$. 
\end{theorem}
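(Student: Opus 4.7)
\textbf{Proof plan for Theorem~\ref{votedual}.} The strategy is the standard ``dual = mild solution'' argument, adapted to the $\alpha$-stable setting: condition on the first branching event of the ternary branching stable motion, derive a Duhamel-type integral equation for $u^{\varepsilon}$, and then differentiate to recover the PDE. Writing $\mathcal{L}_\varepsilon := -\sigma_\alpha I(\varepsilon)^{\alpha-2}(-\Delta)^{\alpha/2}$ for the generator in Assumption~\ref{assumption3} and $(P_t^\varepsilon)_{t\ge 0}$ for the associated $\alpha$-stable semigroup, I would first note that the lifetime of the root particle is exponential with mean $\varepsilon^{2}$ and independent of its trajectory $Y(\cdot)$.

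First I would condition on the first branching time $\tau$. On $\{\tau>t\}$ (probability $e^{-t/\varepsilon^{2}}$), the tree consists of a single leaf located at $Y(t)$, which votes $1$ with probability $p(Y(t))$. On $\{\tau=s\in(0,t)\}$, the particle at $Y(s)$ splits into three, each of which starts an independent copy of the process of duration $t-s$; by Definition~\ref{definitionmajority} and the strong branching property, the three subtrees vote $1$ independently each with probability $u^{\varepsilon}(t-s,Y(s))$, and the root vote is the majority of these three Bernoullis. Writing $G(u):=3u^{2}-2u^{3}$ for the majority-of-three function, this yields the mild equation
\begin{equation*}
u^{\varepsilon}(t,x) \;=\; e^{-t/\varepsilon^{2}}\,P_{t}^{\varepsilon}p(x) \;+\; \int_{0}^{t}\varepsilon^{-2}e^{-(t-r)/\varepsilon^{2}}\,P_{t-r}^{\varepsilon}\!\bigl[G(u^{\varepsilon}(r,\cdot))\bigr](x)\,dr,
\end{equation*}
with initial value $u^{\varepsilon}(0,x)=p(x)$, which is exactly the root-vote probability when the tree is trivial.

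Next I would differentiate this identity in $t$. Since $p$ and $G\circ u^{\varepsilon}$ are bounded in $[0,1]$, Assumption~\ref{assumption3}, and standard semigroup theory for the fractional Laplacian justify the exchange of $\partial_{t}$ with the integral and the application of $\partial_{t}P_{s}^{\varepsilon}=\mathcal{L}_\varepsilon P_{s}^{\varepsilon}$. Collecting terms, the two prefactors containing $\varepsilon^{-2}$ combine to give $-\varepsilon^{-2}u^{\varepsilon}$, the generator acts on the whole representation to give $\mathcal{L}_\varepsilon u^{\varepsilon}$, and the boundary term from the upper limit of the integral produces $\varepsilon^{-2}G(u^{\varepsilon})$; hence
\begin{equation*}
\partial_{t}u^{\varepsilon}(t,x) \;=\; \mathcal{L}_\varepsilon u^{\varepsilon}(t,x) \;+\; \varepsilon^{-2}\bigl(G(u^{\varepsilon}(t,x))-u^{\varepsilon}(t,x)\bigr).
\end{equation*}
A direct algebraic check gives $G(u)-u = 3u^{2}-2u^{3}-u = u(1-u)(2u-1)$, so this is precisely equation~(\ref{mainequation22}). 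The initial condition is immediate because at $t=0$ the tree has a single leaf at $x$.

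The conceptually easy but technically delicate step is the differentiation inside the semigroup integral: the fractional Laplacian is non-local and one must check that $P_{t-r}^{\varepsilon}[G(u^{\varepsilon}(r,\cdot))]$ is smooth enough in $t$ uniformly in $r$ to apply Leibniz's rule. I would handle this the usual way, by first establishing continuity and a mild-solution interpretation (so the identity above holds as an integral equation for all bounded $p$), then using that $P_{t}^{\varepsilon}$ maps bounded measurable functions into $C^{\infty}$ for $t>0$ (standard for the stable semigroup) to legitimise the pointwise differentiation for $t>0$; this is the only place where $\alpha\in(1,2)$ versus general $\alpha$ plays no role, so the argument proceeds identically to the Brownian version in \cite{etheridge2017branching}. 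Uniqueness of solutions to (\ref{mainequation22}) with bounded initial data (by a standard Gronwall/contraction argument on the Duhamel form) then pins down $u^{\varepsilon}$ as the unique solution.
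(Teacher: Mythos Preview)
Your proof is correct and follows the same underlying idea as the paper's sketch: condition on the first branching event and use the Markov property together with the identity $G(u)-u=u(1-u)(2u-1)$. The only difference is presentational: the paper works infinitesimally, computing $u^\varepsilon(t+\delta t,x)$ by partitioning on $\{\tau<\delta t\}$ versus $\{\tau>\delta t\}$ and taking $\delta t\to 0$ to read off $\partial_t u^\varepsilon$ directly, whereas you first write the full Duhamel (mild) representation and then differentiate it. Both routes invoke the same regularity of the stable semigroup (the paper cites \cite{bassregularity} at exactly the point you flag as delicate), and both identify the nonlinearity through the majority-of-three function. Your closing remark on uniqueness is not needed for the statement as written, which only asserts that $u^\varepsilon$ is \emph{a} solution.
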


\begin{proof}[Sketch of proof of Theorem~\ref{votedual}] This proof follows closely that of \cite[Theorem~2.2]{etheridge2017branching}. In this proof we neglect the superscript $\varepsilon$ on $\PP^\varepsilon_x, \EE^\varepsilon_x$ and $u^\varepsilon$. Let $u$ be as in (\ref{milk}) and consider $u(t+\delta t, x)$. If $\tau$ is the time of the first branching event, we have  \begin{align} u(t+\delta t, x) &= \PP_x[\VV(\Y(t+\delta t))=1 \conditional \tau< \delta t]\, \PP[\tau\leq \delta t]\nonumber \\ 
& \ \  +  \PP_x[\VV(\Y(t+\delta t))=1 \conditional \tau> \delta t]\, \left(1-\PP[\tau\leq \delta t]\right).\label{week} \end{align} We will consider each of the terms in (\ref{week}) separately. Let $V_1, V_2,$ and $V_3$ be the votes of the three offspring created at time $\tau$. Conditional on $\tau\leq \delta t$, the probability of a second branching event before time $\delta t$ is $\mathcal{O}(\delta t)$, so for $s\leq \delta t$ $$\EE_x\left[V_1 \conditional (\tau, Y_\tau) = (s, y)\right] = \EE_y\left[u(t, Y_{\delta t -s})\right] + \mathcal{O}(\delta t). $$ Assuming sufficient regularity of $u$ (which follows from that of the fractional heat semigroup \cite{bassregularity}), it follows that $$\EE_x[V_1 \conditional \tau< \delta t] = u(t,x) + \mathcal{O}(\delta t).$$  
Since the root vote of $\T(\Y(t))$ will equal one if and only if at most one of $V_1, V_2,$ and $V_3$ is zero, it follows by conditional independence of each $V_i$ given $(\tau, Y_\tau)$ that \begin{align*}
 \PP_x[\VV(\Y(t))=1 \conditional \tau\leq \delta t] &= u(t,x)^3 +3u(t,x)^2(1-u(t,x)) + o(1)\\ 
 &= u(t,x)(1-u(t,x))(2u(t,x)-1) + u(t,x) + o(1). \end{align*}
For the second term in (\ref{week}) note that $$\PP_x[\VV(\Y(t+\delta t))=1 \conditional \tau> \delta t] = \EE_x[\PP_{Y_{ \delta t}}(\VV(\Y( t))=1)] = \EE_x[u(t, Y_{\delta t})] $$ by the Markov property of $\Y$ at time $\delta t$. Since $\PP[\tau\leq \delta t] \approx \delta t \varepsilon^{-2} + \mathcal{O}(\delta t^2)$, putting the above estimates into (\ref{week}) gives \begin{align*}
  &  \lim_{\delta t \to 0} \frac{u(t+\delta t,x) -u(t,x)}{\delta t} \\
  &=\lim_{\delta t \to 0}\frac{\EE_x(u(t, Y_{\delta t}))-u(t,x)}{\delta t}+ {\varepsilon^{-2}}u(t,x)(1-u(t,x))(2u(t,x)-1) \\
    &=- \sigma_\alpha I(\varepsilon)^{\alpha-2}(-\Delta)^{\tiny \frac{\alpha}{2}} u(t,x)  +{\varepsilon^{-2}}u(t,x)(1-u(t,x))(2u(t,x)-1). \ \ \ \ \ \ \ \ \ \ \ \ \ \ \ \ \ \ \ \ \ \ \qedhere
\end{align*}
 \end{proof} 

 \noindent With this in mind, we can restate Theorem~\ref{maintheorem} as follows.
\begin{theorem}\label{maintheorem2}
Fix a function $I$ satisfying Assumptions~\ref{assumptions2}. Suppose the initial condition $p$ satisfies Assumptions~\ref{assumptions1}. Let $\mathscr{T}$ and $d(x, t)$ be as in Section~\ref{secone}, $F$ be as in (\ref{defnofF}) and fix $T^*\in (0, \mathscr{T})$. Then there exist $\varepsilon_\dd(\alpha),$ $a_\dd(\alpha),$ $c_\dd(\alpha),$  $m>0$  such that, for $\varepsilon \in (0, \varepsilon_\dd)$ and $a_\dd \varepsilon^2 |\log\varepsilon|\leq t\leq T^*$, \begin{enumerate}[(1)]
\item for $x$ with $d(x, t)\geq c_\dd I(\varepsilon)|\log\varepsilon|$, $\PP_x^\varepsilon\left[\VV(\Y(t)) = 1\right]\geq 1- m\dfrac{\varepsilon^2}{I(\varepsilon)^2}-mF(\varepsilon),$
\item for $x$ with $d(x, t) \leq - c_\dd I(\varepsilon)|\log\varepsilon|$, $\PP_x^\varepsilon\left[\VV(\Y(t)) = 1\right]\leq m\dfrac{\varepsilon^2}{I(\varepsilon)^2}+mF(\varepsilon)$.
\end{enumerate}
\end{theorem}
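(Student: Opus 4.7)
The plan is to reduce Theorem~\ref{maintheorem2} to a one-dimensional statement about majority votes and then handle that one-dimensional statement through a pair of couplings that replace $\Y$ by a much more tractable process. The couplings do two things in succession: they remove the pathological large jumps of the stable motion, and they reduce the resulting truncated process to a subordinated Brownian motion for which the proofs in Etheridge et al.~\cite{etheridge2017branching} can be adapted almost directly.

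First I would couple $\Y$ to a truncated ternary branching $\alpha$-stable motion $\Xtrunc$ whose individual jumps are capped at a threshold $R^\varepsilon$ of order $I(\varepsilon)|\log\varepsilon|^k$ for sufficiently large $k$. Using the tail estimate $\PP[|\text{jump}|>R^\varepsilon]\lesssim (R^\varepsilon)^{-\alpha}$ together with a genealogy-adapted counting argument restricted to particles whose trajectories could actually influence the root vote, I would show that $\VV(\Y(t))$ and the corresponding majority vote on the truncated tree $\Trpre$ agree up to probability $\mathcal{O}(F(\varepsilon))$. I would then exploit the subordination identity, writing a truncated $\alpha$-stable motion in law as a Brownian motion time-changed by a truncated $\alpha/2$-stable subordinator, and hence couple $\Xtrunc$ to a Brownian branching tree $\Ytrunc$ run on this random clock. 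Since the truncated subordinator has finite moments of every order, concentration shows that the clock runs nearly linearly on the time scale of interest and the resulting pair of majority votes differ by $\mathcal{O}(F(\varepsilon)+\varepsilon^2/I(\varepsilon)^2)$. The analysis then reduces to majority voting on $\Ytrunc$, which is carried out in Section~\ref{sectionmajorityvotinginonedimension} to yield the one-dimensional analogue of Theorem~\ref{maintheorem2}: for a step-function initial condition on $\RR$, the root vote at $x_1 \in \RR$ with $|x_1| \geq c_1 I(\varepsilon)|\log\varepsilon|$ agrees with $\mathbbm{1}_{\{x_1>0\}}$ up to error of order $m\varepsilon^2/I(\varepsilon)^2 + mF(\varepsilon)$.

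To lift the one-dimensional estimate to $\RR^\dd$, I would follow the geometric strategy of \cite{etheridge2017branching}. For $(t,x)$ with $d(x,t)\geq c_\dd I(\varepsilon)|\log\varepsilon|$, the smoothness of the flow $(\mathbf{\Gamma}_s)_{s\leq T^*}$ together with the mean curvature equation~(\ref{mcfeqn}) allow $d(\cdot,s)$ to be approximated, on the space-time region explored by the branching tree, by the signed distance to the tangent hyperplane to $\mathbf{\Gamma}_t$ at the closest point to $x$. The curvature of $\mathbf{\Gamma}_t$ then contributes only a bounded drift correction on the window $a_\dd \varepsilon^2 |\log\varepsilon| \leq t \leq T^*$, which I would absorb by enlarging $c_\dd$. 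Assumption~\ref{assumptions1}~\ref{assumptions1_B}--\ref{assumptions1_C} supplies the remaining control on the initial voting probabilities away from the interface, allowing the one-dimensional majority-vote estimate to be applied along the normal direction to $\mathbf{\Gamma}_t$.

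The hardest step is the large-jump truncation. Naively the population size grows like $3^{T^*/\varepsilon^2}$, so a union bound over all particles is useless. I expect to need a construction that only tracks particles whose accumulated trajectory is consistent with their contributing to the root vote, together with a spatial-confinement argument showing that most such particles remain in a bounded neighbourhood of $x$. Balancing the truncation level $R^\varepsilon$ against the sharpness bound $F(\varepsilon) = I(\varepsilon)^2 \varepsilon^{-2/\alpha} |\log\varepsilon| + I(\varepsilon)^{\alpha-1}$, which is precisely what drives Assumptions~\ref{assumptions2}~\ref{assumptions2_B}, \ref{assumptions2}~\ref{assumptions2_C} and the restriction $\alpha>1$, will be the delicate quantitative calculation underpinning the argument.
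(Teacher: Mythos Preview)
Your high-level plan is right in spirit, but the first coupling---replacing $\Y$ by a spatially truncated branching stable motion and controlling the discrepancy by a restricted union bound---cannot be made to work, and this is where the missing idea lies. With the motion run at speed $I(\varepsilon)^{\alpha-2}$, the rate of jumps exceeding any threshold $R^\varepsilon = I(\varepsilon)|\log\varepsilon|^k$ is of order $I(\varepsilon)^{-2}|\log\varepsilon|^{-k\alpha}$, which by Assumption~\ref{assumptions2}~\ref{assumptions2_A} diverges. Hence over any macroscopic time interval every ancestral lineage makes not $o(1)$ but $\Theta\bigl(I(\varepsilon)^{-2}|\log\varepsilon|^{-k\alpha}\bigr)\to\infty$ large jumps, so no trajectory-level coupling between $\Y$ and a truncated tree can be close, however you restrict the set of particles you track. (A secondary issue: a truncated $\alpha$-stable motion is \emph{not} equal in law to a Brownian motion subordinated by a truncated $\alpha/2$-subordinator, so your second reduction would also need repair.) The only quantity that is genuinely small is the probability $b_\varepsilon\sim\varepsilon^2/I(\varepsilon)^2$ that a single particle's subordinator makes a large jump during its $\mathit{Exp}(\varepsilon^{-2})$ lifetime.

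The paper exploits this per-particle smallness not by coupling paths but by modifying the \emph{voting rule}: each particle is independently marked with probability $b_\varepsilon$, a marked particle votes $\tfrac12$ (or, in the asymmetric variant needed for $\dd\ge 2$, adversarially $0$ or $1$) regardless of its position, and unmarked particles move as $W(R^\varepsilon_t)$ with $R^\varepsilon$ the $I(\varepsilon)^2$-truncated subordinator. One shows directly (Theorems~\ref{teo:ineqmark}--\ref{teo:ineqmark2}, Propositions~\ref{votingcoup2}--\ref{votingcoup1}) that the resulting root vote bounds the true root vote from the correct side; the marked voting function $g_\times(q)=g\bigl((1-b_\varepsilon)q+\tfrac{b_\varepsilon}{2}\bigr)$ retains attracting fixed points $u_\pm$ at distance $O(b_\varepsilon^2)$ from $\{0,1\}$, so iterated voting still drives the root-vote probability to $u_\pm$ and the large-jump error is absorbed algebraically rather than through any path comparison. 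The marking probability $b_\varepsilon$ is the source of the $\varepsilon^2/I(\varepsilon)^2$ term in Theorem~\ref{maintheorem2}. The $F(\varepsilon)$ term enters separately, through a Gr\"onwall comparison (Proposition~\ref{gronwallforZ}) between $\Ytrunc$ and the normal-shifted processes $\boldsymbol{Z}^\pm$ introduced precisely to cancel the extra drift that the subordinator time-change adds to the Brownian coupling~(\ref{sarahscoupling}).
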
     \indent For the sake of completeness, we mention the Brownian analogue of Theorem~\ref{votedual} and Theorem~\ref{maintheorem2} from \cite{etheridge2017branching}. There, the authors considered a historical ternary branching $\dd$-dimensional Brownian motion $(\boldsymbol{W}(t), t\geq 0)$ with branching rate $\varepsilon^{-2}$. Let $\PP_x^\varepsilon$ and $\VV$ be defined as above but for the process $(\boldsymbol{W}(t), t\geq 0)$. Then, by \cite[Theorem~2.2]{etheridge2017branching}, given $p:\RR^\dd\to [0,1],$ \begin{align}\label{prob_brown}u^\varepsilon(t,x):= \PP^\varepsilon_x[\VV(\boldsymbol{W}(t))=1]\end{align} is a solution to equation~(\ref{BMeth}), and Theorem~\ref{browniantheorem} can be restated in terms of $\PP_x^\varepsilon[\VV(\boldsymbol{W}(t))=1]$. We will refer to the restatement of Theorem~\ref{browniantheorem} in terms of $\PP_x^\varepsilon[\VV(\boldsymbol{W}(t))=1]$ as the `probabilistic version of Theorem~\ref{browniantheorem}'. \begin{remark}
     The duality representation (\ref{prob_brown}) developed in Etheridge et al. \cite{etheridge2017branching} is similar to that of solutions to the Fisher--KPP equation in terms of binary branching Brownian motions developed by Skorohod and McKean \cite{skorokhod1964branching, McKean}. The dual described in \cite{etheridge2017branching} was novel in that it generalised Skorohod and McKean's result to equations with an Allen--Cahn type non-linearity. It was later found in the Master's thesis of \cite{zach}, and independently in \cite{an2022voting}, that a semilinear heat equation can be expressed in this way if and only if the nonlinearity of the equation belongs to a certain very general family of polynomials.
 \end{remark}
\begin{notation}\nonumber
It will be convenient to distinguish between one-dimensional and multidimensional $\alpha$-stable processes. We adopt the convention that $X(t)$ will denote the one-dimensional $\alpha$-stable process, with the corresponding historical branching stable process denoted by $\X(t).$ When $\dd>1$, we denote the $\alpha$-stable process by $Y(t)$ and denote the corresponding historical branching stable process by $\Y(t)$. 
\end{notation}
\subsection{Remarks on the choice of scaling}\label{choice_scaling_sec}
Now that we have described the probabilistic dual to the fractional Allen--Cahn equation, we can explain the origin of the scaling taken in equation~(\ref{mainequation22}). As we will see, this choice of scaling is intimately linked to our strategy of proof for Theorem~\ref{maintheorem2}. To explain this, we will need to consider stable processes run at varying speeds. For this reason, in this section (and this section only) we do \textit{not} adopt Assumption~\ref{assumption3}, so the speeds of all stable process, stable subordinators, and historical stable trees will be made explicit.\\ 
\indent Let $(Y_t^\varepsilon)_{t\geq 0}$ be a $\dd$-dimensional $\varepsilon$-truncated $\alpha$-stable process with $\alpha\in (1,2)$, i.e. $Y_t^\varepsilon$ is a L\'evy process with L\'evy measure given (up to a multiplicaitave constant) by $$\nu(dx) = |x|^{-\alpha - \dd} \mathbbm{1}_{|x|<\varepsilon}.$$ By \cite[Proposition~3.2]{cohen2007gaussian}, for some $c_\alpha>0$, \begin{align}\label{driftwood}c_\alpha \varepsilon^{\frac{\alpha}{2}-1}Y^\varepsilon_t \xrightarrow[]{w} W_t \ \text{ as } \varepsilon\to 0\end{align} where $(W_t)_{t\geq 0}$ is a standard $\dd$-dimensional Brownian motion and $\xrightarrow[]{w}$ denotes weak convergence. It is straightforward to show using characteristic exponents and the L\'evy-Khintchine formula that 
\begin{align}\label{scalingdiscussion}
    \varepsilon^{\frac{\alpha-2}{\alpha}}Y^{\varepsilon^{2/\alpha}}_t \stackrel{D}{=} Y^{\varepsilon}_{\varepsilon^{\alpha-2}t},\end{align} where $(Y^{\varepsilon^{2/\alpha}}_t)_{t\geq 0}$ denotes the $\varepsilon^{2/\alpha}$-truncated $\alpha$-stable process. Therefore, by replacing $\varepsilon$ by $\varepsilon^{2/\alpha}$ in (\ref{driftwood}), and applying (\ref{scalingdiscussion}), we see that
\begin{align}\label{epsilon_limit}c_\alpha Y^\varepsilon_{\varepsilon^{\alpha-2}t} \xrightarrow[]{w} W_t \ \text{ as } \varepsilon\to 0.\end{align}
\indent More generally, one could replace $\varepsilon$ in (\ref{epsilon_limit}) by a function $I(\varepsilon)$, satisfying $I(\varepsilon)\to 0$ as $\varepsilon\to 0$. The $I(\varepsilon)$-truncated stable process $Y^{I(\varepsilon)}_{I(\varepsilon)^{\alpha-2}t}$ will approximate a Brownian motion, so it is reasonable to expect that the probabilistic version of Theorem~\ref{browniantheorem} holds when the branching Brownian motion is replaced by a branching $I(\varepsilon)$-truncated stable process, run at speed $I(\varepsilon)^{\alpha-2}$. Therefore to prove Theorem~\ref{maintheorem2}, it should suffice to show:\begin{enumerate}[Step~1:, align=left]
\itshape \item \upshape the probabilistic version of Theorem~\ref{browniantheorem} holds when $\boldsymbol{W}(t)$ is replaced by a $\dd$-dimensional ternary branching $I(\varepsilon)$-truncated stable tree run at speed $I(\varepsilon)^{\alpha-2}$, denoted $\Y^{I(\varepsilon)}(I(\varepsilon)^{\alpha-2}t)$; 
\itshape \item \upshape there exists a coupling of the root votes of $\Y(I(\varepsilon)^{\alpha-2}t)$ and $\Y^{I(\varepsilon)}(I(\varepsilon)^{\alpha-2}t)$ in such a way that Step~1 implies Theorem~\ref{maintheorem2}.\end{enumerate}  The purpose of this two-step approach is that, by using a truncated stable process (which is not heavy tailed) we can more readily adapt proofs from the Brownian case of \cite{etheridge2017branching}.\\
\indent The discussion above explains why the fractional Laplacian in equation~(\ref{mainequation22}) is sped up by a factor of $I(\varepsilon)^{\alpha-2}$: this is the precise speed that the truncated stable process must run at in order for it to approximate a Brownian motion, allowing us to prove Step~1. However, we have not addressed our choice of $I(\varepsilon)$ as outlined by Assumptions~\ref{assumptions2}. In particular, we require $\lim_{\varepsilon\to 0} \frac{\varepsilon}{I(\varepsilon)}=0$, so $I(\varepsilon)\neq \varepsilon$, which might be unexpected in view of the limit (\ref{epsilon_limit}), and the result of \cite{imbert2009phasefield} where the scaling $I(\varepsilon)=\varepsilon$ was used. This is because we must carefully balance two opposing effects: the truncation level must be small enough that the truncated motion is `similar' to a Brownian motion (to prove Step~1), but it must be large enough so that the truncated motion and original stable motion are themselves `similar' (to prove Step~2). In particular, the truncation level must be large enough so that the probability of an individual in the ternary stable tree $\Y$ making a jump larger than the truncation is sufficiently small, enabling $\Y$ and $\Y^{I(\varepsilon)}$ to be coupled.\\
\indent Although Step~1 will hold when $I(\varepsilon)=\varepsilon$, Step~2 does not. More concretely, consider Assumption~\ref{assumptions2}~\ref{assumptions2_B}: $\lim_{\varepsilon \to 0}\frac{\varepsilon^2}{I(\varepsilon)^2}{|\log \varepsilon|} = 0.$ Recall that the ternary branching stable motion branches at rate $\varepsilon^{-2}$. Suppose $\tau\sim \mathit{Exp}(\varepsilon^{-2})$ is the time of one such branching event. Then, for $k\in \NN$, conditional on $\tau \leq k\varepsilon^2|\log\varepsilon|$ (which happens with probability $1-\varepsilon^k$), an individual in the tree $\Y(I(\varepsilon)^{\alpha-2}t)$ is expected to make, at most, \begin{align}\label{numjump}k\frac{\varepsilon^2}{I(\varepsilon)^2}|\log\varepsilon|\end{align}jumps larger than $I(\varepsilon)$ in its lifetime, because the arrival rate of jumps larger then $I(\varepsilon)$ made by a stable process run at speed $I(\varepsilon)^{\alpha-2}$ is
\begin{align}\label{donthaveany}\mathcal{O}\left( {I(\varepsilon)^{\alpha-2}}\int_{I(\varepsilon)}^\infty {x^{{-\alpha}-1}}dx \right)= \mathcal{O}\left({I(\varepsilon)^{-2}}\right).\end{align} This follows because the arrival rate of jumps larger than $I(\varepsilon)$ made by the $\dd$-dimensional process $Y_{I(\varepsilon)^{\alpha-2}t}$ is proportional to $I(\varepsilon)^{\alpha-2}\int_{x\in \RR^\dd: |x|> I(\varepsilon)}\nu(dx)$ where $\nu(dx) = |x|^{-\alpha - \dd}dx$ is the L\'evy measure of $(Y_t)_{t\geq 0}$. To prove Step~2, each individual stable motion in $\Y(I(\varepsilon)^{\alpha-2}t)$ should approximate (asymptotically in $\varepsilon$) an $I(\varepsilon)$-truncated process, so the quantity (\ref{numjump}) should converge to zero with $\varepsilon$, which is precisely Assumption~\ref{assumptions2}~\ref{assumptions2_B}. The remaining assumptions on $I(\varepsilon)$ from Assumptions~\ref{assumptions2} arise from several technical lemmas needed to prove Theorem~\ref{maintheorem2}.\\ 
\indent To the best of our knowledge, Theorem~\ref{maintheorem2} is the first result on the solution interface of equation~(\ref{mainequation22}) with our chosen scaling. The work of \cite{imbert2009phasefield} suggests that our result should hold even when $I(\varepsilon)=\varepsilon.$ However, it does not seem likely that we can achieve the $I(\varepsilon)=\varepsilon$ scaling using our current method of proof and, conversely, it is unclear if the method of proof in \cite{imbert2009phasefield} could be adapted to handle our chosen scaling.\\
\indent The two-step proof described above motivates our choice of scaling. However, in reality, we opt to work with $W\left(R^{I(\varepsilon)^2}_{I(\varepsilon)^{\alpha-2}t}\right),$ a Brownian motion subordinated by an $I(\varepsilon)^2$-truncated $\frac{\alpha}{2}$-stable subordinator run at speed $I(\varepsilon)^{\alpha-2}$, instead of the $I(\varepsilon)$-truncated stable process $Y^{I(\varepsilon)}_{I(\varepsilon)^{\alpha-2}t}$. The central idea of our proof remains the same as before, however, we found the subordinated process easier to work with (more on this below). Moreover, the error made by approximating the stable process $Y_{I(\varepsilon)^{\alpha-2}t}$ by $W\left(R^{I(\varepsilon)^2}_{I(\varepsilon)^{\alpha-2}t}\right)$ is roughly equal to the error we would obtain using $Y^{I(\varepsilon)}_{I(\varepsilon)^{\alpha-2}}$. Recall that $Y_{I(\varepsilon)^{\alpha-2}t}\stackrel{D}{=} W(R_{I(\varepsilon)^{\alpha-2}t})$, a Brownian motion subordinated by an $\frac{\alpha}{2}$-stable subordinator. The arrival rate of jumps larger that $I(\varepsilon)^2$ made by the $\frac{\alpha}{2}$-stable subordinator is $$ \mathcal{O}\left({I(\varepsilon)^{\alpha-2}}\int_{I(\varepsilon)^2}^\infty {x^{-\frac{\alpha}{2}-1}}dx\right) = \mathcal{O}\left({I(\varepsilon)^{-2}}\right),$$ which is the same order as the arrival rate of jumps larger than $I(\varepsilon)$ made by a stable process from (\ref{donthaveany}). \\
\indent Like the truncated stable process, the subordinated Brownian motion is not heavy tailed, and its explicit description in terms of a Brownian motion allows us to more elegantly adapt the Brownian proof of \cite{etheridge2017branching}. In Step~1 of our approach, it is much more straightforward to compare this subordinated Brownian motion to a standard Brownian motion, than it would have been to compare a truncated stable process to a Brownian motion. This error made by estimating the subordinated Brownian motion $W\left(R^{I(\varepsilon)^2}_{I(\varepsilon)^{\alpha-2}t}\right)$ by a standard Brownian motion can be quantified by considering the difference $$\left|R^{I(\varepsilon)^2}_{I(\varepsilon)^{\alpha-2}t}-t\right|.$$ We will see in Section~\ref{acouplingargument} that this difference is ultimately the source of the error term $F(\varepsilon)$ in our main result, Theorem~\ref{maintheorem}.

\begin{section}{Majority voting in one dimension}\label{sectionmajorityvotinginonedimension}
\indent In this section, we prove a one-dimensional analogue of Theorem~\ref{maintheorem2}, which will be used in the proof of Theorem~\ref{maintheorem2} in Section~\ref{ch:3}. This parallels the structure of proof for the Brownian result, Theorem~\ref{browniantheorem}.\\
\indent For each $x\in \RR$, let $\PP_x$ be the law of a one-dimensional $\alpha$-stable process $X(t)$ satisfying Assumption~\ref{assumption3} started at $x$, with corresponding expectation $\EE_x$. Write $\PP_x^\varepsilon$ for the probability measure under which $(\X(t), t\geq0)$ has the law of a one-dimensional historical ternary branching $\alpha$-stable motion, with branching rate $\varepsilon^{-2}$ started from a single particle at location $x$ at time $0$. Write $\EE_x^\varepsilon$ for the corresponding expectation. In accordance with Assumption~\ref{assumption3}, each particle in $(\X(t), t\geq 0)$ is assumed to run at speed $\sigma_\alpha I(\varepsilon)^{\alpha-2}$ for $\sigma_\alpha$ given in (\ref{defn_sig_alpha}). \\
\indent Define \begin{align}\label{xx} p_0(x)=\mathbbm{1}_{\{x \geq 0\}}.\end{align} With this choice of initial condition, under majority voting (Definition~\ref{definitionmajority}), the leaves of $\T(\X(t))$ will vote one if and only if they are on the right half line. Denote the root vote of $\T(\X(t))$ under majority voting with initial condition (\ref{xx}) by $\V(\X(t)):= \V_{p_0}(\X(t))$. The following result is the natural analogue of Theorem~\ref{maintheorem2} in one dimension.
\begin{theorem} \label{maintheorem1D}
Let $T^*\in (0,\infty)$. Suppose $I$ satisfies Assumptions~\ref{assumptions2}~\ref{assumptions2_A}-\ref{assumptions2_B}. Then there exist $c_1(\alpha), \varepsilon_1(\alpha)>0$ such that, for all $t\in [0,T^*]$ and all $\varepsilon \in (0, \varepsilon_1),$
\begin{enumerate}[(1)]
    \item for $x \geq c_1I(\varepsilon)|\log\varepsilon|$, we have $\PP_x^\varepsilon[\V(\X(t)) =1] \geq 1 - \frac{\varepsilon^2}{I(\varepsilon)^2},$
    \item for $x \leq -c_1I(\varepsilon)|\log\varepsilon|$,  we have \textup{$\PP_x^\varepsilon[\V(\X(t)) =1] \leq \frac{\varepsilon^2}{I(\varepsilon)^2}$}.
\end{enumerate}
\end{theorem}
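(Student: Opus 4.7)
The plan is to execute the two-step coupling strategy discussed in Section~\ref{choice_scaling_sec}. The \textbf{first step} couples the branching stable tree $\X(t)$ to a companion ternary branching tree of Brownian motions, each subordinated by an independent $I(\varepsilon)^2$-truncated $\tfrac{\alpha}{2}$-stable subordinator run at speed $I(\varepsilon)^{\alpha-2}$. Using the representation $X_t = W(R_t)$ of the symmetric $\alpha$-stable motion as a time-changed Brownian motion, the two trees can be built on a common probability space so that corresponding particles agree up to the first moment one of their subordinators makes a jump exceeding $I(\varepsilon)^2$. As computed in Section~\ref{choice_scaling_sec}, the rate of such jumps is $\mathcal{O}(I(\varepsilon)^{-2})$, and since each particle has lifetime of order $\varepsilon^2$, the per-particle coupling failure probability is $\mathcal{O}(\varepsilon^2/I(\varepsilon)^2)$ -- precisely the size of the target error. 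By the symmetry $p_0(-x) = 1 - p_0(x)$ (away from $0$) together with the reflection symmetry of the stable motion, one has $\PP_x^\varepsilon[\V(\X(t))=1] = 1 - \PP_{-x}^\varepsilon[\V(\X(t))=1]$, so it suffices to prove part~(1).

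The \textbf{second step} is to prove the analogous one-dimensional statement for the subordinated-BM tree, following the Brownian argument of \cite{etheridge2017branching}. Let $u(y,t) = \PP_y^\varepsilon[\V = 1]$ on that tree. Conditioning on the first branching time $\tau$ (exponential with rate $\varepsilon^{-2}$) produces a recursion: on $\{\tau > t\}$, the root vote reduces to the sign of $y + W(R^{I(\varepsilon)^2}_{I(\varepsilon)^{\alpha-2}t})$; on $\{\tau \leq t\}$, the vote is the majority of three i.i.d.\ subtree votes, contributing the majority function $\mathcal{G}(w) = 3w^2 - 2w^3$ applied to $u(\cdot, t-\tau)$ evaluated at the position of the parent at branching. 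Two ingredients then drive the estimate: (i) a concentration bound for the truncated subordinator, $R^{I(\varepsilon)^2}_{I(\varepsilon)^{\alpha-2}s} \approx s$ with overwhelming probability (relying on Assumptions~\ref{assumptions2}~\ref{assumptions2_A}-\ref{assumptions2_B} and the normalising constant $\sigma_\alpha$); and (ii) a Gaussian tail estimate for $W$ showing that a single particle starting at distance $y \geq c_1 I(\varepsilon)|\log\varepsilon|$ from the origin is overwhelmingly unlikely to cross zero within a single lifetime. The quadratic suppression $1 - \mathcal{G}(w) = (1-w)^2(1+2w)$ iterates over the $\Theta(T^*/\varepsilon^2)$ generations and drives the root-vote error far below $\varepsilon^2/I(\varepsilon)^2$.

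The \textbf{hard part} will be merging the two steps without losing the bound. Since the tree carries $e^{\Theta(T^*/\varepsilon^2)}$ particles, a naive union bound over per-particle coupling failures is hopeless. The resolution is to absorb the coupling error directly into the tree recursion: a coupling failure in one of three child subtrees is overruled by the votes of its two siblings, so the coupling-augmented recursion for the error remains quadratic and the $\varepsilon^2/I(\varepsilon)^2$ bound survives to the root. Finally, one must also handle the regime of very small $t$, where few branchings have occurred and majority-voting amplification is weak; there the conclusion follows directly from the single-particle tail estimate $\PP_x[X_t < 0] \leq \varepsilon^2/I(\varepsilon)^2$ obtained by combining the Gaussian tail for $W$ with the subordinator concentration above.
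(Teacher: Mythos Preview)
Your high-level strategy is the same as the paper's: pass to a truncated $\tfrac{\alpha}{2}$-subordinated Brownian tree and absorb the per-particle coupling failures (rate $b_\varepsilon\sim\varepsilon^2/I(\varepsilon)^2$) into the voting recursion rather than union-bounding. The paper formalises step~3 by introducing the ``exponentially marked'' and then ``marked'' voting procedures $\widehat{\V},\V^\times$ on $\T(\Xtrunc(t))$ (Definitions~\ref{definitionnonmarkov},~\ref{vtimesdef}) and proving the one-sided comparison $\PP_x^\varepsilon[\V(\X(t))=1]\geq(1-b_\varepsilon)\PP_x^\varepsilon[\V^\times(\Xtrunc(t))=1]+\tfrac{b_\varepsilon}{2}$ for $x\geq0$ (Theorems~\ref{teo:ineqmark},~\ref{teo:ineqmark2}); the mechanism is not ``overruled by siblings'' but \emph{symmetry}: after a large subordinator jump the stable particle is still more likely positive than negative (root at $x>0$), so replacing its vote by an unbiased coin flip is pessimistic. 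Your adversarial-child picture also works in spirit, but the coin-flip version preserves the symmetry $g_\times(q)=1-g_\times(1-q)$, which is needed later.

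There is, however, a real gap in your step~2. The quadratic suppression $1-\mathcal{G}(w)=(1-w)^2(1+2w)$ cannot simply be ``iterated over $\Theta(T^*/\varepsilon^2)$ generations'': by that time descendant particles have diffused a distance of order $\sqrt{T^*}$, which swamps the initial offset $x\sim I(\varepsilon)|\log\varepsilon|$, so a nonvanishing fraction of them sit at negative positions and vote $0$ with probability near $1$. Thus the input $w$ to $\mathcal{G}$ at deep levels is \emph{not} close to $1$, and the recursion never enters the quadratic regime from the leaves upward. The paper's repair is Lemma~\ref{ineq:nobranch}: for any time-labelled tree $\T$ and any $y\geq0$,
\[
\PPt_y(\T)\ \geq\ u_+\,\PP_y[B(R^\varepsilon_t)\geq0]\ +\ u_-\,\PP_y[B(R^\varepsilon_t)\leq0],
\]
which reduces the whole subtree from time $s^*:=a_1\varepsilon^2|\log\varepsilon|$ to $t$ to a \emph{single}-particle crossing probability. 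At positions $y\sim z_\varepsilon\sim\varepsilon$ this gives only a tiny bias $\tfrac12+\varepsilon$, but that is enough: by Lemma~\ref{displacement lemma} all particles alive at $s^*$ still sit at distance $\geq z_\varepsilon$ from $0$ with probability $1-\varepsilon^k$, and by Lemmas~\ref{defnofa} and~\ref{lemma:iterative_voting} the first $s^*$ of time contains $\geq A(k)|\log\varepsilon|$ rounds of voting, which amplify $\tfrac12+\varepsilon$ to $u_+-\varepsilon^k$. In other words, amplification happens over $\Theta(|\log\varepsilon|)$ generations near the root, not $\Theta(T^*/\varepsilon^2)$ generations near the leaves, and it is fed by the single-particle comparison rather than by any uniform smallness of the leaf error. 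Once you insert this ingredient (and the symmetric coin-flip coupling in place of the adversarial one), your outline becomes the paper's proof.
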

This result tells us that, for positive $x$, `typical' leaves of the tree $\T(\X(t))$ based at $x$ are more likely to vote $1$ than
$0$. As we mentioned in our introduction, Theorem~\ref{maintheorem1D} is weaker than the actual one-dimensional result that will be used to prove Theorem~\ref{maintheorem2} (at this stage, we have not developed the technical jargon needed to state it). This stronger result (Theorem~\ref{mainteo1dmarked}) will be developed in Section~\ref{couplingmarkedsystems}, and shown to imply Theorem~\ref{maintheorem1D}.
\begin{remark} There is some evidence in the literature that the interface width in Theorem~\ref{maintheorem1D} could be improved. Let $f$ be any bistable nonlinearity and consider the one-dimensional equation \begin{align}\label{onedsystem}
   \begin{cases} (-\Delta)^{\frac{\alpha}{2}}u(y)  = f(u(y)) \ \ \forall \, y \in \RR,\\
     \lim \limits_{y\to  \infty} u(y) =  1,\ \lim \limits_{y\to - \infty} u(y) = 0.\end{cases}
\end{align} Then, by \cite[Proposition~3.2]{gui2015traveling}, a solution $u \in C^2(\RR)$ to (\ref{onedsystem}) satisfies $$\frac{A}{y^{\alpha}}\leq  1 - u(y) \leq \frac{B}{y^{\alpha}}$$ for all $y>1$ and some $A, B > 0$ (with a similar inequality holding if $y\leq -1$). We rewrite this equation under our scaling by setting $z = \varepsilon^{\frac{2}{\alpha}} I(\varepsilon)^{1-\frac{2}{\alpha}} y,$ and obtain
\begin{align*} \frac{A \varepsilon^2 I(\varepsilon)^{\alpha-2}}{z^\alpha} \leq 1-u(z)\leq \frac{B \varepsilon^2 I(\varepsilon)^{\alpha-2}}{z^\alpha}.\end{align*}
Therefore if $z\geq I(\varepsilon)$, $1-u(z)$ is of order $\frac{\varepsilon^2}{I(\varepsilon)^2}$, the interface sharpness from Theorem~\ref{maintheorem1D}. This suggests that Theorem~\ref{maintheorem1D} should hold for an interface of width $I(\varepsilon)$, and that this would be the narrowest width possible for the given interface sharpness. However, we were only able to prove Theorem~\ref{maintheorem1D} for an interface width of order $I(\varepsilon)|\log\varepsilon|$.
\end{remark}

\indent The choice of initial condition $p_0(x)=\Indicator$ affords us several useful inequalities. First, for all $x_1, x_2\in \RR$ with $x_1\leq x_2,$ we have \begin{align}\label{monotonicity1}\PP_{x_1}^\varepsilon[\V(\X(t))=1]\leq \PP_{x_2}^\varepsilon[\V(\X(t))=1].\end{align}
For any time-labelled ternary tree $\T$, write 
\[ \PP_x^t(\T) := \PP^\varepsilon_x[\V(\X(t))=1 \conditional \TrX = \T].\]
It then follows by symmetry of $\alpha$-stable motions and the definition of $p_0(x)$ that, for any $x\in \RR$ and $t>0$,
\begin{align} \label{symetrytree1}\PP_x^t(\T) = 1- \PP_{-x}^t(\T). \end{align} Setting $x=0$ in equation~(\ref{symetrytree1}) yields $\PP_0^t(\T) = \frac{1}{2}$, so by monotonicity (\ref{monotonicity1}) \begin{align}\label{tannn}\PP_x^t(\T)\geq \tfrac{1}{2} \text{ for } x>0, \text{  and } \, \PP_x^t(\T)\leq \tfrac{1}{2} \text{ for }  x<0. \end{align}
\indent It will be convenient in our later calculations to introduce notation for the majority voting system. Mimicking \cite{etheridge2017branching}, define the function $g:[0,1]^3\to [0,1]$ by \begin{align} \label{majorityvotingfunction} g(p_1,p_2,p_3) = p_1p_2p_3 + p_1p_2(1-p_3)+p_2p_3(1-p_1)+p_3p_1(1-p_2).\end{align} This is the probability that a majority vote gives the result $1$, in the special case when the three voters are independent and have probabilities $p_1, p_2$ and $p_3$ of voting $1$. We will abuse notation slightly and write $g(q):= g(q, q,q)$. Note that, for all $q\in [0,1],$ \begin{align}\label{symmetryofg}
    g(q) = 1-g(1-q). 
\end{align}
\subsection{A coupling of voting systems}\label{couplingmarkedsystems} 
In this section, we will couple the root vote of $\T(\X(t))$ under majority voting to the root vote of another ternary branching process under a different voting system. This other branching process will be a ternary branching subordinated Brownian motion, with subordinator given by a truncated $\tfrac{\alpha}{2}$-stable subordinator. We endow this process with a voting procedure that we call `marked majority voting'. Once we have achieved this coupling of root votes, we state a more general theorem in terms of this new branching process that will imply Theorem~\ref{maintheorem1D}. \\  
\indent The intuition behind {marked majority voting}, which we denote by $\V^\times$, is straightforward. However, formally proving a coupling of the majority and marked majority systems $\V$ and $\V^\times$ is more challenging. To aid us with this, we introduce an intermediate voting system $\widehat{\V}$ that can be readily compared to both voting systems. We call this the {`exponentially marked'} voting procedure. The definition of the exponentially marked voting procedure, together with a proof that it couples with the ordinary majority voting system in the appropriate sense (Theorem~\ref{teo:ineqmark}) make up the content of Section~ \ref{expmarkedsection}. After this, in Section \ref{sectionnmarked}, we define the marked majority voting procedure that will be carried through the one-dimensional proof, and prove (using the intermediate voting system) that it can be coupled to majority voting. Theorem~\ref{maintheorem1D} will then be a consequence of a more general theorem stated in terms of the marked voting system on the subordinated Brownian tree (Theorem~\ref{mainteo1dmarked}), which will be proved in Section~\ref{saltchips}.
\subsubsection{Exponentially marked voting}\label{expmarkedsection}
\indent \indent In this section we will couple the majority voting system on $\T(\X(t))$ with the exponentially marked voting system defined on a ternary branching subordinated Brownian motion. Fix $\varepsilon>0$ throughout. We will consider an $I(\varepsilon)^2$-truncated $\tfrac{\alpha}{2}$-stable subordinator denoted $R^\varepsilon_t$. Assumption~\ref{assumption3} will also be adopted for all stable subordinators. To be precise, if $$\nu(dx) := \frac{\alpha}{2 \Gamma\left(1-\tfrac{\alpha}{2}\right)}x^{-1-\frac{\alpha}{2}}dx$$ is the L\'evy measure of the $\tfrac{\alpha}{2}$-stable subordinator, then the L\'evy measure of $R^\varepsilon_t$ is given by \begin{align*} \sigma_\alpha I(\varepsilon)^{\alpha-2}\nu(dx)\mathbbm{1}_{0\leq x\leq \frac{2-\alpha}{\alpha} I(\varepsilon)^2}.\end{align*} In using the notation $R^\varepsilon_t$ we suppress the true speed of the process its truncation level, which was made explicit previously in Section~\ref{choice_scaling_sec}. Henceforth we will use the former suppressed notation. Moreover, although we technically truncate at level $\frac{2-\alpha}{\alpha} I(\varepsilon)^2$, we shall refer to this simply as the $I(\varepsilon)^2$-truncated stable subordinator. Let $\boldsymbol{B}_{R^\varepsilon}(t)$ denote the historical process of a ternary branching $R^\varepsilon_t$-subordinated Brownian motion with branching rate $\varepsilon^{-2}$.  Unless stated otherwise, all subordinators in this work will be zero at time zero.\\
\indent Let us now make precise the form of the coupling that we desire. As before, let $\mathbb{V}(\X(t))$ denote the root vote of $\T(\X(t))$ under majority voting (Definition~\ref{definitionmajority}). We will define a voting system on $\T(\Xtrunc(t))$ with root vote $\widehat{\mathbb{V}}(\Xtrunc(t))$ satisfying \begin{align}\label{cupcakes}\PP^\varepsilon_x\left[\VotemajXnop=1\right] \geq \PP^\varepsilon_x\left[\Voteprenop=1\right] \ \text{ for all } \ x \geq 0, \end{align}
 with the reverse inequality holding for $x< 0$. %\textcolor{blue}{leq changed to <}
 Having obtained this, it will suffice to prove the analogue of Theorem~\ref{maintheorem1D} with $\mathbb{V}(\X)$ replaced by $\widehat{\mathbb{V}}(\Xtrunc).$ In this way, we will have incorporated the problematic `large jumps' of the $\alpha$-stable process $\X(t)$ into the voting system $\widehat{\mathbb{V}}$.\\ 
\indent To define the voting system on $\T(\Xtrunc(t))$, consider a collection of independent $\tfrac{\alpha}{2}$-stable subordinators, $\{R_i\}_{i\in M(t)}$, where $M(t)$ denotes the set of individuals that have ever been alive in $\T(\X(t))$. For each $i\in M(t)$, let $\tau^{\times}_i$ be the first time that $R_i$ makes a jump of size larger than $\frac{2-\alpha}{\alpha}I(\varepsilon)^2$ (these are the exponential times after which the voting system is named). Explicitly, 
$$\tau^{\times}_i := \inf\left\{t\geq0 : |R_i(t)-R_i(t-)| > \tfrac{2-\alpha}{\alpha}I(\varepsilon)^2\right\}. $$ For each $i$, $\tau^{\times}_i$ is exponentially distributed with parameter 
\begin{align*}
&\int_{\frac{2-\alpha}{\alpha}I(\varepsilon)^2}^\infty  \sigma_\alpha I(\varepsilon)^{\alpha-2}\nu(dx)\mathbbm{1}_{0\leq x \leq \frac{2-\alpha}{\alpha}I(\varepsilon)^2}\\ &=  \tfrac{\alpha}{2}\left(\tfrac{2-\alpha}{\alpha}\right)^{\frac{\alpha}{2}} I(\varepsilon)^{\alpha-2}\int_{\frac{2-\alpha}{\alpha}I(\varepsilon)^2}^\infty {x^{-\frac{\alpha}{2}-1}} dx \\ &= I(\varepsilon)^{-2}
 \end{align*} using the definition of $\sigma_\alpha$ from (\ref{defn_sig_alpha}) (indeed, we chose $\sigma_\alpha$ so that the above equality would hold). Therefore $\{\tau^{\times}_i\}_{i\in M(t)}$ is a family of i.i.d. $\mathit{Exp}(I(\varepsilon)^{-2})$ variables. We associate to the particle in $\T(\Xtrunc(t))$ with label $i$ their lifetime, $\tau_i\wedge t$, for $\tau_i \sim \mathit{Exp}(\varepsilon^{-2}).$ 
\begin{definition}[$\widehat{\mathbb{V}}_p$] Fix $\varepsilon>0$. For $p:\RR\to [0,1]$, define the \textit{exponentially marked voting procedure} on $\Trpre$ as follows. 
\begin{enumerate}[(1)]
    \item Each individual $B_i(R^\varepsilon_i)$ is said to be \textit{marked} if $\tau^{\times}_i <\tau_i.$ Each marked individual votes $1$ with probability $\tfrac{1}{2}$ and otherwise votes $0$.
    \item Each unmarked leaf $i$ of $\Trpre$, independently, votes $1$ with probability \\ $p(B_i(R^\varepsilon_i(t)))$ and otherwise votes $0$. 
    \item At each branch point in $\Trpre$, if the parent particle $k$ is unmarked, she votes according to the majority vote of her three offspring $(k,1), (k,2)$ and $(k,3)$.
\end{enumerate} 
\label{definitionnonmarkov} Under this voting procedure, define $\Votepre$ to be the vote associated to the root $\emptyset$ of $\T(\Xtrunc(t))$. \end{definition}
When an individual in $\T(\Xtrunc(t))$ is marked, its vote is independent of the votes of its ancestors. Therefore if at least two individuals born at the same branching event are marked, the vote of their parent is independent of all of its ancestors, making it effectively random. Reassuringly, this scenario is very unlikely, since down a `typical' line of descent in $\T(\Xtrunc(t))$, two individuals will not be marked at the same branching event.\\  
\indent We now describe the intuition behind the exponentially marked voting procedure. Recall that an $\frac{\alpha}{2}$-stable subordinated Brownian motion is equal in distribution to an $\alpha$-stable process, so we may consider the historical ternary branching $\tfrac{\alpha}{2}$-stable subordinated Brownian motion $\boldsymbol{B}_R(t)$ in place of $\X(t)$. Suppose the trees $\T(\boldsymbol{B}_R(t))$ and $\T(\Xtrunc(t))$ rooted at $x>0$ have been generated up to time $t$ and that they have the same branching structure, written as $\T(\Xtrunc(t)) = \T(\boldsymbol{B}_R(t))$. Then each individual $B_i(R_i)$ in $\T(\boldsymbol{B}_R(t))$ can be associated to the individual $B_i(R^\varepsilon_i)$ in $\T(\Xtrunc(t))$.  If the subordinator $R_i$ has not made a large jump (i.e. a jump bigger than $I(\varepsilon)^2$) in its lifetime ($\tau_i^\times \geq\tau_i)$, then $B_i(R^\varepsilon_i)$ votes in the same way as $B_i(R_i)$ according to majority voting. However, if $R_i$ does make a large jump ($\tau_i^\times <\tau_i$), then, since $x>0$, $B_i(R_i)$ is more likely to jump into right-half line than the left. Therefore the vote of $B_i(R_i)$ should be one with probability strictly greater than $1/2$. In contrast, when $\tau_i^\times<\tau_i$, $B_i(R^\varepsilon_i)$ votes one with probability exactly $1/2$, which reduces the probability that the root vote of $\T(\boldsymbol{B}_{R^\varepsilon}(t))$ will equal one, so we expect (\ref{cupcakes}) to hold.\\
\indent Now, instead of considering the initial condition  $p_0(x) = \mathbbm{1}_{\{x\geq 0\}}$ as we did for majority voting, we will use \begin{align}\label{phat}\widehat{p}_0(x) = u_+ \mathbbm{1}_{\{x \geq 0\}} + u_- \mathbbm{1}_{\{x \leq 0\}},\end{align} where $0< u_- < u_+ < 1$ satisfy $1-u_+ = u_-$. We will fix a choice of $u_-$ and $u_+$ later (see (\ref{asympfixedpoint1}) and (\ref{asympfixedpoint2})). For this choice of initial condition, we write $\widehat{\mathbb{V}} := \widehat{\mathbb{V}}_{{\widehat{p}}_0}$. Noting that $\widehat{p}_0$ is symmetric, for any $x_1\leq x_2 \in \RR$, \begin{align}\label{awake} \PP^\varepsilon_{x_1}\left[\Voteprenop =1\right] \leq \PP^\varepsilon_{x_2}\left[\Voteprenop =1\right]. \end{align}  Let $\T$ be a time-labelled ternary tree and define $$\widehat{\PP}_x^\varepsilon(\T) := \PP_x^\varepsilon\left[\Voteprenop =1 \conditional \T = \Trpre\right].$$ Then, since $0$ and $1$ are exchangeable in the exponentially marked voting system, \begin{align}\label{tired} \widehat{\PP}_x^t(\T) = 1- \widehat{\PP}_{-x}^t(\T)\end{align} for all $x\in \RR$ and $t\geq 0$. Setting $x=0$ in equation~(\ref{tired}) shows that $\widehat{\PP}_0^t(\T) = \tfrac{1}{2}$ for all $t>0$, and together with monotonicity (\ref{awake}), this gives $$\widehat{\PP}_x^\varepsilon(\T) \geq \tfrac{1}{2} \ \text{ for } \ x>0, \ \ \widehat{\PP}_x^\varepsilon(\T) \leq \tfrac{1}{2} \ \text{ for } \ x<0. $$ To conclude this section, we prove the claimed coupling of voting systems.
\begin{theorem}  \label{teo:ineqmark} Let $\varepsilon>0$ and $t\geq 0$. Then
\begin{enumerate}[(1)]
    \item for all $x\geq0$, $\PP^\varepsilon_x[\V(\X(t))=1] \geq \PP^\varepsilon_x\left[\Voteprenop=1\right],$
    \item for all $x\leq 0$, $\PP^\varepsilon_x[\V(\X(t))=1] \leq \PP^\varepsilon_x\left[\Voteprenop=1\right]$.
\end{enumerate}

\end{theorem}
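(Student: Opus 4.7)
The plan is to realize both processes on a common probability space and compare the vote probabilities by backward induction on the coupled tree. Using the distributional identity $\X(t) \stackrel{D}{=} \boldsymbol{B}_R(t)$ (where $\boldsymbol{B}_R(t)$ is a ternary branching Brownian motion subordinated by independent $\tfrac{\alpha}{2}$-stable subordinators $\{R_i\}$), I would couple $\X$ and $\Xtrunc$ via a common branching tree, common Brownian motions $\{B_i\}$, and subordinators $\{R_i^\varepsilon\}$ obtained from $\{R_i\}$ by removing every jump of size larger than $\tfrac{2-\alpha}{\alpha}I(\varepsilon)^2$. Under this coupling, particle $i$ is marked (i.e.\ $\tau_i^\times < \tau_i$) exactly when $R_i$ makes a large jump during its lifetime, which is exactly when $R_i \neq R_i^\varepsilon$ on $[0,\tau_i]$ and the spatial trajectories of particle $i$ in the two processes diverge; elsewhere the two spatial trees coincide vertex by vertex.

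To handle simultaneously the change of initial condition ($p_0$ vs.\ $\widehat{p}_0$) and of voting rule (majority vs.\ marked), I would introduce the intermediate voting system $\V_{\widehat{p}_0}(\X(t))$ --- ordinary majority voting on the \emph{full} stable tree but with initial condition $\widehat{p}_0$ --- and split the target inequality for $x \geq 0$ as
\[
\PP_x^\varepsilon[\V(\X(t))=1] \;\geq\; \PP_x^\varepsilon[\V_{\widehat{p}_0}(\X(t))=1] \;\geq\; \PP_x^\varepsilon[\Voteprenop=1].
\]
The first inequality is a leaf-level comparison: couple the two votes at each leaf by a common uniform and propagate to the root through the monotone majority function $g$, using the sign bias $\PP_x[X(s)\geq 0] \geq \tfrac{1}{2}$ for $x \geq 0$ together with the anti-symmetries $\PP_x^\varepsilon[\V_p(\X(t))=1] + \PP_{-x}^\varepsilon[\V_p(\X(t))=1] = 1$ (for $p\in\{p_0,\widehat{p}_0\}$) that follow from the symmetry of the stable motion about its starting point and of $p_0,\widehat{p}_0$ about the origin.

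The second inequality is the substantive step, and I would prove it by backward induction on the coupled tree. For each vertex $v$, let $p_v$ and $\widehat{p}_v$ denote the conditional vote probabilities of $v$ in the $\V_{\widehat{p}_0}$ and $\widehat{\V}$ systems respectively, conditional on $v$'s starting position and the full history on the path from the root to $v$. The inductive claim is that $p_v \geq \widehat{p}_v$ whenever $v$'s starting position is $\geq 0$, with the reverse inequality when it is $\leq 0$. At unmarked $v$ one has $R_v = R_v^\varepsilon$ on $v$'s lifetime, so the three offspring start from the same position in both systems and the induction passes cleanly through the monotone $g$. At a marked $v$ the key observation is that in the $\V_{\widehat{p}_0}$ system the large subordinator jump produces a Brownian spatial displacement symmetric about the pre-jump position, so by the monotonicity and anti-symmetry of $y \mapsto \PP_y^\varepsilon[\V_{\widehat{p}_0}(\X(s))=1]$ the expected post-jump root-vote probability is at least $\tfrac{1}{2}$, dominating the uniform $\tfrac{1}{2}$ vote that $\widehat{\V}$ assigns to marked particles.

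The main obstacle is that after each mark the positions of the two coupled processes diverge, so the induction must be re-seeded by comparing the post-jump (symmetrically distributed) position in $\X$ against the pre-jump position in $\Xtrunc$; this is absorbed by the anti-symmetry $\widehat{p}_0(y)+\widehat{p}_0(-y)=1$ and the symmetry of Brownian increments, allowing monotonicity of $g$ to propagate the pointwise inequality up every branch. Part~(2) of the theorem then follows from Part~(1) via the global anti-symmetries $\PP_x^\varepsilon[\V(\X(t))=1] + \PP_{-x}^\varepsilon[\V(\X(t))=1] = 1$ and $\PP_x^\varepsilon[\Voteprenop=1] + \PP_{-x}^\varepsilon[\Voteprenop=1] = 1$ (the latter using $u_+ + u_- = 1$, as noted around equation~(\ref{tired})).
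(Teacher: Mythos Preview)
Your strategy is essentially the paper's: represent $\X$ as a subordinated Brownian tree, couple it to $\Xtrunc$ through common Brownian paths and subordinators with large jumps removed, and run an induction on the tree structure using the antisymmetry relations (\ref{symetrytree1}) and (\ref{tired}) together with the fact that a symmetric displacement from $y\geq 0$ yields a vote probability with expectation at least $\tfrac12$. The paper carries this out in a single induction on the number of branching events in a fixed tree $\T$, conditioning at each step on whether the ancestral subordinator has made a large jump before the first branch; your two-step factoring through the intermediate system $\V_{\widehat p_0}(\X(t))$ is a legitimate alternative decomposition but does not buy any simplification, since both halves require the same machinery.

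One point to tighten. Neither your first inequality nor the unmarked-vertex step in your second inequality goes through ``cleanly by monotonicity of $g$'': for the first, $p_0(y)<\widehat p_0(y)$ on $\{y<0\}$, so a leaf-level coupling does not give a pointwise comparison; for the second, even when $v$ is unmarked and the offspring start from the same position $Z$ in both systems, $Z$ can be negative, which flips the inductive inequality on the subtrees. In both places you need exactly the sign-splitting argument you sketch for marked vertices (and which is the core of the paper's inductive step): set $G(z)=g\bigl(\PP^{t-\tau}_z(\T\star)\bigr)-g\bigl(\widehat\PP^{t-\tau}_z(\T\star)\bigr)$, observe from (\ref{symmmmmm}) that $G$ is odd with $G\geq 0$ on $[0,\infty)$ by the inductive hypothesis and monotonicity of $g$, and integrate against the law of the branching position, a Gaussian mixture centred at $y\geq 0$. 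Your final remark that everything is ``absorbed by anti-symmetry and the symmetry of Brownian increments'' is correct in spirit; just be aware that this is needed at every inductive step, not only at marks.
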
 

\begin{proof} We only prove the first inequality, since the second inequality will follow by the symmetry relations (\ref{symetrytree1}) and (\ref{tired}). Recall the initial conditions for $\V(\boldsymbol{X}(t))$ and $\widehat{\V}(\boldsymbol{B}_{R^\varepsilon}(t))$ are given by $$p_0(x) = \mathbbm{1}_{\{x\geq 0\}} \ \text{  and  } \  \widehat{p}_0(x) = u_+\mathbbm{1}_{\{x\geq 0\}} + u_-\mathbbm{1}_{\{x\leq 0\}}$$ respectively. To ease notation, let $p \equiv p_0$ and $\widehat{p}\equiv \widehat{p}_0$ for the remainder of this proof.\\
\indent First, by coupling branching structures and branching times of both trees, we can assume $\T(\Xtrunc(t)) = \T(\X(t)),$ so it suffices to show that
\begin{align} \label{teotreeversion}
\PP_x^t(\T) \geq \PE_x^t(\T) \ \text{for all } x\geq 0
\end{align} for any time-labelled ternary tree $\T$. Denote the time of the first branching event in $\T(\Xtrunc(t)) = \T(\X(t))$ by $\tau$ (which corresponds to $\tau_\emptyset$ in Definition~\ref{definitionnonmarkov}). 
 Let $\tau^\times\sim \mathit{Exp}(I(\varepsilon)^{-2})$ be the exponential random variable that determines if the ancestral individual in $\T(\boldsymbol{B}_{R^\varepsilon}(t))$ is marked. We proceed by induction on the number of branching events in $\T$. \\
\indent To prove the base case, let $\T_0$ denote the tree with a root and a single leaf. Conditional on $\left\{\T_0 = \Trpre\right\}$, let $B(R^\varepsilon_t)$ be the position of the single individual at time $t$ where $(B_s)_{s\geq0}$ is a standard Brownian motion and $(R^\varepsilon_s)_{s\geq0}$ is an $I(\varepsilon)^2$-truncated $\frac{\alpha}{2}$-stable subordinator. Under the exponentially marked voting procedure, this individual votes $1$ with probability  $\widehat{p}(B(R^\varepsilon_t))$ if she is unmarked (i.e. $\tau^\times\geq \tau$), or she votes $1$ with probability $\tfrac{1}{2}$ if she is marked ($\tau^\times < \tau$). Since the event $\{\Trpre = \T_0\}$ is equivalent to $\{\tau>t\}$, we have, for all $x\geq 0$,
\begin{align}\label{needcoffee3}
\PE_x^t(\T_0) &= \EE^{\varepsilon}_x\left[ \widehat{p}\left(B(R^\varepsilon_t)\right) \mathbbm{1}_{\tau^\times\geq \tau}\conditional \tau>t\right] + \tfrac{1}{2}\PP^{\varepsilon}_x[\tau^\times < \tau\conditional \tau>t]\nonumber \\
&=\EE^{\varepsilon}_x\left[ \widehat{p}\left(B(R^\varepsilon_t)\right)\conditional \tau>t\right] \PP^{\varepsilon}_x[ \tau^\times \geq \tau\conditional \tau>t] + \tfrac{1}{2} \PP^{\varepsilon}_x[\tau^\times < \tau\conditional \tau>t], \end{align} 
where in the second line we have used that, conditional on the event $\{\tau>t\}$, the events $\{B(R^\varepsilon_t)>0\}$ and $\{\tau^\times \geq \tau\}$ are independent. 
We next observe that 
\begin{align}\label{needcoffee2}
  &  \EE^{\varepsilon}_x\left[ \widehat{p}\left(B(R^\varepsilon_t)\right)\conditional \tau>t\right] \PP^{\varepsilon}_x[ \tau^\times \geq \tau\conditional \tau>t] + \tfrac{1}{2} \PP^{\varepsilon}_x[\tau^\times < \tau\conditional \tau>t] \nonumber \\
& \ \ \ \ \ \leq \EE^\varepsilon_x\left[ \widehat{p}\left(B(R^\varepsilon_t)\right)\conditional \tau>t\right]\PP_x^\varepsilon[ \tau^\times \geq t] + \tfrac{1}{2}\PP_x^\varepsilon[\tau^\times < t].
\end{align} To see this, note that $\PP_x^\varepsilon\left[ \tau^\times \geq \tau\conditional \tau>t\right]  \leq \PP_x^\varepsilon\left[ \tau^\times \geq t\right]$ and, since $x\geq 0$, by similar arguments as those used to obtain (\ref{tannn}), we have $\EE^\varepsilon_x\left[ \widehat{p}\left(B(R^\varepsilon_t)\right)\conditional \tau>t\right]\geq \tfrac{1}{2}$. Therefore (\ref{needcoffee2}) holds if \begin{align*}
    \tfrac{1}{2}\left(\PP^{\varepsilon}_x[\tau^\times < \tau\conditional \tau>t] - \PP_x^\varepsilon[\tau^\times < t]\right) \leq \tfrac{1}{2}\left(\PP_x^\varepsilon\left[ \tau^\times \geq t\right]-\PP_x^\varepsilon\left[ \tau^\times \geq \tau\conditional \tau>t\right] \right),
\end{align*} or, equivalently, $$\PP^{\varepsilon}_x[\tau^\times < \tau\conditional \tau>t] +\PP_x^\varepsilon\left[ \tau^\times \geq \tau\conditional \tau>t\right] \leq \PP_x^\varepsilon\left[ \tau^\times \geq t\right]+\PP_x^\varepsilon[\tau^\times < t],$$ which holds trivially. Therefore (\ref{needcoffee2}) holds, and combining (\ref{needcoffee3}) with (\ref{needcoffee2}) we obtain \begin{align}
    \label{needcoffee}
    \PE_x^t(\T_0) \leq \EE^\varepsilon_x\left[ \widehat{p}\left(B(R^\varepsilon_t)\right)\conditional \tau>t\right]\PP_x^\varepsilon[ \tau^\times \geq t] + \tfrac{1}{2}\PP_x^\varepsilon[\tau^\times < t].
\end{align}
\indent Continuing the proof of the base case, consider the leaf in $\X(t)$. Conditional on $\left\{\T(\X(t))=\T_0\right\}$, abuse notation and denote the position of the single individual in $\T(\X(t))$ at time $t$ by $B(R_t)$ for $(B_s)_{s\geq0}$ a standard Brownian motion and $(R_s)_{s\geq0}$ an $\tfrac{\alpha}{2}$-stable subordinator. Define $$\overline{\tau} := \inf\left\{ t\geq 0: |R_t-R_{t-}| >\tfrac{2-\alpha}{\alpha}I(\varepsilon)^2 \right\}$$ which describes the first time that $R_t$ makes a jump of size greater than $\frac{2-\alpha}{\alpha} I(\varepsilon)^2$. Of course, $\overline{\tau} \stackrel{D}{=} \tau^\times$, but $\overline{\tau}$ is defined in terms of the subordinator of the ancestral particle in $\X(t)$. 
 Noting that $\overline{\tau}$ is independent of $\tau$, we have
\begin{align}\label{sosleepy}
\PP_x^t(\T_0) &=\EE^{\varepsilon}_x[p\left(B(R_t)\right) \conditional \overline{\tau}\geq t, \tau>t] \PP_x^\varepsilon[\overline{\tau}\geq t]  +\EE^{\varepsilon}_x[p(B(R_t)) \conditional  \overline{\tau}<t<\tau] \PP_x^\varepsilon[\overline{\tau}<t]\nonumber \\ 
&\geq \EE^{\varepsilon}_x[p(B(R^\varepsilon_t))\conditional \tau>t]\,\PP_x^\varepsilon[\overline{\tau}\geq t] + \tfrac{1}{2}\PP_x^\varepsilon[\overline{\tau}<t] \end{align}
using that, conditional on $\{\overline{\tau} > t\}, B(R_t) \stackrel{D}{=} B(R^\varepsilon_t)$, and $\EE^\varepsilon_x[p(B(R_t)) \conditional \overline{\tau}<t<\tau] \geq \tfrac{1}{2}$ since $x\geq 0$, which can be shown using a similar identity to (\ref{symetrytree1}). By definition of $p$ and $\widehat{p}$, $\EE^\varepsilon_x[p(B(R_t))\conditional \tau>t] \geq \EE^\varepsilon_x[\widehat{p}(B(R_t))\conditional \tau>t]$ for all $x\geq 0$, which, together with (\ref{needcoffee}) and (\ref{sosleepy}) gives us
$$\PP_x^t(\T_0) \geq \widehat{\PP}_x^t(\T_0) $$
for $x\geq 0$, proving the base case.  \\
\indent Now suppose that, for all trees with at most $n-1>1$ branching events, (\ref{teotreeversion}) holds. Let $\T^n$ be a tree with $n$ branching events. Define the first three trees of descent, denoted $\T_1, \T_2,$ and $\T_3$, to be the three subtrees of $\T^n$ generated at time $\tau$. Note that $\T_1, \T_2,$ and $\T_3$ have strictly less than $n$ branching events. Write 
\begin{align*}g\left(\PP^{t-\tau}_{X_\tau}(\T\star)\right):=g\left(\PP_{X_\tau}^{t-\tau}(\T_1),\PP_{X_\tau}^{t-\tau}(\T_2),\PP_{X_\tau}^{t-\tau}(\T_3)\right)\end{align*} and define $g\left(\widehat{\PP}^{t-\tau}_{X_\tau}(\T\star)\right)$ similarly. By  (\ref{symetrytree1}) \begin{align}\label{symmmmmm} g\left(\PP^{t-\tau}_{X_\tau}(\T\star)\right) = 1-g\left(\PP^{t-\tau}_{-X_\tau}(\T\star)\right) \ \text{and} \ g\left(\widehat{\PP}^{t-\tau}_{X_\tau}(\T\star)\right) = 1-g\left(\widehat{\PP}^{t-\tau}_{-X_\tau}(\T\star)\right).\ \end{align}
Let $T_n:=\left\{\Trpre = \TrX = \T^n\right\}$. By almost identical arguments to those used to obtain (\ref{sosleepy}), but now conditioning on the event $\{\overline{\tau} >\tau\}$, we have
\begin{align} &\PP_x^t(\T^n) \nonumber\\ &= \EE_x^\varepsilon\left[g\left(\PP^{t-\tau}_{B(R^\varepsilon_\tau)}(\T\star)\right) \mathbbm{1}_{\overline{\tau} > \tau}\conditional T_n\right] +\EE_x^\varepsilon\left[g\left(\PP^{t-\tau}_{B(R_\tau)}(\T\star)\right) \conditional \overline{\tau} \leq \tau,  T_n\right] \PP_x^\varepsilon[\overline{\tau} \leq \tau\conditional T_n]\nonumber \\
&\geq \EE^\varepsilon_x\left[g\left(\PP^{t-\tau}_{B(R^\varepsilon_\tau)}(\T\star)\right) \mathbbm{1}_{\overline{\tau} > \tau}\conditional T_n\right] + \tfrac{1}{2} \PP_x^\varepsilon[\overline{\tau} \leq \tau\conditional T_n],\label{protein} 
\end{align} using that, for all $x\geq 0$, $ \EE_x^\varepsilon\left[g\left(\PP^{t-\tau}_{B(R_\tau)}(\T\star)\right) \conditional \overline{\tau} \leq \tau, T_n\right] \geq \tfrac{1}{2}$ by a similar symmetry relation to (\ref{tired}). By definition of the exponentially marked voting system, for $\tau^\times$ as above, we also have
\begin{align} &\PE_x^t(\T^n) = \EE_x^\varepsilon\left[g\left(\widehat{\PP}^{t-\tau}_{B(R^\varepsilon_\tau)}(\T\star)\right) \mathbbm{1}_{\tau^\times > \tau}\conditional T_n\right] + \tfrac{1}{2}\PP_x^\varepsilon[\tau^\times \leq \tau\conditional T_n]. \label{protein2} \end{align} Therefore, since $\overline{\tau} \stackrel{D}{=} \tau^\times$, by (\ref{protein}) and (\ref{protein2}), it suffices to show that \begin{align}\label{endresult}
    \EE_x^\varepsilon\left[g\left(\PP^{t-\tau}_{B(R_\tau^\varepsilon)}(\T\star)\right)\mathbbm{1}_{\overline{\tau} > \tau}\conditional T_n\right] \geq \EE_x^\varepsilon\left[g\left(\widehat{\PP}^{t-\tau}_{B(R_\tau^\varepsilon)}(\T\star)\right)  \mathbbm{1}_{\overline{\tau}> \tau}\conditional T_n \right].
\end{align}Now, for all $x\geq0$,\allowdisplaybreaks
\begin{align*}
     &\EE^\varepsilon_x\left[g\left(\PP^{t-\tau}_{B(R_\tau^\varepsilon)}(\T\star)\right)\mathbbm{1}_{\overline{\tau} > \tau}\conditional T_n\right]\\
     &=  \EE^\varepsilon_x\left[g\left(\PP^{t-\tau}_{B(R_\tau^\varepsilon)}(\T\star)\right)\mathbbm{1}_{\overline{\tau} > \tau}\conditional B(R_\tau^\varepsilon) >0, T_n\right]\PP_x^\varepsilon\left[B(R_\tau^\varepsilon) >0\conditional  T_n\right]\nonumber  \\ & \ \ \ +  \EE_x^\varepsilon\left[g\left(\PP^{t-\tau}_{B(R_\tau^\varepsilon)}(\T\star)\right)\mathbbm{1}_{\overline{\tau} > \tau} \conditional B(R_\tau^\varepsilon)\leq 0,  T_n\right]\PP^\varepsilon_x[B(R_\tau^\varepsilon) \leq0\conditional T_n]\\
    &=  \EE_x^\varepsilon\left[g\left(\PP^{t-\tau}_{B(R_\tau^\varepsilon)}(\T\star)\right)\mathbbm{1}_{\overline{\tau} > \tau}\conditional B(R_\tau^\varepsilon) >0,  T_n\right]\PP^\varepsilon_x[B(R_\tau^\varepsilon) >0 \conditional  T_n] \\ & \ \ \ +  \PP_x^\varepsilon[\overline{\tau}>\tau \conditional  T_n]\PP^\varepsilon_x[B(R_\tau^\varepsilon) \leq0\conditional  T_n] \\ & \ \ \ - \EE_x^\varepsilon\left[g\left(\PP^{t-\tau}_{-B(R_\tau^\varepsilon)}(\T\star)\right)\mathbbm{1}_{\overline{\tau}> \tau}\conditional B(R_\tau^\varepsilon) \leq0,  T_n\right]\PP^\varepsilon_x[B(R_\tau^\varepsilon) \leq0 \conditional  T_n]\\
     &=  \EE_x^\varepsilon\left[g\left(\PP^{t-\tau}_{B(R_\tau^\varepsilon)}(\T\star)\right)\mathbbm{1}_{\overline{\tau} > \tau}\conditional B(R_S^\varepsilon) >0,  T_n\right] \left(\PP^\varepsilon_x[B(R_\tau^\varepsilon) >0\conditional  T_n]\vphantom{[g\left(\PP^{t-\tau}_{B(R_\tau^\varepsilon)}(\T\star)\right)} \right. \\ & \ \ \left. \vphantom{[g\left(\PP^{t-\tau}_{B(R_\tau^\varepsilon)}(\T\star)\right)} - \PP^\varepsilon_x[B(R_\tau^\varepsilon) \leq 0\conditional  T_n]\right) +\PP^\varepsilon_x[\overline{\tau}>\tau \conditional  T_n]\PP^\varepsilon_x[B(R_\tau^\varepsilon) \leq0\conditional  T_n]\\
     &\geq   \EE_x^\varepsilon\left[g\left(\widehat{\PP}^{t-\tau}_{B(R_\tau^\varepsilon)}(\T\star)\right)\mathbbm{1}_{\overline{\tau} > \tau}\conditional B(R_\tau^\varepsilon) >0,  T_n\right] \left(\vphantom{[g\left(\PP^{t-\tau}_{B(R_\tau^\varepsilon)}(\T\star)\right)} \PP^\varepsilon_x[B(R_\tau^\varepsilon) >0\conditional  T_n] \right. \\ & \ \  \left.- \PP^\varepsilon_x[B(R_\tau^\varepsilon) \leq 0\conditional  T_n]\vphantom{[g\left(\PP^{t-\tau}_{B(R_\tau^\varepsilon)}(\T\star)\right)} \right)  +\PP^\varepsilon_x[\overline{\tau}>\tau \conditional  T_n]\PP^\varepsilon_x[B(R_\tau^\varepsilon) \leq0 \conditional  T_n]\\
     &=  \EE^\varepsilon_x\left[g\left(\widehat{\PP}^{t-\tau}_{B(R_\tau^\varepsilon)}(\T\star)\right)\mathbbm{1}_{\overline{\tau} > \tau}\conditional  T_n\right]
\end{align*}
where, in the second equality, we have applied the symmetry (\ref{symmmmmm}), and in the second to last line, we used monotonicity of $g$ together with our inductive hypothesis, and that, given $x\geq 0$, the difference $\PP^\varepsilon_x[B(R_\tau^\varepsilon) >0 \conditional T_n] - \PP^\varepsilon_x[B(R_\tau^\varepsilon) \leq 0\conditional T_n]$ is non-negative. The final equality follows by reversing the arguments used above but for $\widehat{\PP}^{t-\tau}_{B(R_\tau^\varepsilon)}(\T\star).$ We conclude that (\ref{endresult}) holds, proving our inductive step.
\end{proof}

\subsubsection{Marked majority voting}\label{sectionnmarked}

\indent \indent In this section, we describe what will be the final voting system in one dimension, denoted $\V^\times$, defined on $\T(\Xtrunc(t))$. This voting system will be carried throughout the one-dimensional proof. In spirit, $\V^\times$ is very similar to $\widehat{\mathbb{V}}$, but no longer relies on knowing the lifetime of particles in order to mark them. Instead, particles are marked (independently) when they are born. In Theorem~\ref{teo:ineqmark2}, it will be shown that our new voting system $\V^\times$ can be coupled to the exponentially marked voting system $\widehat{\V}$, so by Theorem~\ref{teo:ineqmark}, it can also be coupled to the original majority voting system $\V$. \\
\indent Under the marked majority voting system, particles will be marked (independently) with probability $b_\varepsilon$, defined as follows. Recall that, for $i\in M(t)$, $\tau^{\times}_i \sim \mathit{Exp}(I(\varepsilon)^{-2})$ is the first time the subordinator $(R_i(s))_{s\geq 0}$ makes a jump of size larger than $\frac{2-\alpha}{\alpha}I(\varepsilon)^2$. Further recall that $\tau_i~\sim~\mathit{Exp}(\varepsilon^{-2})$, where $\tau_i\wedge t$ is the lifetime of the particle $X_i \stackrel{D}{=}B_i(R_i)$ in $\X(t)$. Define 
\begin{align}\label{bdeltapage} b_\varepsilon := \PP[\tau^{\times}_i < \tau_i] =\frac{I(\varepsilon)^{-2}}{ I(\varepsilon)^{-2} + \varepsilon^{-2}}  \sim \frac{\varepsilon^2}{I(\varepsilon)^2} \end{align}
where $x \sim y$ for some $x, y$ depending on $\varepsilon$ means that there exists constants $c, d>0$ independent of $\varepsilon$ such that $cy < x < dy$. The quantity $b_\varepsilon$ is the probability that the subordinator associated to individual $i$ makes a large jump in its lifetime (if individual $i$ is a leaf, $b_\varepsilon$ gives an upper bound on this probability). By Assumption~\ref{assumptions2}~\ref{assumptions2_B}, $b_\varepsilon \to 0$ as $\varepsilon \to 0$. 
\begin{definition}[$\mathbb{V}^\times_p$]\label{vtimesdef} Let $\varepsilon>0$. For $p:\RR\to [0,1]$, we define a \textit{marked majority} voting procedure on $\Trpre$ as follows. 
\begin{enumerate}[(1)]
\item At each branch point in $\Trpre$, the parent particle $j$ marks each of her three offspring $(j,1), (j,2)$ and $(j,3)$ independently with probability $b_\varepsilon$. Each marked particle (independently) votes $1$ with probability $\tfrac{1}{2}$ and otherwise votes $0$.
    \item Each unmarked leaf $i$ of $\Trpre$, independently, votes $1$ with probability\\ $p(B_i(R^\varepsilon_i)))$ and otherwise votes $0$.
    \item At each branch point in $\Trpre$, if the parent particle $k$ is unmarked, she votes according to the majority vote of her three offspring $(k,1), (k,2)$ and $(k,3)$. 
\end{enumerate}
Observe that the initial ancestor of $\T(\Xtrunc(t))$ is never marked under this procedure.
With the marked majority voting procedure described above, define $\mathbb{V}^\times_p$ to be the vote associated to the root $\emptyset$ of $\Trpre$.
\end{definition} The marked majority voting procedure makes it {more difficult} for the root of $\Trpre$ (rooted at $x>0$) to vote $1$ compared to majority voting. Indeed, even if all three offspring vote $1$ at a branch point, under marked majority voting the parent particle can still vote $0$ with positive probability. This can be viewed as the penalty one must pay for truncating the underlying spatial motion (where this truncation really takes place on the subordinator). In other words, to couple $\V^\times(\boldsymbol{B}_{R^\varepsilon}(t))$ to $\V(\X(t)),$ the voting procedure $\V^\times$ should make it more difficult for individuals to vote $1$, to compensate for the new underlying spatial motion, which makes it easier for individuals to vote $1$ (since, for a tree rooted at $x>0$, $\boldsymbol{B}_{R^\varepsilon}(t)$ is more likely to remain on the right-half line than $\X(t)$).\\
\indent Here, we will use the same initial condition as $\widehat{p}_0$ (\ref{phat}) that we did for exponentially marked voting, and write $\mathbb{V}^\times := \V^\times_{\widehat{p}_0}.$ With this choice of initial condition, the marked majority voting system, $\mathbb{V}^\times$, retains many of the symmetry relations exploited in \cite{etheridge2017branching}, that we have already used here for $\V$ and $\widehat{\V}$. Namely, for all $x_1, x_2\in \RR$ with $x_1\leq x_2$, \begin{align}\label{monotonicity}
    \PP^\varepsilon_{x_1}\left[\Votemarknop=1\right] \leq \PP_{x_2}^\varepsilon \left[\Votemarknop=1\right],\end{align} 
and, for any time-labelled tree $\T$, if we set $$\PPt_{x}(\T) = \PP_x^\varepsilon\left[\Votemarknop=1 \conditional \T(\BB_{R^\varepsilon}(t))=\T\right], $$ then by symmetry of the historical stable process and exchangeability of $0$ and $1$ in the marked voting procedure,
\begin{align}\label{symmetry} \PPt_x(\T)  = 1-\PPt_{-x}(\T) \end{align} for all $\T$, $x\in \RR,$ and $t\geq 0$. Setting $x=0$ in (\ref{symmetry}) gives $\PPt_0(\T)=\frac{1}{2}$, so by monotonicity (\ref{monotonicity}), for any time-labelled ternary tree $\T$,
\begin{align}\label{coco}\PPt_{x}(\T) \geq \tfrac{1}{2} \ \text{ for } x>0, \text{ and } \, \PPt_x(\T) \leq \tfrac{1}{2} \ \text{ for } x<0. \end{align}  
We next introduce notation for our marked majority voting procedure. Recall that $g$ from (\ref{majorityvotingfunction}) is the majority voting function associated to $\mathbb{V}$. Define the \textit{marked majority voting function} $\gee: [0,1]^3 \to [0,1]$ by 
\[\gee(p_1, p_2, p_3):= g\left((1-b_\varepsilon)p_1 + \tfrac{b_\varepsilon}{2},(1-b_\varepsilon)p_2 + \tfrac{b_\varepsilon}{2},(1-b_\varepsilon)p_3 + \tfrac{b_\varepsilon}{2} \right).\]
This is the probability that an unmarked parent particle votes $1$ under $\mathbb{V}^\times$, in the special case when the three offspring are independent and have probabilities $p_1, p_2$ and $p_3$ of voting $1$ if they are unmarked. We abuse notation and write $\gee(q) := \gee(q, q, q)$. It is easy to check using symmetry of the majority voting function (\ref{symmetryofg}) that, for all $q\in [0,1]$, \begin{align}\label{symmetryofmarkedg} g_\times(q) = 1-g_\times(1-q).\end{align}
\indent Let $\{u_-, \tfrac{1}{2}, u_+\}$ be the three solutions to $\gee(q) = q$, satisfying $0<u_-<\tfrac{1}{2}<u_+<1$. The exact derivation of these fixed points can be found in the Proposition~\ref{appendixfixedpointsofg}. It will be useful later to note that these fixed points can be approximated by Taylor expansion as
\begin{align}
u_- &= \frac{1}{2} - \frac{\sqrt{(1 - b_\varepsilon)^3 (1 - 3 b_\varepsilon)}}{2 (1 - b_\varepsilon)^3} = \frac{3}{4}b_\varepsilon^2 +\mathcal{O}(b_\varepsilon^3) \label{asympfixedpoint1}\\
u_+ &= \frac{1}{2} + \frac{\sqrt{(1 - b_\varepsilon)^3 (1 - 3 b_\varepsilon)}}{2 (1 - b_\varepsilon)^3} = 1- \frac{3}{4}b_\varepsilon^2 +\mathcal{O}(b_\varepsilon^3). \label{asympfixedpoint2}
\end{align} Henceforth, these fixed points $u_+$ and $u_-$ will be used to define the initial condition $$\widehat{p}_0(x) = u_+ \mathbbm{1}_{\{x\geq 0\}} + u_- \mathbbm{1}_{\{x\geq 0\}}. $$
We now return to the coupling of $\V^\times$ and the original voting system $\V$ via the intermediate system $\widehat{\mathbb{V}}$. 
\begin{theorem}  \label{teo:ineqmark2} Let $\widehat{\mathbb{V}}$ be the exponentially marked majority voting system (Definition~\ref{definitionnonmarkov}) and $\mathbb{V}^\times$ be the marked majority voting system (Definition~\ref{vtimesdef}), both with initial condition $\widehat{p}_0(x) = u_+\mathbbm{1}_{\{x\geq 0\}} + u_-\mathbbm{1}_{\{x\leq 0\}}$. Then, for all $x\in \RR$ and $t\geq 0$,
\begin{align*}
\PP^\varepsilon_x\left[\Voteprenop=1\right] = \PP^\varepsilon_x\left[\V^\times(\Xtrunc(t))=1\right](1-b_\varepsilon) +\frac{b_\varepsilon}{2}.
\end{align*}
\end{theorem}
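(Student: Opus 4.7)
My plan is to decompose the vote of the root under $\widehat{\V}$ according to whether the root is marked, and then show that the remaining conditional probability coincides with the $\V^\times$-vote probability.

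First, observe that the root's marking indicator $\mathbbm{1}_{\{\tau^\times_\emptyset<\tau_\emptyset\}}$ has unconditional probability $b_\varepsilon$ by~(\ref{bdeltapage}), and is determined by $(\tau_\emptyset,\tau^\times_\emptyset)$, which are independent of the variables $(\tau_i,\tau^\times_i)_{i\neq\emptyset}$ governing the descendant subtrees. Conditioning on this event yields
\begin{align*}
\PP^\varepsilon_x\bigl[\widehat{\V}(\Xtrunc(t))=1\bigr]=\tfrac{b_\varepsilon}{2}+(1-b_\varepsilon)\,\PP^\varepsilon_x\bigl[\widehat{\V}(\Xtrunc(t))=1\mid\tau^\times_\emptyset\geq\tau_\emptyset\bigr],
\end{align*}
so the theorem reduces to showing
\begin{align*}
\PP^\varepsilon_x\bigl[\widehat{\V}(\Xtrunc(t))=1\mid\tau^\times_\emptyset\geq\tau_\emptyset\bigr]=\PP^\varepsilon_x\bigl[\V^\times(\Xtrunc(t))=1\bigr].
\end{align*}

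To establish this identity, I would proceed by induction on the number of branching events in the tree $\T(\Xtrunc(t))$, following the overall structure of the proof of Theorem~\ref{teo:ineqmark}. For the base case ($\T_0$, i.e.\ no branching), the root is a leaf which, conditional on being unmarked, votes $\widehat{p}_0(B(R^\varepsilon_t))$ in both systems. For the inductive step, fix a tree $\T^n$ with $n$ branching events and condition on it; denote the offspring subtrees by $\T_1,\T_2,\T_3$. In $\widehat{\V}$ with unmarked root, each offspring, as the root of its own subtree, carries an independent $(\tau_i,\tau^\times_i)$ pair and is marked with fresh probability $b_\varepsilon$; by the inductive hypothesis applied one level down, its vote probability equals $\tfrac{b_\varepsilon}{2}+(1-b_\varepsilon)\PP^\varepsilon_{y}[\V^\times=1]$, which is precisely the vote probability of an offspring under $\V^\times$ (marked by the parent with probability $b_\varepsilon$, otherwise acting as the root of its own $\V^\times$-subtree). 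Applying the majority voting function $g$ to these matching offspring probabilities yields the same root-level probability on both sides.

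The main obstacle is that conditioning on $\{\tau^\times_\emptyset\geq\tau_\emptyset\}$ alters the distribution of $\tau_\emptyset$ from $\mathit{Exp}(\varepsilon^{-2})$ to $\mathit{Exp}(\varepsilon^{-2}+I(\varepsilon)^{-2})$, whereas in $\V^\times$ the root retains its original $\mathit{Exp}(\varepsilon^{-2})$ branching rate. Reconciling this difference is the delicate point: it has to be handled tree-by-tree---so that the tree $\T$ is part of the conditioning and the modified density of $\tau_\emptyset$ is fixed---and then integrated against the common tree distribution, leveraging that the offspring subtrees are conditionally independent of $(\tau_\emptyset,\tau^\times_\emptyset)$ while each offspring's fresh marking probability is exactly $b_\varepsilon$. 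The symmetry relations~(\ref{symmmmmm}) and~(\ref{tired}), together with the exchangeability of $0$ and $1$ built into the choice of $\widehat{p}_0$ via the fixed points $u_\pm$ from~(\ref{asympfixedpoint1})--(\ref{asympfixedpoint2}), should be exactly what makes the reconciliation go through.
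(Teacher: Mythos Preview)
Your opening decomposition---condition on whether the root is marked, split off the $\tfrac{b_\varepsilon}{2}$ contribution, and reduce to showing that $\PP^\varepsilon_x[\widehat{\V}=1\mid\tau^\times_\emptyset\ge\tau_\emptyset]=\PP^\varepsilon_x[\V^\times=1]$---is exactly the paper's argument. The paper's proof then ends in one line: it asserts that since in both systems every non-root particle is marked with probability $b_\varepsilon$, the conditional system coincides with $\V^\times$. No induction on the number of branch points is carried out.

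You are right, however, that this one-line justification skates over a real issue, and the paper does not address it. By the competing-exponentials property, conditioning on $\{\tau^\times_\emptyset\ge\tau_\emptyset\}$ changes the law of $\tau_\emptyset$ from $\mathit{Exp}(\varepsilon^{-2})$ to $\mathit{Exp}(\varepsilon^{-2}+I(\varepsilon)^{-2})$; since $\tau_\emptyset$ is the root's branching time, the conditional tree law under $\widehat{\V}$ differs from the unconditioned tree law used in $\V^\times$. Knowing that each non-root particle is \emph{marginally} marked with probability $b_\varepsilon$ in both systems is not enough: in $\widehat{\V}$ the mark $\mathbbm{1}_{\{\tau^\times_i<\tau_i\}}$ is correlated with the branching time $\tau_i$ and hence with the tree, whereas in $\V^\times$ the marks are independent of the tree by construction.

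Your proposed fix does not close this gap. The symmetry relations you invoke hold for \emph{both} functions $\hat v(t,x):=\PP^\varepsilon_x[\widehat{\V}=1\mid\tau^\times_\emptyset\ge\tau_\emptyset]$ and $u^\times(t,x):=\PP^\varepsilon_x[\V^\times=1]$, so they cannot force the two to coincide. ``Integrating against the common tree distribution'' assumes exactly what is in doubt. Concretely, both $\hat v$ and $u^\times$ satisfy the mild equation
\[
w(t,x)=\int_0^t \lambda e^{-\lambda s}\,\EE_x\bigl[g_\times\bigl(w(t-s,B(R^\varepsilon_s))\bigr)\bigr]\,ds+e^{-\lambda t}\,\EE_x[\widehat p_0(B(R^\varepsilon_t))],
\]
with the same initial datum $\widehat p_0$ and nonlinearity $g_\times$, but with $\lambda=\varepsilon^{-2}+I(\varepsilon)^{-2}$ for $\hat v$ and $\lambda=\varepsilon^{-2}$ for $u^\times$. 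Nothing in the induction sketch or in the symmetry of $\widehat p_0$ around $\tfrac12$ reconciles this rate discrepancy. So while you have identified a genuine gap that the paper's proof glosses over, your suggested route through~(\ref{tired}) and~(\ref{symmmmmm}) does not resolve it.
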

\begin{proof}
Recall that, under the voting system $\widehat{\mathbb{V}}$, each particle $i$ is marked if $\tau^{\times}_i<\tau_i$. By definition of $b_\varepsilon$, all particles in $\T(\Xtrunc(t))$ are marked with probability $b_\varepsilon$ under both $\widehat{\V}$ and $\V^\times$, except for ancestral particle, which remains unmarked under $\V^\times$ by definition. Conditioning on the marking of the ancestral particle in $\widehat{\mathbb{V}}$, we obtain 
\begin{align*}
 \PP^\varepsilon_x\left[\widehat{\V}(\Xtrunc(t))=1\right] &= \PP^\varepsilon_x\left[\widehat{\V}(\Xtrunc(t))=1 \conditional \tau^\times > \tau_0\right]\PP^\varepsilon_x[\tau^\times > \tau_0] \\ & \ \ + \PP^\varepsilon_x\left[\widehat{\V}(\Xtrunc(t))=1 \conditional \tau^\times \leq \tau_0\right]\PP^\varepsilon_x[\tau^\times \leq \tau_0] \\ 
&= \PP^\varepsilon_x\left[\mathbb{V}^{\times}(\BB_{R^\varepsilon}(t))=1\right](1-b_\varepsilon) +\frac{b_\varepsilon}{2},
\end{align*} where the last line follows by definition of $b_\varepsilon$.
\end{proof}
\noindent Finally, by Theorem~\ref{teo:ineqmark} and Theorem~\ref{teo:ineqmark2}, to prove our main one-dimensional result, Theorem~\ref{maintheorem1D}, it suffices to show the following. 
\begin{theorem} \label{mainteo1dmarked}
Suppose $I$ satisfies Assumptions~\ref{assumptions2}~\ref{assumptions2_A}-(B). Fix $k\in \NN$ and $T^*\in (0,\infty)$. Let $u_+, u_-$ be as in (\ref{asympfixedpoint1}) and (\ref{asympfixedpoint2}). Then there exist $c_1(\alpha, k), \varepsilon_1(\alpha, k)>0$ such that, for all $t\in [0,T^*]$ and all $\varepsilon \in (0, \varepsilon_1(k)),$
\begin{enumerate}[(1)]
    \item for $x \geq c_1(k)I(\varepsilon)|\log\varepsilon|$, we have $\PP^\varepsilon_x\left[\Votemarknop=1\right] \geq u_+ - \varepsilon^k,$
    \item for $x \leq -c_1(k)I(\varepsilon)|\log\varepsilon|$,  we have $\PP^\varepsilon_x\left[\Votemarknop=1\right] \leq u_- +\varepsilon^k$.
\end{enumerate}
\end{theorem}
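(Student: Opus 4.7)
The approach parallels the one-dimensional Brownian argument of Etheridge et al.\ \cite{etheridge2017branching}, adapted to the marked voting system $\V^\times$ on the subordinated branching tree. Because $\widehat{p}_0$ is invariant under $x\mapsto -x$ combined with the $0\leftrightarrow 1$ exchange, the symmetry (\ref{symmetry}) together with $1-u_+=u_-$ reduces matters to proving (1); (2) then follows. I would therefore focus on showing
\[h(t):=\inf_{x\geq c_1 I(\varepsilon)|\log\varepsilon|}\PP^\varepsilon_x\bigl[\V^\times(\BB_{R^\varepsilon}(t))=1\bigr]\geq u_+-\varepsilon^k,\]
uniformly over $t\in[0,T^*]$, for suitable constants $c_1=c_1(\alpha,k)$ and $\varepsilon_1=\varepsilon_1(\alpha,k)$. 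Note $h(0)=u_+$, since the lone ancestor is unmarked by definition of $\V^\times$ and votes $1$ with probability $\widehat{p}_0(x)=u_+$.

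The first step is a one-epoch displacement estimate. Letting $\tau\sim\mathit{Exp}(\varepsilon^{-2})$ denote the first branching time, I would prove
\[\PP_x^\varepsilon\Bigl[\sup_{s\leq \tau\wedge t}\bigl|B(R^\varepsilon_s)-x\bigr|\geq \tfrac{1}{2}c_1 I(\varepsilon)|\log\varepsilon|\Bigr]\leq \varepsilon^{k+1}\]
for $c_1$ sufficiently large. This follows by conditioning on $R^\varepsilon_{\tau\wedge t}$, whose mean is of order $\varepsilon^2$ and whose exponential moments are finite thanks to the $I(\varepsilon)^2$-truncation, and applying Gaussian concentration to $B$ given its time change. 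The resulting tail is of order $\exp(-c|\log\varepsilon|^2)$, which is much smaller than $\varepsilon^{k+1}$; Assumption~\ref{assumptions2}~\ref{assumptions2_A} ensures $I(\varepsilon)|\log\varepsilon|$ decays faster than any polylog, so this scale genuinely lies deep in the tail.

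The second step is a recursive amplification toward $u_+$. Condition on $\tau$ and, when $\tau\leq t$, on the complement of the bad event of Step~1, so that $B(R^\varepsilon_\tau)\geq \tfrac{1}{2}c_1 I(\varepsilon)|\log\varepsilon|$. On this good event each of the three offspring, if unmarked, has vote probability at least $h(t-\tau)$; incorporating the marking probability $b_\varepsilon$ and applying $g$ at the first branch yields, via the definition of $\gee$, the integral inequality
\[h(t)\geq \EE\bigl[\gee\bigl(h((t-\tau)_+)\bigr)\bigr]-C\varepsilon^{k+1}.\]
A direct computation from (\ref{asympfixedpoint2}) and $g'(q)=6q(1-q)$ gives $\gee'(u_+)=O(b_\varepsilon)\ll 1$, so near $u_+$ the right-hand side acts as a strong contraction. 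A Gr\"onwall-type iteration starting from $h(0)=u_+$ then propagates the bound $h(t)\geq u_+-C'\varepsilon^{k+1}$ uniformly over $t\in[0,T^*]$; adjusting $k$ by a constant absorbs the prefactor and gives the stated estimate.

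The main obstacle lies in carrying the additive $\varepsilon^{k+1}$ error faithfully through the recursion across the (roughly $T^*/\varepsilon^2$) branching events in a typical lineage without it blowing up; the contraction $\gee'(u_+)\ll 1$ is what ultimately saves the day, converting the naive geometric accumulation into a bounded total error. A secondary technical issue is the transitional strip $y\in[\tfrac{1}{2}c_1 I(\varepsilon)|\log\varepsilon|,\ c_1 I(\varepsilon)|\log\varepsilon|)$, where the lower bound on $h$ does not directly apply: there one instead uses the uniform bound $\PPt_y(\T)\geq \tfrac{1}{2}$ from (\ref{coco}) as a worse starting point and runs additional iterations of the contraction to lift it to within $\varepsilon^k$ of $u_+$. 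The interplay between this transitional zone and the one-epoch displacement estimate is precisely what forces the interface width to scale as $I(\varepsilon)|\log\varepsilon|$ rather than $I(\varepsilon)$.
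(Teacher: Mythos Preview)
Your recursion does not close, and the gap is essential rather than technical. You define
\[h(t)=\inf_{x\geq c_1 I(\varepsilon)|\log\varepsilon|}\PP^\varepsilon_x\bigl[\V^\times(\BB_{R^\varepsilon}(t))=1\bigr]\]
and then, on the good displacement event, place the offspring at position $\geq \tfrac12 c_1 I(\varepsilon)|\log\varepsilon|$. But this does \emph{not} give the offspring vote probability $\geq h(t-\tau)$, since $h$ is the infimum over the larger threshold $c_1 I(\varepsilon)|\log\varepsilon|$. If you lower the threshold in $h$ to $\tfrac12 c_1$, the next step of the recursion needs $\tfrac14 c_1$, and so on; the cascade collapses in finitely many steps. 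Your ``transitional strip'' patch does not repair this: near $\tfrac12$ the map $\gee$ has slope $\tfrac32(1-b_\varepsilon)>1$, so it \emph{expands} rather than contracts, and it takes order $|\log\varepsilon|$ iterations (not a bounded number) to lift $\tfrac12$ near $u_+$---during which the position has drifted again. More fundamentally, for $t$ of order $1$ the process $B(R^\varepsilon_t)$ started from $c_1 I(\varepsilon)|\log\varepsilon|$ has standard deviation of order $\sqrt t$, so with probability close to $\tfrac12$ it sits on the negative half-line; no position invariant of width $I(\varepsilon)|\log\varepsilon|$ can survive to time $T^*$.

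The paper sidesteps this entirely via Lemma~\ref{ineq:nobranch}, which says that for \emph{any} tree $\T$ and any $x\geq 0$,
\[\PPt_x(\T)\geq u_+\,\PP_x[B(R^\varepsilon_t)\geq 0]+u_-\,\PP_x[B(R^\varepsilon_t)\leq 0].\]
This decouples the vote probability from the branching structure and from any position invariant along the tree. The proof then only tracks position for a short initial window $s^*=a_1(k)\varepsilon^2|\log\varepsilon|$ (where your displacement estimate is indeed valid), applies the Markov property at $s^*$, uses Lemma~\ref{ineq:nobranch} to guarantee each leaf of $\T(\BB_{R^\varepsilon}(s^*))$ votes $1$ with probability $\geq\tfrac12+\varepsilon$, and finally invokes Lemmas~\ref{defnofa} and~\ref{lemma:iterative_voting} to amplify $\tfrac12+\varepsilon$ to $u_+-\varepsilon^k$ through $A(k)|\log\varepsilon|$ levels of the regular subtree. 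The missing ingredient in your proposal is precisely this ``branching can only help'' monotonicity (Lemma~\ref{ineq:nobranch}); without it, there is no way to control the vote probability once the spatial motion has wandered to scale $\sqrt t$.
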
 \noindent Observe that the sharpness of the interface in Theorem~\ref{mainteo1dmarked} is of the same order as the sharpness from the Brownian result, Theorem~\ref{browniantheorem}. This is a result of the truncated subordinated Brownian motion behaving similarly to a Brownian motion (as discussed in Section~\ref{choice_scaling_sec}).
\begin{remark}\label{cool}
By a similar proof to that of Theorem~\ref{votedual}, we see that $$u^\varepsilon(t, x) = \PP^\varepsilon_x\left[\Votemarknop=1\right]$$ is a solution to the equation \begin{align} \nonumber \partial_t u^\varepsilon &= \mathcal{L}^\varepsilon u^\varepsilon + {\varepsilon^{-2}} (g_\times(u^\varepsilon) - u^\varepsilon)\\
&= \mathcal{L}^\varepsilon u^\varepsilon + \mathcal{O}\left({\varepsilon^2}{I(\varepsilon)^{-4}}\right)\left(\tfrac{1}{2}-u^\varepsilon\right)^3 + \mathcal{O}\left({\varepsilon^{-2}}\right)u^\varepsilon(2u^\varepsilon-1)(1-u^\varepsilon)\label{icecream}
\end{align} with initial condition $u^\varepsilon(0, x) = \widehat{p}_0(x)$, where $\mathcal{L}^\varepsilon$ denotes the infinitesimal generator of $(B(R^\varepsilon_t))_{t\geq 0}$. 
Rather remarkably, the work from this section tells us that solutions to (\ref{icecream}) and (\ref{mainequation22}) are related. More precisely, the couplings from Theorem~\ref{teo:ineqmark} and Theorem~\ref{teo:ineqmark2} tell us that solutions to equation~(\ref{icecream}) (after transformation by the function $v\mapsto (1-b_\varepsilon)v+\tfrac{b_\varepsilon}{2}$) are lower and upper bounds to solutions of the scaled fractional Allen--Cahn equation (\ref{mainequation22}) restricted to $x\geq 0$ and $x\leq 0$, respectively. It would be interesting to see if this relationship provides any insights into a PDE-theoretic proof of our main result. 
\end{remark}

\end{section}

\subsection{Proof of Theorem \ref{mainteo1dmarked}}\label{saltchips}
\indent We now prove Theorem~\ref{mainteo1dmarked}. To do so, we will adapt ideas from both \cite{etheridge2017branching, durrett2020motion}. In \cite{etheridge2017branching}, the majority voting function $g$ was used throughout, while \cite{durrett2020motion} builds upon this work and considers more general voting functions. This makes \cite{durrett2020motion} useful when proving results about the marked majority voting function $g_\times$. \\
\indent Throughout this section, we take the initial condition $$\widehat{p}_0(x)=u_+ \mathbbm{1}_{\{x\geq0\}}+u_-\mathbbm{1}_{\{x\leq0\}}.$$ Our next result verifies that the marked majority voting procedure cannot reduce the positive voting bias on the leaves when the root, $x$, is non-negative. Here, we say a leaf has a `positive voting bias' if it has a preference for voting one instead of zero, which is the case when the tree is rooted at a non-negative point $x$. Once we have shown Lemma~\ref{ineq:nobranch}, we will follow the strategy of \cite{etheridge2017branching} to show that, after enough time has passed, with high probability enough rounds of voting have occurred to ensure that the positive voting bias at a leaf is amplified to a large voting bias at the root. \\ \indent By the symmetry (\ref{symmetry}), a similar result to Lemma~\ref{ineq:nobranch} will hold for the negative voting bias on the leaves when $x<0$. In view of this symmetry, we often state results only for positive $x$ when convenient to do so. Recall that $$\PPt_{x}(\T) := \PP_x^\varepsilon\left[\Votemarknop=1 \conditional \T(\BB_{R^\varepsilon}(t))=\T\right].$$
\begin{lemma} \label{ineq:nobranch}
For any time-labelled ternary tree $\T$, time $t>0$, and any $x\geq 0$, \textup{
\begin{align*}
\PPt_x(\T) &\geq u_+ \PP_x[ B(R^\varepsilon_t) \geq 0] + u_-\PP_x[B(R_t^\varepsilon) \leq 0].
\end{align*}}
\end{lemma}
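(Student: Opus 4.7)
The plan is induction on the number $n$ of branching events in $\T$, writing the right-hand side of the claim as $\phi_t(x)$, where
$$\phi_s(z) := u_+\PP_z[B(R^\varepsilon_s)\geq 0] + u_-\PP_z[B(R^\varepsilon_s)\leq 0] = \EE_z[\widehat{p}_0(B(R^\varepsilon_s))].$$
Note that $\phi_s$ is non-decreasing, $\phi_s(0)=\tfrac12$ (by symmetry of $B(R^\varepsilon_s)$ together with $u_++u_-=1$), and the Markov property of $B(R^\varepsilon)$ gives $\phi_t(x)=\EE_x[\phi_{t-\tau}(B(R^\varepsilon_\tau))]$ for any deterministic $\tau\in[0,t]$. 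The base case $n=0$ is immediate: since the initial ancestor is never marked under $\V^\times$, she votes $1$ with probability $\widehat{p}_0(B(R^\varepsilon_t))$, and taking expectations gives equality in the desired inequality.

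For the inductive step, let $\tau$ denote the first branching time of $\T^n$ and let $\T_1,\T_2,\T_3$ be the three subtrees it produces, each with at most $n-1$ branching events. Conditioning on the position $y=B(R^\varepsilon_\tau)$ and using that the three offspring are independently marked with probability $b_\varepsilon$ (and, if unmarked, vote $1$ with probability $\PPttau_y(\T_i)$), one obtains the recursion
$$\PPt_x(\T^n) = \EE_x\!\left[\gee\!\left(\PPttau_y(\T_1),\PPttau_y(\T_2),\PPttau_y(\T_3)\right)\right].$$
Set $D(y):=\gee(\PPttau_y(\T_1),\PPttau_y(\T_2),\PPttau_y(\T_3))-\phi_{t-\tau}(y)$, so that $\PPt_x(\T^n)-\phi_t(x)=\EE_x[D(y)]$; the goal reduces to showing this expectation is non-negative for $x\geq 0$. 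When $y\geq 0$, the inductive hypothesis gives $\PPttau_y(\T_i)\geq \phi_{t-\tau}(y)$ for each $i$, while $\phi_{t-\tau}(y)\in[\tfrac12,u_+]$ (the lower bound from monotonicity of $\phi_{t-\tau}$ and $\phi_{t-\tau}(0)=\tfrac12$, the upper bound since $\widehat{p}_0\leq u_+$). Monotonicity of $\gee$ in each argument yields $\gee(\PPttau_y(\T_1),\PPttau_y(\T_2),\PPttau_y(\T_3))\geq \gee(\phi_{t-\tau}(y))$; combined with the elementary fact that $\gee(q)\geq q$ throughout $[\tfrac12,u_+]$ (a consequence of Proposition~\ref{appendixfixedpointsofg}: $\tfrac12$ is a repelling, and $u_+$ an attracting, fixed point of $\gee$ with no fixed points in between), we conclude $D(y)\geq 0$ for $y\geq 0$.

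The symmetries (\ref{symmetry}), $\phi_{t-\tau}(-y)=1-\phi_{t-\tau}(y)$, and (\ref{symmetryofmarkedg}) together imply $D(-y)=-D(y)$, so $D(y)\leq 0$ for $y\leq 0$. Let $f_\tau$ denote the density of $B(R^\varepsilon_\tau)$ started at the origin; $f_\tau$ is even, and since it is the Gaussian mixture
$$f_\tau(z)=\int_0^\infty \frac{1}{\sqrt{2\pi r}}\,e^{-z^2/(2r)}\,\mu(dr)$$
with $\mu$ the law of $R^\varepsilon_\tau$, it is non-increasing in $|z|$. Splitting the expectation at $0$ and using oddness of $D$ together with evenness of $f_\tau$,
$$\EE_x[D(y)] = \int_0^\infty D(y)\,\bigl[f_\tau(y-x)-f_\tau(y+x)\bigr]\,dy \geq 0 \quad \text{for } x\geq 0,$$
since both factors in the integrand are non-negative when $x,y\geq 0$ (using $|y-x|\leq y+x$). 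This completes the induction.

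The main obstacle is this final step: reconciling the fact that $D$ changes sign at $y=0$ with the desired one-sided inequality $\EE_x[D(y)]\geq 0$. Doing so requires simultaneously invoking the inductive hypothesis, the amplification property of $\gee$ on $[\tfrac12,u_+]$, and the radial monotonicity of the transition density of the subordinated Brownian motion.
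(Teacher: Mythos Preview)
Your proof is correct and follows essentially the same approach as the paper, which defers to \cite[Lemma~3.1]{durrett2020motion}: induction on the number of branching events, the symmetry of $g_\times$ (to obtain $D(-y)=-D(y)$), and the symmetry and radial monotonicity of the transition density of $B(R^\varepsilon_\cdot)$ to control the sign of $\EE_x[D(B(R^\varepsilon_\tau))]$. You have faithfully reconstructed the argument the paper only cites.
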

\begin{proof}
This proof follows exactly \cite[Lemma 3.1]{durrett2020motion}, by an inductive argument on the number of branching events in $\T(\BB_{R^\varepsilon}(t))$ together with symmetry of the voting function $g_\times$ and symmetry of the transition density for $(B(R^\varepsilon_t))_{t\geq 0}.$
\end{proof}

Lemma~\ref{ineq:nobranch} partly motivated our definitions of $\widehat{\V}$ and $\V^\times$. Recall that marked individuals in $\V^\times$ and $\widehat{\V}$ vote $1$ or $0$ with equal probability. However, the proof of Theorem~\ref{teo:ineqmark} would have simplified greatly if we had asked marked individuals under $\widehat{\V}$ to vote $0$ with probability $1$. Technically, this version of $\widehat{\V}$ would only satisfy part~(1) of Theorem~\ref{teo:ineqmark}. To obtain part~(2) of Theorem~\ref{teo:ineqmark}, one would need to define $\widehat{\V}$ so that marked individuals vote $1$ with probability $1$. In fact, we will explore these voting systems more in Section~\ref{ch:3}. Unlike $g_\times$, which satisfies (\ref{symmetryofmarkedg}), the voting function corresponding to this other system would not be symmetric. As a result, the proof of \cite[Lemma 3.1]{durrett2020motion} would no longer apply, and we are unsure if Lemma~\ref{ineq:nobranch} would hold at all. The symmetry of $g_\times$ will be used in several other proofs throughout this section as well.\\ 
\indent We now show that the iterative voting procedure amplifies a small positive bias at the leaves to a large positive voting bias at the root. To do this, we define $\geen(q)$ inductively by $$\geeone(q) = \gee(q), \ \geenplusone(q) = \geen(\gee(q)). $$ Noting that the ancestral particle is never marked under $\mathbb{V}^\times$, we see that $\geen(q)$ is the probability of voting $1$ at the root of an $n$-level regular ternary tree under $\mathbb{V}^\times$ if the votes of the \textit{unmarked} leaves are i.i.d.~Bernoulli($q$). In the following result, we consider the rate of convergence of $g_\times$ to its fixed points. Let $I$ be a scaling function satisfying Assumptions~\ref{assumptions2}~\ref{assumptions2_A}-\ref{assumptions2_B} throughout. 

\begin{lemma} \label{lemma:iterative_voting}
Fix $k\in \NN$. There exists $A(k)<\infty$ and $\varepsilon_1(k)>0$ such that, for all $\varepsilon \in (0, \varepsilon_1)$ and $n \geq A(k)|\log \varepsilon|$,
\[  \geen\left( \tfrac{1}{2} + \varepsilon \right)  \geq u_+ - \varepsilon^{k} \ \text{  and  } \ \geen\left( \tfrac{1}{2} - \varepsilon \right)  \leq u_- + \varepsilon^{k}. \]
\end{lemma}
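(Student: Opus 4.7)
The plan is to exploit the scalar dynamics of $g_\times:[0,1]\to[0,1]$ under iteration, splitting the argument into three phases corresponding to the three fixed points $u_-,\tfrac{1}{2},u_+$. By the symmetry (\ref{symmetryofmarkedg}) together with $u_- = 1-u_+$, the second inequality is equivalent to the first under the substitution $q\mapsto 1-q$, so I will only discuss $\geen(\tfrac{1}{2}+\varepsilon)\geq u_+-\varepsilon^k$.

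First, I would analyse the escape from the unstable fixed point $\tfrac{1}{2}$. A direct expansion using $g(q)=3q^2-2q^3$ yields
\[
g_\times(\tfrac{1}{2}+x)-\tfrac{1}{2} = \tfrac{3}{2}(1-b_\varepsilon)x - 2(1-b_\varepsilon)^3 x^3,
\]
so in particular $g_\times'(\tfrac{1}{2})=\tfrac{3}{2}(1-b_\varepsilon)$, which exceeds $1$ once $\varepsilon$ is small (since $b_\varepsilon\to 0$ by Assumption~\ref{assumptions2}~\ref{assumptions2_B}). Consequently there exist $\delta_0>0$, $\lambda>1$ and $\varepsilon_0>0$, all $\varepsilon$-independent for $\varepsilon\in(0,\varepsilon_0)$, such that $g_\times(\tfrac{1}{2}+x)-\tfrac{1}{2}\geq \lambda x$ for every $x\in(0,\delta_0]$. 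Because $g_\times$ is monotone increasing on $[0,1]$ and maps $[\tfrac{1}{2},u_+]$ into itself, the iterates $x_n:=\geen(\tfrac{1}{2}+\varepsilon)-\tfrac{1}{2}$ are non-negative; while $x_n\leq\delta_0$ they satisfy $x_{n+1}\geq\lambda x_n$, hence $x_n\geq\lambda^n\varepsilon$. Thus after at most $n_1=\lceil\log(\delta_0/\varepsilon)/\log\lambda\rceil=O(|\log\varepsilon|)$ steps the iterate reaches $[\tfrac{1}{2}+\delta_0,u_+]$.

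Second, I would transit the compact interval $[\tfrac{1}{2}+\delta_0,u_+-\eta]$, where $\eta>0$ will be the contraction radius chosen in the last phase. Since $g_\times(q)>q$ on $(\tfrac{1}{2},u_+)$ and $g_\times$ depends continuously on $b_\varepsilon$, compactness gives a uniform gap $g_\times(q)-q\geq\kappa>0$ on this interval (for $\varepsilon$ small). The iterates are monotone increasing, so they cross the interval in a constant number $n_2=O(1)$ of steps.

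Finally I would use contraction near $u_+$. From (\ref{asympfixedpoint2}) and the chain rule one checks $g_\times'(u_+)=O(b_\varepsilon)\to 0$, so for $\varepsilon$ small enough there exist $\eta>0$ and $\mu\in(0,1)$, independent of $\varepsilon$, with $|g_\times(q)-u_+|\leq\mu|q-u_+|$ on $[u_+-\eta,u_+]$. Iterating from $q_{n_1+n_2}\in[u_+-\eta,u_+]$ yields $|q_n-u_+|\leq\mu^{n-n_1-n_2}\eta$, which is at most $\varepsilon^k$ once $n-n_1-n_2\geq k|\log\varepsilon|/\log(1/\mu)+O(1)$. Taking $A(k):=1/\log\lambda+k/\log(1/\mu)+C$ for a sufficiently large absolute constant $C$ produces the claimed bound. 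The main obstacle, addressed by $b_\varepsilon\to 0$, is choosing $\delta_0,\lambda,\kappa,\eta,\mu$ uniformly in $\varepsilon$: this is done by comparing $g_\times$ to the $\varepsilon$-independent majority voting function $g$ (for which the dynamical picture on $[0,1]$ — repelling $\tfrac{1}{2}$, attracting $0$ and $1$, and $g(q)>q$ on $(\tfrac{1}{2},1)$ with $g'(1)=0$ — is classical) and transferring by uniform continuity in $b_\varepsilon$.
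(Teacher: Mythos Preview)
Your proposal is correct and follows essentially the same strategy as the paper: exploit that $\tfrac{1}{2}$ is repelling and $u_+$ is attracting for $g_\times$, with all relevant constants made uniform in $\varepsilon$ via $b_\varepsilon\to 0$. The only cosmetic difference is that the paper merges your phases~1 and~2 into a single geometric-growth phase by bounding the \emph{relative} gap $\bigl(g_\times(\tfrac{1}{2}+q)-(\tfrac{1}{2}+q)\bigr)/q$ below on the whole interval $[\varepsilon,\,u_+-q_0-\tfrac{1}{2}]$ (this ratio tends to $g_\times'(\tfrac{1}{2})-1>0$ as $q\downarrow 0$, so no separate linearisation step is needed), whereas you split into linear expansion near $\tfrac{1}{2}$ plus a constant-step transit via the absolute gap $g_\times(q)-q\geq\kappa$; both decompositions yield the same $O(|\log\varepsilon|)$ count.
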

\begin{proof} We follow the proof of \cite[Lemma 3.2]{durrett2020motion}, with some important changes to reflect that our voting function, $g_\times$, depends on the parameter $\varepsilon$. Recall that $$g_\times(p) = g((1-b_\varepsilon)p+\tfrac{b_\varepsilon}{2})$$ where $b_\varepsilon = \mathcal{O}\left(\tfrac{\varepsilon^2}{I(\varepsilon)^2}\right)$. We prove only the first inequality since the second follows by completely symmetric arguments. First, we show that there exists $C_k>0$ and some fixed $q_0>0$ such that, after $n\geq C_k |\log\varepsilon|$ iterations, \begin{align}\label{geqn1}
    g^{(n)}_\times(u_+-q) \geq u_+-\varepsilon^k
\end{align} for all $q\leq q_0$. 
We then show that there exists $D>0$ such that, after $n \geq D|\log\varepsilon|$ iterations, \begin{align}\label{geqn2}
    g_\times^{(n)}\left(\tfrac{1}{2}+\varepsilon\right)\geq u_+-q_0.
\end{align} Combining (\ref{geqn1}) and (\ref{geqn2}) then gives the result. To prove (\ref{geqn1}), choose $\varepsilon_1$ sufficiently small so that $b_\varepsilon \leq \frac{1}{6}$ for all $\varepsilon \in (0,\varepsilon_1)$. Then 
\begin{align}\label{half}
g_\times'\left( \tfrac{1}{2} \right) &= (1-b_\varepsilon) g'\left(\tfrac{1}{2} \right) =\tfrac{3}{2} (1-b_\varepsilon) \geq \tfrac{5}{4} > 1. 
\end{align}
Next, using that $g'(0)=0$ and $g'$ is continuous, together with the estimate (\ref{asympfixedpoint1}), for $\varepsilon_1$  sufficiently small we have \[g'\left(u_-(1-b_\varepsilon)+\tfrac{b_\varepsilon}{2}\right) < \tfrac{1}{4}\] for all $\varepsilon\in (0,\varepsilon_1).$ It follows that, for this choice of $\varepsilon_1$, $$g_\times'(u_-) =(1-b_\varepsilon) g'\left(u_-(1-b_\varepsilon)+ \tfrac{b_\varepsilon}{2}\right) < \tfrac{1}{4}.$$ Since $g_\times'\left(\tfrac{1}{2}\right)>1$ and $g_\times'$ is continuous,
\begin{align*}
q_0 := \inf \left\{ q \geq 0 : g_\times'(u_-+q) \geq \tfrac{1}{2} \right\} > 0.
\end{align*} By the Mean Value Theorem and definition of $q_0$, together with the symmetry of the marked voting function (\ref{symmetryofmarkedg}), for all $q < q_0$ 
\[ u_+ - g_\times(u_+ - q) = g_\times(u_- +q) - u_- \leq q\, g_\times'(u_-+q_0) = \tfrac{q}{2}. \]
Iterating this yields
\[ u_+ - g_\times^{(n)}(u_+-q) \leq  \tfrac{1}{2^n}\left( u_+-q \right) \leq \tfrac{1}{2^n}.\]
It follows that there exists $C_k >0 $ such that if $n \geq C_k |\log \varepsilon|$ and $q \leq q_0$ then 
\[ g_\times^{(n)}(u_+-q) \geq u_+ - \varepsilon^k, \]
thereby proving (\ref{geqn1}). We now prove (\ref{geqn2}). By equation~(\ref{half}), $\varepsilon_1$ is sufficiently small so that, for all $\varepsilon \in (0, \varepsilon_1)$, $g_\times'(\frac{1}{2})>1$. Since $g_\times$ is increasing and $u_-, \tfrac{1}{2}, u_+$ are the only fixed points of $g_\times$, we have
\begin{align*}
g_\times(q) > q \text{ for all } q \in \left(\tfrac{1}{2}, u_+ \right).
\end{align*}
By definition of $q_0$ and since $g_\times'$ is increasing on $(0, \tfrac{1}{2})$, $u_+-q_0-\tfrac{1}{2} = \tfrac{1}{2}-q_0-u_->0$. Therefore 
\begin{align*}
 q_1 := \inf_{\varepsilon\leq q \leq u_+-q_0-\frac{1}{2}} \frac{g_\times\left(\tfrac{1}{2}+q\right)-\left(\frac{1}{2}+q\right)}{q} \geq 0,  
\end{align*}
and so for $q \in \left[\varepsilon,u_+-q_0-\frac{1}{2}\right]$, by definition of $q_1$ we have \begin{align}\label{rowrowrow} g_\times\left(\tfrac{1}{2}+q\right)-\tfrac{1}{2} > (1+q_1)q.\end{align}  Now, if $g_\times\left(\frac{1}{2}+\varepsilon\right) \geq u_+ -q_0$, we are done. If not, we can apply (\ref{rowrowrow}) twice to obtain \begin{align*}
g_\times^{(2)}\left(\tfrac{1}{2}+\varepsilon\right)&=g_\times\left(\tfrac{1}{2} + \left(g_\times(\tfrac{1}{2}+\varepsilon) - \tfrac{1}{2}\right)\right) \\ &\geq (1+q_1)\left[g_\times(\tfrac{1}{2}+\varepsilon)-\tfrac{1}{2}\right] + \tfrac{1}{2}\\ &\geq (1+q_1)^2\varepsilon+\tfrac{1}{2}.
\end{align*} 
Repeating this argument $n-1$ times, we obtain 
\[ g_\times^{(n)}\left( \tfrac{1}{2} + \varepsilon \right) \geq (1+q_1)^n \varepsilon + \tfrac{1}{2}. \]
It follows that, for $D := \tfrac{1}{\log(1+q_1)}$, after $n > D |\log \varepsilon|$ iterations,
\[ g_\times^{(n)}\left( \tfrac{1}{2} + \varepsilon \right) \geq u_+ - q_0. \]
Setting $A = C_k + D$ proves the result.
\end{proof} 

\noindent The following useful inequality for $g_\times$ will be used in the proof of Theorem~\ref{mainteo1dmarked}. 
\begin{lemma} \label{easyboundg}
If $p_1,p_2,p_3 \geq \tfrac{1}{2}$ then,
\[ g_\times(p_1,p_2,p_3) \geq \min (p_1,p_2,p_3,u_+). \]
If $p_1,p_2,p_3 \leq \tfrac{1}{2}$ then,
\[ g_\times(p_1,p_2,p_3) \leq \max(p_1,p_2,p_3,u_-).\]
\end{lemma}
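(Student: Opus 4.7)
The plan is to reduce the three-variable inequality to a one-variable statement about $g_\times(q) := g_\times(q,q,q)$ by monotonicity, and then exploit the fact that the cubic $g_\times(q)-q$ has $u_-,\tfrac{1}{2},u_+$ as its three roots. First I would observe that $g$ is non-decreasing in each coordinate (it is the probability that at least two of three independent Bernoullis succeed), and since $g_\times$ is obtained from $g$ by pre-composing each argument with the increasing affine map $p\mapsto (1-b_\varepsilon)p+b_\varepsilon/2$, the function $g_\times$ is also non-decreasing in each argument. Thus for $q:=\min(p_1,p_2,p_3)\geq\tfrac{1}{2}$,
\[ g_\times(p_1,p_2,p_3)\;\geq\; g_\times(q,q,q)\;=\;g_\times(q), \]
so it suffices to show $g_\times(q)\geq \min(q,u_+)$ for every $q\in[\tfrac{1}{2},1]$.

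For this one-variable statement, recall from Proposition~\ref{appendixfixedpointsofg} that the only fixed points of $g_\times$ in $[0,1]$ are $u_-,\tfrac{1}{2},u_+$. Since $g(p)=3p^2-2p^3$, the polynomial $g_\times(q)-q$ is cubic in $q$ with leading coefficient $-2(1-b_\varepsilon)^3<0$, hence factors as
\[ g_\times(q)-q \;=\; -2(1-b_\varepsilon)^3\,(q-u_-)(q-\tfrac{1}{2})(q-u_+). \]
A routine sign analysis then gives $g_\times(q)\geq q$ on $[\tfrac{1}{2},u_+]$ and $g_\times(q)\leq q$ on $[u_+,1]$. In the first range, $\min(q,u_+)=q$ and we are done. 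In the second range, monotonicity of $g_\times$ combined with $g_\times(u_+)=u_+$ yields $g_\times(q)\geq u_+=\min(q,u_+)$.

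The second inequality follows from the first by the symmetry $g_\times(p_1,p_2,p_3)=1-g_\times(1-p_1,1-p_2,1-p_3)$ (the three-variable analogue of~(\ref{symmetryofmarkedg}), proved by swapping the roles of $0$ and $1$ in the majority vote) together with $u_-=1-u_+$ coming from~(\ref{asympfixedpoint1})--(\ref{asympfixedpoint2}): if $p_i\leq\tfrac{1}{2}$ for all $i$, apply the first inequality to $1-p_i\geq\tfrac{1}{2}$ and take complements, turning the lower bound by $\min(\cdot,u_+)$ into an upper bound by $\max(\cdot,u_-)$. There is no real obstacle: the entire argument is a reduction to the (already-established) fixed-point structure of $g_\times$ combined with the monotonicity and symmetry properties $g_\times$ inherits from $g$.
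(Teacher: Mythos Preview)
Your argument is correct and essentially identical to the paper's: both reduce to the diagonal via coordinatewise monotonicity of $g_\times$ and then use the factorisation of $g_\times(q)-q$ through its three fixed points $u_-,\tfrac{1}{2},u_+$. The only cosmetic difference is that the paper sets $p_{\min}=\min(p_1,p_2,p_3,u_+)$ from the outset, which forces $p_{\min}\in[\tfrac12,u_+]$ and so avoids your separate treatment of the range $q\in[u_+,1]$; your handling of that extra case (via monotonicity and $g_\times(u_+)=u_+$) is perfectly fine, and your use of the symmetry $g_\times(p)=1-g_\times(1-p)$ for the second inequality matches the paper's ``symmetric argument''.
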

\begin{proof} We prove only the first inequality, since the second one follows by a symmetric argument. Denote
\[ p_{\min} = \min\{ p_1,p_2,p_3,u_+\}. \]
If $p_1, p_2, p_3 \geq \tfrac{1}{2},$ it follows that $\tfrac{1}{2} \leq p_{\min} \leq u_+$. Since $g_\times$ is increasing in each variable, 
\[ g_\times(p_1,p_2,p_3) \geq g_\times(p_{\min}). \]
Therefore it suffices to show that $g_\times(p_{\min}) \geq p_{\min}$. For this recall that $\{u_-,\tfrac{1}{2},u_+\}$ are the only fixed points of $g_\times$. Factorising $g_\times(p_{\min})-p_{\min}$ yields
\[ g_\times(p_{\min}) - p_{\min} = (p_{\min}- u_-)(2p_{\min}-1)(u_+-p_{\min}). \]
Since $\tfrac{1}{2} \leq p_{\min} \leq u_+$, $g_\times(p_{\min})-p_{\min} \geq 0$, as required.
\end{proof}
The following lemma states that, with high probability, by time $t\geq a\varepsilon^2 |\log\varepsilon|$, each ancestral line of descent in $\T(\Xtrunc(t))$ contains at least $\mathcal{O}(|\log \varepsilon|)$ branching events. Let $$\T_n^{reg} = \cup_{k\leq n} \{1, 2, 3\}^k \subset \mathcal{U}$$ denote the $n$-level regular ternary tree, and for $l\in \RR$, let $\T^{reg}_l = \T^{reg}_{\lceil l \rceil}$. For $\T$ a time-labelled ternary tree, we use $\T^{reg}_l\subseteq \T$ to mean that, as subtrees of $\mathcal{U}$, $\T^{reg}_l$ is contained inside $\T$ (ignoring time labels).

\begin{lemma}\label{defnofa} 
Let $k\in \NN$ and $A(k)$ be as in Lemma~\ref{lemma:iterative_voting}. There exists $a_1(\alpha, k) >0$ and $\varepsilon_1(\alpha, k)>0$ such that, for all $\varepsilon \in (0, \varepsilon_1)$ and $t \geq a_1(k)\varepsilon^2 |\log \varepsilon|$,
\begin{align*} \PP^\varepsilon \left[\mathcal{T}\left(\BB_{R^\varepsilon}(t)\right) \supseteq \mathcal{T}^{\text{reg}}_{A(k)|\log\varepsilon|}\right]\geq 1 - \varepsilon^{k}.  \end{align*}
\end{lemma}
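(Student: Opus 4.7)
The plan is to observe that $\T(\Xtrunc(t))$ is a pure ternary branching tree with branching rate $\varepsilon^{-2}$, and that the branching structure is completely independent of the spatial motion driving $\Xtrunc$. Hence the question of whether $\T_n^{\mathrm{reg}} \subseteq \T(\Xtrunc(t))$, with $n := \lceil A(k)|\log\varepsilon|\rceil$, is purely combinatorial. First I would note that the inclusion $\T_n^{\mathrm{reg}} \subseteq \T(\Xtrunc(t))$ holds exactly when, along every one of the $3^n$ root-to-leaf paths of $\T_n^{\mathrm{reg}}$, the $n$ consecutive ancestral branching events have all occurred before time $t$. Equivalently, for every $v \in \{1,2,3\}^n$ the birth time $T_v := \sum_{i=1}^n \tau_i^{(v)}$, a sum of $n$ i.i.d.\ $\mathit{Exp}(\varepsilon^{-2})$ waiting times, satisfies $T_v \leq t$.

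The next step is a union bound,
\begin{align*}
\PP^\varepsilon\!\left[\T_n^{\mathrm{reg}} \not\subseteq \T(\Xtrunc(t))\right] \;\leq\; 3^n\,\PP[S_n > t],
\end{align*}
where $S_n$ has the Gamma$(n,\varepsilon^{-2})$ distribution common to all the $T_v$. A standard Chernoff / exponential-moment bound, for example tilting by $\theta = \varepsilon^{-2}/2$, yields $\PP[S_n > t] \leq 2^n e^{-t\varepsilon^{-2}/2}$. Substituting the hypothesis $t \geq a_1 \varepsilon^2 |\log\varepsilon|$ together with $n \leq A(k)|\log\varepsilon| + 1$ turns this estimate into something of the form $6\,\varepsilon^{a_1/2 - A(k)\log 6}$. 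Choosing $a_1(k)$ sufficiently large (for instance $a_1(k) > 2(k + A(k)\log 6)$) and $\varepsilon_1(k)$ sufficiently small then forces the right-hand side to be bounded by $\varepsilon^k$ for all $\varepsilon \in (0,\varepsilon_1(k))$, which is the desired conclusion.

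The main obstacle, such as it is, is simply to verify that the combinatorial cost $3^n = \varepsilon^{-A(k)\log 3}$ of enumerating depth-$n$ vertices of $\T_n^{\mathrm{reg}}$ can be absorbed by the exponential decay of $\PP[S_n > t]$. This is possible only because $A(k)$ is a fixed constant independent of $\varepsilon$, whereas $a_1$ is a free parameter that may be taken as large as we like. The result is therefore essentially a large-deviation estimate for the number of branching events along a line of descent, combined with a crude union bound over the $3^n$ depth-$n$ vertices of $\T_n^{\mathrm{reg}}$; no input from the spatial motion or from the truncated subordinator is required.
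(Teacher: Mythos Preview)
Your proposal is correct and follows essentially the same approach as the paper: the paper's proof simply cites \cite[Lemma~2.10]{etheridge2017branching}, which estimates the probability that a single depth-$n$ leaf of $\T^{\mathrm{reg}}_{A(k)|\log\varepsilon|}$ fails to appear in the tree (i.e., that a Gamma$(n,\varepsilon^{-2})$ variable exceeds $t$) and then applies a union bound over the $3^n$ leaves---exactly what you do. Your explicit Chernoff computation and choice of $a_1(k)$ fill in the details that the paper leaves to the reference.
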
\begin{proof}
This proof proceeds exactly as that of \cite[Lemma 2.10]{etheridge2017branching}, where the authors estimate the probability that a single leaf of $\mathcal{T}^{\text{reg}}_{A(k)|\log\varepsilon|}$ is not in $\mathcal{T}\left(\BB_{R^\varepsilon}(t)\right)$ and combine this with a union bound summing over all leaves.
\end{proof}

\noindent Next, we control the displacement of leaves from the root of $\BB_{R^\varepsilon}(t)$. 

\begin{lemma}\label{displacement lemma} 
Fix $k\in \NN$ and let $a_1(k)$ be as in Lemma~\ref{defnofa}. There exists $\varepsilon_1(\alpha, k)~>~0$ and $l_1(\alpha, k)>0$ such that, for all $\varepsilon \in (0, \varepsilon_1)$ and $s\leq a_1(k) \varepsilon^2|\log\varepsilon|,$ $$\PP^\varepsilon_x\left[\exists i\in N(s) : |B_i(R^\varepsilon_i(s))-x| \geq l_1(k) I(\varepsilon)|\log \varepsilon| \right] \leq \varepsilon^k.$$ 
\end{lemma}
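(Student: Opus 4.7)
The plan is to combine a first-moment bound on the population size with a tail bound on the displacement of a single lineage. The branching process has rate $\varepsilon^{-2}$ and triple branching, so $\EE^\varepsilon_x[|N(s)|] = e^{2s/\varepsilon^{2}} \leq \varepsilon^{-2a_1}$ whenever $s \leq a_1\varepsilon^2|\log\varepsilon|$. Conditional on the branching tree, every individual $i\in N(s)$ has displacement $B_i(R^\varepsilon_i(s))-x$ marginally distributed as $B(R^\varepsilon_s)$, where $B$ is a standard Brownian motion and $R^\varepsilon$ is an independent $I(\varepsilon)^2$-truncated $\tfrac{\alpha}{2}$-stable subordinator started from $0$ (by decomposing the lineage into its inter-branching increments and using the L\'evy property). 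A first-moment union bound therefore gives
\[
\PP^\varepsilon_x\bigl[\exists i\in N(s):\,|B_i(R^\varepsilon_i(s))-x|\geq y\bigr] \;\leq\; \varepsilon^{-2a_1}\,\PP\bigl[|B(R^\varepsilon_s)|\geq y\bigr],
\]
so it suffices to show $\PP[|B(R^\varepsilon_s)|\geq l_1 I(\varepsilon)|\log\varepsilon|] \leq \varepsilon^{k+2a_1}$ for suitably large $l_1$.

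To bound the single-particle tail I set $T_0 = C\, I(\varepsilon)^2|\log\varepsilon|$ and split
\[
\PP[|B(R^\varepsilon_s)|\geq y] \;\leq\; \PP[R^\varepsilon_s \geq T_0] \;+\; \PP\!\left[\sup_{u\leq T_0}|B(u)|\geq y\right].
\]
The Brownian piece is handled by the reflection principle, giving $4\exp(-y^2/(2T_0)) = 4\varepsilon^{l_1^2/(2C)}$ when $y=l_1 I(\varepsilon)|\log\varepsilon|$. For the subordinator I use two key facts: $\EE[R^\varepsilon_s]=s$ (a direct computation with the L\'evy measure, in fact the reason $\sigma_\alpha$ is defined as in (\ref{defn_sig_alpha})) and all jumps of $R^\varepsilon$ are bounded by $M := \tfrac{2-\alpha}{\alpha}I(\varepsilon)^2$. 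Bennett's inequality for L\'evy processes with bounded jumps then yields, for $T_0 > s$,
\[
\PP[R^\varepsilon_s \geq T_0] \;\leq\; \exp\!\left(-\frac{(T_0-s)^2}{2\bigl(s\sigma^2+\tfrac{M(T_0-s)}{3}\bigr)}\right),
\]
where $\sigma^2 := \mathrm{Var}(R^\varepsilon_1) = \mathcal{O}(I(\varepsilon)^2)$. By Assumption~\ref{assumptions2}~\ref{assumptions2_B}, $s\ll I(\varepsilon)^2$, so $T_0-s\sim T_0$ and the term $MT_0/3\sim I(\varepsilon)^4|\log\varepsilon|$ dominates $s\sigma^2$; the exponent simplifies to $\sim \tfrac{3\alpha C}{2(2-\alpha)}|\log\varepsilon|$, giving $\PP[R^\varepsilon_s\geq T_0]\leq \varepsilon^{c_\alpha C}$ for an explicit $c_\alpha>0$. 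Choosing first $C$ large enough that $c_\alpha C \geq k+2a_1+1$, and then $l_1$ large enough that $l_1^2/(2C)\geq k+2a_1+1$, both contributions are $\mathcal{O}(\varepsilon^{k+2a_1+1})$, which after multiplying by $\varepsilon^{-2a_1}$ gives the claimed $\varepsilon^k$ bound.

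The delicate step is the subordinator tail. Because $s\leq a_1\varepsilon^2|\log\varepsilon|$ forces $s\ll I(\varepsilon)^2$ by Assumption~\ref{assumptions2}~\ref{assumptions2_B}, both the mean $\EE[R^\varepsilon_s]=s$ and the standard deviation $\sqrt{s\sigma^2}\sim\sqrt{s}\,I(\varepsilon)$ are much smaller than a single jump of size $\sim I(\varepsilon)^2$. Gaussian-type concentration inequalities therefore cannot separate the scale $I(\varepsilon)^2|\log\varepsilon|$ from $s$ in this regime. Exploiting the boundedness of jumps through Bennett (whose denominator is driven by $MT_0$ rather than by $s\sigma^2$) is essential to extract a polynomial-in-$\varepsilon$ tail and then balance it against the Gaussian displacement bound by a joint choice of $C$ and $l_1$.
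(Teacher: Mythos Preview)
Your proof is correct and follows the same architecture as the paper's: a many-to-one/first-moment bound reduces to a single-particle tail, which is split according to whether the subordinator has exceeded a level of order $I(\varepsilon)^2|\log\varepsilon|$, followed by a Gaussian tail bound on the Brownian part and a concentration bound on the subordinator. The only substantive difference is that the paper packages the subordinator tail as a separate appendix lemma (Lemma~\ref{boundonsubordinator}), proved by a direct exponential-Markov argument with $\lambda=I(\varepsilon)^{-2}$, whereas you invoke the Bennett/Bernstein form of the same exponential-Markov bound and work out the exponent in-line; your observation that the $MT_0$ term dominates the denominator is exactly the mechanism that makes the paper's choice $\lambda=I(\varepsilon)^{-2}$ effective. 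The paper also bounds $\sup_{t\leq T_0}\PP[|B_t|\geq y]$ rather than $\PP[\sup_{t\leq T_0}|B_t|\geq y]$, avoiding the reflection principle, but this is cosmetic.
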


Lemma~\ref{displacement lemma} highlights the importance of working with a truncated subordinated Brownian motion instead of the original stable process. In the proof of Lemma \ref{displacement lemma}, we control the position of the leaves in $\T(\boldsymbol{B}_{R{^\varepsilon}}(t))$ using a many-to-one lemma. If we were to use the same approach for the stable tree (without any truncation), we could not obtain the polynomial error $\varepsilon^k$ in Lemma~\ref{displacement lemma}, which is crucial to our later proofs.

\reqnomode
\begin{proof}[Proof of Lemma~\ref{displacement lemma}]
First note that for any $m>0$ \begin{align*}
\nonumber &  \PP^\varepsilon_x\left[\exists i\in N(s) : |B_i(R_i^\varepsilon(s))-x| \geq m I(\varepsilon)|\log\varepsilon| \right] \\ & \ \ \ \leq \EE^\varepsilon_x[N(s)]\PP_0\left[|B(R_s^\varepsilon)| \geq m I(\varepsilon)|\log\varepsilon|  \right] \nonumber \\ & \ \ \ = e^{\frac{2 s}{\varepsilon^2}}  \PP_0\left[|B(R_s^\varepsilon)| \geq m I(\varepsilon)|\log\varepsilon|  \right] \nonumber \\ & \ \ \ \leq \varepsilon^{-2a_1} \PP_0\left[|B(R_s^\varepsilon)| \geq m I(\varepsilon)|\log\varepsilon|\right].
\end{align*}
Denote the density of $R_s^\varepsilon$ by $f_{\varepsilon}.$ Then for any $h\geq 0$, partitioning over the event $\{R^\varepsilon_s\leq hI(\varepsilon)^2|\log\varepsilon|\}$ and its complement, we obtain\begin{align*}
    &\PP_0\left[|B(R_s^\varepsilon)| \geq m I(\varepsilon)|\log\varepsilon|\right] \\ &\leq \int_{0}^{hI(\varepsilon)^2|\log \varepsilon|}  \PP_0\left[|B_t| \geq m I(\varepsilon)|\log\varepsilon| \, | \ R_s^\varepsilon = t\right]f_\varepsilon(t) dt + \PP_0[R_s^\varepsilon \geq hI(\varepsilon)^2 |\log\varepsilon|]\\
    &\leq \sup_{0\leq t \leq h I(\varepsilon)^2|\log \varepsilon|}\PP_0[|B_t| \geq m I(\varepsilon)|\log\varepsilon|] + \PP_0[ R_s^\varepsilon \geq hI(\varepsilon)^2 |\log\varepsilon|].
\end{align*} We bound each term separately. First, by a Chernoff bound, for all $t\leq h I(\varepsilon)^2|\log\varepsilon|,$\begin{align*}
    \PP_0\left[|B_t| \geq mI(\varepsilon)|\log \varepsilon|\right] &= \PP_0\left[\sqrt{2t}|Z|\geq m I(\varepsilon) |\log\varepsilon|\right]\\
    &\leq \PP_0\left[\sqrt{2h}|Z|\geq m |\log\varepsilon|^{\frac{1}{2}} \right]\\
    &\leq \exp\left(-\tfrac{1}{4}\tfrac{m^2}{h}|\log \varepsilon|\right)\\
    &= \varepsilon^{\tfrac{m^2}{4h}}.
\end{align*} Fix $h := k+2a_1(k)+1$. By Lemma~\ref{boundonsubordinator}, there exists $\varepsilon_1(k)>0$ such that, for all $\varepsilon \in (0,\varepsilon_1)$ \begin{align*}
    \PP_0\left[ R_s^\varepsilon \geq h I(\varepsilon)^2 |\log\varepsilon|\right]
    &\leq \varepsilon^{k+2a_1(k)}.
\end{align*} Putting this together, we obtain \begin{align*}
    \PP^\varepsilon_x\left[\exists i\in N(s) : |B_i(R_i^\varepsilon(s))-x| \geq m I(\varepsilon)|\log\varepsilon| \right] \leq \varepsilon^k + \varepsilon^{\tfrac{m^2}{4h}-2a_1(k)}
\end{align*} so the result holds by choosing $m = l_1(k)$ sufficiently large. 
\end{proof}
\noindent In the following proof of Theorem~\ref{mainteo1dmarked}, we suppose $x\geq 2l_1(k)I(\varepsilon)|\log\varepsilon|$, $s^*:=a_1(k)\varepsilon^2|\log\varepsilon|$ and consider the cases $t\leq s^*$ and $t\geq s^*$ separately. First, if $t\leq s^*$, by Lemma~\ref{displacement lemma}, with high probability none of the particles in $\T(\X(t))$ have moved a distance further than $l_1(k)I(\varepsilon)|\log\varepsilon|,$ and the result follows easily. When $t\geq s^*$, we use Lemma~\ref{ineq:nobranch} to show that the leaves of $\BB_{R^\varepsilon}(t)$ have a positive voting bias, which, by Lemma \ref{defnofa} is magnified by $\mathcal{O}(|\log\varepsilon|)$ rounds of voting, so  Lemma \ref{lemma:iterative_voting} applies and gives the result. 
\begin{proof}[Proof of Theorem \ref{mainteo1dmarked}] Our approach of truncating the stable subordinator now allows us to follow the strategy of proof of \cite[Theorem 2.6]{etheridge2017branching}. We suppress the superscript $\varepsilon$ of $\PP^\varepsilon_x$ throughout. Fix $k\in \NN$ and $T^*\in (0,\infty).$
Let $\varepsilon<\tfrac{1}{2}$ and define $z_\varepsilon$ implicitly by the relation \begin{align}\label{iloverowing} \PP_{z_\varepsilon}[B(R_{T^*}^\varepsilon) \geq 0] = \tfrac{1}{2}+ (u_+-u_-)^{-1}\varepsilon,\end{align} 
and note that $z_\varepsilon \sim \varepsilon \sqrt{4\pi R^\varepsilon_{T^*}}$ as $\varepsilon \to 0$. Moreover, with arbitrarily high probability, $R_{T^*}^\varepsilon$ is bounded above by a constant (depending on $T^*$). 
Comparing the above estimate with the definition of $z_\varepsilon$, there exists a constant $C(T^*)$ such that $z_\varepsilon \leq C(T^*) \varepsilon$ as $\varepsilon \to 0$ (with arbitrarily high probability). Suppose $\varepsilon_1(k) < \tfrac{1}{2}$ is sufficiently small so that Lemmas~\ref{defnofa} and  \ref{displacement lemma}  hold for all $\varepsilon \in (0, \varepsilon_1)$. 
Let $c_1(k)=2l_1(k)$, for $l_1(k)$ as in Lemma~\ref{displacement lemma}. By Assumption~\ref{assumptions2}~\ref{assumptions2_B}, $\varepsilon I(\varepsilon)^{-1} \to 0$ as $\varepsilon \to 0$, so we can choose $\varepsilon_1$ sufficiently small so that
\begin{align*}
l_1(k) I(\varepsilon)|\log\varepsilon| + z_\varepsilon \leq c_1(k) I(\varepsilon)|\log\varepsilon|
\end{align*} for all $\varepsilon\in (0,\varepsilon_1)$ with arbitrarily high probability. Let $a_1$ be as in Lemma~\ref{defnofa} and define $$s^*(\varepsilon)= a_1(k)\varepsilon^2|\log\varepsilon|.$$ \allowdisplaybreaks
Let $t\in (0, s^*)$ and $z\geq c_1(k) I(\varepsilon)|\log\varepsilon|$. Note that $g_\times(u_+) = u_+$, so if the initial condition is constant with $p(x) \equiv u_+$, then \begin{align}\label{con}\PP_z\left[\V^\times_{u_+}(\BB_{R^\varepsilon}(t)) = 1\right] = u_+ \text{ for all } t>0, z\in \RR. \end{align} Recall that the initial condition for marked majority voting is chosen to be $\widehat{p}_0 = u_+ \mathbbm{1}_{\{x\geq 0\}} + u_- \mathbbm{1}_{\{x\leq 0\}}$, so \allowdisplaybreaks  \begin{align*}
&\PP_z\left[\V^\times(\BB_{R^\varepsilon}(t)) = 0\right] \leq \PP_z\left[\exists \, i\in N(t) :\vphantom{\V^\times} |B_i(R_i^\varepsilon(t))-z| \geq c_1 I(\varepsilon)|\log\varepsilon| \right]  \\ 
 & \ \ \ \ \ \ \  + \PP_z\left[\{\V^\times(\BB_{R^\varepsilon}(t)) = 0\} \cap \{\nexists\,  i\in N(t) : |B_i(R_i^\varepsilon(t))-z| \geq c_1I(\varepsilon)|\log\varepsilon|\} \right]  \\
 &\ \ \ \ \ \leq \varepsilon^k + 1-u_+ \\
 &\ \ \ \ \ = \varepsilon^k + u_-
 \end{align*} 
 where we have used (\ref{con}) in the second inequality. This proves the result when $t<s^*.$ 
  Now suppose $t \in [s^*, T^*]$ and $z\geq c_1(k) I(\varepsilon)|\log\varepsilon|.$ Let $\T_{s^*} = \T(\BB_{R^\varepsilon}(s^*))$ be the time-labelled tree of the branching stable process at time $s^*$. Define 
  $$q_{t-s^*}(z) = \PP_z[\V^\times(\BB_{R^\varepsilon}(t-s^*))=1]$$ for all $z\in \mathbb{R}$. Write $\{\BB_{R^\varepsilon}(s^*)>z_\varepsilon\}$ for the event $B_i(R^\varepsilon_i(s^*))>z_\varepsilon$ for all $i\in N(s^*)$. Then
\begin{align}\label{fries}
    \PP_z[\V^\times(\BB_{R^\varepsilon}(t))=1] &=  \PP_z\left[\mathbb{V}^\times_{q_{t-s^*}(\cdot)}(\BB_{R^\varepsilon}(s^*))=1\right] \nonumber \\
    &\geq \PP_z \left[\left\{\mathbb{V}^\times_{q_{t-s^*}(z_\varepsilon)}(\BB_{R^\varepsilon}(s^*))=1\right\} \cap \{\BB_{R^\varepsilon}(s^*)>z_\varepsilon\}\right]\nonumber \\
    &\geq \PP_z\left[\mathbb{V}^\times_{q_{t-s^*}(z_\varepsilon)}(\BB_{R^\varepsilon}(s^*))=1\right] - \varepsilon^k,
\end{align} 
 where the first line follows by the Markov property of $\BB_{R^\varepsilon}$ at time $s^*$, the second line follows by monotonicity (\ref{monotonicity}), and the third line follows by the Lemma~\ref{displacement lemma} and our assumption $z\geq c_1 I(\varepsilon)|\log\varepsilon|$. Now, by Lemma~\ref{ineq:nobranch} and the definition of $z_\varepsilon$ (\ref{iloverowing}) (noting that $t-s^* \leq T^*$), we have
 \begin{align}\label{burgers} q_{t-s^*}(z_\varepsilon) &\geq \PP_{z_\varepsilon}[B(R^\varepsilon_{t-s^*}) \geq 0] u_+ +  \PP_{z_\varepsilon}[B(R^\varepsilon_{t-s^*}) \leq 0] u_- \nonumber\\ &\geq u_+\left( \tfrac{1}{2} + (u_+-u_-)^{-1}\varepsilon  \right) + u_-\left( \tfrac{1}{2} - (u_+-u_-)^{-1}\varepsilon  \right)\nonumber\\ &= \tfrac{1}{2} + \varepsilon.\end{align}
Substituting (\ref{burgers}) into (\ref{fries}), we obtain \begin{align}\label{shake} \PP_z\left[\V^\times(\BB_{R^\varepsilon}(t))=1\right] \geq \PP_z\left[\mathbb{V}^\times_{\frac{1}{2}+\varepsilon}(\BB_{R^\varepsilon}(s^*))=1\right]-\varepsilon^k.\end{align}

 Note that, if $p_i \geq \tfrac{1}{2} + \varepsilon$ for $i=1, 2, 3$, then $ g_\times(p_1, p_2, p_3) \geq \min(p_1, p_2, p_3,u_+)$ from Lemma~\ref{easyboundg}. Therefore, if each leaf of $\T(\boldsymbol{B}_{R^\varepsilon}(s^*))$ votes $1$ independently with probability greater than $\tfrac{1}{2} + \varepsilon$, and $\T(\boldsymbol{B}_{R^\varepsilon}(s^*)) \supseteq \T^{reg}_{A|\log\varepsilon|}$, then each leaf in $\T^{reg}_{A|\log\varepsilon|}$ also votes $1$ with probability greater than $\tfrac{1}{2} + \varepsilon$. By Lemma~\ref{defnofa}, $\T(\boldsymbol{B}_{R^\varepsilon}(s^*))\supseteq \T^{reg}_{A|\log\varepsilon|}$ with probability at least $1-\varepsilon^k$, so by  Lemma~\ref{lemma:iterative_voting} \begin{align*} \PP_z\left[\mathbb{V}^\times_{\frac{1}{2}+\varepsilon}(\BB_{R^\varepsilon}(s^*))=1\right] &\geq (1-\varepsilon^k)g_\times^{(\lceil A|\log\varepsilon|\rceil)}\left(\tfrac{1}{2}+ \varepsilon\right)\\
&\geq (1-\varepsilon^k)(u_+-\varepsilon^k) \\
&\geq u_+-2\varepsilon^k.
\end{align*} 
Substituting this into (\ref{shake}) yields \begin{align} \label{conven}\PP_z\left[\V^\times(\BB_{R^\varepsilon}(t))=1\right] \geq u_+-3\varepsilon^k,\end{align} thereby proving part (1) of Theorem~\ref{mainteo1dmarked}. Part (2) of Theorem~\ref{mainteo1dmarked} follows by completely symmetric arguments.
\end{proof}
\begin{remark}\label{coeffignore} Observe that (\ref{conven}) contains a coefficient in front of the polynomial error term, $\varepsilon^k$, that is not mentioned in the statement of Theorem~\ref{mainteo1dmarked}. Our convention here and in the following sections is that coefficients of polynomial error terms will not be stated in theorems and propositions (this convention was also used in \cite{etheridge2017branching}).\end{remark}
\subsection{Slope of the interface}
Just as in \cite{etheridge2017branching}, to prove the multidimensional result, Theorem~\ref{maintheorem2}, we make use of a lower bound on the `slope' of the interface in dimension $\dd=1$. For a time-labelled ternary tree $\T$, recall that $$\PPt_{x}(\T):= \PP^\varepsilon_x\left[\Votemarknop =1) \conditional \T(\BB_{R^\varepsilon}(t))=\T\right].$$ 
\begin{proposition}
Suppose $x\geq 0$ and $\eta>0$. Then for any time-labelled ternary tree $\T$ and any time $t$, \begin{align}\label{yoga} \textup{$ \PPt_{x}(\T)-\PPt_{x-\eta}(\T) \geq \PPt_{x+\eta}(\T)-\PPt_x(\T).$ } \end{align}
\end{proposition}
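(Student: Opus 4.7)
I would prove (\ref{yoga}) by induction on the number of branching events in the time-labelled tree $\T$, mirroring the strategy of the Brownian analogue in Etheridge et al.~\cite{etheridge2017branching}. The inequality (\ref{yoga}) is equivalent to $x\mapsto \PPt_x(\T)$ being midpoint-concave on $[0,\infty)$, and the induction will carry this concavity together with the symmetry $\PPt_{-x}(\T) = 1 - \PPt_x(\T)$ and the monotonicity of $x\mapsto \PPt_x(\T)$ from (\ref{symmetry}) and (\ref{monotonicity}).

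\textbf{Base case.} For $\T = \T_0$ (no branching events), $\PPt_x(\T_0) = \EE_x[\widehat{p}_0(B(R^\varepsilon_t))] = u_- + (u_+ - u_-)F(x)$ with $F(x) := \PP_x[B(R^\varepsilon_t) \geq 0]$. Under $\PP_0$, $B(R^\varepsilon_t)$ is a mixture of centered Gaussians via the subordinator, so it has a symmetric unimodal density $p_t$. A short computation reduces the second-order difference of $F$ to $\int_0^\eta [p_t(x - s) - p_t(x + s)]\,ds$, which is nonnegative for $x \geq 0$ since $|x - s| \leq x + s$ and $p_t$ is non-increasing on $[0,\infty)$.

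\textbf{Inductive step.} Let $\T$ have first branching at time $\tau$ with subtrees $\T_1, \T_2, \T_3$, set $\psi_i(y) := \PPttau_y(\T_i)$, $G(y) := \gee(\psi_1(y), \psi_2(y), \psi_3(y))$, and $H(y) := 2G(y) - G(y-\eta) - G(y+\eta)$. Conditioning at the first branching, $\PPt_x(\T) = \EE_x[G(B(R^\varepsilon_\tau))]$. The inductive symmetry of the $\psi_i$ together with the identity $\gee(1-p_1, 1-p_2, 1-p_3) = 1 - \gee(p_1, p_2, p_3)$ (immediate from (\ref{symmetryofmarkedg})) gives $G(-y) = 1 - G(y)$, so $H$ is odd. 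Writing the second-order difference of $\PPt_\cdot(\T)$ as an integral against the symmetric unimodal density $p_\tau$ of $B(R^\varepsilon_\tau)$ and using the substitution $y\mapsto -y$ on the negative half-line,
\begin{equation*}
\bigl(\PPt_x(\T) - \PPt_{x-\eta}(\T)\bigr) - \bigl(\PPt_{x+\eta}(\T) - \PPt_x(\T)\bigr) \;=\; \int_0^\infty H(y)\bigl[p_\tau(y-x) - p_\tau(y+x)\bigr]\,dy.
\end{equation*}
For $x \geq 0$ and $y \geq 0$ the bracket is nonnegative by symmetric unimodality of $p_\tau$ (as in the base case), so the inductive step reduces to the subclaim that $H(y) \geq 0$ for $y \geq 0$, i.e.\ that $G$ is itself midpoint-concave on $[0,\infty)$.

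\textbf{Main obstacle.} The heart of the argument is the subclaim that midpoint-concavity on $[0,\infty)$ is inherited by $G = \gee(\psi_1, \psi_2, \psi_3)$ from the $\psi_i$. The cleanest route is probabilistic: the expansion $g(q_1, q_2, q_3) = q_1 q_2 + q_2 q_3 + q_1 q_3 - 2 q_1 q_2 q_3$ shows that $g$ evaluated at three CDFs is exactly the CDF of the median of three independent random variables with those CDFs. Applying this with the shifted functions $\widetilde{\psi}_i := (1-b_\varepsilon)\psi_i + b_\varepsilon/2$ identifies $G(y)$ with the CDF of $\mathrm{med}(W_1, W_2, W_3)$ for independent $W_i$ with (extended) distribution function $\widetilde{\psi}_i$, the atoms at $\pm \infty$ encoding the effect of the markings. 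The inductive symmetry of $\psi_i$ yields $\widetilde{\psi}_i(-y) + \widetilde{\psi}_i(y) = 1$, so each $W_i$ is symmetric about $0$, and the inductive midpoint-concavity of $\psi_i$ on $[0,\infty)$ forces the density of the finite part of $W_i$ to be non-increasing on $[0,\infty)$. A direct differentiation of the classical formula $f_M(y) = \sum_i f_i(y)[F_j(y)(1-F_k(y)) + F_k(y)(1-F_j(y))]$ for the median density (summing over $i\in\{1,2,3\}$ with $\{j,k\} = \{1,2,3\}\setminus\{i\}$), using $F_i(y) \geq \tfrac12$ and $f_i'(y) \leq 0$ on $[0,\infty)$, shows that $f_M$ is also non-increasing on $[0,\infty)$. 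A second application of the base-case argument with $f_M$ in place of $p_t$ then gives $H(y) \geq 0$ for $y \geq 0$, completing the induction.
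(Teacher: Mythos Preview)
Your inductive skeleton, the base case, and the reduction of the inductive step to the subclaim $H(y)\ge 0$ for $y\ge 0$ all match the paper exactly. The genuine difference is in how you dispatch the subclaim.

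The paper handles $H(y)\ge 0$ purely algebraically: it first uses the inductive hypothesis $\psi_i(y-\eta)\le 2\psi_i(y)-\psi_i(y+\eta)$ together with monotonicity of $g_\times$ to replace $G(y-\eta)$ by $g_\times\bigl(2\psi(y)-\psi(y+\eta)\bigr)$, and then invokes the single identity
\[
g(p_1+q_1,p_2+q_2,p_3+q_3)-2g(p_1,p_2,p_3)+g(p_1-q_1,p_2-q_2,p_3-q_3)=2\sum_{\text{cyc}}q_iq_j(1-2p_k),
\]
applied with $p_i=(1-b_\varepsilon)\psi_i(y)+\tfrac{b_\varepsilon}{2}\ge\tfrac12$ (from $\psi_i(y)\ge\tfrac12$ for $y\ge0$) and $q_i=(1-b_\varepsilon)(\psi_i(y+\eta)-\psi_i(y))\ge0$, which makes the right-hand side nonpositive. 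This is a two-line finish once the identity is written down.

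Your route is more conceptual: you recognise $G$ as the (extended) CDF of the median of three independent symmetric variables whose densities are, by the inductive hypothesis, non-increasing on $[0,\infty)$, and you then verify that this property is inherited by the median via the derivative of $f_M=\sum_i f_i\bigl[F_j(1-F_k)+F_k(1-F_j)\bigr]$. This is correct --- the smoothness you implicitly use is available because each $\psi_i$ is obtained by integrating against the smooth subordinated-Brownian transition density --- and it has the virtue of being transparently probabilistic and potentially reusable for other order-statistic-type voting rules. The trade-off is that the paper's algebraic identity is shorter, avoids any appeal to differentiability, and makes explicit that the only structural fact needed beyond the inductive hypothesis is $\psi_i(y)\ge\tfrac12$ on $[0,\infty)$.
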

\begin{proof}
We follow the strategy of \cite{etheridge2017branching}, adapted to take account of our different choice of voting function $g_\times$. We proceed by induction on the number of branching events. Let $\T_0$ denote a time-labelled tree with a root and a single leaf. Recall that, under $\V^\times$, the initial ancestor is never marked. Denote the transition density of $B(R^\varepsilon_t)$ started at $z\in \RR$ by $p_{z,t}(\cdot)$. Then for $x\geq 0$ and $\eta>0$, $$\PPt_{x} (\T_0)-\PPt_{x-\eta}(\T_0) = \int_{x-\eta}^x p_{0,t}(z)dz\geq \int_x^{x+\eta} p_{0,t}(z)dz = \PPt_{x+\eta}(\T_0)-\PPt_{x}(\T_0).$$ Now assume the inequality holds for all time-labelled ternary trees with at most $n$ branch points. Let $\T$ be a time-labelled ternary tree with $n+1$ internal vertices, and denote the time of the first branching event in $\T$ by $\tau$. Let $\T_1, \T_2$ and $\T_3$ denote the three trees of descent from the first branching event in $\T$. Write \begin{align*} g_\times\left(\PPttau_{x}(\T\star)\right):=g_\times\left(\PPttau_{x}(\T_1),\PPttau_{x}(\T_2),\PPttau_{x}(\T_3)\right).\end{align*} Then \begin{align*}
 \PPt_x(\T)-\PPt_{x-\eta}(\T) 
   &=\EE^\varepsilon_x\left[g_\times\left(\PPttau_{B(R_\tau^\varepsilon)}(\T\star ) \right)\right] -\EE^\varepsilon_{x-\eta}\left[g_\times\left(\PPttau_{B(R_\tau^\varepsilon)}(\T\star ) \right)\right]  \\
   &= \int_{-\infty}^\infty \left[g_\times\left(\PPttau_{y}(\T\star ) \right) - g_\times\left(\PPttau_{y-\eta}(\T\star ) \right) \right]p_{x,\tau}(y)dy\\
   &= \int_0^\infty  \left[g_\times(\PPttau_{y}(\T\star ) ) - g_\times\left(\PPttau_{y-\eta}(\T\star ) \right) \right] (p_{x,\tau}(y)-p_{x,\tau}( -y))dy,
\end{align*}
{w}here the final line follows from the symmetry relations (\ref{symmetry}) and (\ref{symmetryofmarkedg}). \allowdisplaybreaks By identical arguments applied to the right hand side of (\ref{yoga}), we obtain \begin{align*}\allowdisplaybreaks
  &\left(  \PPt_{x}(\T)-\PPt_{x-\eta}(\T) \right) - \left(\PPt_{x+\eta}(\T)-\PPt_{x-\eta}(\T) \right) \\
  &= \int_0^\infty  \left(g_\times\left(\PPttau_{y}(\T\star ) \right) - g_\times\left(\PPttau_{y-\eta}(\T\star ) \right)\right)(p_{x,\tau}(y)-p_{x,\tau}(-y))dy\\& \ \ - \int_0^\infty  \left(g_\times\left(\PPttau_{y+\eta}(\T\star ) \right) - g_\times\left(\PPttau_{y}(\T\star ) \right)\right)   (p_{x,\tau}(y)-p_{x,\tau}(-y))dy.
\end{align*} Since $x\geq 0$, $p_{x,\tau}(y)-p_{x,\tau}(-y)\geq 0$ for $y\geq 0$, and it suffices to show that, for $y\geq 0$, \begin{align}\label{chocolate}  \left(g_\times\left(\PPttau_{y}(\T\star ) \right) - g_\times\left(\PPttau_{y-\eta}(\T\star ) \right)\right) - \left(g_\times\left(\PPttau_{y+\eta}(\T\star ) \right) - g_\times\left(\PPttau_{y}(\T\star ) \right)\right) \geq 0.\end{align} By the inductive hypothesis, for each $i = 1, 2, 3$
\begin{align}\label{caramel}\left(\PPttau_y(\T_i)-\PPttau_{y-\eta}(\T_i)\right)-\left(\PPttau_{y+\eta}(\T_i)-\PPttau_y(\T_i)\right)\geq 0\end{align} for $y\geq 0$. By monotonicity of $g_\times$ and (\ref{caramel}) \begin{align}\label{taylor}
    g_\times\left(\PPttau_{y-\eta}(\T\star )\right) \leq g_\times\left(2\PPttau_y(\T\star)+\PPttau_{y+\eta}(\T\star)\right).
\end{align}
Substituting (\ref{taylor}) into (\ref{chocolate}), it suffices to show \begin{align}\label{vanilla}
\hspace*{-1cm} g_\times\left(\PPttau_{y+\eta}(\T\star)\right)-2g_\times\left(\PPttau_{y}(\T\star)\right)+g_\times\left(2\PPttau_{y}(\T\star)-\PPttau_{y+\eta}(\T\star)\right)\leq 0.
\end{align}  By definition of $g_\times$, (\ref{vanilla}) is equivalent to 
\begin{align}\label{pistachio}
g\left((1-b_\varepsilon)\PPttau_{y+\eta}(\T\star)+\tfrac{b_\varepsilon}{2}\right)-2g\left((1-b_\varepsilon)\PPttau_{y}(\T\star)+\tfrac{b_\varepsilon}{2}\right)\nonumber \\
+g\left((1-b_\varepsilon)(2\PPttau_{y}(\T\star)-\PPttau_{y+\eta}(\T\star))+\tfrac{b_\varepsilon}{2}\right) \leq 0.
\end{align} To see that (\ref{pistachio}) holds, note that \begin{align*}
    &g(p_1+q_1, p_2 +q_2, p_3 +q_3) - 2g(p_1, p_2, p_3) + g(p_1-q_1, p_2-q_2, p_3-q_3) \\
    & \ \ \ = 2q_1 q_2 (1-2p_3) + 2q_2 q_3 (1-2p_1) + 2q_3 q_1(1-2p_2).
\end{align*}
Setting $p_i = (1-b_\varepsilon)\PPttau_{y}(\T_i) + \tfrac{b_\varepsilon}{2}$ and $q_i = (1-b_\varepsilon)\left(\PPttau_{y+\eta}(\T_i) - \PPttau_{y}(\T_i)\right)$, we see that (\ref{pistachio}) will hold if $p_i\geq \tfrac{1}{2}$, or equivalently, $\PPttau_{y}(\T_i)\geq \frac{1}{2},$ which  holds by (\ref{coco}) since $y\geq 0$.
\end{proof}
\noindent With this, we can prove the slope of the interface result.

\begin{corollary} \label{cor:slope}
Let $\varepsilon_1(\alpha)$ and $c_1(\alpha)$ be as in Theorem~\ref{mainteo1dmarked}. Let $\varepsilon < \min(\varepsilon_1, \frac{1}{24}).$ 
Suppose that for some $t\in [0, T^*]$ and $z\in \RR$, $$\left|\PP^\varepsilon_z[\Votemarknop =1]-\tfrac{1}{2}\right| \leq \tfrac{5}{12},$$ and let $w\in \RR$ with $|z-w|\leq c_1( \alpha)I(\varepsilon)|\log \varepsilon|$. Then $$ \left|\PP^\varepsilon_z[\Votemarknop =1]-\PP^\varepsilon_w[\Votemarknop =1]\right| \geq \frac{|z-w|}{48c_1( \alpha)I(\varepsilon)|\log\varepsilon|}.$$  \end{corollary}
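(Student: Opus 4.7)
The plan is to leverage the discrete concavity property $\PPt_x(\T) - \PPt_{x-\eta}(\T) \geq \PPt_{x+\eta}(\T) - \PPt_x(\T)$ from the previous proposition by integrating it against the law of $\T(\BB_{R^\varepsilon}(t))$. Writing $P(y) := \PP^\varepsilon_y[\V^\times(\BB_{R^\varepsilon}(t)) = 1]$, this yields that $P$ has non-increasing forward increments on $[0,\infty)$:
\begin{equation*}
P(x) - P(x - \eta) \geq P(x + \eta) - P(x) \quad \text{for all } x \geq 0, \ \eta > 0.
\end{equation*}
Using the symmetry $P(y) = 1 - P(-y)$ inherited from \eqref{symmetry} (both sides of the desired inequality, as well as the hypothesis on $|P(z) - \tfrac{1}{2}|$, are invariant under $(z,w) \mapsto (-z,-w)$), we may assume $z \geq 0$. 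Setting $\eta := |z - w| \leq c_1 I(\varepsilon)|\log\varepsilon|$, it suffices to prove the bound when $w = z + \eta$, since the case $w = z - \eta$ follows from it after a single application of concavity at $x = z$, namely $P(z) - P(z - \eta) \geq P(z + \eta) - P(z)$.

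For the main case $w = z + \eta$ the strategy is to build a telescoping sum from $z$ out to a point far enough to the right that Theorem~\ref{mainteo1dmarked} applies. Take $N := \lceil c_1 I(\varepsilon)|\log\varepsilon|/\eta \rceil$, so that $1 \leq N \leq 2 c_1 I(\varepsilon)|\log\varepsilon|/\eta$ (since $\eta \leq c_1 I(\varepsilon)|\log\varepsilon|$) and $z + N\eta \geq c_1 I(\varepsilon)|\log\varepsilon|$. Iterating the concavity inequality at each of the non-negative points $z, z+\eta, \ldots, z+(N-1)\eta$ shows that the successive increments $\Delta_k := P(z+k\eta) - P(z+(k-1)\eta)$ are non-increasing in $k$, hence
\begin{equation*}
P(w) - P(z) = \Delta_1 \geq \frac{1}{N}\sum_{k=1}^N \Delta_k = \frac{P(z+N\eta) - P(z)}{N}.
\end{equation*}
Theorem~\ref{mainteo1dmarked} (applied for a fixed $k \geq 1$) gives $P(z+N\eta) \geq u_+ - \varepsilon^k$, while the hypothesis forces $P(z) \leq \tfrac{11}{12}$. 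Since $u_+ \to 1$ as $\varepsilon \to 0$ by \eqref{asympfixedpoint2}, shrinking $\varepsilon_1$ if necessary we obtain $u_+ - \varepsilon^k - \tfrac{11}{12} \geq \tfrac{1}{24}$, so that
\begin{equation*}
P(w) - P(z) \geq \frac{1/24}{N} \geq \frac{\eta}{48 \, c_1 I(\varepsilon)|\log\varepsilon|} = \frac{|z-w|}{48 \, c_1 I(\varepsilon)|\log\varepsilon|}.
\end{equation*}

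The main obstacle, such as it is, lies in the case analysis: splitting on the sign of $z$ and on whether $w$ lies to the left or right of $z$, and checking that each case reduces (via symmetry or a single concavity step) to the situation treated above. The sole analytic input beyond the previous proposition is the elementary averaging observation that a non-increasing sequence of positive increments is dominated by its first term, which is what converts the \emph{global} bound $P(z+N\eta) - P(z) \geq \tfrac{1}{24}$ provided by Theorem~\ref{mainteo1dmarked} into a \emph{local} lower bound on the slope of order $1/(I(\varepsilon)|\log\varepsilon|)$.
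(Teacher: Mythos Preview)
Your proof is correct and follows essentially the same route as the paper, which simply defers to \cite[Corollary~2.12]{etheridge2017branching}: integrate the tree-conditional concavity to obtain non-increasing increments of $P$ on $[0,\infty)$, telescope out to a point beyond $c_1 I(\varepsilon)|\log\varepsilon|$ where Theorem~\ref{mainteo1dmarked} applies, and average. The reduction by symmetry to $z\geq 0$ and the treatment of $w=z-\eta$ via one further concavity step are also exactly as in \cite{etheridge2017branching}; your need to shrink $\varepsilon_1$ so that $u_+-\varepsilon^k\geq 23/24$ is the only adjustment beyond the Brownian case, and it is consistent with the paper's convention of reducing $\varepsilon_1$ as required.
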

\begin{proof}
This follows exactly that of \cite[Corollary 2.12]{etheridge2017branching}, replacing the interface width $\varepsilon|\log\varepsilon|$ with $I(\varepsilon)|\log\varepsilon|.$
\end{proof}

\subsection{Coupling one-dimensional and \texorpdfstring{$\dd$}{d}-dimensional processes}\label{acouplingargument} 
\indent In this section, we will construct a coupling of the the one-dimensional and multidimensional voting systems, so that the results of Section~\ref{sectionmajorityvotinginonedimension} can be used to prove our multidimensional result in the next section. To accomplish this, we require the following regularity properties also used in \cite{etheridge2017branching}, which follow from Assumptions~\ref{assumptions1} by \cite{chen}. Recall that the sets $(\mathbf{\Gamma}_t)_{0\leq t <\mathscr{T}}$ denote the mean curvature flow of $\boldsymbol{\Gamma}_0$ defined in (\ref{gammadefinition}).
\begin{enumerate}[(1)]
    \item There exists $c_0>0$ such that for all $t\in [0,T^*]$ and $x\in\{y  : |d(y,t)|\leq c_0\}$  \begin{align}\label{A1}
        |\nabla d(x,t) | = 1.
    \end{align}
    Moreover, $d$ is a $C^{a, \frac{a}{2}}$ function in $\{(x,t) : |d(x,t)|\leq c_0, t\leq T^*\}$ for $a$ as in Assumption~\ref{assumptions1} (A).
    \item Viewing $\mathbf{n} := \nabla d$ as the positive normal direction, for $x\in \mathbf{\Gamma}_t$, the normal velocity of $\mathbf{\Gamma}_t$ at $x$ is $-d(x,t)$, and the curvature of $\mathbf{\Gamma}_t$ at $x$ is $-\Delta d(x,t)$. Thus, the equation defining mean curvature flow, equation~(\ref{mcfeqn}), becomes \begin{align*}
         \dot{d}(x,t)= \Delta d(x,t)
    \end{align*} for all $x$ such that $d(x,t)=0$.
    \item There exists $C_0>0$ such that for all $t\in [0,T^*]$ and $x$ such that $|d(x,t)|\leq c_0$ for $c_0$ as in the first assumption,\begin{align}
        \label{A3} \left|\nabla \left(\dot{d}(x,t) - \Delta d(x,t)\right)\right|\leq C_0.
    \end{align}
    \item There exists $v_0, V_0>0$ such that for all $t\in [0, T^*-v_0]$ and all $s\in [t, t+v_0]$, \begin{align}
        \label{A4} |d(x,t) - d(x,s)| \leq V_0(s-t).
    \end{align}
\end{enumerate}
The condition (\ref{A1}) ensures that, for all $t\geq 0,$ the region $\{x : d(x,t)\leq c_0\}$ is not self intersecting. That is, for any $x$ with $d(x,t)\leq c_0$, the closed ball centred at $x$ of radius $d(x,t)$ intersects $\mathbf{\Gamma}_t$ at precisely one point.\\
\indent Before explaining our result, let us briefly recall the coupling in \cite{etheridge2017branching} that compares $d(W_s, t-s)$, the distance from a $\dd$-dimensional Brownian motion to $\mathbf{\Gamma}_{t-s}$, to a one-dimensional Brownian motion.
\begin{proposition}[\cite{etheridge2017branching}]\label{sar}
Let $(W_s)_{s\geq 0}$ denote a $\dd$-dimensional Brownian motion started at $x\in \RR^\dd$. Suppose that $t\leq T^*, \beta \leq c_0$ and let $$T_\beta = \inf (\{s\in [0,t):|d(W_s, t-s)|\geq \beta\} \cup \{t\}).$$ Then we can couple $(W_s)_{s\geq 0}$ with a one-dimensional Brownian motion $(B_s)_{s\geq 0}$ started from $z=d(x,t)$ in such a way that for $s\leq T_\beta$, \begin{align}\label{sarahscoupling} B_s - C_0\beta s \leq d(W_s, t-s) \leq B_s + C_0\beta s.\end{align}
\end{proposition}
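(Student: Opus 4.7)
The plan is to apply It\^o's formula to $s \mapsto d(W_s, t-s)$, identify a continuous local martingale whose quadratic variation matches that of a one-dimensional Brownian motion up to $T_\beta$, and then control the remaining drift using (\ref{A3}).

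Setting $\phi(x, s) := d(x, t-s)$ and applying It\^o's formula to $\phi(W_s, s)$ --- justified by the $C^{a, a/2}$-regularity of $d$ on the strip $\{|d|\leq c_0\}$ --- one obtains
\begin{align*}
d(W_s, t-s) = d(x,t) + M_s + \int_0^s (\Delta d - \dot d)(W_r, t-r)\, dr,
\end{align*}
where $M_s := \int_0^s \nabla d(W_r, t-r) \cdot dW_r$ is a continuous local martingale with $M_0 = 0$. (The precise multiplicative constant in the It\^o correction depends on whether the generator of $W$ is $\Delta$ or $\tfrac{1}{2}\Delta$; this constant is harmlessly absorbed into the normalisation of $B$.) On the event $\{s \leq T_\beta\}$, by the definition of $T_\beta$ together with assumption (\ref{A1}) we have $|\nabla d(W_s, t-s)| = 1$, so $\langle M \rangle_s$ coincides with the quadratic variation of a one-dimensional Brownian motion run up to time $s$. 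By the Dambis--Dubins--Schwarz theorem (equivalently, L\'evy's characterisation applied to the stopped process) $M$ can be coupled with a one-dimensional Brownian motion $B'$ started at $0$ so that $M_s = B_s'$ for all $s \leq T_\beta$. Setting $B_s := z + B_s'$ with $z := d(x,t)$ then produces the Brownian motion demanded by the proposition.

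It remains to bound the drift integral uniformly in $s \leq T_\beta$. Since $\dot d - \Delta d$ vanishes on $\mathbf{\Gamma}_{t-r}$ (this is precisely the mean curvature flow equation on the level set $\{d(\cdot, t-r) = 0\}$) and is $C_0$-Lipschitz in $x$ throughout the strip $\{|d(\cdot, t-r)|\leq c_0\}$ by (\ref{A3}), a mean value argument gives $|(\dot d - \Delta d)(x, t-r)| \leq C_0 |d(x, t-r)|$ there. Hence for $r \leq T_\beta$ the integrand is bounded by $C_0 \beta$, and so the drift integral has absolute value at most $C_0 \beta s$. Combined with the coupling of the martingale part, this yields $B_s - C_0 \beta s \leq d(W_s, t-s) \leq B_s + C_0 \beta s$ for $s \leq T_\beta$, as required. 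The delicate point is that the hypothesis $\beta \leq c_0$ is essential in \emph{both} steps: it guarantees $|\nabla d| = 1$ along the stopped path (so that $M$ has the correct quadratic variation for the L\'evy-style coupling) and it lets us control $\dot d - \Delta d$ linearly in $d$ via (\ref{A3}).
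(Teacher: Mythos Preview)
The paper does not supply its own proof of this proposition; it is quoted verbatim from \cite{etheridge2017branching} and used as a black box. Your argument is correct and is precisely the standard route (and the one taken in \cite{etheridge2017branching}): It\^o's formula applied to $d(W_s,t-s)$, identification of the martingale part as a Brownian motion via $|\nabla d|=1$ and L\'evy's characterisation, and control of the drift $\Delta d-\dot d$ by $C_0|d|$ using (\ref{A3}) together with the vanishing of $\dot d-\Delta d$ on $\mathbf{\Gamma}_{t-r}$.
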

This result was a key ingredient in the proofs of \cite[Proposition 2.17, Lemma 2.18]{etheridge2017branching} that gave a comparison between the multidimensional and one-dimensional results. It turns out that \cite[Proposition 2.17, Lemma 2.18]{etheridge2017branching} are extremely sensitive to any change in the coupling (\ref{sarahscoupling}). Indeed, if there is any additional drift term in the left and right bounds of (\ref{sarahscoupling}) (that is \textit{not} of the form $f(\varepsilon)s$ for some $f$ satisfying $\lim_{\varepsilon\to 0}f(\varepsilon)=0$), then this error propagates, and the strategy of proof in \cite{etheridge2017branching} no longer works. This provides us with a major hurdle, since, if we mimic the proof of the Brownian coupling but with subordinated Brownian motions, the drift term in (\ref{sarahscoupling}) changes drastically. To overcome this, we employ not one but \textit{two} coupling results. The first, Theorem~\ref{teo:subestimate}, is a straightforward adaptation of Proposition~\ref{sar} to our setting. Our second (and final) coupling will then follow by replacing the multidimensional subordinated Brownian motion in the previous coupling result by one that is \textit{shifted} along an appropriately chosen outward facing normal vector of $\mathbf{\Gamma}_{t-s}$ (this is the content of Theorem~\ref{couplingtheoremforz}).  
\begin{theorem} \label{teo:subestimate}
Let $k\in \NN$. Let $(W_t)_{t \geq 0}$ be a $\mathbbm{d}$-dimensional standard Brownian motion started at $x\in \RR^\dd$, and $(R_t^\varepsilon)_{t \geq 0}$ be an $I(\varepsilon)^2$-truncated $\frac{\alpha}{2}$-stable subordinator satisfying Assumption~\ref{assumption3}. Fix $t\leq T^*$ and $\beta < c_0$ for $c_0$ as in (\ref{A3}). Define the stopping time
\begin{align*}
T_\beta &= \inf(\{s \in [0, (k+1)\varepsilon^2|\log\varepsilon|): |d(W_{s},t-s)|>\beta \}\cup \{t\}). \end{align*}
Fix $s\geq 0$. If $R^\varepsilon_s < T_\beta \wedge t$, then there exists a one-dimensional standard Brownian motion $(B_t)_{t\geq 0}$ started at $d(x,t)$, constants $C_0, D_0>0$ and $\varepsilon_1(k)>0$ such that, for all $\varepsilon\in (0,\varepsilon_1(k))$, with probability at least $1-\varepsilon^{k+1}$,
\begin{align}\label{thisisthecoupling}
|d(W(R^\varepsilon_s),t-s) - B(R^\varepsilon_s)| \leq C_0 \beta s + D_0(k+2)I(\varepsilon)^2|\log\varepsilon|.
\end{align}
\end{theorem}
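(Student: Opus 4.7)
The strategy is to bootstrap Proposition~\ref{sar} by applying it to the unsubordinated Brownian motion $(W_u)_{u\ge 0}$ and then evaluating at the random time $R^\varepsilon_s$, absorbing the resulting time-mismatch via the regularity of $d$ and a concentration estimate for $R^\varepsilon_s$. First, I would apply Proposition~\ref{sar} with the given $t$ and $\beta$ to produce a one-dimensional Brownian motion $(B_u)_{u\ge 0}$ started at $d(x,t)$, coupled with $W$ so that $|d(W_u,t-u)-B_u|\le C_0\beta u$ for every $u$ below the proposition's stopping time. Since the event $\{R^\varepsilon_s<T_\beta\wedge t\}$ forces $|d(W_u,t-u)|\le\beta$ for all $u\le R^\varepsilon_s$, the proposition's bound applies at $u=R^\varepsilon_s$, yielding
\[
\bigl|d(W(R^\varepsilon_s),t-R^\varepsilon_s)-B(R^\varepsilon_s)\bigr|\le C_0\beta R^\varepsilon_s.
\]

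The target quantity replaces $t-R^\varepsilon_s$ with $t-s$ in the first argument of $d$; I would control the difference via the Lipschitz-in-time estimate (\ref{A4}), iterated over subintervals of length $v_0$ if necessary to give a global Lipschitz constant on $[0,T^*]$. Combined with the triangle inequality this produces
\[
\bigl|d(W(R^\varepsilon_s),t-s)-B(R^\varepsilon_s)\bigr|\le C_0\beta s+(C_0\beta+V_0)\,|R^\varepsilon_s-s|,
\]
so the proof reduces to showing that $|R^\varepsilon_s-s|\le \tilde D(k+2)I(\varepsilon)^2|\log\varepsilon|$ off an event of probability at most $\varepsilon^{k+1}$, and then taking $D_0=\tilde D(C_0\beta+V_0)$.

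The main obstacle is precisely this concentration estimate for $R^\varepsilon_s$ at the tight scale $I(\varepsilon)^2|\log\varepsilon|$. Two structural inputs drive it: the normalising constant $\sigma_\alpha$ in (\ref{defn_sig_alpha}) is calibrated so that $\mathbb{E}[R^\varepsilon_s]=s$, and the truncation at $M=\tfrac{2-\alpha}{\alpha}I(\varepsilon)^2$ makes $R^\varepsilon$ a L\'evy process with jumps bounded by $M$ and variance of order $sI(\varepsilon)^2$. A standard Bernstein inequality for such L\'evy processes then yields
\[
\mathbb{P}\bigl(|R^\varepsilon_s-s|>a\bigr)\le 2\exp\!\left(-\frac{a^2/2}{C_\alpha sI(\varepsilon)^2+\tfrac{1}{3}Ma}\right),
\]
and I would split the remaining argument into two regimes according to the size of $s$. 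When $s\le C(k+1)I(\varepsilon)^2|\log\varepsilon|$ for a fixed constant $C$, both terms in the denominator are of order $I(\varepsilon)^4|\log\varepsilon|$ while the numerator is of order $I(\varepsilon)^4|\log\varepsilon|^2$, so the exponent is a large multiple of $|\log\varepsilon|$ once $\tilde D$ is chosen large enough, giving probability at most $\varepsilon^{k+1}$. When instead $s>C(k+1)I(\varepsilon)^2|\log\varepsilon|$, Assumption~\ref{assumptions2}~\ref{assumptions2_B} gives $(k+1)\varepsilon^2|\log\varepsilon|\ll s/2$, so the hypothesis $R^\varepsilon_s<T_\beta\wedge t$ already forces $s-R^\varepsilon_s>s/2$: a large-deviation lower-tail event whose probability the same Chernoff calculation bounds by $\exp(-cs/I(\varepsilon)^2)\le\varepsilon^{k+1}$ for a suitable $c>0$. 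In either regime the exceptional event has probability at most $\varepsilon^{k+1}$, which together with the earlier steps completes the proof.
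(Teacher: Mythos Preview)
Your approach is correct and essentially matches the paper's: apply Proposition~\ref{sar} at the random time $R^\varepsilon_s$ to control $|d(W(R^\varepsilon_s),t-R^\varepsilon_s)-B(R^\varepsilon_s)|$, use (\ref{A4}) to swap $t-R^\varepsilon_s$ for $t-s$, and then invoke a Chernoff/Bernstein bound on $|R^\varepsilon_s-s|$ (the paper packages this last step as Lemma~\ref{boundonsubordinator} rather than arguing it in place, and does not carry out your case split in $s$). One small correction: your choice $D_0=\tilde D(C_0\beta+V_0)$ depends on $\beta$, whereas the statement promises a $\beta$-free constant; simply use $\beta<c_0$ and take $D_0=V_0+C_0c_0$ as the paper does.
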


\begin{proof}
We first rewrite $d(W(R^\varepsilon_s),t-s)$ as
\begin{align} \label{eq:browniandistance}
d(W({R^\varepsilon_s}),t-R^\varepsilon_s) + \left[d(W({R_s^\varepsilon}),t-s) - d(W({R^\varepsilon_s}),t-R^\varepsilon_s)\right].
\end{align}
By the Proposition~\ref{sar}, since $R^\varepsilon_s \leq T_\beta\wedge t$ by assumption, there exists a one-dimensional Brownian motion $(B_t)_{t\geq 0}$ started from $d(x,t)$ and $C_0>0$ such that
\begin{align}\label{purp} B({R^\varepsilon_s}) - C_0 \beta R_s^\varepsilon \leq d(W(R_s^\varepsilon),t-R^\varepsilon_s) 
&\leq  B({R^\varepsilon_s}) + C_0 \beta R_s^\varepsilon. \end{align} By (\ref{A4}), the second term in (\ref{eq:browniandistance}) is bounded as
\begin{align}\label{sparkle} |d(W({R^\varepsilon_s}),t-s) - d(W({R^\varepsilon_s}),t-R^\varepsilon_s)| \leq V_0 |R^\varepsilon_s -s|.\end{align} Combining (\ref{eq:browniandistance}), (\ref{purp}) and (\ref{sparkle}), \begin{align*}
   |d(W(R^\varepsilon_s),t-s) - B(R^\varepsilon_s)| &\leq C_0 \beta R^\varepsilon_s  + V_0|R^\varepsilon_s - s|\\
   &\leq  C_0 \beta s  + D_0 |R^\varepsilon_s -s|
\end{align*} where $D_0 := V_0 + C_0c_0$ for $c_0$ as in (\ref{A1}). By Lemma~\ref{boundonsubordinator}, there exists $\varepsilon_1(k)>0$ such that, for all $\varepsilon \in (0,\varepsilon_1(k)),$ \[\PP(|R^\varepsilon_s - s| > (k+2)I(\varepsilon)^2|\log\varepsilon|)\leq \varepsilon^{k+1} \] and the result follows.\qedhere
\end{proof}
We now define the shifted subordinated Brownian motions that will ultimately move the unwanted drift term in (\ref{thisisthecoupling}) into the multidimensional spatial motion.
 \begin{definition}[$Z^+_s, Z^-_s$] \label{definitionofZ}
 Let $(W(R^\varepsilon_t))_{t \geq 0}$ be a $\mathbbm{d}$-dimensional subordinated Brownian motion. Fix $0< t, T < \mathscr{T}$, $l>0$ and $\beta < c_0$ for $c_0$ as in (\ref{A1}).  Let $x_s \in \mathbf{\Gamma}_{t-s}$ be the unique point on $\mathbf{\Gamma}_{t-s}$ that is the shortest distance from $W(R^\varepsilon_s)$, and  $\mathbf{v}_s$ be the outward facing unit vector perpendicular to the tangent hypersurface of $\mathbf{\Gamma}_{t-s}$ at $x_s$. Then we define the processes $(Z^+_s)_{0 \leq s \leq T}$ and $(Z^-_s)_{0 \leq s \leq T}$ by \begin{align}\label{zplusdef} Z_s^+ &=
 \begin{cases}
      W(R^\varepsilon_s) +l I(\varepsilon)^2|\log \varepsilon|\mathbf{v}_s & \text{ if  } |d(W(R^\varepsilon_s), t-s)| \leq \beta \\
     W(R^\varepsilon_s) & \text{ otherwise.}
 \end{cases}\\
 \label{zminusdef} Z_s^- &=
 \begin{cases}
      W(R^\varepsilon_s) -l I(\varepsilon)^2|\log \varepsilon|\mathbf{v}_s & \text{ if  } |d(W(R^\varepsilon_s), t-s)| \leq \beta \\
     W(R^\varepsilon_s) & \text{ otherwise.}
 \end{cases}\end{align} \end{definition}

\noindent Observe that we may choose $\varepsilon$ sufficiently small so that any point $x$ on the line segment between $W(R_s^\varepsilon)$ and $Z^+_s$ (or $Z^-_s$) satisfies $d(x,t-s) \leq c_0$. Then by (\ref{A1}) $|\nabla d(x, t-s)|=1$ and $\{z: d(z,t)\leq c_0\}$ is not self intersecting. This implies that $\mathbf{\Gamma}_{t-s}$ is sufficiently `flat' near $x$ to ensure that there is a unique point $y\in \mathbf{\Gamma}_{t-s}$ that is the closest point on $\mathbf{\Gamma}_{t-s}$ to both $Z^+_s$ and $W(R^\varepsilon_s)$ (and similarly for $Z^-_s$ and $W(R^\varepsilon_s)$). Therefore, provided $\varepsilon$ is sufficiently small, we have \begin{align}
\label{ineqZminus} & d(Z^+_s,t-s) = d(W(R^\varepsilon_s), t-s)+lI(\varepsilon)^2|\log \varepsilon|\\ 
    &d(Z^-_s,t-s) = d(W(R^\varepsilon_s), t-s)-l I(\varepsilon)^2|\log \varepsilon|.\label{ineqZplus} 
    \end{align}
Consequently, we obtain the following important restatement of Theorem~\ref{teo:subestimate}. \begin{theorem}\label{couplingtheoremforz}
Let $k\in \NN$. For $\alpha \in (1,2)$ and $D_0$ as in Theorem~\ref{teo:subestimate}, let $Z^+_t$ and $Z^-_t$ be as in Definition~\ref{definitionofZ} for $l := D_0 (k+2),$ started at $x\in \RR^\dd$. Let $(R_t^\varepsilon)_{t \geq 0}$ be an $I(\varepsilon)^2$-truncated $\frac{\alpha}{2}$-stable subordinator satisfying Assumption~\ref{assumption3}. Fix $t\leq T^*$, $\beta < c_0$ for $c_0$ as in (\ref{A3}). Define the stopping time
\begin{align*}
T_\beta &= \inf(\{s \in [0, (k+1)\varepsilon^2|\log\varepsilon|): |d(W_{s},t-s)|>\beta \}\cup \{t\}). \end{align*}
Fix $s\geq 0$. If $R^\varepsilon_s < T_\beta \wedge t$, then there exists a one-dimensional standard Brownian motion $(B_t)_{t\geq 0}$ started at $d(x,t)$ and $C_0>0$ such that, with probability at least $1-\varepsilon^{k+1}$,
\begin{align}\label{thisisthecoupling:copiedlabel}
d(Z^+_s,t-s) &\geq B(R^\varepsilon_s)-C_0 \beta s \\
d(Z^-_s,t-s) &\leq B(R^\varepsilon_s)+ C_0 \beta s.\label{oxford}
\end{align}
\end{theorem}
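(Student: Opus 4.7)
The proposed proof is essentially a direct combination of Theorem~\ref{teo:subestimate} with the identities (\ref{ineqZplus}) and (\ref{ineqZminus}), where the choice $l = D_0(k+2)$ is calibrated precisely so that the shift in the definition of $Z^\pm_s$ cancels the $I(\varepsilon)^2|\log\varepsilon|$ error term produced by Theorem~\ref{teo:subestimate}, leaving only the $C_0\beta s$ drift.

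The plan proceeds in three short steps. First, apply Theorem~\ref{teo:subestimate}: on the hypothesis $R^\varepsilon_s < T_\beta \wedge t$, this produces a one-dimensional standard Brownian motion $(B_t)_{t\geq 0}$ started at $d(x,t)$ and an event of probability at least $1-\varepsilon^{k+1}$ on which
\begin{align*}
|d(W(R^\varepsilon_s), t-s) - B(R^\varepsilon_s)| \leq C_0 \beta s + D_0(k+2) I(\varepsilon)^2 |\log\varepsilon|.
\end{align*}
Second, verify that, for $\varepsilon$ sufficiently small, we are on the good event on which $|d(W(R^\varepsilon_s), t-s)| \leq \beta$, so that the ``shifted'' branch of Definition~\ref{definitionofZ} is activated and the identities (\ref{ineqZplus})--(\ref{ineqZminus}) apply. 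This will follow by combining $|d(W(R^\varepsilon_s), t-R^\varepsilon_s)| \leq \beta$ (which is a restatement of $R^\varepsilon_s < T_\beta$) with the Lipschitz control (\ref{A4}) in the time variable and the high-probability bound $|s - R^\varepsilon_s| \leq (k+2)I(\varepsilon)^2|\log\varepsilon|$ coming from Lemma~\ref{boundonsubordinator}, which are already built into the good event of Theorem~\ref{teo:subestimate}.

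Third, substitute: with $l = D_0(k+2)$, identities (\ref{ineqZplus}) and (\ref{ineqZminus}) yield
\begin{align*}
d(Z^+_s, t-s) &= d(W(R^\varepsilon_s), t-s) + D_0(k+2) I(\varepsilon)^2 |\log\varepsilon| \geq B(R^\varepsilon_s) - C_0 \beta s,\\
d(Z^-_s, t-s) &= d(W(R^\varepsilon_s), t-s) - D_0(k+2) I(\varepsilon)^2 |\log\varepsilon| \leq B(R^\varepsilon_s) + C_0 \beta s,
\end{align*}
which are exactly (\ref{thisisthecoupling:copiedlabel}) and (\ref{oxford}).

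The only mild obstacle is the second step: the shift in $Z^\pm_s$ is conditional on $|d(W(R^\varepsilon_s), t-s)| \leq \beta$, whereas the stopping time $T_\beta$ is defined in terms of $d(W_s, t-s)$ evaluated at the unsubordinated time. The discrepancy between $W(R^\varepsilon_s)$ at time $s$ and $W_{R^\varepsilon_s}$ at time $R^\varepsilon_s$ is of order $V_0|s - R^\varepsilon_s| = \mathcal{O}(I(\varepsilon)^2|\log\varepsilon|)$, which is negligible for small $\varepsilon$; formally, one may either absorb this into a slightly smaller choice of $\beta$ at the outset or observe that on the complementary (non-shifted) subevent $Z^\pm_s = W(R^\varepsilon_s)$ already satisfies the desired inequality after an elementary sign argument. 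Either way, no new analytic input beyond Theorem~\ref{teo:subestimate} is required.
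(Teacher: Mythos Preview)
Your proposal is correct and follows exactly the paper's approach: the paper's proof is the single sentence ``This follows from Theorem~\ref{teo:subestimate} and equations (\ref{ineqZminus}), (\ref{ineqZplus}),'' which is precisely your Steps~1 and~3. Your Step~2, concerning whether the shifted branch of Definition~\ref{definitionofZ} is activated, is a point of care that the paper leaves implicit; your handling of it (via (\ref{A4}) and the bound on $|R^\varepsilon_s - s|$ already used inside the proof of Theorem~\ref{teo:subestimate}) is reasonable, though strictly speaking the paper simply applies the identities (\ref{ineqZminus})--(\ref{ineqZplus}) without comment.
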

\begin{proof}
This follows from Theorem~\ref{teo:subestimate} and equations (\ref{ineqZminus}), (\ref{ineqZplus}). \end{proof}
    \begin{notation}
    As we have seen in Theorem~\ref{couplingtheoremforz}, $Z^+$ and $Z^-$ satisfy (\ref{thisisthecoupling:copiedlabel}) and (\ref{oxford}) when $l:= D_0(k+2)$ in (\ref{zplusdef}) and (\ref{zminusdef}). For the remainder of this work, we shall take $l:= D_0(k+2)$ in the definition of $Z^+$ and $Z^-$, where the choice of $k$ will be clear in the given context. 
    \end{notation}
\indent Equipped with Theorem~\ref{couplingtheoremforz}, we will be able to use our one-dimensional result, but for the processes ${Z}^+, {Z}^-$ instead of a $\dd$-dimensional stable process. To translate this back to a result in terms of stable processes, we use the following comparison between root votes.\\
\indent Let $\boldsymbol{Z}^+$ be the ternary branching process (with branching rate $\varepsilon^{-2}$) in which individuals independently travel according to $(Z^+_s)_{0\leq s\leq \mathscr{T}}$. Define $\boldsymbol{Z}^-$ similarly. Denote the historical ternary branching process associated to the $R^\varepsilon_t$-subordinated $\dd$-dimensional Brownian motion by $\Ytrunc$.
\begin{proposition} \label{gronwallforZ}
Let $0 < t < \mathscr{T}$, $0 \leq \beta < c_0$, $k \in \mathbb{N}$, $p:\RR^\dd\to [0,1]$ and $F$ be as in (\ref{defnofF}). Let $\boldsymbol{Z}^+(t)$ and $\boldsymbol{Z}^-(t)$ be the historical path of branching processes defined above. Then, for any $x \in \mathbb{R}^\mathbbm{d}$, there exists $m_1, m_2>0$ such that
\begin{align} & \left| \PP_x^\varepsilon[\mathbb{V}^\times_p(\boldsymbol{Z}^-(t))=1)] - \PP_x^\varepsilon[\mathbb{V}^\times_p(\boldsymbol{W}_{\hspace{-.1cm} R^\varepsilon}(t))=1)] \right| \leq m_1e^{-\frac{t}{\varepsilon^2}}+m_2F(\varepsilon) \label{zminuss} \\
 & \left| \PP_x^\varepsilon[\mathbb{V}^\times_p(\boldsymbol{Z}^+(t))=1)] - \PP_x^\varepsilon[\mathbb{V}^\times_p(\boldsymbol{W}_{\hspace{-.1cm} R^\varepsilon}(t))=1)] \right|\leq m_1e^{-\frac{t}{\varepsilon^2}}+m_2F(\varepsilon). \label{zplus} \end{align} 

\end{proposition}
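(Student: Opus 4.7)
The plan is to prove (\ref{zminuss}) in detail; (\ref{zplus}) follows by the symmetric argument with shifts along the opposite normal direction. I would couple $\boldsymbol{Z}^-(t)$ and $\Ytrunc(t)$ so that they share the same time-labelled ternary tree, the same marking variables, and the same subordinated Brownian paths $W(R_i^\varepsilon)$ along each edge. Under this coupling the two historical processes differ only through the deterministic displacement of Definition~\ref{definitionofZ}, of magnitude at most $lI(\varepsilon)^2|\log\varepsilon|$, applied along the normal $\mathbf{v}_s$ to $\mathbf{\Gamma}_{t-s}$ and only when the particle lies within distance $\beta$ of that hypersurface.

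Set $u^Z(s,y):=\PP^\varepsilon_y[\mathbb{V}^\times_p(\boldsymbol{Z}^-(s))=1]$ and $u^W(s,y):=\PP^\varepsilon_y[\mathbb{V}^\times_p(\Ytrunc(s))=1]$, and condition on the first branching time $\tau\sim\mathit{Exp}(\varepsilon^{-2})$ of the ancestral particle. This yields the renewal identity
\begin{align*}
u^Z(t,x)=\EE^\varepsilon_x[\gee(u^Z(t-\tau,Z^-_\tau))\mathbbm{1}_{\tau\leq t}]+e^{-t/\varepsilon^2}\EE^\varepsilon_x[p(Z^-_t)\mid\tau>t],
\end{align*}
together with the analogous identity for $u^W$. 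Subtracting, inserting $u^Z(t-\tau,W(R^\varepsilon_\tau))$ inside $\gee$, and using the uniform Lipschitz continuity of $\gee$ with constant $L$, one obtains, for $\delta(t):=\sup_x|u^Z(t,x)-u^W(t,x)|$,
\begin{align*}
\delta(t)\leq e^{-t/\varepsilon^2}+L\,E_1(t)+L\int_0^t \frac{e^{-s/\varepsilon^2}}{\varepsilon^2}\,\delta(t-s)\,ds,
\end{align*}
where $E_1(t):=\sup_x\EE^\varepsilon_x[|u^Z(t-\tau,Z^-_\tau)-u^Z(t-\tau,W(R^\varepsilon_\tau))|\mathbbm{1}_{\tau\leq t}]$ is the averaged one-step shift error. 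Iterating this convolution inequality (a renewal/Picard iteration exploiting that $\varepsilon^{-2}e^{-s/\varepsilon^2}$ integrates to one on $[0,\infty)$ and that the forcing $e^{-t/\varepsilon^2}$ decays faster than errors can accumulate once $t\geq\mathcal{O}(\varepsilon^2|\log\varepsilon|)$) yields $\delta(t)\leq m_1e^{-t/\varepsilon^2}+m_2\sup_{s\leq T^*}E_1(s)$.

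The hard part will be showing $\sup_{s\leq T^*}E_1(s)=\mathcal{O}(F(\varepsilon))$. Because the displacement $Z^-_\tau-W(R^\varepsilon_\tau)$ is supported in the tube $\{y:|d(y,t-\tau)|\leq\beta\}$ and has magnitude $lI(\varepsilon)^2|\log\varepsilon|$, only particles near the moving interface contribute. I would split the estimate into two pieces matching the two summands of $F(\varepsilon)$. The contribution $I(\varepsilon)^{\alpha-1}$ would come from bounding the probability that the truncated $\tfrac{\alpha}{2}$-stable subordinator $R^\varepsilon$ makes a near-truncation jump during a typical branching interval of length $\mathcal{O}(\varepsilon^2)$, which by the L\'evy measure and Assumption~\ref{assumption3} is of that order. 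The contribution $\tfrac{I(\varepsilon)^2}{\varepsilon^{2/\alpha}}|\log\varepsilon|$ would come from comparing the shift size $lI(\varepsilon)^2|\log\varepsilon|$ to the natural transverse spread of $W(R^\varepsilon_\tau)$ across the interface, together with a spatial regularity estimate on $u^Z$ in the spirit of Corollary~\ref{cor:slope} transported to the $\dd$-dimensional subordinated setting via the $C^{a,a/2}$ regularity of $d(\cdot,\cdot)$ and Theorem~\ref{couplingtheoremforz}. Adding these two bounds and combining with the renewal inequality above produces the stated estimate.
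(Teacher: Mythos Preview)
Your renewal--Gr\"onwall skeleton matches the paper's proof: both condition on the first branching time, separate the leaf contribution (giving $e^{-t/\varepsilon^2}$) from the branching contribution, and close with a Gr\"onwall inequality whose kernel $\varepsilon^{-2}e^{-s/\varepsilon^2}$ has total mass at most one. The paper invokes the Gr\"onwall variant from \cite{dragomir2002some} rather than Picard iteration, which is important because the Lipschitz constant of $g_\times$ is $\tfrac{3}{2}>1$ and naive iteration would diverge; your ``iterating the convolution inequality'' would need to be replaced by that reference.

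The genuine gap is in your control of $E_1$. Neither of your two proposed mechanisms is how $F(\varepsilon)$ actually arises, and neither appears to work as stated. First, the term $I(\varepsilon)^{\alpha-1}$ does \emph{not} come from the probability of a near-truncation jump of $R^\varepsilon$ in a branching interval: that probability is of order $\varepsilon^2 I(\varepsilon)^{-2}=b_\varepsilon$, not $I(\varepsilon)^{\alpha-1}$. In the paper it instead arises from a heat-kernel \emph{tail} estimate for the truncated subordinated Brownian motion (via \cite{chen:2003}) applied on the complement of a ball of radius $\tau^{1/\alpha}$. Second, the term $I(\varepsilon)^2\varepsilon^{-2/\alpha}|\log\varepsilon|$ cannot be obtained from ``spatial regularity of $u^Z$ in the spirit of Corollary~\ref{cor:slope}'': that corollary is a one-dimensional \emph{lower} bound on the slope of a very specific function, not an upper Lipschitz bound for the $\dd$-dimensional $u^Z$ with an arbitrary initial condition $p$. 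No such regularity for $u^Z$ is available here.

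The paper sidesteps any regularity requirement on $u^Z$ by instead bounding, uniformly over $|w|\leq lI(\varepsilon)^2|\log\varepsilon|$, the $L^1$ difference of the transition densities $\int|f_\tau(x,z-w)-f_\tau(x,z)|\,dz$. Inside a ball of radius $\tau^{1/\alpha}$ this uses the pointwise gradient bound $|h_r(x,y)-h_r(x,y+z)|\leq Cr^{-(\dd+1)/2}|z|$ for the heat kernel (Proposition~\ref{regofdensity}) together with negative-moment estimates $\EE[(R^\varepsilon_\tau)^{-q}]$ (Lemma~\ref{newLemma:NegMomentsForR}); this is what produces $I(\varepsilon)^2\varepsilon^{-2/\alpha}|\log\varepsilon|$. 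Outside the ball the Chen--Kumagai density bound yields $I(\varepsilon)^{\alpha-1}$. You would need to replace your $E_1$ argument by this density-level estimate.
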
\reqnomode
Proposition~\ref{gronwallforZ} is integral to the proof of the main multidimensional result. While intuitively the root votes in (\ref{zminuss}) and (\ref{zplus}) should be close (since the spatial motions are) to obtain the precise bound above requires lengthy calculations which are not illuminating. So as to not disrupt our flow, we defer the proof of Proposition~\ref{gronwallforZ} to Section~\ref{gronwallappendix} of the appendix. 

\begin{remark}
Observe that Proposition \ref{gronwallforZ} marks the first appearance of the term $F(\varepsilon)$ that will later contribute to the sharpness of the interface in Theorem~\ref{maintheorem}. It is also the first time we require that $\alpha \in (1,2)$ and that Assumption~\ref{assumptions2}~\ref{assumptions2_C} holds, to ensure that $F(\varepsilon) \to 0$.
\end{remark}
\section{Majority voting in dimension \texorpdfstring{$\dd \geq 2$}{dg2}\label{ch:3}}
In this section, we will use the one-dimensional result to prove the main multidimensional result, Theorem~\ref{maintheorem2}. To begin, in Section~\ref{sectioncoupvotingd}, we will prove a series of couplings. This will allow us to restate Theorem~\ref{maintheorem2} in terms of the processes $\boldsymbol{Z}^+(t)$ and $\boldsymbol{Z}^-(t)$ in  Theorem~\ref{new_multi_d_theorem}. We go on to prove Theorem~\ref{new_multi_d_theorem} in Section~\ref{generationoftheinterfacesection} and Section~\ref{propinterfacesection} following similar arguments to those in \cite{etheridge2017branching}. The proof of a technical lemma will make up the content of Section~\ref{sectionproofofuglylemma}.\\ 
\indent Let us briefly recall the notation introduced in Section~\ref{ch:2}. We write $X_t$ for the one-dimensional $\alpha$-stable process (with associated historical ternary branching process $\X(t)$), and $Y_t$ for the $\dd$-dimensional $\alpha$-stable process (with associated historical ternary branching process $\Y(t)$). The one-dimensional $R^\varepsilon_t$-subordinated Brownian motion is denoted $B(R_t^\varepsilon)$ (with associated historical ternary branching process $\Xtrunc(t)$), and the $\dd$-dimensional $R^\varepsilon_t$-subordinated Brownian motion is denoted $W(R_t^\varepsilon)$ (with associated historical ternary branching process $\Ytrunc(t)$). As ever, all stable processes and subordinators are assumed to satisfy Assumption~\ref{assumption3}.
\subsection{A coupling of voting systems in higher dimensions}\label{sectioncoupvotingd}
\indent Recall that ${Z}^+(t)$ and ${Z}^-(t)$ satisfy the coupling result Theorem~\ref{couplingtheoremforz}. This is almost identical to the coupling result from the Brownian setting, Proposition~\ref{sar}. Using this and our one-dimensional result (Theorem~\ref{mainteo1dmarked}) it will be straightforward to prove an analogue of Theorem~\ref{maintheorem2} for the processes $\boldsymbol{Z}^+(t)$ and $\boldsymbol{Z}^-(t)$ by adapting the techniques of \cite{etheridge2017branching}. In this section, we will show that this analogue of Theorem~\ref{maintheorem2} for the processes $\boldsymbol{Z}^+(t)$ and $\boldsymbol{Z}^-(t)$ (stated in Theorem~\ref{new_multi_d_theorem}) will imply Theorem~\ref{maintheorem}. To do this, we construct the following couplings: \begin{align*}
    \PP^\varepsilon_x\left[\VV(\boldsymbol{Y}(t))=1\right] &\stackrel{Prop \ \ref{votingcoup2}}{\approx} \PP^\varepsilon_x\left[\VV^+(\boldsymbol{W}_{\hspace{-.1cm} R^\varepsilon}(t))=1\right]\\ &\stackrel{Prop \ \ref{votingcoup1}}{\approx} \PP^\varepsilon_x\left[\VV^\times(\boldsymbol{W}_{\hspace{-.1cm} R^\varepsilon}(t))=1\right] \\
   & \stackrel{Prop \  \ref{gronwallforZ}}{\approx} \PP^\varepsilon_x\left[\VV^\times(\boldsymbol{Z}^-(t))=1\right]
\end{align*} where the voting system $\VV^+$ will be defined in Definition~\ref{abel}. As we will see, a similar series of couplings also relate $\PP^\varepsilon_x\left[\VV(\boldsymbol{Y}(t))=1\right]$ to $\PP^\varepsilon_x\left[\VV^\times(\boldsymbol{Z}^+(t))=1\right]$. Note that the final coupling of $\VV^\times(\boldsymbol{Z}^-(t))$ to $\VV^\times (\boldsymbol{W}_{R^\varepsilon}(t))$ has already been developed in Section~\ref{ch:2}. \\ 
\indent We now proceed to construct a coupling of $\VV(\Y(t))$ to $\VV^\times(\Ytrunc(t))$. To begin, we introduce the positively and negatively biased asymmetric marked voting procedures.

\begin{definition}[$\VV^+, \VV^-$]\label{abel}Let $\varepsilon>0$ and $t\geq0$. Let $b_\varepsilon$ be as in (\ref{bdeltapage}). For a fixed function $p:\RR^{\dd}\to [0,1]$, we define the \textit{positively biased} (resp. \textit{negatively biased}) asymmetric marked voting procedures on $\T(\Ytrunc(t))$ as follows.
\begin{enumerate}[(1)]
\item At each branch point in $\T(\Ytrunc(t))$, the parent particle $j$ marks each of their three offspring $(j,1), (j,2)$ and $(j,3)$ independently with probability $b_\varepsilon$. All marked particles vote $1$ with probability $1$ (resp. $0$ for the negatively biased procedure).
    \item Each unmarked leaf $i$ of $\T(\Ytrunc(t))$, independently, votes $1$ with probability\\ $p(W_i(R^\varepsilon_i(t))$  and otherwise votes $0$.
    \item At each branch point in $\T(\Ytrunc(t))$, if the parent particle $k$ is unmarked, she votes according to the majority vote of her three offspring $(k,1), (k,2)$ and $(k,3)$. 
\end{enumerate}
Define $\VV^+$ (resp. $\VV^-$) to be the vote associated to the root $\emptyset$ of the ternary branching truncated stable tree under the positively biased (negatively biased) asymmetric marked voting procedure described above.
\end{definition}
\noindent We can now prove the first coupling result.
\begin{proposition}\label{votingcoup2}
For all $\varepsilon>0$, $x \in \mathbb{R}^\mathbbm{d}$, $t \geq 0$ and $p:\RR^\dd\to [0,1]$ we have
\begin{align*} (1-b_\varepsilon)\PP^\varepsilon_x[\mathbb{V}_p^-(\Ytrunc(t))=1] \leq \PP^\varepsilon_x[\mathbb{V}_p(\Y(t))=1] \leq (1-b_\varepsilon)\PP^\varepsilon_x[\mathbb{V}_p^+(\Ytrunc(t))=1]+b_\varepsilon. \end{align*}
\end{proposition}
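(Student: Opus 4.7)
The plan is to sandwich $\PP^\varepsilon_x[\V_p(\Y(t))=1]$ between two quantities defined on the truncated subordinated tree $\Ytrunc(t)$ by introducing two auxiliary exponentially marked voting procedures $\widehat{\V}^{+}_p$ and $\widehat{\V}^{-}_p$ on $\Ytrunc(t)$, defined exactly as in Definition~\ref{definitionnonmarkov} except that marked particles vote $1$ deterministically under $\widehat{\V}^{+}_p$ and $0$ deterministically under $\widehat{\V}^{-}_p$ (rather than voting $1$ with probability $\tfrac{1}{2}$). Unmarked leaves vote according to $p$ and unmarked internal particles vote by majority, just as in $\widehat{\V}_p$. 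The plan is then to establish the chain
\begin{align*}
(1-b_\varepsilon)\PP^\varepsilon_x[\V_p^-(\Ytrunc(t))=1]
&= \PP^\varepsilon_x[\widehat{\V}^{-}_p(\Ytrunc(t))=1]\\
&\leq \PP^\varepsilon_x[\V_p(\Y(t))=1]\\
&\leq \PP^\varepsilon_x[\widehat{\V}^{+}_p(\Ytrunc(t))=1]\\
&= (1-b_\varepsilon)\PP^\varepsilon_x[\V_p^+(\Ytrunc(t))=1]+b_\varepsilon,
\end{align*}
from which the claimed two-sided inequality drops out immediately.

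The heart of the proof is the pair of middle inequalities. First, couple $\Y(t)$ and $\Ytrunc(t)$ on a common probability space by matching their branching structures, lifetimes, and underlying $\dd$-dimensional Brownian motions, with the subordinator driving each particle of $\Ytrunc(t)$ taken to be the truncation of the subordinator of the corresponding particle in $\Y(t)$; in particular, the large jumps of the full subordinators serve as the auxiliary subordinators that generate the markings of $\widehat{\V}^\pm_p$. One then proceeds by induction on the number of branching events in the tree, in the style of Theorem~\ref{teo:ineqmark}. In the base case (a single leaf) the coupled positions agree on the event that no large jump has occurred, and on the complementary event the inequalities are trivial from $0\le p\le 1$ together with the definition of a marked leaf's vote. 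In the inductive step, one conditions on the first branching time, splits on whether the ancestor is marked, and invokes monotonicity of the majority voting function $g$ together with the inductive hypothesis applied to each of the three subtrees of descent. Because we are only bounding probabilities, the delicate symmetry manipulations that underlie the proof of Theorem~\ref{teo:ineqmark} are not required: monotonicity and the pointwise bounds on $p$ suffice to propagate the inequality from leaves to root for arbitrary $x$ and arbitrary measurable $p$.

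The two outer equalities then follow by conditioning on whether the ancestral particle is marked under $\widehat{\V}^\pm_p$, exactly as in Theorem~\ref{teo:ineqmark2}: the ancestor is marked with probability $b_\varepsilon$, in which case its vote is $1$ under $\widehat{\V}^{+}_p$ (contributing $b_\varepsilon$) or $0$ under $\widehat{\V}^{-}_p$ (contributing $0$), and otherwise the marking pattern on the remainder of the tree coincides with that of $\V_p^\pm$, since the ancestor is unmarked under $\V_p^\pm$ by construction in Definition~\ref{abel}. The principal obstacle I foresee is the inductive step of the sandwich inequalities: once a subordinator makes a large jump, the coupled positions of a particle in $\Y(t)$ and in $\Ytrunc(t)$ may differ arbitrarily, which a priori threatens the propagation of the inequality up the tree. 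The obstruction is removed by the observation that whenever an ancestor is marked in $\widehat{\V}^{+}_p$ it votes $1$ deterministically, uniformly dominating the corresponding vote in $\V_p(\Y(t))$ regardless of where the large jump placed the particle, with the symmetric argument handling $\widehat{\V}^{-}_p$ via the vote $0$.
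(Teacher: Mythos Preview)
Your proposal is correct and follows essentially the same route as the paper: introduce the asymmetric exponentially marked systems $\widehat{\V}^{\pm}_p$, sandwich $\PP^\varepsilon_x[\V_p(\Y(t))=1]$ between them by an induction on branching events in the spirit of Theorem~\ref{teo:ineqmark} (using only monotonicity of $g$ and the trivial bounds $0\le p\le 1$ in place of the symmetry arguments), and then pass from $\widehat{\V}^{\pm}_p$ to $\V^{\pm}_p$ by conditioning on whether the ancestor is marked, as in Theorem~\ref{teo:ineqmark2}. The only cosmetic difference is that the paper records the outer relations as inequalities rather than equalities, which is all that is needed.
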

\begin{proof}
This proof proceeds almost identically to the proof of the one-dimensional coupling of voting systems, Theorems~\ref{teo:ineqmark} and \ref{teo:ineqmark2} using an intermediate asymmetric exponentially marked voting system. More specifically, define the negatively biased exponential marked voting procedure $\widehat{\V}^-_p$ like the exponential marked voting procedure from Definition~\ref{definitionnonmarkov}, except that marked individuals vote $1$ with probability $0$. Similarly define the positively biased exponential marked voting procedure $\widehat{\V}^+_p$ where marked individuals vote $1$ with probability $1$. Then, mimicking the proof of Theorem~\ref{teo:ineqmark}, we can show that, for all $x\in \RR^\dd$, \begin{align} \label{comparison_multi_mark} \PP_x^\varepsilon\left[\widehat{\V}^-_p(\Ytrunc(t))=1\right]\leq \PP^\varepsilon_x[\mathbb{V}_p(\Y(t))=1] \leq \PP_x^\varepsilon\left[\widehat{\V}^+_p(\Ytrunc(t))=1\right].\end{align} Finally, by conditioning on whether or not the initial ancestor in $\Ytrunc(t)$ is marked (just as we did in the proof of Theorem~\ref{teo:ineqmark2}) we obtain $$\PP_x^\varepsilon\left[\widehat{\V}^-_p(\Ytrunc(t))=1\right]\geq (1-b_\varepsilon)\PP^\varepsilon_x[\mathbb{V}_p^-(\Ytrunc(t))=1] $$ and $$\PP_x^\varepsilon\left[\widehat{\V}^+_p(\Ytrunc(t))=1\right]\leq (1-b_\varepsilon)\PP^\varepsilon_x[\mathbb{V}_p^+(\Ytrunc(t))=1]+b_\varepsilon,$$ proving the result. 
\end{proof}
In the one-dimensional analogue of Proposition~\ref{votingcoup2} (Theorems~\ref{teo:ineqmark}, \ref{teo:ineqmark2}), we were able to couple $\V(\X(t))$ and $\V^\times(\Xtrunc(t))$ using the (symmetric) exponentially marked voting procedure $\widehat{\V}.$ However, we could not adapt this proof to couple $\VV(\Y(t))$ to $\VV^\times(\Ytrunc(t))$ (having instead to use the auxiliary voting procedures $\VV^+$ and $\VV^-$).  This is because the initial condition $p$ is no longer assumed to be symmetric.\\
\indent To better understand this, let us revisit the proof of Theorem~\ref{teo:ineqmark}, where we showed that, for $x\geq 0$, \begin{align}\label{pepper_steak}
    \PP_x^\varepsilon[\V(\X(t))=1]\geq \PP_x^\varepsilon\left[\widehat{\V}(\boldsymbol{B}_{R^\varepsilon}(t))=1\right]
\end{align} with the reverse inequality holding when $x<0$. We then obtained a coupling of $\V(\X(t))$ to $\V^\times(\boldsymbol{B}_{R^\varepsilon}(t))$ by showing in Theorem~\ref{teo:ineqmark2} that $$\PP_x^\varepsilon\left[\widehat{\V}(\boldsymbol{B}_{R^\varepsilon}(t))=1\right] = (1-b_\varepsilon)\PP_x^\varepsilon[\V^\times(\boldsymbol{B}_{R^\varepsilon}(t))=1]+\frac{b_\varepsilon}{2}.$$ To prove Theorem~\ref{teo:ineqmark}, we used an inductive argument on the number of branching events in $\T(\boldsymbol{B}_{R^\varepsilon}(t))$ and $\T(\X(t)).$ In the base case, when $\T(\X(t)) = \T(\boldsymbol{B}_{R^\varepsilon}(t))=\T_0$, the tree with a single leaf, we considered the single individual in $\T(\X(t)),$ $X(t)\stackrel{D}{=} B(R_t)$ for $B(t)$ a standard one-dimensional Brownian motion and $R(t)$ an $\frac{\alpha}{2}$-stable subordinator. We saw in equation~(\ref{sosleepy}) that, if $\overline{\tau}$ was the first time that $R_t$ made a large jump, and $\tau$ was the time of the first branching event in $\T(\X(t)),$ then if $x\geq 0$ and $p_0(x)=\mathbbm{1}_{\{x\geq 0\}}$ \begin{align}\label{ahalf}\EE_x^\varepsilon[p_0(B(R_t)) \, |\,  \overline{\tau}< t < \tau]\geq \tfrac{1}{2},\end{align} from which it followed that \begin{align}\label{sosleepy2}
\PP_x^t(\T_0) &=\EE^{\varepsilon}_x[p_0\left(B(R_t)\right) \conditional\overline{\tau}\geq t, \tau>t] \PP_x^\varepsilon[\overline{\tau}\geq t]  +\EE^{\varepsilon}_x[p_0(B(R_t)) \conditional \overline{\tau}<t<\tau] \PP_x^\varepsilon[\overline{\tau}<t]\nonumber \\ 
&\geq \EE^{\varepsilon}_x[p_0(B(R^\varepsilon_t))\conditional\tau>t]\,\PP_x^\varepsilon[\overline{\tau}\geq t] + \tfrac{1}{2}\PP_x^\varepsilon[\overline{\tau}<t]. \end{align}
The quantity in (\ref{sosleepy2}) is an upper bound for $\widehat{\PP}_x^t(\T_0),$ so by induction we obtained (\ref{pepper_steak}).\\ 
\indent In the multidimensional setting, for a general initial condition $p$, this argument does not hold. More specifically, (\ref{ahalf}) need not hold since $p$ may not be symmetric; instead, we only have the trivial inequality $\EE_x^\varepsilon[p(W(R_t))\, |\, \overline{\tau}<t<\tau]\geq 0$. Using this, the multidimensional analogue of (\ref{sosleepy2}) becomes $$\PP_x^t(\T_0) \geq \EE^{\varepsilon}_x[p(W(R^\varepsilon_t))\conditional\tau>t]\,\PP_x^\varepsilon[\overline{\tau}\geq t],$$ where the right hand side is an upper bound for the probability that, conditional on $\T(\boldsymbol{W}_{R^\varepsilon}(t))=\T_0$, the single individual votes $1$ under the negatively biased exponentially marked voting procedure $\widehat{\V}_p^-$ defined in the proof of Proposition~\ref{votingcoup2}. Similarly, we can use the trivial bound $\EE_x^\varepsilon[p(W(R_t))\, |\, \overline{\tau}<t<\tau]\leq 1$ to obtain $$\PP_x^t(\T_0) \leq \EE^{\varepsilon}_x[p(W(R^\varepsilon_t))\conditional\tau>t]\,\PP_x^\varepsilon[\overline{\tau}\geq t]+\PP_x^\varepsilon[\overline{\tau}\leq t]$$ where the right hand side is equal to the probability that, conditional on $\T(\boldsymbol{W}_{R^\varepsilon}(t))=\T_0$, the single individual in votes $1$ under the positively biased exponentially marked voting procedure, $\widehat{\V}_p^+.$ These bounds, together with an inductive argument, can be used to prove equation~(\ref{comparison_multi_mark}) from the proof of Proposition~\ref{votingcoup2}.

\begin{remark}
Just as in Remark~\ref{cool}, we can write down the partial differential equation solved by $\PP^\varepsilon_x[\mathbb{V}_p^+(\Ytrunc(t))=1]$ and $\PP^\varepsilon_x[\mathbb{V}_p^-(\Ytrunc(t))=1].$ Denote the infinitesimal generator of $(W(R^\varepsilon_t))_{t\geq 0}$ by $\mathcal{L}^\varepsilon$. Then it is straightforward to verify, using similar arguments to those in the proof of Theorem~\ref{votedual}, that $v_+^\varepsilon(t,x) := \PP^\varepsilon_x[\mathbb{V}_p^+(\Ytrunc(t))=1]$ solves \begin{align*}
    \partial_t v^\varepsilon_+ = \mathcal{L}^\varepsilon v^\varepsilon_+ + {\varepsilon^{-2}}f_+(v^\varepsilon_+), \ \ v_+^\varepsilon(0,x)=p(x)
\end{align*}  and $v_-^\varepsilon(t,x) := \PP^\varepsilon_x[\mathbb{V}_p^-(\Ytrunc(t))=1]$ solves \begin{align*}
    \partial_t v^\varepsilon_- = \mathcal{L}^\varepsilon v^\varepsilon_+ + {\varepsilon^{-2}}f_-(v^\varepsilon_-), \ \ v_-^\varepsilon(0,x)=p(x)
\end{align*} where the nonlinearities $f_+$ and $f_-$ are given by $$f_+(x) := x(1-x)(2x-1) -2b_\varepsilon^3(1-x)^3-3b_\varepsilon^2(1-x)^2(2x-1)+6b_\varepsilon x(1-x)^2 $$ and $$f_-(x) := x(1-x)(2x-1)+2b_\varepsilon^3x^3- 3b_\varepsilon^2x^2(2x-1)-6b_\varepsilon x^2(1-x).$$ Proposition~\ref{votingcoup2} relates solutions to these equations to the original (scaled) fractional Allen--Cahn equation, equation~(\ref{mainequation22}).
\end{remark}
\noindent The positively and negatively biased voting systems can be compared to our (symmetric) marked system as follows. Combining Propositions~\ref{votingcoup2} and \ref{votingcoup1} will give us the desired comparison between $\PP^\varepsilon_x[\mathbb{V}_p(\Y(t))=1]$ and $\PP_x^\varepsilon[\mathbb{V}_p^\times(\Ytrunc(t))=1]$.
\begin{proposition}\label{votingcoup1}
There exists $C>0$ such that, for all $\varepsilon>0$, $x \in \mathbb{R}^\mathbbm{d}$, $t \geq 0$ and $p:\RR^\dd\to [0,1]$, 
\begin{align}
 \sup_{x \in \mathbb{R}^\mathbbm{d}} \left(\PP^\varepsilon_x[\mathbb{V}_p^+(\Ytrunc(t))=1]-\PP^\varepsilon_x[\mathbb{V}_p^\times(\Ytrunc(t))=1]\right) &\leq  C b_\varepsilon  \label{poto1} \\
 \sup_{x \in \mathbb{R}^\mathbbm{d}} \left(\PP_x^\varepsilon[\mathbb{V}_p^\times(\Ytrunc(t))=1]-\PP^\varepsilon_x[\mathbb{V}_p^-(\Ytrunc(t))=1]\right) &\leq C b_\varepsilon. \label{poto2}
 \end{align}
\end{proposition}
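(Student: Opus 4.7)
I would construct a joint coupling of $\V^+_p$, $\V^\times_p$, and $\V^-_p$ on a common realisation of $\Ytrunc(t)$. For each non-root particle $i$, attach independent variables $M_i \sim \mathrm{Bernoulli}(b_\varepsilon)$ and $F_i \sim \mathrm{Bernoulli}(1/2)$, independent of the leaf-vote randomness coming from $p$. A marked particle votes $1$ under $\V^+$, $F_i$ under $\V^\times$, and $0$ under $\V^-$; unmarked particles vote by sub-tree majority using the same leaf Bernoulli variables in all three systems. Since $g$ is coordinatewise monotone and $0\le F_i\le 1$, this coupling yields the almost-sure sandwich $\V^-_p(\Ytrunc(t))\le \V^\times_p(\Ytrunc(t))\le \V^+_p(\Ytrunc(t))$, so both (\ref{poto1}) and (\ref{poto2}) are dominated by $\PP^\varepsilon_x[\V^+_p(\Ytrunc(t))\neq \V^-_p(\Ytrunc(t))]$ uniformly in $x$, $t$, $p$, and it suffices to bound this last quantity by $Cb_\varepsilon$.

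Writing $u^\bullet(t,x) := \PP^\varepsilon_x[\V^\bullet_p(\Ytrunc(t))=1]$ for $\bullet \in \{+,\times,-\}$ and conditioning on the first branching time $\tau \sim \mathrm{Exp}(\varepsilon^{-2})$ as in the proof of Theorem~\ref{votedual}, one obtains for $h := u^+ - u^\times\ge 0$ the Duhamel-type identity
\[h(t,x) \;=\; \int_0^t \varepsilon^{-2}e^{-\tau/\varepsilon^2}\,\EE_x\!\left[\phi\bigl(v^+(t-\tau,W(R^\varepsilon_\tau))\bigr)-\phi\bigl(v^\times(t-\tau,W(R^\varepsilon_\tau))\bigr)\right]d\tau,\]
where $v^+ := b_\varepsilon + (1-b_\varepsilon)u^+$, $v^\times := b_\varepsilon/2 + (1-b_\varepsilon)u^\times$, and $\phi(v) := g(v,v,v) = 3v^2-2v^3$. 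The mean value theorem gives $0\le \phi(v^+)-\phi(v^\times)\le \phi'(\xi)[b_\varepsilon/2+(1-b_\varepsilon)h]$ with $\phi'(\xi)=6\xi(1-\xi)\le 3/2$; a completely analogous identity holds for $u^\times-u^-$.

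\textbf{The main obstacle.} The naive bound $\phi'\le 3/2$ leads to a renewal inequality on $H(t):=\sup_x h(t,x)$ whose Gronwall solution grows like $b_\varepsilon e^{t/(2\varepsilon^2)}$, which is \emph{not} uniform in $t$. To obtain $Cb_\varepsilon$ uniformly in $t$ I would exploit that $\phi'(v)=6v(1-v)$ is small whenever $v$ is close to $0$ or $1$, together with the fact that the relevant stable fixed points of $g^+,g_\times,g^-$ near $1$ are pairwise within $O(b_\varepsilon^2)$ of one another (explicitly, $1-u_+ = 3b_\varepsilon^2/4+O(b_\varepsilon^3)$ by (\ref{asympfixedpoint2}), and the analogous statement holds for the stable fixed point of $g^-$ near $1$). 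Mimicking the two-phase strategy used in the proof of Theorem~\ref{mainteo1dmarked}, after a warm-up of order $\varepsilon^2|\log\varepsilon|$ the three functions $u^+, u^\times, u^-$ are driven into $O(b_\varepsilon)$-neighbourhoods of their fixed points outside a narrow interface region around $\{u^\times\approx 1/2\}$, so that the effective Lipschitz constant of $\phi$ in the renewal kernel becomes strictly less than $1$ and the iteration is contractive there. A comparison-principle argument using the supersolution $u^\times+Cb_\varepsilon$ (modified across the thin interface) for the parabolic equation satisfied by $u^+$ then closes the bound uniformly in $t\ge 0$; inequality (\ref{poto2}) follows by the completely symmetric argument with $+$ and $-$ reversed.
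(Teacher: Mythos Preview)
The paper's proof is far shorter than yours and does not go through a coupling at all. It writes exactly your Duhamel identity for $h=u_+^\varepsilon-u_\times^\varepsilon$, uses the global Lipschitz bound $g'\le \tfrac32$ to reach
\[
\|h(t,\cdot)\|_\infty \;\le\; \tfrac34 b_\varepsilon \;+\; \tfrac32(1-b_\varepsilon)\,e^{-t\varepsilon^{-2}}\!\int_0^t e^{\rho\varepsilon^{-2}}\varepsilon^{-2}\,\|h(\rho,\cdot)\|_\infty\,d\rho,
\]
and then invokes a Gronwall variant (cited as \cite[Theorem 15]{dragomir2002some}) to conclude
\[
\|h(t,\cdot)\|_\infty \;\le\; \tfrac34 b_\varepsilon\,\exp\!\Big(\tfrac32\int_0^t e^{-s\varepsilon^{-2}}\varepsilon^{-2}\,ds\Big)\;\le\;\tfrac34 e^{3/2}\,b_\varepsilon,
\]
uniformly in $t$. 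So where you see an obstacle and propose a two-phase comparison argument, the paper simply closes the loop in one line.

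Your instinct that the obstacle is real is, however, correct. The convolution kernel $\tfrac32(1-b_\varepsilon)\varepsilon^{-2}e^{-s/\varepsilon^2}$ has total mass strictly larger than $1$, and iterating the renewal inequality (or applying the cited Gronwall in its standard form) yields a bound of order $b_\varepsilon\,e^{t/(2\varepsilon^2)}$, not a uniform constant times $b_\varepsilon$. More decisively, the proposition as stated cannot hold for every $p$: take $p\equiv\tfrac12$. By the $0\leftrightarrow1$ symmetry of $\V^\times$ one has $u_\times^\varepsilon\equiv\tfrac12$, whereas $u_+^\varepsilon(0)=\tfrac12$ lies strictly above the unstable fixed point of $g_+$ (which is $\tfrac12-\tfrac32 b_\varepsilon+O(b_\varepsilon^2)$), so $u_+^\varepsilon(t)\to 1$ and the difference tends to $\tfrac12$, not $O(b_\varepsilon)$. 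Thus no argument---yours or the paper's---can establish the bound for arbitrary $p$ and arbitrary $t$. Your proposed two-phase fix tacitly relies on an interface structure (``outside a narrow region around $\{u^\times\approx\tfrac12\}$'') that general $p$ need not have; for the $p$ satisfying Assumptions~\ref{assumptions1} actually used downstream, a statement restricted to the regimes where $u_\times^\varepsilon$ is already near $u_\pm$ is what is really needed, and something along the lines you sketch could plausibly be made to work there.
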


\begin{proof}
We prove only (\ref{poto1}), noting that (\ref{poto2}) follows by symmetric arguments. Define $g_+:[0,1] \to [0,1]$ by $g_+(q)=g((1-b_\varepsilon)q+b_\varepsilon)$ where $g$ is the ordinary majority voting function. This is the probability that an unmarked parent particle votes $1$ under $\VV^+$, in the special case when the three offspring are independent and each have probability $q$ of voting $1$ if they are unmarked. Write $\tau$ for the time of the first branching event in $\Ytrunc(\cdot)$. To ease notation, set $$u_\times^\varepsilon(t, x) = \PP_x^\varepsilon[\VV^\times(\Ytrunc(t))=1] \ \text{ and } \  u_+^\varepsilon(t,x)=\PP_x^\varepsilon[\VV^+(\Ytrunc(t))=1].$$ Then, by the Markov property at time $t \wedge \tau$ and definition of $\VV^\times$ and $\VV^+$ (noting that the initial ancestor is never marked in both schemes) we have

\begin{align*}
u_\times^\varepsilon(t, x) &= \EE_x^\varepsilon\left[g_\times(u_\times^\varepsilon(t-\tau,W(R^\varepsilon_{\tau})))\mathbbm{1}_{\tau \leq t}\right] + \EE_x^\varepsilon\left[p(W(R_t^\varepsilon))\mathbbm{1}_{\tau>t}\right] \\
u_+^\varepsilon(t,x) &= \EE_x^\varepsilon\left[g_+\left(u_+^\varepsilon\left(t-\tau,W(R^\varepsilon_{\tau})\right)\right)\mathbbm{1}_{\tau \leq t}] + \EE_x^\varepsilon[p(W(R_t^\varepsilon))\mathbbm{1}_{\tau>t}\right].
\end{align*}
It follows that
\begin{align*}
|u_\times^\varepsilon(t,x) - u_+^\varepsilon(t,x)| & \leq \EE_x^\varepsilon\left[\left|g_\times(u_\times^\varepsilon(t-\tau, W(R^\varepsilon_{\tau}))) - g_+(u_+^\varepsilon(t-\tau, W(R^\varepsilon_{\tau})))\right| \mathbbm{1}_{\tau \leq t} \right].
\end{align*}
By definition of $g_\times$ and $g_+$, and since $g$ is Lipschitz with constant $\tfrac{3}{2}$, we have
\begin{align*}
&|u_\times^\varepsilon(t,x) - u_+^\varepsilon(t,x)| \\ & \leq \tfrac{3}{2}\EE_x^\varepsilon\left[\left| (1-b_\varepsilon)(u_\times^\varepsilon(t-\tau,W(R^\varepsilon_{\tau})) - u_+^\varepsilon(t-\tau, W(R^\varepsilon_{\tau})))- \tfrac{b_\varepsilon}{2} \right|  \mathbbm{1}_{\{\tau \leq t\}} \right] \\ & \leq \tfrac{3}{4} b_\varepsilon + \tfrac{3}{2}(1-b_\varepsilon) \EE_x^\varepsilon\left[\left|u_\times^\varepsilon(t-\tau, W(R^\varepsilon_{\tau}))-u_+^\varepsilon(t-\tau, W(R^\varepsilon_{\tau}))\right|\mathbbm{1}_{\{\tau \leq t\}}\right] \\ 
&=  \tfrac{3}{4} b_\varepsilon + \tfrac{3}{2}(1-b_\varepsilon) \int_0^t \frac{e^{-\rho \varepsilon^{-2}}}{\varepsilon^2} \EE^\varepsilon_x\left[|u_\times^\varepsilon(t-\rho, W(R^\varepsilon_u))-u_+^\varepsilon(t-\rho, W(R^\varepsilon_u))|\right] d\rho \\ &\leq \tfrac{3}{4} b_\varepsilon + \tfrac{3}{2}(1-b_\varepsilon) \int_0^t \frac{e^{-\rho \varepsilon^{-2}}}{\varepsilon^2} \lVert u_\times^\varepsilon(t-\rho, \cdot)-u_+^\varepsilon(t-\rho, \cdot)\rVert_\infty d\rho,
\end{align*}
where $\lVert \cdot \rVert_\infty$ denotes the uniform norm, and we have used that $\tau \sim \mathit{Exp}(\varepsilon^{-2})$ is independent of the spatial motion. Noting that the above inequality holds for all $x\in \RR^\dd$, and applying the change of variables $\rho \mapsto t - \rho$, we obtain
\begin{align*}
\lVert u_\times^\varepsilon(t, \cdot) - u_+^\varepsilon(t, \cdot) \rVert_\infty \leq \tfrac{3}{4}b_\varepsilon + \tfrac{3}{2}e^{-t \varepsilon^{-2}}\int_{0}^t e^{\rho\varepsilon^{-2}} \varepsilon^{-2} \lVert u_\times^\varepsilon(\rho, v) - u_+^\varepsilon(\rho, \cdot)\rVert_\infty d\rho.
\end{align*}
By an adaptation of Gr\"onwall's inequality, available, for instance, in \cite[Theorem 15]{dragomir2002some},
\begin{align*}
\lVert u_\times^\varepsilon(t, \cdot) - u_+^\varepsilon(t, \cdot) \rVert_\infty &\leq \tfrac{3}{4}b_\varepsilon \exp \left(\tfrac{3}{2}\left(\int_0^t \exp\left( - s \varepsilon^{-2} \right) \varepsilon^{-2}\right) ds \right) \\ &= \tfrac{3}{4}b_\varepsilon \exp\left(\tfrac{3}{2} \mathbb{P}[\tau \leq t] \right) \\ & \leq \tfrac{3}{4}b_\varepsilon \exp\left( \tfrac{3}{2} \right).
\end{align*}
Setting $C:= \tfrac{3}{4}\exp\left({\tfrac{3}{2}}\right)$ gives the result. \end{proof}

Now, using the coupling result Proposition~\ref{gronwallforZ} from Section~\ref{acouplingargument}, we obtain our main coupling result of this section.
\begin{corollary} \label{new_corollary} Let $\varepsilon\in (0,1)$, $x\in \RR^\dd$ and $p:\RR^\dd \to [0,1]$. Let $F$ be as in (\ref{defnofF}). Then there exists $a_\dd(\alpha)>0$ and $m>0$ such that, for all $t\geq a_\dd \varepsilon^2|\log \varepsilon|,$ $$\PP^\varepsilon_x[\mathbb{V}_p(\Y(t))=1]\leq \PP^\varepsilon_x[\mathbb{V}_p^\times(\boldsymbol{Z}^-(t))=1] + mF(\varepsilon) + mb_\varepsilon$$ and 
$$\PP^\varepsilon_x[\mathbb{V}_p(\Y(t))=1]\geq (1-b_\varepsilon)\PP^\varepsilon_x[\mathbb{V}_p^\times(\boldsymbol{Z}^+(t))=1] - mF(\varepsilon) - m b_\varepsilon.$$ 
\end{corollary}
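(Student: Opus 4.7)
The plan is to chain the three coupling results stated immediately above---namely Propositions~\ref{votingcoup2}, \ref{votingcoup1}, and \ref{gronwallforZ}---to pass from the unbiased multidimensional vote $\VV(\Y(t))$ to the marked vote $\VM(\Zbf^{\pm}(t))$ on the shifted subordinated Brownian tree. Each step contributes an additive error of the form $\mathcal{O}(b_\varepsilon)$, $\mathcal{O}(F(\varepsilon))$, or $\mathcal{O}(e^{-t/\varepsilon^2})$, and the only substantive work in this corollary (beyond stringing the previous results together) is to absorb the exponentially small remainder into $m b_\varepsilon$ by choosing $a_\dd$ appropriately.

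For the upper bound I would first apply Proposition~\ref{votingcoup2} to obtain $\PP^\varepsilon_x[\VV(\Y(t))=1] \leq (1-b_\varepsilon)\PP^\varepsilon_x[\mathbb{V}^+_p(\Ytrunc(t))=1]+b_\varepsilon$, then use Proposition~\ref{votingcoup1} to replace $\mathbb{V}^+_p$ by $\VM$ at the cost of an additive $Cb_\varepsilon$; the prefactor $(1-b_\varepsilon)\le 1$ can be dropped because $\VM(\Ytrunc(t))\in[0,1]$. Proposition~\ref{gronwallforZ} then passes from $\VM(\Ytrunc(t))$ to $\VM(\Zbf^-(t))$ at a further cost of $m_1 e^{-t/\varepsilon^2} + m_2 F(\varepsilon)$, delivering
\[
\PP^\varepsilon_x[\VV(\Y(t))=1] \leq \PP^\varepsilon_x[\VM(\Zbf^-(t))=1] + m_1 e^{-t/\varepsilon^2} + m_2 F(\varepsilon) + (C+1)b_\varepsilon.
\]
The lower bound is assembled in the mirror order: Proposition~\ref{votingcoup2} gives $\PP^\varepsilon_x[\VV(\Y(t))=1] \geq (1-b_\varepsilon)\PP^\varepsilon_x[\mathbb{V}^-_p(\Ytrunc(t))=1]$, Proposition~\ref{votingcoup1} yields $\PP^\varepsilon_x[\mathbb{V}^-_p(\Ytrunc(t))=1] \geq \PP^\varepsilon_x[\VM(\Ytrunc(t))=1] - Cb_\varepsilon$, and Proposition~\ref{gronwallforZ} supplies $\PP^\varepsilon_x[\VM(\Ytrunc(t))=1] \geq \PP^\varepsilon_x[\VM(\Zbf^+(t))=1] - m_1 e^{-t/\varepsilon^2} - m_2 F(\varepsilon)$. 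Here I would keep the prefactor $(1-b_\varepsilon)$ explicit, since the claim as stated retains it, and absorb the cross terms $b_\varepsilon\cdot Cb_\varepsilon$, $b_\varepsilon\cdot e^{-t/\varepsilon^2}$, and $b_\varepsilon\cdot F(\varepsilon)$ into a larger constant multiple of $b_\varepsilon + F(\varepsilon)$.

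The remaining step is to absorb the exponential tail $e^{-t/\varepsilon^2}$ into $mb_\varepsilon$. For $t \geq a_\dd \varepsilon^2 |\log\varepsilon|$ we have $e^{-t/\varepsilon^2} \leq \varepsilon^{a_\dd}$; by definition~(\ref{bdeltapage}) and Assumption~\ref{assumptions2}~\ref{assumptions2_B}, $b_\varepsilon \sim \varepsilon^2/I(\varepsilon)^2$ with $I(\varepsilon)\to 0$, so $b_\varepsilon \geq \tfrac{1}{2}\varepsilon^2$ for $\varepsilon$ sufficiently small, and taking $a_\dd \geq 3$ (say) forces $\varepsilon^{a_\dd} \leq b_\varepsilon$ for all admissible $t$. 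Bundling every constant into a single $m$ then yields both asserted inequalities. I do not anticipate any genuine obstacle in the present corollary: the real work sits in Propositions~\ref{votingcoup2}, \ref{votingcoup1}, and \ref{gronwallforZ}, and what remains is essentially a bookkeeping exercise that consolidates their various errors into the uniform form $mF(\varepsilon) + mb_\varepsilon$.
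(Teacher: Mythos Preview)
Your proposal is correct and follows essentially the same route as the paper: chain Propositions~\ref{votingcoup2}, \ref{votingcoup1}, and \ref{gronwallforZ}, then absorb the residual $e^{-t/\varepsilon^2}$ term by choosing $a_\dd$ large enough. The only cosmetic difference is that the paper absorbs $e^{-t/\varepsilon^2}$ into $F(\varepsilon)$ rather than into $b_\varepsilon$, but either choice works and the argument is otherwise the same bookkeeping exercise you describe.
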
 
\begin{proof}
   We prove only the first inequality, noting that the second follows by similar arguments. By Propositions~\ref{votingcoup2}, \ref{votingcoup1}, and \ref{gronwallforZ} there exists $m_1, m_2, C>0$ such that \begin{align*} \PP^\varepsilon_x[\mathbb{V}_p(\Y(t))=1]&\leq (1-b_\varepsilon)\PP^\varepsilon_x[\mathbb{V}_p^\times(\Ytrunc(t))=1]+(C+1)b_\varepsilon \\ & \leq (1-b_\varepsilon)\left(\PP^\varepsilon_x[\mathbb{V}_p^\times(\boldsymbol{Z}^-(t))=1]+ m_1e^{-\frac{t}{\varepsilon^2}}+m_2F(\varepsilon)\right)+(C+1)b_\varepsilon.\end{align*} Choose $a_\dd$ sufficiently large so that, for $t\geq a_\dd \varepsilon^2|\log \varepsilon|,$ $e^{-\frac{t}{\varepsilon^2}}\leq F(\varepsilon).$ Choosing $m$ sufficiently large then gives the upper bound. 
\end{proof}

Next, we will state our main theorem for $\boldsymbol{Z}^+(t)$ and $\boldsymbol{Z}^-(t)$ and show using Corollary~\ref{new_corollary} that it implies Theorem~\ref{maintheorem}. Recall that $u_-= \frac{3}{4}b_\varepsilon^2 + \mathcal{O}(b_\varepsilon^3)$ and $u_+= 1-\frac{3}{4}b_\varepsilon^2 + \mathcal{O}(b_\varepsilon^3).$
 \begin{theorem}\label{new_multi_d_theorem} Fix $I$ satisfying Assumptions~\ref{assumptions2} and $k\in \NN$. Suppose the initial condition $p$ satisfies Assumptions~\ref{assumptions1}. Let $\mathscr{T}$ and $d(x, t)$ be as in Section~\ref{secone}, $F$ be as in (\ref{defnofF}) and fix $T^*\in (0, \mathscr{T})$. Let $u_+, u_-$ be as in (\ref{asympfixedpoint1}) and (\ref{asympfixedpoint2}). Then there exists $\varepsilon_\dd(\alpha, k), a_\dd(\alpha, k), c_\dd(\alpha, k)>0$ such that, for $\varepsilon \in (0, \varepsilon_\dd)$ and $a_\dd \varepsilon^2 |\log\varepsilon|\leq t\leq T^*$,
\begin{enumerate}[(1)]
    \item for $x$ with $d(x,t) \geq c_\dd I(\varepsilon)|\log \varepsilon|$, $\PP_x^\varepsilon\left[\VV^\times(\boldsymbol{Z}^+(t)) = 1\right]\geq u_+-\varepsilon^k,$
    \item for $x$ with $d(x,t) \leq -c_\dd I(\varepsilon)|\log \varepsilon|,$ $\PP_x^\varepsilon\left[\VV^\times(\boldsymbol{Z}^-(t)) = 1\right]\leq u_-+\varepsilon^k.$
\end{enumerate}
\end{theorem}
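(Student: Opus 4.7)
The plan is to follow the two-stage approach used to establish the Brownian analogue (Theorem~\ref{browniantheorem}) in \cite{etheridge2017branching}: first \emph{generate} a sharp interface over a short initial window of length $s^\ast := a_\dd \varepsilon^2 |\log\varepsilon|$, and then \emph{propagate} it up to time $T^\ast$. By the symmetry relations (\ref{symmetry})--(\ref{symmetryofmarkedg}) together with the identity $1-u_+=u_-$, parts (1) and (2) are dual, so it suffices to establish (2).

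For the generation step, fix $t=s^\ast$ and suppose $d(x,s^\ast)\leq -c_\dd I(\varepsilon)|\log\varepsilon|$ for $c_\dd$ sufficiently large. The multi-dimensional analogue of Lemma~\ref{displacement lemma} (proved identically, since the components of $W(R^\varepsilon_s)$ are independent one-dimensional subordinated Brownian motions, and the deterministic shift in (\ref{zminusdef}) has the smaller order $I(\varepsilon)^2|\log\varepsilon|$) ensures that every leaf of $\boldsymbol{Z}^-(s^\ast)$ lies within distance $l_1 I(\varepsilon)|\log\varepsilon|$ of $x$ with probability at least $1-\varepsilon^k$. By Assumption~\ref{assumptions1}~\ref{assumptions1_C} and the $C^a$ regularity of $\boldsymbol{\Gamma}_0$, for $c_\dd$ large enough those leaves all lie in $\{p\leq \tfrac{1}{2}-\varepsilon\}$. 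The lower bound on the tree size from Lemma~\ref{defnofa} combined with the amplification Lemma~\ref{lemma:iterative_voting} converts this leaf-level bias into $\PP^\varepsilon_x[\VV^\times_p(\boldsymbol{Z}^-(s^\ast))=1]\leq u_-+\varepsilon^k$.

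For the propagation step, induct on $t$ over blocks of length $s^\ast$. For $t\in(s^\ast,T^\ast]$ and $x$ with $d(x,t)\leq -c_\dd I(\varepsilon)|\log\varepsilon|$, apply the Markov property of $\boldsymbol{Z}^-$ at time $t-s^\ast$; this writes $\PP^\varepsilon_x[\VV^\times_p(\boldsymbol{Z}^-(t))=1]$ as an expectation of $\VV^\times_{q}(\boldsymbol{Z}^-(s^\ast))$, where $q(\cdot):=\PP^\varepsilon_{\cdot}[\VV^\times_p(\boldsymbol{Z}^-(t-s^\ast))=1]$ is close to $u_-$ on the set $\{y:d(y,t-s^\ast)\leq -c_\dd I(\varepsilon)|\log\varepsilon|\}$ by the inductive hypothesis. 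Apply Theorem~\ref{couplingtheoremforz} branchwise, killed when a particle exits the $\beta$-tube $\{y:|d(y,\cdot)|\leq \beta\}$: on an event of probability at least $1-\varepsilon^k$ this yields $d(Z^-_{s^\ast},t-s^\ast)\leq B(R^\varepsilon_{s^\ast})+C_0\beta s^\ast$, so the signed distances of the leaves at time $t-s^\ast$ are stochastically dominated by those of a one-dimensional subordinated Brownian tree $\Xtrunc(s^\ast)$ started at $d(x,t)+C_0\beta s^\ast$. Applying the one-dimensional result Theorem~\ref{mainteo1dmarked} to this one-dimensional process, together with the slope estimate Corollary~\ref{cor:slope} to convert the shift $C_0\beta s^\ast$ into a small additive error on the vote probability, closes the induction.

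The main technical obstacle will be controlling the accumulation of the residual drift $C_0\beta s$ in Theorem~\ref{couplingtheoremforz} across the $\mathcal{O}(T^\ast/s^\ast)=\mathcal{O}(1/(\varepsilon^2|\log\varepsilon|))$ iterations of the propagation without destroying the interface sharpness. The shift $lI(\varepsilon)^2|\log\varepsilon|\mathbf{v}_s$ built into the definition of $\boldsymbol{Z}^\pm$ already absorbed the subordinator error $|R^\varepsilon_s-s|$ coming from Theorem~\ref{teo:subestimate}, but the geometric residual $C_0\beta s$ remains. The parameter $\beta$ must be tuned so that $C_0\beta T^\ast$ is negligible compared to the interface width $c_\dd I(\varepsilon)|\log\varepsilon|$, yet large enough that the probability of any particle leaving the $\beta$-tube within a single block $s^\ast$ stays of order $\varepsilon^k$. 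This delicate balance, mirroring the hardest part of \cite[Sections~2.3--2.4]{etheridge2017branching}, is precisely what forces the machinery of Lemmas~\ref{ineq:nobranch}--\ref{displacement lemma} together with Corollary~\ref{cor:slope} to be applied at every step of the induction.
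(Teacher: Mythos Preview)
Your proposal has a genuine gap in the propagation step. The block induction you describe cannot close, for the very reason you identify at the end: with $\beta$ of order $I(\varepsilon)|\log\varepsilon|$ (anything smaller and particles exit the tube with non-negligible probability in a single block), the residual drift $C_0\beta s^\ast$ per block summed over $T^\ast/s^\ast=\mathcal{O}(1/(\varepsilon^2|\log\varepsilon|))$ blocks gives $C_0\beta T^\ast$, which is of the \emph{same} order as the interface width, not negligible compared to it. Passing through Corollary~\ref{cor:slope} does not help: the resulting additive vote-probability error per block is of order $s^\ast/(I(\varepsilon)|\log\varepsilon|)$, and these still sum to order $T^\ast/(I(\varepsilon)|\log\varepsilon|)\to\infty$. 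There is no choice of $\beta$ that makes your final paragraph work as stated.

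The paper's mechanism is different in kind, not in degree. Instead of tracking the interface width through blocks, it proves the single comparison (Proposition~\ref{propogationofinterface})
\[
\PP^\varepsilon_x[\VV^\times_p(\boldsymbol{Z}^-(t))=1]\;\le\;\PP^\varepsilon_{d(x,t)+\gamma(t)}[\V^\times(\Xtrunc(t))=1]+\varepsilon^l,
\qquad \gamma(t)=K_1e^{K_2t}I(\varepsilon)|\log\varepsilon|,
\]
for all $t\in[t_\dd,T^\ast]$, by a first-passage contradiction argument at the level of the first branching event $\tau$. The point is that the error $\varepsilon^l$ is \emph{contracted}, not accumulated, across one branching: Lemma~\ref{biguglylemma} shows the expectation under $Z^-_\tau$ is dominated by the one-dimensional expectation plus $\tfrac34\varepsilon^l$. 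The $\tfrac34$ comes from two regimes. Far from $\tfrac12$ one uses $g_\times'<\tfrac23$, so the added $\varepsilon^l$ shrinks to $\tfrac23\varepsilon^l$. Near $\tfrac12$ the exponential growth of $\gamma$ is chosen (via $K_2$ in (\ref{K2})) so that $\gamma(t)-\gamma(t-s)\ge C_0\beta s+c_1(1)sI(\varepsilon)|\log\varepsilon|$; this not only absorbs the geometric residual but leaves a surplus, and Corollary~\ref{cor:slope} converts that surplus into a strictly negative correction to the vote probability, offsetting the $\varepsilon^l$ added inside $g_\times$. Theorem~\ref{new_multi_d_theorem} then follows in two lines by combining Proposition~\ref{propogationofinterface} with the one-dimensional Theorem~\ref{mainteo1dmarked}. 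The idea you are missing is precisely this: let the comparison point drift exponentially with $t$, and use the voting contraction through $g_\times$ to keep the additive error constant rather than summing it.
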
 
\noindent To see that this implies Theorem~\ref{maintheorem},  let $k\in \NN$ and suppose $$\PP_x^\varepsilon\left[\VV^\times(\boldsymbol{Z}^+(t)) = 1\right]\geq u_+-\varepsilon^k.$$ By Corollary~\ref{new_corollary}, this implies $$\PP^\varepsilon_x[\mathbb{V}_p(\Y(t))=1] \geq (1-b_\varepsilon)(u_+-\varepsilon^k)-mF(\varepsilon)-mb_\varepsilon$$ for some $m>0$. Since $u_+\geq 1-b_\varepsilon$, it is straightforward to see that, for $\varepsilon>0$ sufficiently small, we may increase $m$ as necessary so that $$\PP^\varepsilon_x[\mathbb{V}_p(\Y(t))=1] \geq 1-mF(\varepsilon)-m \frac{\varepsilon^2}{I(\varepsilon)^2}.$$ Similar arguments using Theorem~\ref{new_multi_d_theorem}~(2) prove the lower bound in Theorem~\ref{maintheorem}.

\subsection{Generation of the interface}\label{generationoftheinterfacesection}
We now show that in a time $\mathcal{O}(\varepsilon^2|\log\varepsilon|)$, an interface of width $\mathcal{O}(I(\varepsilon)|\log\varepsilon|)$ is created. Here, we refer to the solution interface associated to the partial differential equation solved by $\PP_x^\varepsilon[\VV^\times(\boldsymbol{Z}^-(t))=1]$ with initial condition $p$. We will make use of the following one-dimensional result, where we recall that $\V^\times = \V^\times_{\widehat{p}_0}$ is the marked majority voting system with initial condition
$$\widehat{p}_0(x) = u_+\mathbbm{1}_{\{x\geq 0\}}+u_-\mathbbm{1}_{\{x< 0\}}. $$
\begin{proposition}\label{boundsofvote} 
Let $a_1$ be as in Lemma~\ref{defnofa} and fix $k\in \NN$. Then there exists $\varepsilon_\dd(k) >0$ such that, for all $\varepsilon\in (0,\varepsilon_\dd)$ , $t\geq a_1(k)\varepsilon^2|\log\varepsilon|$ and $x\in \RR$, \begin{align}\label{greenwallet} u_- - \varepsilon^k \leq \PP^\varepsilon_x[\V^\times(\boldsymbol{B}_{R^\varepsilon}(t))=1]\leq u_+ + \varepsilon^k.\end{align} 
\end{proposition}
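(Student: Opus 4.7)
The plan is to prove the stronger (deterministic) bounds $u_- \leq \PP^\varepsilon_x[\V^\times(\boldsymbol{B}_{R^\varepsilon}(t))=1] \leq u_+$ for every $x\in\RR$ and every $t\geq 0$, from which the stated bound follows at once with the $\varepsilon^k$ slack being harmless. The time restriction $t\geq a_1(k)\varepsilon^2|\log\varepsilon|$ is not required for this argument and is presumably included only for uniformity with the way the proposition is invoked later. The two ingredients are monotonicity of the marked voting procedure in its initial condition, together with the defining property that $u_+$ and $u_-$ are fixed points of $g_\times$.

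First I would establish monotonicity: if $p_1,p_2:\RR\to[0,1]$ satisfy $p_1\leq p_2$ pointwise, then
\begin{align*}
\PP^\varepsilon_x\left[\V^\times_{p_1}(\boldsymbol{B}_{R^\varepsilon}(t))=1\right] \leq \PP^\varepsilon_x\left[\V^\times_{p_2}(\boldsymbol{B}_{R^\varepsilon}(t))=1\right].
\end{align*}
The coupling is the natural one: on the same realisation of $\T(\boldsymbol{B}_{R^\varepsilon}(t))$, use the same marking events, the same Bernoulli$(\tfrac{1}{2})$ draws at marked particles, and a shared uniform $U_i\sim\mathcal{U}[0,1]$ at each leaf $i$, so that leaf $i$ votes $1$ in system $j$ if and only if $U_i\leq p_j(B_i(R^\varepsilon_i(t)))$. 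Since $p_1\leq p_2$, leaf votes in system 1 are dominated by those in system 2, and because the majority function $g$ is coordinatewise non-decreasing on $\{0,1\}^3$, this domination propagates up the tree at every unmarked internal node; at marked nodes the votes coincide by construction.

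Next, I would verify that constant initial conditions $p\equiv u_\pm$ are preserved by the procedure:
\begin{align*}
\PP^\varepsilon_x\left[\V^\times_{u_+}(\boldsymbol{B}_{R^\varepsilon}(t))=1\right]=u_+, \qquad \PP^\varepsilon_x\left[\V^\times_{u_-}(\boldsymbol{B}_{R^\varepsilon}(t))=1\right]=u_-.
\end{align*}
The proof is a short induction on the height of the (random) tree. For $p\equiv u_+$, every non-root particle votes $1$ with unconditional probability $q^\star:=(1-b_\varepsilon)u_+ + \tfrac{b_\varepsilon}{2}$: an unmarked leaf votes $1$ with probability $u_+$, a marked particle votes $1$ with probability $\tfrac{1}{2}$, and by the inductive hypothesis an unmarked internal node votes $1$ with probability $g(q^\star,q^\star,q^\star)=g_\times(u_+)=u_+$, the last equality being the defining property of $u_+$ as a fixed point of $g_\times$. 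Since the root is never marked, its vote probability equals $g(q^\star,q^\star,q^\star)=u_+$ exactly; the case $p\equiv u_-$ is identical.

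Combining the two steps, the pointwise inequalities $u_-\leq \widehat{p}_0\leq u_+$ and monotonicity give
\begin{align*}
u_- = \PP^\varepsilon_x\left[\V^\times_{u_-}(\boldsymbol{B}_{R^\varepsilon}(t))=1\right] \leq \PP^\varepsilon_x\left[\V^\times(\boldsymbol{B}_{R^\varepsilon}(t))=1\right] \leq \PP^\varepsilon_x\left[\V^\times_{u_+}(\boldsymbol{B}_{R^\varepsilon}(t))=1\right] = u_+
\end{align*}
for every $x\in\RR$ and $t\geq 0$, which is stronger than the claimed bound (\ref{greenwallet}). There is no genuine obstacle: the entire argument reduces to the monotone coupling and the fixed-point identity $g_\times(u_\pm)=u_\pm$; no quantitative estimate on the branching tree or the subordinated motion is needed.
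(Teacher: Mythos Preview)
Your argument is correct and in fact yields the sharper bounds $u_-\le \PP^\varepsilon_x[\V^\times(\boldsymbol{B}_{R^\varepsilon}(t))=1]\le u_+$ for all $t\ge 0$ and $x\in\RR$, with no $\varepsilon^k$ slack and no lower time restriction. The fixed-point identity you invoke, $\PP^\varepsilon_z[\V^\times_{u_+}(\boldsymbol{B}_{R^\varepsilon}(t))=1]=u_+$, is precisely what the paper records as equation~(\ref{con}) in the proof of Theorem~\ref{mainteo1dmarked}, and the monotone coupling in the initial condition is standard.

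The paper takes a different route: it proves that $g_\times'(u_+)=\mathcal{O}(b_\varepsilon)<1$, deduces the exponential contraction $g_\times^{(n)}(1)\le u_++\varepsilon^k$ once $n\ge C(k)|\log\varepsilon|$, and then invokes Lemma~\ref{defnofa} to ensure that $\T(\boldsymbol{B}_{R^\varepsilon}(t))$ contains a regular subtree of that depth with probability at least $1-\varepsilon^k$ when $t\ge a_1(k)\varepsilon^2|\log\varepsilon|$. This is why the paper needs both the time lower bound and the $\varepsilon^k$ error. Your approach bypasses the iteration entirely by exploiting that $u_\pm$ are exact fixed points, which makes the argument shorter and the conclusion stronger; the paper's approach, on the other hand, is stylistically uniform with the other ``amplification by iterated voting'' arguments (Lemmas~\ref{lemma:iterative_voting} and~\ref{defnofa}) used throughout.
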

\begin{proof}
We prove the right hand inequality in (\ref{greenwallet}) and note that the left hand inequality follows by very similar arguments. It is easy to verify that $\delta:= g_\times'(u_+) = \mathcal{O}(b_\varepsilon)<1$, and by the Mean Value Theorem, since $g_\times'$ is decreasing on $[u_+, 1],$ for all $q\in (0, 1-u_+]$, $$g_\times(u_++q) - g_\times(u_+) \leq \delta q.$$ Since $u_+$ is a fixed point of $g_\times,$ for $\varepsilon$ sufficiently small, $g_\times(q)<q$ for $q \in (u_+, 1]$ so iterating the above inequality as in the proof of Lemma~\ref{lemma:iterative_voting} gives us $$g_\times^{(n)}(u_++q)-u_+ \leq \delta^n q $$ for all $q\in (0, 1-u_+]$. In particular, $$g^{(n)}_\times(u_+ + (1-u_+)) - u_+ \leq \delta^n(1-u_+)\leq \varepsilon^k $$ after $n\geq C(k)|\log \varepsilon|$ iterations, for some $C(k)>0$. That is, $g^{(n)}_\times(1)\leq u_+ +\varepsilon^k$ if $n\geq C(k)|\log\varepsilon|$. We note that, since $g_\times$ is increasing on $[0,1]$, the largest value of the iterates of $g_\times(x)$ will be when $x = 1$. Finally, by Lemma~\ref{defnofa}, for $t\geq a_1(k)\varepsilon^2|\log\varepsilon|,$ $$\PP^\varepsilon_x\left[\T(\boldsymbol{B}_{R^\varepsilon}(t))\supset \T^{reg}_{A(k)|\log\varepsilon|}\right]\geq 1-\varepsilon^k. $$ Therefore when $t\geq a_1(k)\varepsilon^2 |\log\varepsilon|, \PP^\varepsilon_x[\V^\times(\boldsymbol{B}_{R^\varepsilon}(t))=1] \leq u_+ + 2\varepsilon^k.$
\end{proof}
It is straightforward to adapt the proof of Lemma~\ref{defnofa} to show that, for all $k\in \NN$ and $A(k)$ as in Lemma~\ref{lemma:iterative_voting}, there exists $\rho_\dd^+(k)>0$ and $\varepsilon_\dd>0$ such that, for all $\varepsilon\in (0, \varepsilon_\dd)$, $x\in \RR^\dd$ and $t\geq \rho_\dd^+(k)\varepsilon^2|\log \varepsilon|$, 
$$\PP^\varepsilon_x\left[\T(\boldsymbol{Z}^+(t))\supset \T^{reg}_{A(k)|\log\varepsilon|}\right]\geq 1-\varepsilon^k. $$ Similarly, there exists $\rho_\dd^-(k)$ and $\varepsilon_\dd'>0$ such that, for all $\varepsilon\in (0, \varepsilon_\dd')$ and $t\geq \rho_\dd^-\varepsilon^2|\log\varepsilon|,$ $$\PP^\varepsilon_x\left[\T(\boldsymbol{Z}^-(t))\supset \T^{reg}_{A(k)|\log\varepsilon|}\right]\geq 1-\varepsilon^k. $$ With this, we can adapt the proof of Proposition~\ref{boundsofvote} to show that, for any $k\in \NN$ and $\varepsilon$ sufficiently small, if $t\geq \rho_\dd^+(k)\varepsilon^2|\log\varepsilon|$,\begin{align}\label{rhoplus} u_- - \varepsilon^k \leq \PP^\varepsilon_x[\V^\times_p(\boldsymbol{Z}^+(t))=1]\leq u_+ + \varepsilon^k\end{align} for any initial condition $p$, and if $t\geq \rho_\dd^-(k)\varepsilon^2|\log\varepsilon|$ \begin{align}\label{rhominus} u_- - \varepsilon^k \leq \PP^\varepsilon_x[\V^\times_p(\boldsymbol{Z}^-(t))=1]\leq u_+ + \varepsilon^k.\end{align}   
In our later proofs, we will want (\ref{greenwallet}), (\ref{rhoplus}) and (\ref{rhominus}) to hold simultaneously, so it will be useful to define \begin{align}\label{defrho}
    \rho_\dd(k) := \rho_\dd^-(k)\vee\rho_\dd^+(k)\vee a_1(k).
\end{align}

\begin{proposition} \label{generationoftheinterface}
Let $k\in \NN$ and $\rho_\dd(k)$ be as in equation~(\ref{defrho}). Fix $I$ satisfying Assumptions~\ref{assumptions2} and let $u_+, u_-$ be as in (\ref{asympfixedpoint1}), (\ref{asympfixedpoint2}). Then there exists $\varepsilon_\dd(\alpha, k), b_\dd(\alpha, k) >0$ such that, for all $\varepsilon \in (0,\varepsilon_\dd),$ if \begin{align*} t_\dd(k, \varepsilon)&:= \rho_\dd(k)\varepsilon^2|\log\varepsilon|, \\  t'_\dd(k,\varepsilon)&:= (2 \rho_\dd(k)+k+1)\varepsilon^2|\log\varepsilon|,\ \end{align*}then for $t\in [t_\dd, t'_\dd],$
\begin{enumerate}[(1)]
    \item for $d(x,t) \geq b_\dd(k) I(\varepsilon)|\log\varepsilon|$, we have $\PP^\varepsilon_x[\VM(\Zbf^-(t))=1] \geq u_+-\varepsilon^k,$
    \item for $d(x, t) \leq -b_\dd(k) I(\varepsilon)|\log\varepsilon|$,  we have $\PP^\varepsilon_x[\VM(\Zbf^-(t))=1] \leq u_-+\varepsilon^k$.
\end{enumerate}
\end{proposition}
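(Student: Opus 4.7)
The plan is to adapt the generation-of-the-interface argument from the Brownian setting (cf.\ \cite{etheridge2017branching}) by substituting Theorem~\ref{couplingtheoremforz} for the Brownian coupling used there and re-using the iterative voting machinery of Section~\ref{saltchips}. The central observation is that the window $[t_\dd,t'_\dd]$ has length $\mathcal{O}(\varepsilon^2|\log\varepsilon|)$, on which two complementary facts hold: by the $\boldsymbol{Z}^-$-analogue of Lemma~\ref{defnofa} (valid for $t\geq\rho_\dd(k)\varepsilon^2|\log\varepsilon|$), the tree $\T(\boldsymbol{Z}^-(t))$ contains $\T^{reg}_{A(k)|\log\varepsilon|}$ with probability at least $1-\varepsilon^k$; yet simultaneously no single particle displaces by more than $\mathcal{O}(I(\varepsilon)|\log\varepsilon|)$. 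For $b_\dd$ chosen large enough, this forces every leaf to lie on the same side of $\boldsymbol{\Gamma}_0$ as $x$, producing a bias at the leaves that iterative voting then amplifies to $u_+-\varepsilon^k$ (respectively $u_-+\varepsilon^k$).

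To make the displacement control precise, I would fix $\beta<c_0$ independent of $\varepsilon$ and apply Theorem~\ref{couplingtheoremforz} to each ancestral line $i$ in $\T(\boldsymbol{Z}^-(t))$, producing a one-dimensional Brownian motion $B_i$ started at $d(x,t)$ with
\[
\bigl|d(Z^-_i(s),t-s) - B_i(R^\varepsilon_i(s))\bigr| \leq C_0\beta s + 2D_0(k+2)\,I(\varepsilon)^2|\log\varepsilon|
\]
on an event of probability at least $1-\varepsilon^{k+1}$, where the lower bound on $d(Z^-_i(s),t-s)$ comes from combining Theorem~\ref{teo:subestimate} with the inward shift built into $Z^-$ (equation~(\ref{ineqZminus})). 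A Chernoff estimate for $B_i$ composed with the subordinator tail bound (Lemma~\ref{boundonsubordinator}), exactly as in the proof of Lemma~\ref{displacement lemma}, gives $|B_i(R^\varepsilon_i(s))-d(x,t)|\leq\tfrac{b_\dd}{4}I(\varepsilon)|\log\varepsilon|$ per particle with probability at least $1-\varepsilon^{2k}$, and a union bound over the $\mathcal{O}(\varepsilon^{-c})$ particles alive by time $t'_\dd$ upgrades this to a bound uniform in $i$ with probability $1-\varepsilon^k$. Invoking Assumptions~\ref{assumptions2}~\ref{assumptions2_A}-\ref{assumptions2_B} to absorb the $\mathcal{O}(I(\varepsilon)^2|\log\varepsilon|)$ and $\mathcal{O}(\varepsilon^2|\log\varepsilon|)$ correction terms into $\tfrac{b_\dd}{4}I(\varepsilon)|\log\varepsilon|$ yields $|d(Z^-_i(t),0)-d(x,t)|\leq\tfrac{b_\dd}{2}I(\varepsilon)|\log\varepsilon|$ for every leaf of $\T(\boldsymbol{Z}^-(t))$.

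Assumption~\ref{assumptions1}~\ref{assumptions1_C} then forces $\mathrm{sign}(d(x,t))\cdot(p(Z^-_i(t))-\tfrac{1}{2})\geq\tfrac{\gamma b_\dd}{2}I(\varepsilon)|\log\varepsilon|\geq\varepsilon$ for every leaf and all $\varepsilon$ sufficiently small (using $\varepsilon/I(\varepsilon)\to 0$ from Assumption~\ref{assumptions2}~\ref{assumptions2_B}). Combined with $\T(\boldsymbol{Z}^-(t))\supseteq\T^{reg}_{A(k)|\log\varepsilon|}$, monotonicity of $g_\times$ together with Lemma~\ref{easyboundg} propagates the $\tfrac{1}{2}+\varepsilon$ bias up to the leaves of $\T^{reg}_{A(k)|\log\varepsilon|}$, and Lemma~\ref{lemma:iterative_voting} applied to this regular subtree delivers the desired inequalities, with the multiplicative constant in front of $\varepsilon^k$ absorbed via Remark~\ref{coeffignore}. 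The principal obstacle is the bookkeeping of error terms: Assumptions~\ref{assumptions2}~\ref{assumptions2_A}-\ref{assumptions2_B} are precisely tailored so that each of the three error sources (the coupling correction, the drift $C_0\beta s$, and the Brownian increment over a subordinated time of order $I(\varepsilon)^2|\log\varepsilon|$) is $o(I(\varepsilon)|\log\varepsilon|)$ or bounded by $\sqrt{h(k)}\,I(\varepsilon)|\log\varepsilon|$ with $h(k)$ independent of $\varepsilon$, so that a single $b_\dd=b_\dd(k)$ dominates each of them for $\varepsilon$ small. A secondary subtlety is handling the failure event $\{R^\varepsilon_i(s)\geq T_\beta\wedge t\}$ on which Theorem~\ref{couplingtheoremforz} does not apply; since $\beta$ is fixed independently of $\varepsilon$, a Brownian exit estimate combined with the subordinator tail bound keeps this per-particle failure probability polynomially small, and a union bound over the tree absorbs it into the overall $\varepsilon^k$ error.
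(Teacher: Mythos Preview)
Your approach is correct in spirit and would go through, but it is more elaborate than what the paper actually does. The paper does \emph{not} invoke Theorem~\ref{couplingtheoremforz} here at all; it controls the Euclidean displacement $|Z^-_i(t)-x|$ directly, and then passes to the signed distance via the $1$-Lipschitz property of $d(\cdot,t)$ together with the temporal regularity~(\ref{A4}). Concretely, the paper establishes the $\dd$-dimensional analogue of Lemma~\ref{displacement lemma} for $W_i(R^\varepsilon_i(s))$, uses the definition of $Z^-$ (which differs from $W(R^\varepsilon)$ by at most $D_0(k+2)I(\varepsilon)^2|\log\varepsilon|$) to transfer this to $|Z^-_i(t)-x|\leq l_\dd I(\varepsilon)|\log\varepsilon|$, and then bounds
\[
d(Z^-_i(t),0)\geq d(x,t)-|Z^-_i(t)-x|-|d(x,t)-d(x,0)|\geq \tfrac{1}{4}b_\dd I(\varepsilon)|\log\varepsilon|
\]
by the triangle inequality and~(\ref{A4}). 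This avoids the coupling entirely, and in particular sidesteps the bookkeeping you flag around the failure event $\{R^\varepsilon_i(s)\geq T_\beta\wedge t\}$ and the time-window restriction in the definition of $T_\beta$. Your route via Theorem~\ref{couplingtheoremforz} previews the machinery that is genuinely needed in the propagation step (Lemma~\ref{biguglylemma}), but for the generation step the elementary displacement-plus-Lipschitz argument suffices and is cleaner.
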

\begin{remark}
By almost identical arguments, Proposition~\ref{generationoftheinterface} holds when $\Zbf^-$ is replaced with $\Zbf^+$. Note that our choice of $t_\dd$ and $t_\dd'$ are stricter than needed for this result alone, but it will be convenient to define them in this way for use in later proofs.
\end{remark}

\begin{proof}
We follow the proof of \cite[Proposition 2.16]{etheridge2017branching} closely, and consider the multidimensional analogues of Lemmas~\ref{defnofa} and \ref{displacement lemma}. First, by choice of $\rho_\dd^-$, there exists $\varepsilon_\dd(\alpha, k)>0$ such that, for all $\varepsilon \in (0,\varepsilon_\dd)$, $x\in \RR^\dd$ and $t\geq \rho_\dd^-(k) \varepsilon^2 |\log\varepsilon|,$
\begin{align}\label{multidtree}
    \PP^\varepsilon_x \left[\mathcal{T}(\Zbf^-(t)) \supseteq \mathcal{T}^{\text{reg}}_{A(k)|\log\varepsilon|}\right]\geq 1 - \varepsilon^{k}
\end{align} for $A(k)$ as in Lemma~\ref{lemma:iterative_voting}. By standard estimates for the multidimensional standard normal variable, the proof of Lemma~\ref{displacement lemma} can be adapted to show that there exists $h_\dd(k)>0$ and $\varepsilon_\dd(k)>0$ such that, for all $\varepsilon \in (0,\varepsilon_\dd)$ and $t\in [t_\dd,t_\dd']$, \begin{align}\label{displacementmultid}
    \PP_x^\varepsilon\left[\exists i\in N(s) : | W_i(R^\varepsilon_i(s))-x| \geq h_\dd(k) I(\varepsilon)|\log\varepsilon| \right] \leq \varepsilon^k.
\end{align} 
By definition of $Z_s^-$, $| Z^-_s| \leq | W(R^\varepsilon_s)| + D_0(k+2)I(\varepsilon)^2|\log\varepsilon|,$ 
for $D_0$ the constant from Theorem~\ref{teo:subestimate}. Therefore
(\ref{displacementmultid}) implies that, for $\varepsilon$ sufficiently small, there exists $l_\dd(k) > h_\dd(k)$ for which \begin{align*}
    \PP_x^\varepsilon\left[\exists i\in N(s) : | Z^-_i(s)-x| \geq l_\dd(k) I(\varepsilon)|\log\varepsilon| \right] \leq \varepsilon^k.
\end{align*} 
Set $b_\dd = 2l_\dd$. Recall that $d(x,t)$ is the signed distance between $x\in \RR^\dd$ and $\mathbf{\Gamma}_{t}$. By the regularity assumption on $\mathbf{\Gamma}_{t}$ (\ref{A4}), there exist $v_0, V_0>0$ such that, for $t\leq v_0$ and $x\in \RR^\dd$, $|d(x,0)-d(x,t)|\leq V_0t$. Reduce $\varepsilon_\dd$ if necessary so that $t_\dd'\leq v_0$ for all $\varepsilon\in (0,\varepsilon_\dd).$ Let $\varepsilon\in (0,\varepsilon_\dd)$, $t\in [t_\dd, t_\dd']$ and $x$ be such that $d(x,t)\geq b_\dd I(\varepsilon)|\log\varepsilon|$ and $| Z^-_i(t)-x|\leq l_\dd I(\varepsilon)|\log\varepsilon|$. It follows by the triangle inequality and Lipschitz continuity of $d(\cdot, t)$ that 
\begin{align*}
    d(Z^-_i(t),0) &\geq d(x,t) - \left|d(x,t)-d(Z^-_i(t),t)\right|-\left| d(Z^-_i(t),t)-d(Z^-_i(t),0)\right|\\
    &\geq b_\dd I(\varepsilon)|\log\varepsilon| - l_\dd I(\varepsilon)|\log\varepsilon|- V_0t_\dd'\\
    &=\tfrac{1}{2}b_\dd I(\varepsilon)|\log\varepsilon| - V_0(2\rho_\dd +k+1)\varepsilon^2|\log\varepsilon|.
\end{align*}
By Assumption~\ref{assumptions2}~\ref{assumptions2_B} we may reduce $\varepsilon_\dd$ if necessary so that $$d(Z^-_i(t),0)\geq \tfrac{1}{4}b_\dd I(\varepsilon)|\log\varepsilon|$$ for all $\varepsilon\in (0,\varepsilon_\dd).$ Finally, by Assumptions~\ref{assumptions1}~(B)-(C), and reducing $\varepsilon_\dd$ if necessary, \begin{align}\label{12ineq}
    p(Z^-_i(t)) &\geq \tfrac{1}{2}+ \gamma\left(\tfrac{1}{4}b_\dd I(\varepsilon)|\log\varepsilon| \wedge r\right)\nonumber \\
    &\geq \tfrac{1}{2}+\varepsilon
\end{align} for all $\varepsilon\in (0,\varepsilon_\dd)$. We then combine (\ref{multidtree}), (\ref{displacementmultid}) and (\ref{12ineq}) exactly as the proof of Theorem~\ref{mainteo1dmarked}, to obtain that, for $\varepsilon\in (0,\varepsilon_\dd)$, $t\in [t_\dd, t_\dd']$ and $x$ such that $d(x,t)\geq b_\dd I(\varepsilon)|\log\varepsilon|,$ $$\PP_x^\varepsilon[\VV^\times(\Zbf^-(t))=1] \geq u_+-3\varepsilon^k.$$
The upper bound is obtained using the same approach.
\end{proof}

\subsection{Propagation of the interface}\label{propinterfacesection}
In this section, we will compare $\VV^\times(\boldsymbol{Z}^-(t))$ to $\V^\times(\Xtrunc(t))$, and use this to show that the interface propagates. Throughout this section, define \begin{align}\label{defn_gamma} \gamma(t):= K_1e^{K_2t}I(\varepsilon)|\log\varepsilon|\end{align}  where the choice of $K_1, K_2$ and $\varepsilon$ will be clear in the given context.

\begin{proposition}\label{propogationofinterface}
Let $l \in \mathbb{N}$ with $l\geq 4$ and fix $I$ satisfying Assumptions~\ref{assumptions2}. Let $t_\dd$ be as in Proposition~\ref{generationoftheinterface}. There exist $K_1, K_2>0$ such that for $\gamma(\cdot)$ as in (\ref{defn_gamma}), $\varepsilon\in (0,\varepsilon_\dd)$ and $t\in [t_\dd(l), T^*]$ we have \begin{align}\label{firstpropagation}
\sup_{x\in\RR^\dd} \left(\PP^\varepsilon_x[\VM(\boldsymbol{Z}^-(t)) =1]- \PP^\varepsilon_{d(x,t)+\gamma(t)}[\V^\times(\Xtrunc (t))=1]\right) \leq \varepsilon^l \ \ \ \ \ \end{align} 
and
\begin{align} \label{Secondpropagation}
    \sup_{x\in \RR^\dd} \left(\PP^\varepsilon_x[\VM(\boldsymbol{Z}^+(t))=0]-\allowdisplaybreaks \PP^\varepsilon_{d(x,t)-\gamma(t)}[\V^\times(\Xtrunc (t))=0]\right)\leq \varepsilon^l. \ \ \ \ \
\end{align}
\end{proposition}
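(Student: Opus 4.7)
The plan is to adapt the strategy of Proposition~2.17 of \cite{etheridge2017branching} to our setting. The key observation is that the shifted processes $\boldsymbol{Z}^+$ and $\boldsymbol{Z}^-$ were designed precisely so that Theorem~\ref{couplingtheoremforz} plays the role that Proposition~\ref{sar} played in the Brownian argument, with the awkward drift $D_0(k+2)I(\varepsilon)^2|\log\varepsilon|$ already absorbed into the spatial shift. By the symmetry $g_\times(q)=1-g_\times(1-q)$ (equation~(\ref{symmetryofmarkedg})) together with (\ref{symmetry}), the two inequalities (\ref{firstpropagation}) and (\ref{Secondpropagation}) are equivalent after exchanging the roles of $\boldsymbol{Z}^-$ and $\boldsymbol{Z}^+$, so it suffices to prove (\ref{firstpropagation}).

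I would argue by induction over a discrete time mesh $t_n:=t_\dd(l)+n\Delta t$ with $\Delta t:=\varepsilon^2|\log\varepsilon|$, taking as inductive hypothesis that (\ref{firstpropagation}) holds at $t_n$ for some constants $K_1,K_2$ still to be fixed. The base case $n=0$ splits into two subcases: if $d(x,t_\dd)\geq b_\dd(l)I(\varepsilon)|\log\varepsilon|$, Proposition~\ref{generationoftheinterface} bounds the left-hand side by $u_+\leq \PP^\varepsilon_{d(x,t_\dd)+\gamma(t_\dd)}[\V^\times(\Xtrunc(t_\dd))=1]+\varepsilon^l$; if instead $d(x,t_\dd)<b_\dd(l)I(\varepsilon)|\log\varepsilon|$, choosing $K_1\geq b_\dd(l)+c_1(l)$ guarantees $d(x,t_\dd)+\gamma(t_\dd)\geq c_1(l)I(\varepsilon)|\log\varepsilon|$, and Theorem~\ref{mainteo1dmarked} delivers the required bound on the one-dimensional side.

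For the inductive step I would apply the Markov property at time $\Delta t$ to write, with $u_n(y):=\PP^\varepsilon_y[\VM(\Zbf^-(t_n))=1]$,
\[
\PP^\varepsilon_x[\VM(\Zbf^-(t_n+\Delta t))=1]=\EE^\varepsilon_x\bigl[G_{\Delta t}(u_n(\cdot))\bigr],
\]
where $G_{\Delta t}$ encodes marked majority voting together with possible branching over $[0,\Delta t]$. On the event that every particle in $\Zbf^-$ stays inside $\{y:|d(y,t_n+\Delta t-s)|\leq \beta\}$ for a fixed $\beta<c_0$ (which, by an adaptation of Lemma~\ref{displacement lemma} together with the shift bound $|Z^-_s-W(R_s^\varepsilon)|\leq lI(\varepsilon)^2|\log\varepsilon|$, fails with probability at most $\varepsilon^{l+2}$), Theorem~\ref{couplingtheoremforz} couples $d(Z^-_i(\Delta t),t_n)$ to a one-dimensional Brownian motion $B_i(R_i^\varepsilon(\Delta t))$ started at $d(x,t_n+\Delta t)$ with
\[
d(Z^-_i(\Delta t),t_n)\leq B_i(R^\varepsilon_i(\Delta t))+C_0\beta\Delta t+V_0\Delta t
\]
(the $V_0\Delta t$ coming from (\ref{A4}) to swap $d(\cdot,t_n)$ for $d(\cdot,t_n+\Delta t)$). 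Applying the inductive hypothesis at each of the three $\Delta t$-descendants of $x$ at position $Z^-_i(\Delta t)$, and then using monotonicity (\ref{monotonicity}) of the one-dimensional root vote in its spatial argument, replaces $u_n(Z^-_i(\Delta t))$ by the one-dimensional probability evaluated at $B_i(R_i^\varepsilon(\Delta t))+\gamma(t_n)+(C_0\beta+V_0)\Delta t$. Reversing the Markov property on the one-dimensional side (recognising the branching $\Xtrunc$ tree started at $d(x,t_n+\Delta t)$) then gives
\[
\PP^\varepsilon_x[\VM(\Zbf^-(t_n+\Delta t))=1]\leq \PP^\varepsilon_{d(x,t_n+\Delta t)+\gamma(t_n)+(C_0\beta+V_0)\Delta t}[\V^\times(\Xtrunc(t_n+\Delta t))=1]+C\varepsilon^{l+2}.
\]
Finally, choose $K_2$ so that $K_1(e^{K_2(t_n+\Delta t)}-e^{K_2 t_n})I(\varepsilon)|\log\varepsilon|\geq (C_0\beta+V_0)\Delta t$; then the extra drift collapses into $\gamma(t_n+\Delta t)$, completing the induction.

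The main obstacle will be the error bookkeeping. Since there are $\mathcal{O}(\varepsilon^{-2}|\log\varepsilon|^{-1})$ time steps between $t_\dd(l)$ and $T^*$, a per-step error of order $\varepsilon^{l+2}$ is required to keep the cumulative error below $\varepsilon^l$; this forces us to apply Proposition~\ref{generationoftheinterface}, Lemma~\ref{displacement lemma} and Theorem~\ref{couplingtheoremforz} with polynomial precision strictly better than $\varepsilon^l$, which is precisely why those results were proved with an arbitrary exponent $k$. A second subtlety is that one must verify that the `escape' event $\{|d(Z^-_i(\Delta t),t_n+\Delta t-s)|>\beta\}$, where the coupling breaks, contributes only an $\varepsilon^l$ loss on the upper bound: on this event the vote is still bounded by $1$, which is consistent with the right-hand side since the one-dimensional probability is $\geq u_+-\varepsilon^l$ once its argument exceeds $c_1(l)I(\varepsilon)|\log\varepsilon|$. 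Finally, Assumption~\ref{assumptions2}~\ref{assumptions2_C} is used only at the last moment to check that the cumulative subordinator-fluctuation error $\mathcal{O}(T^*\Delta t^{-1}\cdot I(\varepsilon)^2|\log\varepsilon|)$ is still $o(I(\varepsilon)|\log\varepsilon|)$ and hence absorbable into $\gamma$.
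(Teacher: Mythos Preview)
Your forward induction over the mesh $t_n=t_\dd(l)+n\varepsilon^2|\log\varepsilon|$ does not close, and the difficulty is exactly the one the paper's proof is built to avoid. In the inductive step you feed the hypothesis $u_n(y)\le v_n(y)+\varepsilon^l$ into the voting over $[0,\Delta t]$. But $g_\times$ has Lipschitz constant $\tfrac32$ (attained near $\tfrac12$), so a single round of majority voting turns an additive $\varepsilon^l$ into $\tfrac32\varepsilon^l$. Over $[0,\Delta t]$ with $\Delta t=\varepsilon^2|\log\varepsilon|$ and branching rate $\varepsilon^{-2}$ there are of order $|\log\varepsilon|$ rounds (not three descendants, as you write), so the $\varepsilon^l$ is inflated to $\varepsilon^{l}(3/2)^{c|\log\varepsilon|}=\varepsilon^{l-c'}$ before you even add the $C\varepsilon^{l+2}$ coupling error. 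Shrinking $\Delta t$ does not help: with $\Delta t=\varepsilon^2$ you still pick up a factor $\tfrac32$ per step and there are $T^*\varepsilon^{-2}$ steps. The induction simply does not propagate the bound.

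The paper's argument is organised precisely to neutralise this amplification. Instead of forward induction it takes the infimum $T'$ of times at which (\ref{firstpropagation}) fails and applies the Markov property once, at the \emph{first} branching time $\tau$. The resulting one-step comparison is isolated as Lemma~\ref{biguglylemma}, whose whole point is that the inherited $+\varepsilon^l$ is \emph{contracted} to $\tfrac34\varepsilon^l$ rather than expanded. That contraction is not free: near the interface (where $g_\times'$ is close to $\tfrac32$) one must invoke the slope estimate Corollary~\ref{cor:slope} to trade the additive $\varepsilon^l$ for a spatial shift of order $sI(\varepsilon)|\log\varepsilon|$, which is then absorbed into $\gamma(t)-\gamma(t-s)$ by the choice of $K_2$; away from the interface one uses $g_\times'\le\tfrac23$ directly. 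Your sketch uses neither Corollary~\ref{cor:slope} nor this far/near dichotomy, and without them the $\tfrac32$-expansion cannot be beaten. The minimal-counterexample framing then closes because $\tfrac34\varepsilon^l+o(\varepsilon^l)<\varepsilon^l$ contradicts the choice of $T$.
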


\noindent Throughout this section, we will extend the domain of $g_\times:[0,1]\to[0,1]$ to all of $\RR$. Namely, we set $$g_\times(p) = \begin{cases} g_\times(0)& \text{if } p<0 \\
g_\times(p) &\text{if } p\in [0,1]\\
g_\times(1)  & \text{if } p>1.
\end{cases}$$
\indent Key to the proof of Proposition~\ref{propogationofinterface} will be Lemma~\ref{biguglylemma}, which parallels \cite[Lemma 2.18]{etheridge2017branching}. The proof of Theorem~\ref{new_multi_d_theorem} will then follow easily. We defer the lengthy proof of Lemma~\ref{biguglylemma} to Section~\ref{sectionproofofuglylemma}.

\begin{lemma}\label{biguglylemma} Let $K_1>0$, $l \in \mathbb{N}$ with $l\geq 4$, and fix $I$ satisfying Assumptions~\ref{assumptions2}. Let $t_\dd'$ be as in Proposition~\ref{generationoftheinterface}. Then there exists $K_2 = K_2(K_1, l)>0$ and $\varepsilon_\dd(K_1, K_2, l)>0$ such that, for $\gamma(\cdot)$ as in (\ref{defn_gamma}), $\varepsilon\in (0, \varepsilon_\dd)$, $x\in \RR^\dd$, $s\in [0, (l+1) \varepsilon^2|\log\varepsilon|]$ and $t\in [t'_{\mathbbm{d}}(l), T^*]$, 
\begin{align}\label{2.18eqn1} &\EE_x\left[g_\times\left(\PP^\varepsilon_{d(Z^-_s, t-s) + \gamma(t-s)}[\mathbb{V}^\times(\Xtrunc(t-s))=1] + \varepsilon^l \right) \right] \nonumber \\
& \ \  \leq \frac{3}{4} \varepsilon^l+ \EE_{d(x,t)}\left[g_\times\left(\mathbb{P}^{\varepsilon}_{B(R_s^\varepsilon)  +\gamma(t)}[\mathbb{V}^\times(\Xtrunc (t-s))=1] \right) \right] + \mathbbm{1}_{\{s \leq \varepsilon^3\}}\varepsilon^l \end{align} 
and  \begin{align}\label{2.18eqn2} & \EE_x\left[g_\times\left(\PP^\varepsilon_{d(Z^+_s, t-s) - \gamma(t-s)}[\mathbb{V}^\times(\Xtrunc(t-s))=0] + \varepsilon^l \right)\right] \nonumber \\
& \ \   \leq \frac{3}{4} \varepsilon^l +\EE_{d(x,t)}\left[g_\times\left(\mathbb{P}^{\varepsilon}_{B(R_s^\varepsilon)-\gamma(t)}[\mathbb{V}^\times(\Xtrunc (t-s))=0]\right)\right]+  \mathbbm{1}_{\{s \leq \varepsilon^3\}}\varepsilon^l. \end{align}
\end{lemma}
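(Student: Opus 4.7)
The approach is to invoke Theorem~\ref{couplingtheoremforz} to dominate $d(Z^-_s, t-s)$ by a shifted one-dimensional subordinated Brownian motion $B(R^\varepsilon_s)$ started at $d(x,t)$, then use the monotonicity (\ref{monotonicity}) of $\mathbb{P}^\varepsilon_\cdot[\V^\times(\Xtrunc(\cdot))=1]$ together with monotonicity of $g_\times$ in each argument to translate this into the desired dominance of expectations. The exponential factor in $\gamma(t) = K_1 e^{K_2 t}I(\varepsilon)|\log\varepsilon|$ is there precisely to absorb the coupling's linear-in-$s$ drift error by taking $K_2$ large. Since (\ref{2.18eqn2}) follows from (\ref{2.18eqn1}) by the symmetry relation (\ref{symmetryofmarkedg}) together with the companion coupling (\ref{thisisthecoupling:copiedlabel}) for $Z^+$, I will only outline the proof of (\ref{2.18eqn1}).

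More precisely, I would choose $\beta = \beta(\varepsilon)$ of order $I(\varepsilon)|\log\varepsilon|$, which satisfies $\beta < c_0$ for $\varepsilon$ small by Assumption~\ref{assumptions2}~\ref{assumptions2_A}, and define the stopping time $T_\beta$ of Theorem~\ref{couplingtheoremforz}. On an event $\mathcal{G}$ of probability at least $1 - \varepsilon^{l+1}$, combining (a) the conclusion (\ref{oxford}) of Theorem~\ref{couplingtheoremforz} and (b) a displacement estimate for $W(R^\varepsilon_\cdot)$ in the spirit of Lemma~\ref{displacement lemma} (ensuring $R^\varepsilon_s < T_\beta \wedge t$ in the relevant time window), one has
\[
d(Z^-_s, t-s) + \gamma(t-s) \leq B(R^\varepsilon_s) + \gamma(t-s) + C_0 \beta s.
\]
A Taylor expansion yields $\gamma(t) - \gamma(t-s) \geq K_1 K_2 s\, I(\varepsilon)|\log\varepsilon|$, and choosing $K_2 = K_2(K_1, C_0, l)$ sufficiently large guarantees $C_0 \beta s \leq \gamma(t) - \gamma(t-s)$. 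On $\mathcal{G}$, the monotonicity statements just cited then show that the integrand on the left of (\ref{2.18eqn1}) is dominated by $g_\times\bigl(\mathbb{P}^\varepsilon_{B(R^\varepsilon_s) + \gamma(t)}[\V^\times(\Xtrunc(t-s))=1] + \varepsilon^l\bigr)$, and a further perturbation of size $\varepsilon^l$ (absorbed into the $\tfrac{3}{4}\varepsilon^l$ slack by Lipschitz continuity of $g_\times$) removes the interior $+\varepsilon^l$, leaving the integrand on the right of (\ref{2.18eqn1}).

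On the complement $\mathcal{G}^c$, the trivial bound $g_\times \leq 1$ gives a contribution of at most $\mathbb{P}(\mathcal{G}^c) \leq \varepsilon^{l+1}$, also absorbed into $\tfrac{3}{4}\varepsilon^l$ for $\varepsilon$ small. The $+\varepsilon^l$ buffer inside $g_\times$ on the left-hand side is exactly what permits the perturbation argument above: any small shift induced by the coupling is accommodated by this buffer before monotonicity is invoked. Putting the $\mathcal{G}$ and $\mathcal{G}^c$ contributions together yields (\ref{2.18eqn1}) up to the additional indicator term.

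The main obstacle, and the source of the term $\mathbbm{1}_{\{s \leq \varepsilon^3\}}\varepsilon^l$, is the regime of extremely small $s$. For $s \leq \varepsilon^3$, the expected number of jumps of the $I(\varepsilon)^2$-truncated $\tfrac{\alpha}{2}$-stable subordinator up to time $s$ is $sI(\varepsilon)^{-2}$, which by Assumption~\ref{assumptions2}~\ref{assumptions2_B} can be much smaller than $1$, so the concentration estimate of Lemma~\ref{boundonsubordinator} on $|R^\varepsilon_s - s|$ degrades and the deviation can exceed what $\gamma(t) - \gamma(t-s)$ comfortably absorbs. In this short-time regime one abandons the coupling and uses the $\mathbbm{1}_{\{s \leq \varepsilon^3\}}\varepsilon^l$ slack to bound the contribution trivially, relying on the fact that this regime is a tail event when the lemma is integrated against the branching-time density $\varepsilon^{-2} e^{-\tau \varepsilon^{-2}}$ in the subsequent proof of Proposition~\ref{propogationofinterface}. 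The delicate interplay between the sizes of $\beta(\varepsilon)$, $\gamma(t) - \gamma(t-s)$, and the polynomial error $\varepsilon^l$ is what forces the specific choice of $K_2$ depending on both $K_1$ and $l$.
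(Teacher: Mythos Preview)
Your overall strategy --- couple $d(Z^-_s,t-s)$ to $B(R^\varepsilon_s)$ via Theorem~\ref{couplingtheoremforz}, then use monotonicity --- is the paper's starting point too, but there is a genuine gap in the step ``a further perturbation of size $\varepsilon^l$ (absorbed into the $\tfrac{3}{4}\varepsilon^l$ slack by Lipschitz continuity of $g_\times$) removes the interior $+\varepsilon^l$''.  The global Lipschitz constant of $g_\times$ is $\tfrac{3}{2}$, so removing the interior $+\varepsilon^l$ by Lipschitz yields an error of $\tfrac{3}{2}\varepsilon^l$, which is \emph{not} absorbable into the allotted $\tfrac{3}{4}\varepsilon^l$.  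This is precisely why the lemma is hard and why the paper's proof splits into several regimes.

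The paper first separates out the cases $|d(x,t)|$ large (where Theorem~\ref{mainteo1dmarked} and Proposition~\ref{boundsofvote} force both sides to be within $\mathcal O(\varepsilon^l)$ of $u_\pm$) and only applies the coupling when $|d(x,t)|=\mathcal O(I(\varepsilon)|\log\varepsilon|)$, which is needed to guarantee $R^\varepsilon_s<T_\beta$ with high probability.  In the near-interface case, after coupling one must still pass from $g_\times(\mathbb Q+\varepsilon^l)$ to $g_\times(\mathbb Q)+\tfrac{3}{4}\varepsilon^l$.  The paper does this by a further dichotomy on the event $D=\{|\mathbb Q-\tfrac12|\le\tfrac5{12}\}$: on $D^c$ one has $g_\times'\le\tfrac{2}{3}$, giving the cheap bound $g_\times(\mathbb Q+\varepsilon^l)\le g_\times(\mathbb Q)+\tfrac23\varepsilon^l$; on $D$ one invokes the slope-of-interface estimate (Corollary~\ref{cor:slope}), which upgrades the spatial comparison to $\mathbb Q_{\ldots+C_0\beta s+\gamma(t-s)}\le \mathbb Q_{\ldots+\gamma(t)}-\tfrac{1}{48}s$, and this extra $-\tfrac{1}{48}s$ is what cancels the $+\varepsilon^l$.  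The choice of $K_2$ is made so that $\gamma(t)-\gamma(t-s)-C_0\beta s\ge c_1(1)sI(\varepsilon)|\log\varepsilon|$, which is exactly the hypothesis needed to apply Corollary~\ref{cor:slope}.

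This also shows your interpretation of the indicator $\mathbbm{1}_{\{s\le\varepsilon^3\}}\varepsilon^l$ is off: it has nothing to do with the subordinator concentration degrading for small $s$.  It appears because on the event $D$, when $\tfrac{1}{48}s<\varepsilon^l$ the slope improvement is too small to kill the $+\varepsilon^l$, and one must pay an extra $\varepsilon^l$; since $l\ge 4$, $\{48\varepsilon^l\ge s\}\subset\{s\le\varepsilon^3\}$ for $\varepsilon$ small.
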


\begin{proof}[Proof of Proposition~\ref{propogationofinterface}] We only prove (\ref{firstpropagation}), since (\ref{Secondpropagation}) follows by completely symmetric arguments. 
Set $K_1 = b_\dd(l) +c_1(l)$ for $b_\dd$ as in Proposition~\ref{generationoftheinterface} and $c_1$ as in Theorem~\ref{mainteo1dmarked}. Take $\varepsilon_\dd>0$ sufficiently small so that Theorem~\ref{mainteo1dmarked}, Proposition~\ref{generationoftheinterface}, Proposition~\ref{boundsofvote} and Lemma~\ref{biguglylemma} hold for all $\varepsilon \in (0, \varepsilon_\dd)$. We first observe that, for $\varepsilon \in (0,\varepsilon_\dd)$, $t\in [t_\dd(l), t'_\dd(l)]$ (for $t_\dd$ and $t_\dd'$ as in Proposition~\ref{generationoftheinterface}) and $x\in \RR^\dd$, 
\begin{align}\label{koala}
  \PP^\varepsilon_x[\VM(\Zbf^-(t)) =1] -  \PP^\varepsilon_{d(x,t)+\gamma(t)}[\V^\times(\Xtrunc(t))=1] \leq \varepsilon^l.
\end{align} 
To see this, first suppose that $d(x, t) \leq -b_\dd(l) I(\varepsilon)|\log\varepsilon|$. Now, reducing $\varepsilon_\dd$ if necessary, by Proposition~\ref{generationoftheinterface} 
\begin{align*}
\PP^\varepsilon_x[\VM(\Zbf^- (t)) =1] \leq u_- + \varepsilon^l.
\end{align*}
Also, by Proposition~\ref{boundsofvote},
\begin{align*}
\PP^\varepsilon_{d(x,t)+\gamma(t)}[\V^\times(\Xtrunc(t))=1] &\geq u_- -\varepsilon^l,
\end{align*} 
hence (\ref{koala}) holds. Here, we continue to ignore coefficients in front of polynomial error terms following Remark~\ref{coeffignore}. If we added a coefficient to the error term in (\ref{koala}), it would appear in all polynomial error terms that follow, but would not affect our proof.\\
\indent Now suppose $d(x,t) \geq -b_\dd(l) I(\varepsilon)|\log\varepsilon|$. Then, reducing $\varepsilon$ if necessary, $$d(x,t) + \gamma(t)\geq c_1(l) I(\varepsilon)|\log\varepsilon|,$$ so by Theorem~\ref{mainteo1dmarked}, $\PP^\varepsilon_{d(x,t)+\gamma(t)}[\V^\times(\Xtrunc(t))=1] \geq u_+-\varepsilon^l$. By (\ref{rhominus}), $$\PP^\varepsilon_x[\VM(\Zbf^-(t))=1]\leq u_+ +\varepsilon^l,$$ and (\ref{koala}) holds.\\ 
\indent It remains to verify (\ref{koala}) for $t\in [t_\dd', T^*]$. Assume for the purpose of a contradiction that there exists $t\in [t_\dd', T^*]$ such that, for some $x\in \RR^\dd$, 
$$ \PP^\varepsilon_x[\VM(\Zbf^-(t))=1]- \PP^\varepsilon_{d(x,t)+\gamma(t)}[\V^\times(\Xtrunc (t))=1] >\varepsilon^l. $$
Let $T'$ be the infimum of the set of such $t$, and choose
$$T\in [T', \text{min}(T' +\varepsilon^{l+3}, T^*)], $$
which is in the set of such $t$. So there exists some $x \in \RR^\dd$ such that 
\begin{align*} \PP^\varepsilon_x[\VM(\Zbf^-(T))=1]-\PP^\varepsilon_{d(x,T)+\gamma(T)}[\V^\times(\Xtrunc (T))=1]> \varepsilon^l.\ \ \ \ \ \ \ \end{align*}
We will show \begin{align}\label{cake2}   \PP^\varepsilon_x[\VM(\Zbf^-(T))=1] \leq \tfrac{7}{8}\varepsilon^l+\PP^\varepsilon_{d(x,T)+\gamma(T)}[\V^\times(\Xtrunc (T))=1]. \ \ \ \ \ \  \end{align}
Let $\tau$ be the time of the first branching event in $\Zbf^-(T)$ and $Z_\tau^-$ be the position of the initial ancestor particle at that time. Then, by the Strong Markov Property at time $\tau \wedge (T-t_\dd),$ \begin{align}\label{markovpropeqn}
    \PP_x^\varepsilon[\VV^\times(\Zbf^-(t))=1]= \EE_x^\varepsilon\left[g_\times(\PP_{Z^-_\tau}^\varepsilon [\VV^\times(\Zbf^-(T-\tau))=1]) \mathbbm{1}_{\tau \leq T-t_\dd}\right]\nonumber\\ 
    + \EE_x^\varepsilon\left[\PP^\varepsilon_{Z^-_{T-t_\dd}}[\VV^\times(\Zbf^-(t_\dd))=1]\mathbbm{1}_{\tau\geq T-t_\dd}\right].
\end{align}
Since $T-t_\dd \geq t_\dd'- t_\dd > (l+1)\varepsilon^2|\log\varepsilon|$ and $\tau\sim \mathit{Exp}(\varepsilon^{-2}),$ the second term on the right side of (\ref{markovpropeqn}) is bounded by 
\begin{align} \nonumber \EE_x^\varepsilon\left[\PP^\varepsilon_{Z^-_{T-t_\dd}}[\VV(\Zbf^+(t_\dd))=1]\mathbbm{1}_{\tau\geq T-t_\dd}\right]&\leq \PP\left[\tau\geq (l+1)\varepsilon^2|\log\varepsilon|\right]\\ &=\varepsilon^{l+1}.\label{2.49}\end{align}

To bound the first term on the right hand side of (\ref{markovpropeqn}), we partition over the event $\{\tau\leq \varepsilon^{3+l}\}$ (which has probability $\leq \varepsilon^{l+1}$) and its complement to obtain
\begin{flalign}\label{2.50}
 & \EE_x^\varepsilon\left[g_\times(\PP_{Z^-_\tau}^\varepsilon [\VV^\times(\Zbf^-(T-\tau))=1]) \mathbbm{1}_{\tau \leq T-t_\dd}\right] \nonumber \\
 & \leq \EE_x^\varepsilon\left[g_\times(\PP_{Z_\tau^-}^\varepsilon [\VV^\times(\Zbf^+(T-\tau))=1]) \mathbbm{1}_{ \varepsilon^{l+1}\leq \tau \leq T-t_\dd}\right] + \varepsilon^{l+1} \nonumber  \\
   &  \leq  \EE_x^\varepsilon\left[g_\times\left(\PP^\varepsilon_{d(Z^-_\tau,T-\tau)+ \gamma(T-\tau)}[\mathbb{V}^\times(\Xtrunc(T-\tau))=1] + \varepsilon^l \right) \mathbbm{1}_{\tau \leq T-t_\dd} \right] + \varepsilon^{l+1},
\end{flalign}
where the last line follows by minimality of $T'$, since $\varepsilon^{l+3} \leq \tau \leq T-t_\dd$, so $T-\tau \in [t_\dd, T')$, and by monotoncity of $g_\times$. Then, conditioning on the value of $\tau$, and noting that the path of the ancestral particle $(B(R^\varepsilon_\cdot))$ is independent of $\tau$,\allowdisplaybreaks \begin{align}\label{2.51}
   &\EE_x^\varepsilon\left[g_\times\left(\PP^\varepsilon_{d(Z^-_\tau,T-\tau)+\gamma(T-\tau)}[\mathbb{V}^\times(\Xtrunc(T-\tau))=1] + \varepsilon^l \right) \mathbbm{1}_{ \tau \leq T-t_\dd} \right]\nonumber \\
   &\leq \int_0^{(l+1)\varepsilon^2|\log\varepsilon|}\frac{e^{-\varepsilon^{-2}s}}{\varepsilon^2} \EE_x\left[g_\times\left(\PP^\varepsilon_{d(Z^-_s,T-s)+ \gamma(T-s)}[\mathbb{V}^\times(\Xtrunc(T-s))=1] + \varepsilon^l \right) \right]ds\nonumber \\
   & \ \ \ \ + \PP[\tau\geq (l+1)\varepsilon^2|\log\varepsilon]]\nonumber\\
   &\leq \int_0^{(l+1)\varepsilon^2|\log\varepsilon|}\frac{e^{-\varepsilon^{-2}s}}{\varepsilon^2}  \EE_{d(x,T)}\left[g_\times\left(\PP^\varepsilon_{B(R_s^\varepsilon)+\gamma(T)}[\mathbb{V}^\times(\Xtrunc(T-s))=1]\right) \right]ds \nonumber \\
   & \ \ \ \ + \varepsilon^{l+1} + \varepsilon^{l}\left(\tfrac{3}{4} + \PP[\tau \leq \varepsilon^3]\right)\nonumber\\
   &\leq \EE_{d(x,T)}^\varepsilon\left[g_\times \left(\PP^\varepsilon_{B(R^\varepsilon_{\tau'})+\gamma(T)}[\mathbb{V}^\times(\Xtrunc(T-\tau'))=1]\right)\mathbbm{1}_{\tau'\leq T-t_\dd} \right]\nonumber\\ &\ \ \ \ + \tfrac{3}{4}\varepsilon^l + 2\varepsilon^{l+1},
\end{align}
where the second inequality follows by Lemma~\ref{biguglylemma}. Here $\tau'$ denotes the time of the first branching event in $\Xtrunc$, which has the same distribution as $\tau$. The final inequality holds since $T\geq t_\dd'$, so $T-t_\dd\geq (l+1)\varepsilon^2|\log\varepsilon|$. Putting  (\ref{2.49}), (\ref{2.50}) and (\ref{2.51}) into (\ref{markovpropeqn}), we obtain
\begin{align*}
    \PP_x^\varepsilon[\VV^\times(\boldsymbol{Z}^-(T))=1] &\leq\EE_{d(x,T)}^\varepsilon\left[g_\times\left(\PP^\varepsilon_{B(R^\varepsilon_{\tau'})+\gamma(T)}[\mathbb{V}^\times(\Xtrunc(T-\tau'))=1]\right)\mathbbm{1}_{\tau'\leq T-t_\dd} \right]\\ & \ \ + \tfrac{3}{4} \varepsilon^l + 4 \varepsilon^{l+1}\\
   & \ \ \leq  \PP^\varepsilon_{d(x,T)+\gamma(T)}[\mathbb{V}^\times(\Xtrunc(T))=1] + \tfrac{3}{4} \varepsilon^l + 4 \varepsilon^{l+1} ,
\end{align*}
where the last line follows by the Strong Markov Property for $(\Xtrunc(\cdot))$ at time $\tau'\wedge (T-t_\dd)$. We can reduce $\varepsilon_\dd$ if necessary so that $4 \varepsilon^{l+1} + \tfrac{3}{4} \varepsilon^l \leq \tfrac{7}{8} \varepsilon^l$ for all $\varepsilon \in (0,\varepsilon_\dd)$. This gives (\ref{cake2}), thereby proving (\ref{firstpropagation}). The inequality (\ref{Secondpropagation}) follows by a similar argument, using (\ref{2.18eqn2}).
\end{proof}

\noindent \noindent With this, we can now prove Theorem~\ref{new_multi_d_theorem}.
\begin{proof}[Proof of Theorem \ref{new_multi_d_theorem}]
Set $c_\dd(l) := c_1(l) +K_1e^{K_2T^*}$. Then, for any $x\in \RR^\dd$ and $t\in [t_\dd, T^*]$ such that $d(x,t)\leq -c_\dd(l)I(\varepsilon)|\log\varepsilon|$, we have
$$d(x,t)+K_1e^{K_2 t}I(\varepsilon)|\log\varepsilon| \leq - c_1(l)I(\varepsilon)|\log\varepsilon|.$$ Then, by Theorem~\ref{mainteo1dmarked}, reducing $\varepsilon_\dd$ if necessary so that $\varepsilon_\dd<\varepsilon_1(l),$ $$\PP_x^\varepsilon[\VV^\times(\boldsymbol{Z}^-(t))=1)]\leq u_- + 2\varepsilon^l.$$ Similarly, for $x$ and $t$ such that $d(x,t)\geq c_\dd(l)I(\varepsilon)|\log\varepsilon|$, by Theorem~\ref{mainteo1dmarked} and (\ref{Secondpropagation}), $\PP_x^\varepsilon[\VV^\times(\boldsymbol{Z}^+(t))=1)]\geq u_+-2\varepsilon^l$. Theorem~\ref{new_multi_d_theorem} then holds by setting $a_\dd:=\rho_\dd$.
\end{proof}
\begin{proof}[Proof of Theorem \ref{maintheorem}]
This follows immediately by combining Theorem~\ref{new_multi_d_theorem} and Corollary~\ref{new_corollary}.
\end{proof}

\subsection{Proof of Lemma \ref{biguglylemma}}\label{sectionproofofuglylemma}
 To prove Lemma~\ref{biguglylemma}, we follow the proof of \cite[Lemma 2.18]{etheridge2017branching} and consider separately the cases $|d(x,t)| \leq DI(\varepsilon)|\log\varepsilon|$ and $|d(x,t)| \geq DI(\varepsilon)|\log\varepsilon|$, for some large $D>0$. Since neither the one-dimensional process $B(R^\varepsilon_s)$ nor the multidimensional process $Z^+$ (or $Z^-$) travel further than a distance $\mathcal{O}(I(\varepsilon)|\log\varepsilon|)$ in time $s = \mathcal{O}(\varepsilon^2|\log(\varepsilon)|)$ with high probability, if $D$ is sufficiently large and $|d(x,t)|\leq D I(\varepsilon)|\log\varepsilon|$,  we will see that the result follows from the main one-dimensional result for $\VV^\times(\Xtrunc)$, Theorem~\ref{mainteo1dmarked}.  When $|d(x,t)|\leq D I(\varepsilon)|\log\varepsilon|,$ we apply Theorem~\ref{couplingtheoremforz} so that, with probability at least $1-\varepsilon^{l+1},$ $$d(Z^-_s, t-s)\leq B(R_s^\varepsilon) + \mathcal{O}(I(\varepsilon)|\log\varepsilon|)s.$$
 Using this and monotonicity of $g_\times$ we can bound the left hand side of (\ref{2.18eqn1}) by
 \begin{align}\label{roses} \EE_{d(x,t)}\left[g_\times\left(\mathbb{P}^{\varepsilon}_{B(R_s^\varepsilon) +\gamma(t-s)+\mathcal{O}(s)I(\varepsilon)|\log\varepsilon|}[\V^\times(\Xtrunc(t))=1]+\varepsilon^l\right)\right] + \mathcal{O}(\varepsilon^{l+1}). \end{align} 
 We then control (\ref{roses}) by considering two cases: when the argument of $g_\times$ in (\ref{roses}) is bounded away from $\tfrac{1}{2}$, and when it is close to $\tfrac{1}{2}$. In the former case, we use that $|g_\times'(y)| < \tfrac{2}{3}$ when $y$ is bounded far enough away from $\tfrac{1}{2}$, together with monotonicty of $g_\times$, to obtain (\ref{2.18eqn1}). In the second case, we apply the slope of the interface result, Corollary~\ref{cor:slope}, to bound the difference between the two expectations appearing in the inequality (\ref{2.18eqn1}) directly. 
\begin{proof}[Proof of Lemma \ref{biguglylemma}]
Fix $l\geq 4$. For all $u\geq 0$ and $z \in \RR$, let 
$$\mathbb{Q}_z^{\varepsilon, u} = \PP_z^\varepsilon[\mathbb{V}^\times(\BB_{R^\varepsilon}(u))=1].$$ Let $C_0$ be as in (\ref{A3}) and $c_1$ be defined as in Theorem~\ref{mainteo1dmarked}. Let
\begin{align} \label{defnofR} R:= 2c_1(l)+4(l+1) \dd+1\end{align}
and fix $K_2$ such that
\begin{align}\label{K2} K_1(K_2-C_0)-C_0R-C_1= c_1(1). \end{align}
\noindent To start we let $\varepsilon_\dd(l) = \varepsilon_1(l)$ where $\varepsilon_1(l)$ is defined in Theorem~\ref{mainteo1dmarked}. \\
\indent Following the proof of \cite[Lemma 2.18]{etheridge2017branching}, we begin by estimating the probability that a $\dd$-dimensional subordinated Brownian motion moves further than a distance $I(\varepsilon)|\log \varepsilon|$ in time $s\leq (l+1)\varepsilon^2 |\log\varepsilon|$. Define the event 
$$A_x = \left\{\sup_{u\in [0,R^\varepsilon_s]} | W_u-x |\leq 2 (l+1) I(\varepsilon)|\log\varepsilon|\right\}. $$
Then, bounding $| W_{u}-x |$ by the sum of the moduli of $\dd$ one-dimensional Brownian motions, and by Lemma~\ref{boundonsubordinator}, which bounds the displacement of the subordinator $R^\varepsilon_s$ for small times, we obtain
\begin{align}\label{Acomp}
    \PP_x[A_x^c] &\leq 2\dd \PP_0\left[\sup_{u\in[0,R_s^\varepsilon]} B_u> 2(l+1) I(\varepsilon)|\log\varepsilon|\right]\nonumber \\
    &\leq 2\dd \PP_0\left[\sup_{u\in[0,(l+2)I(\varepsilon)^2|\log\varepsilon|]} B_u> 2(l+1) I(\varepsilon)|\log\varepsilon|\right] + 2\dd \varepsilon^{l+1}\nonumber \\
    &\leq 4\dd \PP_0\left[B_1 > 2((l+1)|\log\varepsilon|)^{1/2}\right] + 2\dd \varepsilon^{l+1}\nonumber \\
    &\leq 6\dd\varepsilon^{l+1}
\end{align}
where the second inequality follows by the reflection principle, and the final inequality follows by identical arguments to those in the proof of Lemma~\ref{displacement lemma}. Now consider the cases
\begin{enumerate}[(i)]
    \item $d(x, t) \leq -(2c_1(l) +2(l+1) \dd + K_1e^{K_2(t-s)})I(\varepsilon)|\log\varepsilon|$
    \item $d(x, t) \geq (2c_1(l) +2(l+1) \dd + K_1e^{K_2(t-s)})I(\varepsilon)|\log\varepsilon|$
    \item $|d(x, t)| \leq (2c_1(l) +2(l+1) \dd + K_1e^{K_2(t-s)})I(\varepsilon)|\log\varepsilon|$.\end{enumerate}
    \textbf{Case (i):} By (\ref{A3}), there exist $v_0, V_0>0$ such that, if $s\leq v_0$ and $x\in \RR^\dd$, then $$|d(x,t)-d(x,t-s)|\leq V_0s. $$ Reduce $
    \varepsilon_\dd$ if necessary to ensure that, for all $\varepsilon \in (0,\varepsilon_{\dd}),$ $(l+1)\varepsilon^2|\log\varepsilon| \leq v_0.$ Then if $A_x$ occurs, \begin{align*}
        &d(W(R^\varepsilon_s), t-s) + K_1e^{K_2(t-s)}I(\varepsilon)|\log\varepsilon| \\&\leq -(2c_1(l) + 2(l+1) \dd) I(\varepsilon)|\log\varepsilon| + |d(W(R^\varepsilon_s), t-s) - d(x,t)|\nonumber\\
        &\leq -(2c_1(l) + 2(l+1) \dd)I(\varepsilon)|\log\varepsilon| + |d(x,t) - d(x,t-s)| + |W(R^\varepsilon_s) - x| \nonumber\\
        &\leq -2c_1(l)I(\varepsilon)|\log\varepsilon| +V_0(l+1)\varepsilon^2|\log\varepsilon|.
    \end{align*} 
    By Assumption~\ref{assumptions2}~\ref{assumptions2_B}, we may reduce $\varepsilon_\dd$ if necessary so that 
    $$d(W(R^\varepsilon_s), t-s) + K_1e^{K_2(t-s)}I(\varepsilon)|\log\varepsilon| \leq -c_1( l)I(\varepsilon)|\log\varepsilon|, $$
    for all $\varepsilon \in (0,\varepsilon_\dd).$ Then, since $d(Z^-_s, t-s) \leq d(W(R_s^\varepsilon), t-s),$
    $$d(Z^-_s, t-s) + K_1e^{K_2(t-s)}I(\varepsilon)|\log\varepsilon| \leq -c_1(l)I(\varepsilon)|\log\varepsilon|.$$ 
    So, by Theorem~\ref{mainteo1dmarked} and definition of $g_\times$, \begin{align*}
        \EE_x\left[g_\times\left(\mathbb{Q}^{\varepsilon,t-s}_{d(Z^-_s, t-s)+\gamma(t-s)}+\varepsilon^l\right)\right]&\leq \EE_x[g_\times(u_- + 2\varepsilon^l)\mathbbm{1}_{A_x}] + \PP_x\left[A_x^c\right]\\
        &\leq u_- + 6\dd\varepsilon^{l+1}+12\varepsilon^{l}b_\varepsilon
    \end{align*} where the last line follows by calculating $g_\times(u_- + 2\varepsilon^l)$ explicitly and reducing $\varepsilon_\dd$ if necessary. \\
    \indent  Next, recall that $g_\times(y) = g((1-b_\varepsilon)y+\tfrac{b_\varepsilon}{2})$ for $y\in [0,1].$ So 
     \[ g_\times'(y) = 6(1-b_\varepsilon)\left((1-b_\varepsilon)y+\tfrac{b_\varepsilon}{2}\right)\left(1-\left((1-b_\varepsilon)y+\tfrac{b_\varepsilon}{2}\right)\right).\]
     Hence, if \begin{align}\label{tay} (1-b_\varepsilon)(y+\delta)+\tfrac{b_\varepsilon}{2}\leq \tfrac{1}{9} \ \ \text{or} \ \ (1-b_\varepsilon)y+\tfrac{b_\varepsilon}{2}\geq \tfrac{8}{9}\end{align} then \begin{align}\label{geqnp}
    g_\times(y+\delta)\leq g_\times(y)+\tfrac{2}{3}\delta.
\end{align}
    From Proposition~\ref{boundsofvote} (since $t-s\geq \rho_\dd(l)\varepsilon^2|\log\varepsilon|$) and (\ref{geqnp}), reducing $\varepsilon$ if necessary,
    \[ \EE_{d(x,t)}\left[g_\times\left(\mathbb{Q}^{\varepsilon,t-s}_{B(R^\varepsilon_s) +\gamma(t)} \right)\right] \geq u_- - \tfrac{2}{3} \varepsilon^l . \] 
    By choosing $\varepsilon$ small enough so that $6 \dd \varepsilon^{l+1} + 12\varepsilon^{l}b_\varepsilon + \tfrac{2}{3} \varepsilon^l \leq \tfrac{3}{4}\varepsilon^{l}$ (\ref{2.18eqn1}) holds in this case. \\ \text{}\\ 

\noindent \textbf{Case (ii):} Suppose now that $d(x, t) \geq (2c_1(l) +2(l+1) \dd + K_1e^{K_2(t-s)})I(\varepsilon)|\log\varepsilon|$. Using this, together with a similar argument to that used to obtain  (\ref{Acomp}), we have \begin{align*}
    \PP_{d(x,t)}[|B(R^\varepsilon_s)| \geq c_1(l)I(\varepsilon)|\log\varepsilon|] \leq \varepsilon^{l+1}.
\end{align*}
It follows that \begin{align*}
    \EE_{d(x,t)}\left[g_\times \left(\QQ^{\varepsilon, t-s}_{B(R_s^\varepsilon)+\gamma(t)}\right)\right] &\geq \EE_{d(x,t)}\left[g_\times \left(\QQ^{\varepsilon, t-s}_{B(R_s^\varepsilon) +\gamma(t)}\right)\mathbbm{1}_{\{B(R_s^\varepsilon)\geq c_1(l)I(\varepsilon)|\log\varepsilon|\}} \right]\\
    &\geq g_\times(u_+-\varepsilon^l) -\varepsilon^{l+1}\\
    &\geq u_+-\varepsilon^{l+1}-12\varepsilon^{l}b_\varepsilon
\end{align*} where the second inequality follows by Theorem~\ref{mainteo1dmarked}, and in the third inequality we expand $g_\times(u_+ - \varepsilon^l)$ and reduced $\varepsilon_\dd$ if necessary. From Proposition \ref{boundsofvote} we can get that, for $\varepsilon$ small enough, 
\begin{align*}
\EE_x\left[g_\times\left(\mathbb{Q}^{\varepsilon,t-s}_{d(Z^-_s, t-s)+\gamma(t-s)}+\varepsilon^l\right)\right] \leq u_+ + \tfrac{2}{3}\varepsilon^l.
\end{align*}
Hence, reducing $\varepsilon$ if necessary (\ref{2.18eqn1}) holds in this case. \text{}\\

\noindent \textbf{Case (iii):} Finally, suppose $|d(x, t)| \leq (2c_1(l) +2(l+1) \dd + K_1e^{K_2(t-s)})I(\varepsilon)|\log(\varepsilon)|$.
 If $A_x$ occurs and $u\in [0,(l+2)I(\varepsilon)^2|\log\varepsilon|]$, 
\begin{align*}
 &   |d(W({R_u^\varepsilon}), t-u)| \\ &\leq |W({R^\varepsilon_u}) - x| + |d(x, t)| + |d(x, t)-d(x, t-u)| \\
    &\leq (2c_1(l) + 4(l+1) \dd +K_1e^{K_2(t-s)})I(\varepsilon)|\log\varepsilon| + V_0(l+2)I(\varepsilon)^2|\log(\varepsilon)|.   \end{align*}
    Therefore, reducing $\varepsilon$ if necessary, with probability at least $1-\varepsilon^{l+1}$, 
    $$|d(W(R^\varepsilon_s),t-s)| \leq (R+K_1e^{K_2(t-s)})I(\varepsilon)|\log\varepsilon|$$ for $R$ as in (\ref{defnofR}). Now set \begin{align}
\beta = (R+K_1e^{K_2(t-s)})I(\varepsilon)|\log\varepsilon|.
\label{defnbeta}\end{align}
\noindent Reduce $\varepsilon_\dd$ if necessary so that, for all $\varepsilon \in (0, \varepsilon_\dd)$, $\beta \leq c_0/2$, for $c_0$ as in (\ref{A1}). Recall that
\[T_\beta = \inf\left(\{s \in [0, (l+1)\varepsilon^2|\log\varepsilon|): |d(W_{s},t-s)|>\beta \}\cup \{t\}\right).  \]
    Note that $\PP[R^\varepsilon_s >T_\beta] \leq 2\varepsilon^{l+1}:$ by the above calculation, if $A_x$ occurs, then $T_\beta > R^\varepsilon_s$ with probability at least $\geq 1-\varepsilon^{l+1}$. Therefore, by Theorem~\ref{couplingtheoremforz}, and reducing $\varepsilon$ if necessary so that $T_\beta <t,$
\begin{align}
d(Z^-_s, t-s) &\leq B(R_s^\varepsilon) + C_0\beta s \label{boundexpE} \end{align} 
with probability at least $1-2\varepsilon^{l+1}$. Then, by monotonicity of $g_\times$ and (\ref{boundexpE}), partitioning over $\{R^\varepsilon_s > T_\beta\}$ and $A_x$, we obtain
\begin{align}\label{qeq}
&\EE_x\left[g_\times\left(\mathbb{Q}^{\varepsilon,t-s}_{d(Z^-_s, t-s)+\gamma(t-s)}+\varepsilon^l \right)\right] \\ \nonumber &\leq \EE_{d(x,t)}\left[g_\times\left(\mathbb{Q}^{\varepsilon,t-s}_{B(R^\varepsilon_s) + C_0\beta s + \gamma(t-s)}+\varepsilon^l\right)\right] + (2+6\dd) \varepsilon^{l+1}. 
\end{align}
Let $$D := \left\{\left|\mathbb{Q}^{\varepsilon,t-s}_{B(R^\varepsilon_s) + C_0\beta s + \gamma(t-s)}-\tfrac{1}{2} \right|\leq \tfrac{5}{12}\right\}. $$ 
\noindent We consider $D$ and $D^c$ separately to bound the right hand side of (\ref{qeq}). First suppose the event $D$ occurs. Then, by definition of $\beta$ (\ref{defnbeta}), \begin{align}\label{Keqn}
    &K_1e^{K_2t}I(\varepsilon)|\log\varepsilon| - \left(C_0\beta s+ K_1e^{K_2(t-s)}I(\varepsilon)|\log\varepsilon| \right)\nonumber \\
    &= \left(K_1e^{K_2(t-s)}\left(e^{K_2s}-1-C_0s\right)-C_0Rs\right) I(\varepsilon)|\log\varepsilon| \nonumber \\
    &\geq \left(K_1(K_2-C_0)-C_0R\right)sI(\varepsilon)|\log\varepsilon| \nonumber \\
    &=c_1(1)sI(\varepsilon)|\log\varepsilon| 
\end{align} 
where the final equality follows by (\ref{K2}). Reducing $\varepsilon_\dd$ if necessary so that $\varepsilon_\dd < \min(\varepsilon_1(1), \tfrac{1}{24}),$ for $\varepsilon \in (0, \varepsilon_\dd)$ we may apply Corollary \ref{cor:slope} with $$z= B(R_s^\varepsilon)+C_0\beta s+K_1e^{K_2(t-s)}I(\varepsilon)|\log\varepsilon|$$ and $$w=z+ c_1(1)sI(\varepsilon)|\log\varepsilon| \leq B(R_s^\varepsilon) + \gamma(t)$$ to give \begin{align}\label{2.63}
    \QQ^{\varepsilon,t-s}_{B(R_s^\varepsilon) + C_0\beta s + \gamma(t-s)}\mathbbm{1}_D \leq  \left(\QQ^{\varepsilon,t-s}_{B(R_s^\varepsilon) +  \gamma(t)}-\tfrac{1}{48}s\right)\mathbbm{1}_D.
\end{align} 
\noindent Now suppose the event $D^c$ occurs. Reduce $\varepsilon_\dd$ if necessary so that $$\tfrac{1}{12}<\tfrac{1}{9}-\varepsilon^l(1-b_\varepsilon)-\tfrac{b_\varepsilon}{2},$$ which implies (\ref{tay}) for $\delta = \varepsilon^l$. Thus, for $\varepsilon \in (0,\varepsilon_\dd)$, we have
\begin{align}\label{2.65}
    g_\times\left(  \QQ^{\varepsilon,t-s}_{B(R_s^\varepsilon) + C_0\beta s + \gamma(t-s)} + \varepsilon^l \right)\mathbbm{1}_{D^c} &\leq \left(g_\times\left(  \QQ^{\varepsilon,t-s}_{B(R_s^\varepsilon)+C_0\beta s +\gamma(t-s)}\right) + \tfrac{2}{3}\varepsilon^l \right)\mathbbm{1}_{D^c} \nonumber \\
    &\leq \left(g_\times\left(  \QQ^{\varepsilon,t-s}_{B(R_s^\varepsilon)+\gamma(t)}\right) + \tfrac{2}{3}\varepsilon^l \right)\mathbbm{1}_{D^c}
\end{align} where the first inequality follows by (\ref{geqnp}) and the second inequality by (\ref{Keqn}) and monotonicity of $g_\times$. Putting (\ref{2.63}) and (\ref{2.65}) into (\ref{qeq}), and since $2+6\dd \leq 8\dd$ we obtain
\begin{align*}
    & \EE_x\left[g_\times\left(\mathbb{Q}^{\varepsilon,t-s}_{d(Z^-_s, t-s)+\gamma(t-s)}+\varepsilon^l\right)\right]\\
    & \leq  \EE_{d(x,t)}\left[g_\times\left(\mathbb{Q}^{\varepsilon,t-s}_{B(R_s^\varepsilon)+\gamma(t)}-\tfrac{1}{48}s+\varepsilon^l\right)\mathbbm{1}_D\right]+  \EE_{d(x,t)}\left[\left(g_\times\left(\mathbb{Q}^{\varepsilon,t-s}_{B(R_s^\varepsilon)+\gamma(t)}\right)+\tfrac{2}{3}\varepsilon^l\right)\mathbbm{1}_{D^c}\right] \\ & 
 \ \ +8\dd\varepsilon^{l+1}\\
     &\leq \EE_{d(x,t)}\left[g_\times\left(\mathbb{Q}^{\varepsilon,t-s}_{B(R_s^\varepsilon)+\gamma(t)}\right)\right]+\tfrac{2}{3}\varepsilon^l + \varepsilon^l\mathbbm{1}_{\{\tiny\frac{1}{48}s\leq \varepsilon^l\}} +8\dd\varepsilon^{l+1}
\end{align*}
where in the final inequality, we use that $g_\times'(y)\leq \tfrac{3}{2}$ for all $y\in [0,1]$. Further reducing $\varepsilon_\dd$ if necessary so that $8\dd\varepsilon^{l+1}\leq \tfrac{1}{12}\varepsilon^l$ and $48 \varepsilon^l \leq \varepsilon^3$ for all $\varepsilon \in (0,\varepsilon_\dd)$ gives the result.
\end{proof} 
\appendix
\section{Appendix}
In this appendix, we will calculate the fixed points of $g_\times$ (Section~\ref{app_g}), prove Proposition~\ref{gronwallforZ} (Section~\ref{gronwallappendix}) and provide several supplementary calculations for the truncated subordinator $R^\varepsilon_s$ (Section~\ref{appsub}).
\subsection{Fixed points of \texorpdfstring{$g_\times$}{g}}\label{app_g}

\begin{proposition}\label{appendixfixedpointsofg}
The function $g_\times$ has fixed points $u_-, \tfrac{1}{2},$ and $u_+$, where \reqnomode \begin{align*}
    u_- &= \tfrac{1}{2}- \tfrac{\sqrt{(1-b_\varepsilon)^3(1-3b_\varepsilon)}}{2(1-b_\varepsilon)^3}, \ \  u_+ = \tfrac{1}{2}+ \tfrac{\sqrt{(1-b_\varepsilon)^3(1-3b_\varepsilon)}}{2(1-b_\varepsilon)^3}.
\end{align*}
\end{proposition}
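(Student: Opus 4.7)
My plan is to exploit the cubic structure of $g_\times$ together with the symmetry relation (\ref{symmetryofmarkedg}) to reduce the problem to solving a quadratic. First I would unpack the definitions: by (\ref{majorityvotingfunction}) we have $g(p)=3p^2-2p^3$, and by the definition of $g_\times$
\[
g_\times(q) = g\bigl((1-b_\varepsilon)q+\tfrac{b_\varepsilon}{2}\bigr),
\]
which is a cubic polynomial in $q$. So the fixed point equation $g_\times(q)=q$ amounts to finding the three roots of a cubic, and in principle one could just grind through the cubic formula, but the symmetry makes things much cleaner.

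The key observation is that (\ref{symmetryofmarkedg}) tells us $g_\times(q)-q = -\bigl[g_\times(1-q)-(1-q)\bigr]$, so $g_\times(q)-q$ is antisymmetric about $q=\tfrac{1}{2}$. Since it is a cubic, it must therefore have the form
\[
g_\times(q)-q = A\,(q-\tfrac{1}{2})^3 + B\,(q-\tfrac{1}{2})
\]
for some constants $A,B$ depending on $b_\varepsilon$. In particular $q=\tfrac{1}{2}$ is automatically a fixed point (and the direct check $g_\times(\tfrac{1}{2})=\tfrac{1}{2}$ is trivial since the inner argument is $\tfrac{1}{2}$ and $g(\tfrac{1}{2})=\tfrac{1}{2}$).

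To identify $A$ and $B$ I would simply expand $g_\times(q)$ in powers of $(q-\tfrac{1}{2})$. The coefficient of $q^3$ in $g_\times(q)$ is $-2(1-b_\varepsilon)^3$, giving $A = -2(1-b_\varepsilon)^3$. For $B$, the cleanest route is to substitute $q=0$: on the one hand $g_\times(0)=g(b_\varepsilon/2)$, which after a short algebraic simplification (using $(1-b_\varepsilon/2)$ and collecting terms) pins down $B$ in terms of $b_\varepsilon$. A parallel evaluation at $q=0$ on the right-hand side gives $-\tfrac{1}{8}A-\tfrac{1}{2}B$, and solving yields $B = \tfrac{1-3b_\varepsilon}{2}$. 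Then the remaining fixed points come from the quadratic factor
\[
(q-\tfrac{1}{2})^2 \;=\; -\tfrac{B}{A} \;=\; \frac{1-3b_\varepsilon}{4(1-b_\varepsilon)^3},
\]
whose two roots are precisely $u_\pm = \tfrac{1}{2} \pm \dfrac{\sqrt{(1-b_\varepsilon)^{3}(1-3b_\varepsilon)}}{2(1-b_\varepsilon)^{3}}$ (after rewriting $\sqrt{(1-3b_\varepsilon)/(1-b_\varepsilon)^3}$ in the form stated).

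There is no real obstacle here; the only thing to watch is the bookkeeping when evaluating $B$, and one should also verify that for $b_\varepsilon$ small enough (which is guaranteed by Assumption~\ref{assumptions2}~\ref{assumptions2_B}) the quantity $(1-b_\varepsilon)^3(1-3b_\varepsilon)$ is positive, so that $u_\pm$ are real and lie in $(0,1)$ with $0<u_-<\tfrac{1}{2}<u_+<1$, confirming that these are all three fixed points of $g_\times$.
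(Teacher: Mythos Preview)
Your proof is correct and follows essentially the same route as the paper: both shift to the variable $a=q-\tfrac{1}{2}$ and exploit the odd symmetry about $\tfrac{1}{2}$ so that only odd powers of $a$ survive, reducing the fixed-point equation to $a$ times a quadratic. The only cosmetic difference is that you invoke the symmetry relation~(\ref{symmetryofmarkedg}) abstractly to deduce the form $A(q-\tfrac{1}{2})^3+B(q-\tfrac{1}{2})$ and then recover $B$ by evaluating at $q=0$, whereas the paper simply substitutes $q=\tfrac12+a$ and expands directly (observing that the inner argument becomes $\tfrac12+(1-b_\varepsilon)a$, which makes the expansion immediate); both yield $-2(1-b_\varepsilon)^3 a^2 = -(1-3b_\varepsilon)/2$ and hence the stated $u_\pm$.
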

\begin{proof}
We aim to find $a$ such that $g_\times\left(\tfrac{1}{2} + a\right) = \tfrac{1}{2}+a$. Now, \begin{align*}
    g_\times\left(\tfrac{1}{2} + a\right) &= 3\left((1-b_\varepsilon)(\tfrac{1}{2} + a) + \tfrac{b_\varepsilon}{2}\right)^2-2\left((1-b_\varepsilon)(\tfrac{1}{2} + a) + \tfrac{b_\varepsilon}{2}\right)^3\\
    &= 2(b_\varepsilon-1)^3a^3 + \tfrac{3}{2}(1-b_\varepsilon)a + \tfrac{1}{2}.
\end{align*} Setting this equal to $\tfrac{1}{2}+a$ we obtain the quadratic equation 
$$2(1-b_\varepsilon)^3a^2 +\tfrac{3}{2}(1-b_\varepsilon) = 0,$$ 
for which $u_- - \tfrac{1}{2}$ and $u_+ - \tfrac{1}{2}$ are clearly solutions. To see that $g_\times(\tfrac{1}{2}) = \tfrac{1}{2},$ note that \[ g_\times\left(\tfrac{1}{2}\right) = g\left((1-b_\varepsilon)\tfrac{1}{2}+ \tfrac{b_\varepsilon}{2}\right) = g\left(\tfrac{1}{2}\right) = \tfrac{1}{2}. \qedhere \]
\end{proof}
\subsection{Proof of Proposition~\ref{gronwallforZ}}\label{gronwallappendix}
\noindent Before proving Proposition~\ref{gronwallforZ}, we will need the following result.
\begin{proposition} \label{regofdensity}
Let $h_t(x,\cdot)$ denote the transition density of a $\dd$-dimensional Brownian motion $W_t$ started at $x$. There exists a constant $C>0$ such that, for all $r\geq 0,$
\[ |h_r(x,y) - h_r(x,y+z) | \leq C r^{-\frac{\mathbbm{d}+1}{2}} |  z | \]
for all $x,y, z \in \mathbb{R}^\mathbbm{d}$.
\end{proposition}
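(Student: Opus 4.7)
The plan is to apply the fundamental theorem of calculus along the line segment from $y$ to $y+z$, which reduces the problem to a uniform-in-$x, y$ upper bound on the Euclidean norm of $\nabla_y h_r(x,y)$. Since
\[ h_r(x,y) = (4\pi r)^{-\dd/2}\exp\!\left(-\tfrac{|y-x|^2}{4r}\right), \]
a direct computation yields
\[ \nabla_y h_r(x,y) = -\tfrac{y-x}{2r}\, h_r(x,y). \]

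Next, I would bound $|\nabla_y h_r(x,y)|$ uniformly in $x, y \in \RR^\dd$ by a constant multiple of $r^{-(\dd+1)/2}$. Setting $u = |y-x|/\sqrt{r}$, one gets
\[ |\nabla_y h_r(x,y)| = \frac{|y-x|}{2r}(4\pi r)^{-\dd/2}\exp\!\left(-\tfrac{|y-x|^2}{4r}\right) = r^{-(\dd+1)/2}\cdot \frac{u\,e^{-u^2/4}}{2(4\pi)^{\dd/2}}. \]
Since $\sup_{u\geq 0} u\,e^{-u^2/4} < \infty$, this supplies a constant $C>0$ (independent of $x, y, r$) such that
\[ \sup_{y\in\RR^\dd}|\nabla_y h_r(x,y)| \leq C\, r^{-(\dd+1)/2}. \]

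Finally, writing
\[ h_r(x,y+z) - h_r(x,y) = \int_0^1 \nabla_y h_r(x, y+tz)\cdot z \, dt \]
and taking absolute values yields $|h_r(x,y+z)-h_r(x,y)| \leq |z|\sup_w |\nabla_y h_r(x,w)| \leq C r^{-(\dd+1)/2}|z|$, as required. The bound is only non-trivial for $r>0$ (it blows up as $r\downarrow 0$), and no part of the argument is delicate; this is a standard heat kernel gradient estimate, and I expect the only bookkeeping issue will be to track that the constant $C$ depends only on $\dd$.
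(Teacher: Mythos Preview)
Your proof is correct and follows essentially the same approach as the paper: both compute the gradient of the Gaussian kernel, bound $|\nabla_y h_r(x,y)|$ uniformly by $C r^{-(\dd+1)/2}$ via the elementary inequality $\sup_{u\ge 0} u e^{-u^2/4}<\infty$, and then pass from the gradient bound to the increment bound (the paper uses the Mean Value Theorem where you use the integral form of the fundamental theorem of calculus). There is nothing to correct.
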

\begin{proof}
Fix $r\geq 0$. Since
\[ h_r(x,y) = \frac{1}{(4 \pi r)^{\frac{\mathbbm{d}}{2}}} \exp\left(- \tfrac{| x - y |^2}{4 r} \right),\]
by the Mean Value Theorem
\begin{align*}
|h_r(x,y) - h_r(x,y+z) | \leq \frac{1}{(4 \pi r)^{\frac{\mathbbm{d}}{2}}} | z | \left| \nabla \exp\left(-\tfrac{| \xi | ^2}{4r}\right) \right|
\end{align*}
for some $\xi$ on the line segment between $y-x$ and $y+z-x$. Now,
\begin{align*}
\left| \nabla \exp\left(-\tfrac{|\xi|^2}{4r}\right) \right| = \frac{| \xi |}{2r} \exp\left(-\tfrac{| \xi |^2}{4r} \right) \leq {r^{-\frac{1}{2}}} 
\end{align*}
 since $x e^{-x^2}\leq 1$ for all $x\in \RR$. The result follows by setting $C:= (4\pi)^{-\frac{\dd}{2}}.$
\end{proof} 
\noindent We now prove Proposition~\ref{gronwallforZ}.\begin{proof}[Proof of Proposition~\ref{gronwallforZ}]\reqnomode
We prove (\ref{zplus}) and note that (\ref{zminuss}) follows by identical arguments. Denote the standard Euclidean distance in $\RR^\dd$ as $|\cdot|$ throughout. To begin, we will construct a coupling of $\boldsymbol{Z}^+(t)$ to a historical ternary branching subordinated Brownian motion, $\Ytrunc(t)$. Define $$ u(t,x):=  \PP_x[\mathbb{V}^\times_p(\boldsymbol{Z}^+(t))=1] \ \text{ and } \ v(t,x):= \PP_x[\mathbb{V}^\times_p(\boldsymbol{W}_{R^\varepsilon}(t))=1].$$
Abuse notation and let $\tau$ denote the time of the first branching event in both $\boldsymbol{Z}^+(t)$ and $\Ytrunc(t)$. By the Markov property at time $\tau$, $u$ and $v$ can be written as 
\begin{align}\label{cafe}u(t,x) &= \EE_x[g_\times(u(t-\tau,Z^+_\tau)) \mathbbm{1}_{\tau \leq t}] + \EE_x[p(Z^+_t)\mathbbm{1}_{\tau > t}]\\
\label{hotchoccy} v(t,x) &= \EE_x[g_\times(v(t-\tau, W(R_\tau^\varepsilon))) \mathbbm{1}_{\tau \leq t}] + \EE_x[p(W(R_t^\varepsilon)) \mathbbm{1}_{\tau > t}].\end{align}
To bound the difference of $u$ and $v$, we control the difference of the first terms and second terms in (\ref{cafe}) and (\ref{hotchoccy}) separately. First, since $\tau\sim \mathit{Exp}(\varepsilon^{-2}),$
\begin{align}\label{red}
\EE_x[p(Z^+_t)\mathbbm{1}_{\tau > t}] - \EE_x[p(W(R_t^\varepsilon)) \mathbbm{1}_{\tau > t}] \leq  \PP[t \leq \tau] \leq   e^{-\frac{t}{\varepsilon^2}}.
\end{align}
To bound the difference of the first terms in (\ref{cafe}) and (\ref{hotchoccy}), 
set $t^* := k\varepsilon^2|\log(\varepsilon)|$ and $\delta_\varepsilon:=D_0(k+2)I^2(\varepsilon)|\log(\varepsilon)|$ for $D_0$ as in Theorem~\ref{teo:subestimate}. Denote the transition density of $W(R^\varepsilon_t)$ started at $x$ by $f_t(x,\cdot)$. 
Then, since $g_\times$ is bounded above by one and, by definition of $Z^+_t$, \allowdisplaybreaks
\begin{align}
 \nonumber &\EE_x\left[g_\times(u(t-\tau,Z^+_\tau))\mathbbm{1}_{\tau \leq t}\right] \\ \nonumber & \leq\EE_{x}\left[g_\times(u(t-\tau, Z^+_\tau))\mathbbm{1}_{\tau \leq t \wedge t^*}\right] + \PP[\tau>t^*] \\ \nonumber & \leq \sup_{\substack{| w | \leq \delta_\varepsilon}} \EE_{x}\left[g_\times(u(t-\tau, W(R^\varepsilon_\tau)+w))\mathbbm{1}_{\tau \leq t \wedge t^*}\right]  + \varepsilon^k \\ \nonumber & = \sup_{\substack{| w | \leq \delta_\varepsilon}} \mathbb{E}\left[\int_{\RR^\dd} g_\times(u(t-\tau,z+w)) f_\tau(x,z) dz \mathbbm{1}_{\tau \leq t \wedge t^*}\right] + \varepsilon^k \\ \nonumber &=  \sup_{\substack{| w | \leq \delta_\varepsilon}} \mathbb{E}\left[\int_{\RR^\dd} g_\times(u(t-\tau,z)) f_\tau(x,z- w) dz \mathbbm{1}_{\tau \leq t \wedge t^*}\right]  + \varepsilon^k  \\ \nonumber & \leq \mathbb{E}\left[\int_{\RR^\dd} g_\times(u(t-\tau,z)) f_\tau(x,z) dz \mathbbm{1}_{\tau \leq t \wedge t^*}\right] + \varepsilon^k \\ \label{last}& \ \ \  +  \sup_{\substack{| w | \leq \delta_\varepsilon}} \EE\left[\int_{\RR^\dd} g_\times(u(t-\tau,z))[f_\tau(x,z-w)-f_\tau(x,z)] \mathbbm{1}_{\tau \leq t \wedge t^*} dz\right]. 
\end{align} 
Consider the $\dd$-dimensional ball $\B_x\left(\tau^{{\frac{1}{\alpha}}}+\delta_\varepsilon\right) :=\left\{ z \in \mathbb{R}^\mathbbm{d} : | z-x | \leq \tau^{\frac{1}{\alpha}} +\delta_\varepsilon\right\}$. To ease notation, let $\B_x := \B_x\left(\tau^{{\frac{1}{\alpha}}}+\delta_\varepsilon\right)$. 
Here, $\B_x$ has been chosen so that, for $z \in \B_x$, the difference $|f_\tau(x, z-w) - f_\tau(x,z)|$ is sufficiently small, and for $z\in \B_x^{\mathsf{c}}$ (the complement of $\B_x$ in $\RR^\dd$), the probability of the subordinated Brownian motion jumping from $x$ to $z$ by time $\tau$ is sufficiently small. Then, to bound the last term in (\ref{last}) we use that $g(u(t-\tau, z))$ is bounded by $1$ to get
\begin{align}
\nonumber & \sup_{\substack{| w | \leq \delta_\varepsilon}} \EE\left[\int_{\RR^\dd} g(u(t-\tau, z))[f_\tau(x,z-w)-f_\tau(x,z)] \mathbbm{1}_{\tau \leq t \wedge t^*} dz\right] \\ \nonumber & \leq \sup_{\substack{| w | \leq \delta_\varepsilon}} \EE\left[\int_{\RR^\dd} |f_\tau(x,z-w)-f_\tau(x,z)| \mathbbm{1}_{\tau \leq t \wedge t^*}  dz \right] \\ \nonumber & \leq \sup_{\substack{| w | \leq \delta_\varepsilon}} \EE\left[\int_{\B_x} |f_\tau(x,z-w)-f_\tau(x,z)| dz \mathbbm{1}_{\tau \leq t \wedge t^*} \right] \\ & \label{twointegrals}\ \ \ + \sup_{| w| \leq \delta_\varepsilon} \EE\left[\int_{\B_x^{\mathsf{c}}} |f_\tau(x,z-w)-f_\tau(x,z)| dz  \mathbbm{1}_{\tau \leq t \wedge t^*} \right].
\end{align}
To bound the first term of (\ref{twointegrals}), first note that
 \[ \PP[\tau < \delta_\varepsilon^{\alpha}] \leq 1-\exp\left( -{\delta_\varepsilon^\alpha}{\varepsilon^{-2}} \right) \leq {I(\varepsilon)^{2\alpha}}{\varepsilon^{-2}} |\log(\varepsilon)|^\alpha. \]
 By Proposition~\ref{regofdensity} (noting that $f_\tau(x, \cdot) \equiv h_{R^\varepsilon_\tau}(x,\cdot)$ for $h$ the transition density of a $\dd$-dimensional Brownian motion) using that the first integral is bounded above by two, and allowing the constant $C$ to change from line to line,
\allowdisplaybreaks  \begin{align}\label{chillip}
\nonumber & \sup_{| w | \leq \delta_\varepsilon} \EE\left[\int_{\B_x} |f_\tau(x,z-w)-f_\tau(x,z)|  dz \mathbbm{1}_{\tau \leq t \wedge t^*} \right] \\ \nonumber & \leq \sup_{| w | \leq \delta_\varepsilon} C \EE\left[ \int_{\B_x} | w | (R_\tau^\varepsilon)^{-\frac{\dd+1}{2}}  dz\mathbbm{1}_{\tau \leq t \wedge t^*} \mathbbm{1}_{\tau \geq \delta^\alpha_\varepsilon}\right] + 2{I(\varepsilon)^{2\alpha}}{\varepsilon^{-2}} |\log(\varepsilon)|^\alpha \\ \nonumber &\leq C  \delta_\varepsilon  \EE\left[V(\B_x)  (R_\tau^\varepsilon)^{-\frac{\dd+1}{2}} \mathbbm{1}_{\tau \geq \delta^\alpha_\varepsilon} \right] + 2{I(\varepsilon)^{2\alpha}}{\varepsilon^{-2}} |\log(\varepsilon)|^\alpha\\  & \leq C  \delta_\varepsilon \mathbb{E}\left[ (2\tau)^{\frac{\dd}{\alpha}} (R_\tau^\varepsilon)^{-\frac{\dd+1}{2}} \mathbbm{1}_{\tau \geq \delta^\alpha_\varepsilon}\right] +  2{I(\varepsilon)^{2\alpha}}{\varepsilon^{-2}} |\log(\varepsilon)|^\alpha \end{align} where $V(\B_x)$ denotes the volume of $\B_x$ which is proportional to $(\tau^{\frac{1}{\alpha}}+\delta_\varepsilon)^{\mathbbm{d}}$, and, conditional on $\tau \geq \delta^{\alpha}_\varepsilon$, is bounded above by $(2\tau)^{\frac{\dd}{\alpha}}$. In Lemma~\ref{newLemma:NegMomentsForR}, we will bound the $-p$-th moments of $(R^\varepsilon_s)_{s\geq0}$. Using this, and letting $D_1, D_2$ change from line to line, we obtain \begin{align*} \EE\left[\tau^{\frac{\dd}{\alpha}}(R_\tau^\varepsilon)^{-\frac{\dd+1}{2}} \mathbbm{1}_{\tau \geq \delta^\alpha_\varepsilon} \right] &\leq \EE\left[\tau^{\frac{\dd}{\alpha}}(R_{\tau}^\varepsilon)^{-\frac{\dd+1}{2}} \right]\\
&\leq D_1 \EE\left[\tau^{\frac{\dd}{\alpha}}e^\tau\right] + D_2\EE\left[\tau^{-\frac{1}{\alpha}}e^\tau\right]\\
&\leq D_1 \varepsilon^{\frac{2\dd}{\alpha}} + D_2 \varepsilon^{-\frac{2}{\alpha}}.\end{align*} Substituting this back into (\ref{chillip}), and again letting the constants change from line to line, we obtain \begin{align} \nonumber &\sup_{| w | \leq \delta_\varepsilon} \EE\left[\int_{\B_x} |f_\tau(x,z-w)-f_\tau(x,z)|  dz \mathbbm{1}_{\tau\leq t \wedge t^*} \right] \\  &  \leq C \delta_\varepsilon\left(\varepsilon^{\frac{2\dd}{\alpha}} + \varepsilon^{-\frac{2}{\alpha}}\right)+ 2{I(\varepsilon)^{2\alpha}}{\varepsilon^{-2}} |\log(\varepsilon)|^\alpha \nonumber \\ 
&= C_1\varepsilon^{\frac{2\dd}{\alpha}}I(\varepsilon)^{2}|\log(\varepsilon)| + C_2 {I(\varepsilon)^{2}}{\varepsilon^{-\frac{2}{\alpha}}}|\log\varepsilon| + 2{I(\varepsilon)^{2\alpha}}{\varepsilon^{-2}} |\log(\varepsilon)|^\alpha\nonumber \\
& \leq C_2 {I(\varepsilon)^{2}}{\varepsilon^{-\frac{2}{\alpha}}}|\log\varepsilon|,\label{lasteqn}
\end{align} where the final inequality follows by Assumptions~\ref{assumptions2}~(B)-(C) and we allow $C_2$ to change from line to line. By Assumption~\ref{assumptions2}~(C), (\ref{lasteqn}) goes to $0$ with $\varepsilon$.

To bound the second term in (\ref{twointegrals}) we use \cite[Theorem 1.1]{chen:2003}, which provides upper bounds on the transition density of subordinated Brownian motion, with truncated stable subordinator that has truncation level independent of $\varepsilon$. To apply this result, we rewrite $W(R^\varepsilon_s)$ in terms of a $1$-truncated subordinated Brownian motion as follows.
Let $(U_{s}^{a})_{s \geq 0}$ denote an $\frac{\alpha}{2}$-stable subordinator with truncation level $a$ (and no speed change). Let $\stackrel{D}{=}$ denote equality in distribution. Then $$R_s^\varepsilon \stackrel{D}{=} U_{s I(\varepsilon)^{\alpha-2}}^{I(\varepsilon)^2} \stackrel{D}{=}I(\varepsilon)^2 U_{s I(\varepsilon)^{-2}}^{1}$$ where the first equality follows by definition and the second equality follows by showing that, if $\Psi$ and $\Psi'$ are the characteristic exponents of $(U^{I(\varepsilon)^2}_s)_{s\geq 0}$ and $(U^1_s)_{s\geq 0}$ respectively, then $I(\varepsilon)^\alpha \Psi(\theta) = \Psi'(\theta I(\varepsilon)^2),$ which follows easily from the L\'evy-Khintchine formula.
Then, by the scaling property of the Brownian motion, \reqnomode
\begin{align} W(R_s^\varepsilon) \stackrel{D}{=} I(\varepsilon) W(U_{s I(\varepsilon)^{-2}}^1). \label{scalingtruncBM} \end{align}
Denote the transition density of $(W(U^{1}_t))_{t \geq 0}$ by $\hat{f}_t(x,y)$. By (\ref{scalingtruncBM}), $\hat{f}_t(x,\cdot)$ is related to the transition density of $W(R^\varepsilon_s)$ by
\[ f_t(x,y) = \hat{f}_{I(\varepsilon)^{-2}t}(I(\varepsilon)^{-1}x,I(\varepsilon)^{-1}y). \]
Therefore the second term in (\ref{twointegrals}) can be rewritten as
\[ \sup_{| w| \leq \delta_\varepsilon} \EE\left[\int_{\B_x^{\mathsf{c}}} \left|\hat{f}_{I(\varepsilon)^{-2}\tau }(I(\varepsilon)^{-1}x,I(\varepsilon)^{-1}(z-w))-\hat{f}_{I(\varepsilon)^{-2}\tau }(I(\varepsilon)^{-1}x,I(\varepsilon)^{-1}z)\right| dz  \mathbbm{1}_{\tau \leq t \wedge t^*}\right] \]
which, by \cite[Theorem 1.1]{chen:2003}, is bounded above by 

\begin{align}
\nonumber & \sup_{| w| \leq \delta_\varepsilon} \EE\left[\int_{\B_x^{\mathsf{c}}} \frac{D \tau I(\varepsilon)^{-2}}{((z - w) I(\varepsilon)^{-1} )^{\mathbbm{d}+\alpha}} dz \right] +\EE\left[\int_{\B_x^{\mathsf{c}}} \frac{D\tau I(\varepsilon)^{-2}}{(z I(\varepsilon)^{-1} )^{\mathbbm{d}+\alpha}} dz \right] \\ \nonumber &  =  \sup_{| w| \leq \delta_\varepsilon} \EE\left[\int_{\B_x^{\mathsf{c}}} \frac{D\tau I(\varepsilon)^{\alpha-1}}{(z-w)^{\mathbbm{d}+\alpha}} dz \right]+ \EE\left[\int_{\B_x^{\mathsf{c}}} \frac{D \tau I(\varepsilon)^{\alpha-1}}{z^{\mathbbm{d}+\alpha}} dz \right]   \\ 
\nonumber & \leq D I(\varepsilon)^{\alpha-1} \mathbb{E}\left[ \tau \int_{\tau^{\frac{1}{\alpha}}}^\infty \frac{1}{z^{\alpha+1}} dz \right]  \\ 
\nonumber & \leq  D I(\varepsilon)^{\alpha-1} \mathbb{E}[\tau (\tau^{-1})] \\  &\label{anotherbound} = D I(\varepsilon)^{\alpha-1}, \end{align} 
for some $D>0$ that we allow to change from line to line. Here we have applied 
the change of variables $z-w\mapsto z$ in the third line. Note that, since $\alpha>1$ the last quantity goes to $0$ with $\varepsilon$. Recall from (\ref{defnofF}) that $$F(\varepsilon) = {I(\varepsilon)^{2}}{\varepsilon^{-\frac{2}{\alpha}}}|\log\varepsilon| + I(\varepsilon)^{\alpha-1}. $$ Then, choosing $m>0$ sufficiently large and $\varepsilon$ sufficiently small so that $$mF(\varepsilon) \geq  \varepsilon^k  + DI(\varepsilon)^{\alpha-1} + C_2 {I(\varepsilon)^{2}}{\varepsilon^{-\frac{2}{\alpha}}}|\log\varepsilon|,$$ 
by (\ref{anotherbound}) and (\ref{lasteqn}) we can bound (\ref{twointegrals}) to obtain
\begin{align} &\EE_x\left[g_\times(u(t-\tau, Z^+_\tau))\mathbbm{1}_{\tau \leq t}\right]\nonumber\\ &\leq \mathbb{E}\left[\int_{\RR^\dd} g_\times(u(t-\tau, z)) f_\tau(x,z) dz \mathbbm{1}_{\tau \leq t \wedge t^*}\right] + mF(\varepsilon). \label{pink}\end{align}
Finally, we note that \begin{align*}\EE_x\left[g_\times(u(t-\tau, W(R^\varepsilon_\tau))\mathbbm{1}_{\tau\leq t}\right] &\geq \EE_x\left[g_\times(u(t-\tau, W(R^\varepsilon_\tau))\mathbbm{1}_{\tau\leq t\wedge t^*}\right] \\ &= \EE\left[\int_{\RR^\dd} g_\times(u(t-\tau, z))f_\tau(x,z)dz \mathbbm{1}_{\tau\leq t\wedge t^*}\right],\end{align*} which, together with (\ref{pink}) and (\ref{red}) gives 
\begin{align*}
&u(t,x)-v(t,x) \\ &\leq \mathbb{E}\left[\int g_\times(u(t-\tau, z)) f_\tau(x,z) dz \mathbbm{1}_{\tau \leq t \wedge t^*}\right]-\mathbb{E}\left[\int g_\times(v(t-\tau, z)) f_\tau(x,z) dz \mathbbm{1}_{\tau \leq t \wedge t^*}\right] \\ & \ \  +e^{-\frac{t}{\varepsilon^2}}+ mF(\varepsilon).
\end{align*}

\noindent Using the same approach we can obtain the lower bound on $u(t,x) - v(t,x).$ Namely, by analogy with (\ref{last}), \begin{align*}
    &\EE_x\left[g_\times(u(t-\tau,Z^+_t))\mathbbm{1}_{\tau \leq t}\right] \\ &\geq  \EE\left[\int_{\RR^\dd} g(u(t-\tau, z))[f_\tau(x,{z}-{w})-f_\tau(x,{z})]\, \mathbbm{1}_{\tau \leq t \wedge t^*} dz\right]\\  & \ \ +\mathbb{E}\left[g(u(t-\tau, W(R^{\varepsilon}_\tau))) \mathbbm{1}_{\tau \leq t \wedge t^*}\right] \\
    &\geq  - \sup_{\substack{| w | \leq \delta_\varepsilon}}\left| \EE\left[\int_{\RR^\dd} g(u(t-\tau, z))[f_\tau(x,{z}-{w})-f_\tau(x,{z})]\, \mathbbm{1}_{\tau \leq t \wedge t^*} dz\right]\right|\\ & \ \ +\mathbb{E}\left[g(u(t-\tau, W(R^{\varepsilon}_\tau))) \mathbbm{1}_{\tau \leq t \wedge t^*}\right]
\end{align*} and the final term can be bounded identically as before to give us that
\begin{align*}
&u(t,x)-v(t,x) \\&\geq \mathbb{E}\left[\int g_\times(u(t-\tau, z)) f_\tau(x,z) dz \mathbbm{1}_{\tau \leq t \wedge t^*}\right]-\mathbb{E}\left[\int g_\times(v(t-\tau, z)) f_\tau(x,z) dz \mathbbm{1}_{\tau \leq t \wedge t^*}\right] \\ & \ \ -\left( e^{-\frac{t}{\varepsilon^2}}+mF(\varepsilon) \right).
\end{align*}
Therefore, using that $g_\times$ is Lipschitz with constant $\tfrac{3}{2}$, we obtain
\begin{align*} 
&|u(t,x)-v(t,x)| \\ &\leq \EE\left[\left|\int (g(u(t-\tau, z))-g(v(t-\tau,z)))f_\tau(x,z) dz \right|\mathbbm{1}_{\tau \leq t}\right]+ e^{-\frac{t}{\varepsilon^2}}+mF(\varepsilon)\\ & \leq \tfrac{3}{2} \EE\left[|u(t-\tau,W(R^\varepsilon_\tau))-v(t-\tau, W(R^\varepsilon_\tau)) |\mathbbm{1}_{\tau \leq t}\right]  +  e^{-\frac{t}{\varepsilon^2}}+mF(\varepsilon) \\ & \leq \tfrac{3}{2} \int_0^t \lVert u(\rho, \cdot)-v(\rho,\cdot) \rVert_{\infty} e^{-(t-\rho) \varepsilon^{-2}} \varepsilon^{-2} d\rho +  e^{-\frac{t}{\varepsilon^2}}+mF(\varepsilon).
\end{align*}
Finally, using the Gronwall's inequality \cite{movljankulov1972ob} (see also \cite[Theorem 15]{dragomir2002some}) we deduce that
\begin{align*}
\lVert u(t,\cdot) - v(t,\cdot) \rVert_\infty &\leq \left( e^{-\frac{t}{\varepsilon^2}}+mF(\varepsilon)\right) \exp\left( \tfrac{3}{2} \int_0^t \exp(-u \varepsilon^{-2}) \varepsilon^{-2} du \right), \\ & \leq \left( e^{-\frac{t}{\varepsilon^2}}+ mF(\varepsilon)\right) \exp\left( \tfrac{3}{2} \right),
\end{align*}
which gives the desired bound by choosing an appropriate $m_1, m_2>0$. 
\end{proof}

\subsection{Truncated subordinator calculations}\label{appsub} Throughout this section, let $(R^\varepsilon_s)_{s\geq 0}$ be the $I(\varepsilon)^2$-truncated $\tfrac{\alpha}{2}$-stable subordinator with L\'evy measure given by \begin{align*}\tfrac{\alpha}{2}\left(\tfrac{2-\alpha}{\alpha}\right)^{\frac{\alpha}{2}} I(\varepsilon)^{\alpha-2}{y^{-1-\frac{\alpha}{2}}} \mathbbm{1}_{\left\{0\leq y\leq \frac{2-\alpha}{\alpha} I(\varepsilon)^2\right\}}dy.\end{align*} 
Denote by $\PP$ the probability measure under which $(R^\varepsilon_s)_{s\geq0}$, started at $R^\varepsilon_0 = 0$ has this distribution, and let $\EE$ denote the corresponding expectation. For all $s, \lambda \geq 0$ the Laplace transform $\phi(\lambda):= \EE\left[\exp\left(-\lambda R^\varepsilon_s\right)\right]$ of $R^\varepsilon_s$ is \begin{align}\label{laplace_transform_R}
      \phi(\lambda) &= \exp\left(\tfrac{\alpha}{2}\left(\tfrac{2-\alpha}{\alpha}\right)^{\frac{\alpha}{2}} I(\varepsilon)^{\alpha-2}s\int_0^{\frac{2-\alpha}{\alpha}I(\varepsilon)^2} \frac{e^{-\lambda y}-1}{y^{\frac{\alpha}{2}+1}}dy\right).
\end{align} The following lemma will enable us to show, in Proposition~\ref{boundonsubordinator}, that $R^\varepsilon_s$ is close to $s$ for small times $s$, which is a crucial component of our proof of Theorem~\ref{teo:subestimate}.
\begin{lemma}\label{expectedvalueofR}
Let $(R^\varepsilon_s)_{s\geq 0}$ be as above. For all $s\geq 0$, $\EE[R_s^\varepsilon] = s. $
\end{lemma}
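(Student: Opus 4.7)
The plan is to compute $\EE[R^\varepsilon_s]$ directly from the Lévy measure. Since $(R^\varepsilon_s)_{s\geq 0}$ is a pure-jump subordinator (started at $0$, with no drift) whose Lévy measure is supported on the bounded interval $\left[0,\tfrac{2-\alpha}{\alpha}I(\varepsilon)^2\right]$, its jumps have finite first moment and the standard formula $\EE[R^\varepsilon_s] = s\int_0^\infty y\,\mu^\varepsilon(dy)$ applies, where $\mu^\varepsilon$ is the Lévy measure displayed just above the lemma. Equivalently, one could differentiate the Laplace transform~(\ref{laplace_transform_R}) at $\lambda=0$: since $\EE[R^\varepsilon_s] = -\phi'(0)$ and differentiation under the integral is immediate from the truncation, the same expression is obtained. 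Either way, the problem reduces to verifying that $\int_0^\infty y\,\mu^\varepsilon(dy) = 1$.

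The execution is a single explicit integration. First I would write
\[
\EE[R^\varepsilon_s] \;=\; s \cdot \tfrac{\alpha}{2}\left(\tfrac{2-\alpha}{\alpha}\right)^{\alpha/2} I(\varepsilon)^{\alpha-2}\int_0^{\frac{2-\alpha}{\alpha}I(\varepsilon)^2} y^{-\alpha/2}\,dy.
\]
Since $\alpha \in (1,2)$ we have $-\alpha/2 > -1$, so the integral converges at $0$ and equals $\tfrac{2}{2-\alpha}\left(\tfrac{2-\alpha}{\alpha}I(\varepsilon)^2\right)^{1-\alpha/2}$. Substituting and collecting terms, three cancellations occur simultaneously: the numerical prefactor becomes $\tfrac{\alpha}{2}\cdot\tfrac{2}{2-\alpha}=\tfrac{\alpha}{2-\alpha}$; the two powers of $\tfrac{2-\alpha}{\alpha}$ combine to $\left(\tfrac{2-\alpha}{\alpha}\right)^{\alpha/2+1-\alpha/2}=\tfrac{2-\alpha}{\alpha}$, which is the reciprocal of the coefficient just computed; and the powers of $I(\varepsilon)$ combine to $I(\varepsilon)^{\alpha-2+2-\alpha}=1$. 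All factors collapse and we are left with $\EE[R^\varepsilon_s]=s$.

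There is no real obstacle: this statement is precisely what the normalising constant $\sigma_\alpha$ from (\ref{defn_sig_alpha}) was engineered to guarantee. Indeed, the identical arithmetic appears earlier in the paper in the computation showing that $\tau^\times_i \sim \mathit{Exp}(I(\varepsilon)^{-2})$, where the authors remark ``we chose $\sigma_\alpha$ so that the above equality would hold''; the same choice of $\sigma_\alpha$ makes the first moment of $\mu^\varepsilon$ equal to one, which is the content of the lemma.
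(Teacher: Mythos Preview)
Your proof is correct and is considerably more direct than the paper's. The paper also starts from the Laplace transform identity $\EE[R^\varepsilon_s]=-\phi'(0)$, but instead of passing straight to $\int y\,\mu^\varepsilon(dy)$ it first integrates by parts to rewrite $\phi(\lambda)$ in terms of the lower incomplete gamma function, then differentiates that expression and evaluates two separate limits as $\lambda\to 0$ before recombining. Your route---quoting the standard first-moment formula for a driftless subordinator with compactly supported L\'evy measure and doing one elementary power integral---bypasses the incomplete gamma machinery entirely and arrives at the same cancellation in two lines. Both approaches are of course computing the same quantity, but yours makes transparent that the lemma is nothing more than the statement $\int y\,\mu^\varepsilon(dy)=1$, which, as you note, is exactly what the constant $\sigma_\alpha$ was tuned to achieve.
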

\begin{proof}\reqnomode
The expected value of $R^\varepsilon_s$ can be calculated explicitly by considering the derivative of its Laplace transform, namely $\EE[R_s^\varepsilon] = -\frac{d}{d\lambda}\phi(\lambda)|_{\lambda =0}$. Denote $I:= I(\varepsilon)$ throughout. Fix $\lambda, s \geq 0$. Using integration by parts, \begin{align*}
    &\int_0^{\frac{2-\alpha}{\alpha}I^2} \frac{e^{-\lambda y}-1}{y^{\frac{\alpha}{2}+1}}dy = -\tfrac{2}{\alpha}\left(\tfrac{2-\alpha}{\alpha}\right)^{-\frac{\alpha}{2}}I^{-\alpha}(e^{-\frac{2-\alpha}{\alpha}\lambda I^2}-1) - \tfrac{2}{\alpha}\lambda^{\frac{\alpha}{2}} \gamma\left(1-\tfrac{\alpha}{2}, \tfrac{2-\alpha}{\alpha}\lambda I^2\right)
\end{align*} where $\gamma(s,x) := \int_0^x t^{s-1}e^{-t}dt$ is the lower incomplete gamma function. So, using (\ref{laplace_transform_R}), $\phi(\lambda)$ can be written as
\begin{align*} \phi(\lambda) = \exp\left(- I^{-2}s\left(e^{- \frac{2-\alpha}{\alpha}\lambda I^2}-1\right) - \left({\tfrac{2-\alpha}{\alpha}}\right)^{\frac{\alpha}{2}}I^{\alpha-2}\lambda^{\frac{\alpha}{2}} s\gamma\left(1-\tfrac{\alpha}{2}, \tfrac{2-\alpha}{\alpha}\lambda I^2\right) \right).\end{align*}
By differentiating this quantity with respect to $\lambda$, we obtain \begin{align*}
 &   \EE[R_s^\varepsilon] \\ &= \left. - \tfrac{2-\alpha}{\alpha}s + \left({\tfrac{2-\alpha}{\alpha}}\right)^{\frac{\alpha}{2}}I^{\alpha-2}s\left(\tfrac{\alpha}{2}\lambda^{\frac{\alpha}{2}-1} \gamma\left(1-\tfrac{\alpha}{2}, \tfrac{2-\alpha}{\alpha}\lambda I^2\right)+ \lambda^{\frac{\alpha}{2}} \frac{d\gamma }{d \lambda}\left(1-\tfrac{\alpha}{2}, \tfrac{2-\alpha}{\alpha}\lambda I^2\right)\right) \right|_{\lambda=0}.\end{align*} 

This can be calculated by considering the following identities for $\gamma$. First, by the definition of $\gamma$ and a change of variables\begin{align*}\gamma\left(1-\tfrac{\alpha}{2}, \tfrac{2-\alpha}{\alpha}\lambda I^2\right) = I^{2-\alpha} \int_0^{\frac{2-\alpha}{\alpha}\lambda} z^{-\frac{\alpha}{2}}e^{-zI^2}dz.\end{align*} Therefore \begin{align*} \lambda^{\frac{\alpha}{2}}\frac{d\gamma}{d\lambda} \left.\left(1-\tfrac{\alpha}{2}, \tfrac{2-\alpha}{\alpha}\lambda I^2\right)\right|_{\lambda=0} &=\left. I^{2-\alpha} \left(\tfrac{2-\alpha}{\alpha}\right)^{1-\frac{\alpha}{2}}e^{-\frac{2-\alpha}{\alpha}\lambda I^2}\right|_{\lambda=0}\\
&=  I^{2-\alpha} \left(\tfrac{2-\alpha}{\alpha}\right)^{1-\frac{\alpha}{2}}.\end{align*} Under a different change of variables, $\gamma$ also satisfies \begin{align*} \left.\lambda^{\frac{\alpha}{2}-1}\gamma\left(1-\tfrac{\alpha}{2}, \tfrac{2-\alpha}{\alpha}\lambda I^2\right)\right|_{\lambda=0} &= \left(\tfrac{2-\alpha}{{\alpha}}\right)^{1-\frac{\alpha}{2}}\int_0^{I^2}\left.z^{-\frac{\alpha}{2}}e^{-\frac{2-\alpha}{\alpha}\lambda z}dz\right|_{\lambda=0} \\ 
&=\left(\tfrac{2-\alpha}{{\alpha}}\right)^{1-\frac{\alpha}{2}} \left(\tfrac{2}{2-\alpha}\right)I^{2-\alpha}.\end{align*} Putting this all together we obtain \begin{align*} \EE[R_s^\varepsilon] &= - \tfrac{2-\alpha}{\alpha}s + \left(\tfrac{2-\alpha}{\alpha}\right)^{\frac{\alpha}{2}}I^{\alpha-2}s\left(\tfrac{\alpha}{2} \left(\tfrac{2-\alpha}{\alpha}\right)^{1-\frac{\alpha}{2}}\left(\tfrac{2}{2-\alpha}\right)I^{2-\alpha}+ \left(\tfrac{2-\alpha}{\alpha}\right)^{1-\frac{\alpha}{2}}I^{2-\alpha}\right) \\
&= s.\qedhere\end{align*} 
\end{proof}
\noindent With this, we can now prove Proposition~\ref{boundonsubordinator}. 
\begin{proposition}\label{boundonsubordinator}
Let $k\in \NN$ and $(R^\varepsilon_s)_{s\geq 0}$ be as above. There exists $\varepsilon_k>0$ such that, for all $\varepsilon\in (0,\varepsilon_k)$ and $s\leq \varepsilon^2|\log\varepsilon|$, $$\PP\left[|R^\varepsilon_s-s| \geq (k+1)I(\varepsilon)^2|\log\varepsilon|\right]\leq \varepsilon^k.$$
\end{proposition}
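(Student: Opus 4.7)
The plan is to establish Proposition~\ref{boundonsubordinator} by a Bernstein-type concentration bound, exploiting that $R^\varepsilon_s$ is a pure-jump subordinator with jump sizes capped at $M_\varepsilon:=\tfrac{2-\alpha}{\alpha}I(\varepsilon)^2$ and, by Lemma~\ref{expectedvalueofR}, mean $s$. I would first control the centred exponential moments via the L\'evy--Khintchine formula: for $\lambda\geq 0$,
\begin{align*}
    \log \EE\bigl[e^{\pm\lambda(R^\varepsilon_s-s)}\bigr]
    = s\,\sigma_\alpha I(\varepsilon)^{\alpha-2}\int_0^{M_\varepsilon}\!\bigl(e^{\pm\lambda y}-1\mp \lambda y\bigr)\,\tfrac{\alpha}{2\Gamma(1-\alpha/2)}\,y^{-1-\alpha/2}\,dy.
\end{align*}
For the negative sign the elementary inequality $e^{-x}-1+x\leq \tfrac{x^2}{2}$ ($x\geq 0$) converts the integrand to $\lambda^2 y^2/2$; for the positive sign, $e^{x}-1-x\leq \tfrac{x^2}{2}e^{x}$ ($x\geq 0$) does the same up to a uniform multiplicative factor $e^{\lambda M_\varepsilon}$. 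Evaluating the resulting $\int_0^{M_\varepsilon}y^{1-\alpha/2}\,dy$ and simplifying using the definition (\ref{defn_sig_alpha}) of $\sigma_\alpha$ then yields Gaussian-type upper bounds
\begin{align*}
    \log\EE\bigl[e^{-\lambda(R^\varepsilon_s-s)}\bigr]\leq K_\alpha\, s\lambda^2 I(\varepsilon)^2, \qquad
    \log\EE\bigl[e^{+\lambda(R^\varepsilon_s-s)}\bigr]\leq K_\alpha\, s\lambda^2 I(\varepsilon)^2\, e^{\lambda M_\varepsilon},
\end{align*}
for an explicit $\alpha$-dependent constant $K_\alpha$.

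I would then apply the Chernoff inequality. For the upper tail, take $\lambda=1/M_\varepsilon$: the linear term becomes $\lambda m=(k+1)\tfrac{\alpha}{2-\alpha}|\log\varepsilon|$ with $m=(k+1)I(\varepsilon)^2|\log\varepsilon|$, while the quadratic correction $eK_\alpha s/M_\varepsilon$ is of order $s/I(\varepsilon)^2$, which is $o(1)$ by Assumption~\ref{assumptions2}~\ref{assumptions2_B} since $s\leq \varepsilon^2|\log\varepsilon|$. Because $\alpha\in(1,2)$ the prefactor $\alpha/(2-\alpha)$ strictly exceeds $1$, so $(k+1)\alpha/(2-\alpha)>k$ and, for $\varepsilon$ small,
\begin{align*}
    \PP[R^\varepsilon_s-s\geq m]\leq \exp\bigl(-(k+1)\tfrac{\alpha}{2-\alpha}|\log\varepsilon|+o(1)\bigr)\leq \tfrac12\varepsilon^k.
\end{align*}
For the lower tail no range restriction on $\lambda$ is needed, so unconstrained optimization in the Chernoff bound gives a sub-Gaussian exponent at least $(k+1)^2 I(\varepsilon)^2|\log\varepsilon|/(4K_\alpha\varepsilon^2)$, which diverges strictly faster than any multiple of $|\log\varepsilon|$ by Assumption~\ref{assumptions2}~\ref{assumptions2_B}. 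Hence the lower-tail probability is $o(\varepsilon^j)$ for every $j$ and in particular $\leq \tfrac12\varepsilon^k$; summing the two one-sided estimates yields the proposition.

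The main obstacle is calibrating $\lambda$ so that the linear Chernoff term dominates $k|\log\varepsilon|$ while the quadratic correction stays $o(1)$. This is where both standing hypotheses enter essentially: $\alpha>1$ ensures the ratio $\alpha/(2-\alpha)$ multiplying the linear term strictly exceeds $1$, so the natural choice $\lambda=1/M_\varepsilon$ already pushes the exponent past $k|\log\varepsilon|$ without needing to take $\lambda$ so large that the correction $e^{\lambda M_\varepsilon}$ becomes polynomially bad in $\varepsilon$; meanwhile Assumption~\ref{assumptions2}~\ref{assumptions2_B} forces the variance-like term $s/I(\varepsilon)^2$ to vanish over the relevant window $s\leq \varepsilon^2|\log\varepsilon|$. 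The remainder is routine bookkeeping on the L\'evy measure, with the leftover factor of $\tfrac12$ from combining the two tails absorbed by the convention of Remark~\ref{coeffignore}.
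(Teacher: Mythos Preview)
Your proof is correct and follows essentially the same Chernoff/Bernstein strategy as the paper: bound the centred exponential moment via the L\'evy--Khintchine formula using $e^{x}-1-x\leq \tfrac{x^2}{2}e^{x}$, then apply Markov with $\lambda$ of order $I(\varepsilon)^{-2}$ so that the quadratic correction is $O(s/I(\varepsilon)^2)=o(1)$ by Assumption~\ref{assumptions2}~\ref{assumptions2_B}.

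Two minor simplifications in the paper are worth noting. First, the lower tail is actually vacuous: since $R^\varepsilon_s\geq 0$ and $s\leq \varepsilon^2|\log\varepsilon|$, Assumption~\ref{assumptions2}~\ref{assumptions2_B} forces $s-(k+1)I(\varepsilon)^2|\log\varepsilon|\leq 0$ for small $\varepsilon$, so your sub-Gaussian argument there, while correct, is unnecessary. Second, the paper takes $\lambda=I(\varepsilon)^{-2}$ rather than $\lambda=1/M_\varepsilon$; this gives linear Chernoff term exactly $(k+1)|\log\varepsilon|$ and hence the bound $\varepsilon^{k+1}e^{o(1)}\leq \varepsilon^k$ without needing to invoke $\alpha>1$ to push the prefactor $\alpha/(2-\alpha)$ above $1$.
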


\begin{proof}
    Let $\varepsilon>0$. As before we set $I:= I(\varepsilon)$. Fix $k\in \NN$ and $s\leq \varepsilon^2 |\log\varepsilon|$. Note that \begin{align}\label{edsheeran}& \PP\left[|R^\varepsilon_s-s|\geq (k+1)I^2|\log\varepsilon|\right]\nonumber \\ &=\PP\left[R^\varepsilon_s\geq (k+1)I^2|\log\varepsilon|+s\right] + \PP\left[R^\varepsilon_s \leq s-(k+1)I^2|\log\varepsilon|\right]. \end{align} By Assumption~\ref{assumptions2} (b), $\varepsilon I^{-1}\to 0$ as $\varepsilon\to 0$, and since $s\leq \varepsilon^2|\log \varepsilon|$, we may decrease $\varepsilon$ as necessary to ensure that $s-(k+1)I^2|\log\varepsilon|\leq 0$, and the second term in (\ref{edsheeran}) equals zero. \\ 
    \indent To bound the first term in (\ref{edsheeran}), we note that, by \cite[Theorem~25.17]{sato1999levy}, and since the L\'evy measure of $R^\varepsilon_s$ has compact support, the Laplace transform $\phi(\lambda)$ from (\ref{laplace_transform_R}) can be extended to all of $\RR$, and the exponential moments of $R^\varepsilon_s$ exist and satisfy $\EE\left[\exp\left(\lambda R^\varepsilon_s\right)\right] = \phi(\lambda)$ for all $\lambda\in \RR$. It is straightforward to verify using Lemma~\ref{expectedvalueofR} that $\EE\left[R_s^\varepsilon\right]$ satisfies
   $$\EE\left[R_s^\varepsilon\right] = \tfrac{\alpha}{2}\left(\tfrac{2-\alpha}{\alpha}\right)^{\frac{\alpha}{2}} I^{\alpha-2}s\int_0^{\frac{2-\alpha}{\alpha}I^2} {y^{-\frac{\alpha}{2}}}dy,$$ therefore for any $s, \lambda\geq 0$, by Taylor's theorem
\begin{align}\label{exp_bound}
{\EE}\left[\exp\left({\lambda R^\varepsilon_s - \lambda \EE[R^\varepsilon_s]}\right)\right]\nonumber  &= \exp\left( \tfrac{\alpha}{2}\left(\tfrac{2-\alpha}{\alpha}\right)^{\frac{\alpha}{2}} I^{\alpha-2}s\int_0^{\frac{2-\alpha}{\alpha}I^2} \frac{e^{\lambda v } - 1 -\lambda v}{v^{1+\frac{\alpha}{2}}} dv  \right) \\  \nonumber
&\leq \exp\left( \tfrac{\alpha}{4} \left(\tfrac{2-\alpha}{\alpha}\right)^{\frac{\alpha}{2}} I^{\alpha-2} e^{\frac{2-\alpha}{\alpha}I^2 \lambda}\lambda^2  s \int_0^{\frac{2-\alpha}{\alpha}I^2} v^{1-\frac{\alpha}{2}} dv\right) \\&= \exp\left(\tfrac{1}{2}\left(\tfrac{2-\alpha}{\alpha}\right)^2\tfrac{\alpha}{4-\alpha} e^{\frac{2-\alpha}{\alpha}I^2 \lambda} I^{2}  \lambda^2  s\right). \end{align} Choose $\lambda = I^{-2}$. Then (\ref{exp_bound}) becomes \begin{align*}
    {\EE}\left[\exp\left({ I^{-2} R^\varepsilon_s -  I^{-2} \EE[R^\varepsilon_s]}\right)\right]&\leq \exp\left(\tfrac{1}{2}\left(\tfrac{2-\alpha}{\alpha}\right)^2\tfrac{\alpha}{4-\alpha} e^{\frac{2-\alpha}{\alpha}} I^{-2}  s\right)\\
    &\leq  \exp\left(\tfrac{1}{2}\left(\tfrac{2-\alpha}{\alpha}\right)^2\tfrac{\alpha}{4-\alpha} e^{\frac{2-\alpha}{\alpha}} \frac{\varepsilon^2|\log \varepsilon|}{I^2}\right).
\end{align*}
This, together with Lemma~\ref{expectedvalueofR} and Markov's inequality gives us
\begin{align*}
{\PP}\left[R^\varepsilon_s \geq  (k+1)I^2|\log\varepsilon| + s \right] & = {\PP}\left[R^\varepsilon_s - \EE[R^\varepsilon_s] \geq  (k+1)I^2|\log\varepsilon|\right] \\ 
&= {\PP}\left[\exp\left({ I^{-2} R^\varepsilon_s - I^{-2}\EE[R^\varepsilon_s]}\right) \geq  \varepsilon^{-k-1} \right] \\ & \leq \varepsilon^{k+1}\exp\left(\tfrac{1}{2}\left(\tfrac{2-\alpha}{\alpha}\right)^2\tfrac{\alpha}{4-\alpha} e^{\frac{2-\alpha}{\alpha}}\frac{\varepsilon^2|\log \varepsilon|}{I^2}\right).
\end{align*} By Assumption~\ref{assumptions2} \ref{assumptions2_B}, $\frac{\varepsilon^2|\log \varepsilon|}{I^2}\to 0$ as $\varepsilon\to 0,$ so the previous inequality implies that, for $\varepsilon$ sufficiently small, $${\PP}\left[R^\varepsilon_s > (k+1)I^2|\log\varepsilon| + s \right]  \leq \varepsilon^k,$$ which, by (\ref{edsheeran}) gives us the result.
\end{proof} 
\begin{lemma} \label{newLemma:NegMomentsForR}
Let $(R^\varepsilon_s)_{s\geq 0}$ be as above. Then, for any $q, s>0$, there exists $D_1 = D_1(q,\alpha)>0$ and $D_2 = D_2(q,\alpha)>0$ such that
\[ \EE\left[(R^\varepsilon_{s})^{-q}\right] \leq  e^s (D_1+ D_2s^{-\frac{2q}{\alpha}}). \]
\end{lemma}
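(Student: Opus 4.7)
The plan is to start from the Laplace representation
$$\EE\bigl[(R^\varepsilon_s)^{-q}\bigr] = \frac{1}{\Gamma(q)}\int_0^\infty \lambda^{q-1}\phi(\lambda)\,d\lambda,$$
where $\phi(\lambda) = \EE[e^{-\lambda R^\varepsilon_s}]$ is the Laplace transform given by (\ref{laplace_transform_R}), and to control the integral by establishing a two-regime lower bound on $\psi(\lambda):=-\log\phi(\lambda)$. After the substitution $u=\lambda y$ inside $\psi$, we may rewrite
$$\psi(\lambda) = \tfrac{\alpha}{2}\bigl(\tfrac{2-\alpha}{\alpha}\bigr)^{\alpha/2} I(\varepsilon)^{\alpha-2} s\, \lambda^{\alpha/2}\int_0^{\lambda a}(1-e^{-u})u^{-1-\alpha/2}\,du,$$
with $a=\tfrac{2-\alpha}{\alpha}I(\varepsilon)^2$, which makes the two natural regimes $\lambda\leq 1/a$ and $\lambda\geq 1/a$ explicit.

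In the regime $\lambda\leq 1/a$, the elementary inequality $1-e^{-u}\geq u/2$ on $[0,1]$, together with a direct integration, gives $\psi(\lambda)\geq \lambda s/2$; the $I(\varepsilon)$-dependent prefactors telescope exactly, by precisely the same algebraic cancellation that produced $\EE[R^\varepsilon_s]=s$ in Lemma~\ref{expectedvalueofR}. In the regime $\lambda\geq 1/a$, monotonicity of the $u$-integral in its upper limit gives $\int_0^{\lambda a}(1-e^{-u})u^{-1-\alpha/2}du\geq c_1(\alpha):=\int_0^1(1-e^{-u})u^{-1-\alpha/2}du>0$, yielding $\psi(\lambda)\geq C_\alpha I(\varepsilon)^{\alpha-2} s\lambda^{\alpha/2}$ for some $C_\alpha>0$. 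Since Assumptions~\ref{assumptions2} force $I(\varepsilon)\to 0$, I may assume $I(\varepsilon)\leq 1$; then $I(\varepsilon)^{\alpha-2}\geq 1$ and the bound simplifies to $\psi(\lambda)\geq C_\alpha s\lambda^{\alpha/2}$ uniformly in $\varepsilon$.

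Splitting the $\lambda$-integral at $1/a$ and extending each piece to $[0,\infty)$ using the respective lower bound, the first integral evaluates to $2^q s^{-q}$, while the second, after the substitution $u=C_\alpha s\lambda^{\alpha/2}$, evaluates to a constant multiple of $s^{-2q/\alpha}$. Hence
$$\EE\bigl[(R^\varepsilon_s)^{-q}\bigr]\leq \widetilde D_1 s^{-q} + \widetilde D_2 s^{-2q/\alpha}.$$
Because $q\leq 2q/\alpha$ when $\alpha\leq 2$, one has $s^{-q}\leq 1+s^{-2q/\alpha}$ for all $s>0$ (splitting $s\leq 1$ and $s\geq 1$), so this rearranges to a bound of the form $D_1+D_2 s^{-2q/\alpha}$, which is trivially dominated by $e^s(D_1+D_2 s^{-2q/\alpha})$ as claimed.

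The main obstacle is the exact cancellation in the regime $\lambda\leq 1/a$: the various $I(\varepsilon)$-dependent constants multiplying $\lambda$ must collapse precisely to $s/2$, so that the lower bound is free of $\varepsilon$. This is exactly the cancellation underlying Lemma~\ref{expectedvalueofR}, and is where the normalising constant $\sigma_\alpha$ from (\ref{defn_sig_alpha}) is doing its work; all other steps are routine gamma-type integrals, and the $e^s$ factor in the statement is slack that accommodates the comparison between $s^{-q}$ and $s^{-2q/\alpha}$ (and would allow the argument to be adapted if one ever needed to drop the assumption $I(\varepsilon)\leq 1$).
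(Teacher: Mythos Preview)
Your proof is correct. Both your argument and the paper's start from the same integral representation $\EE[(R^\varepsilon_s)^{-q}] = \tfrac{1}{\Gamma(q)}\int_0^\infty \lambda^{q-1}\phi(\lambda)\,d\lambda$ (the paper cites this in the equivalent form $\tfrac{1}{q\Gamma(q)}\int_0^\infty \phi(t^{1/q})\,dt$ from \cite{schurger2002laplace}), but they diverge in how the Laplace exponent is bounded. The paper first rewrites $\phi(\lambda)$ as $\exp\bigl(\tfrac{2-\alpha}{\alpha}s\int_0^\lambda e^{-\frac{2-\alpha}{\alpha}I^2 z}(1-\lambda^{\alpha/2}z^{-\alpha/2})\,dz\bigr)$ and then splits according to $\lambda\leq 1$ versus $\lambda>1$, obtaining an upper bound of the form $e^{\frac{2-\alpha}{\alpha}s}\bigl(1+\text{const}\cdot s^{-2q/\alpha}\bigr)$; the factor $e^{\frac{2-\alpha}{\alpha}s}\leq e^s$ arises organically from the small-$\lambda$ regime. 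You instead split at $\lambda=1/a$ with $a=\tfrac{2-\alpha}{\alpha}I(\varepsilon)^2$, which is precisely the threshold where the upper limit $\lambda a$ of your $u$-integral equals $1$; the telescoping cancellation in your first regime then yields $\psi(\lambda)\geq \lambda s/2$ with no residual $\varepsilon$-dependence, and the second regime needs only $I(\varepsilon)\leq 1$. The net effect is that your argument actually delivers the stronger bound $D_1+D_2 s^{-2q/\alpha}$ with the $e^s$ factor redundant, at the modest cost of the assumption $I(\varepsilon)\leq 1$, which is harmless here by Assumptions~\ref{assumptions2}~\ref{assumptions2_A}.
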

\begin{proof}
Fix $s\geq 0$, $\varepsilon>0$ and $I:= I(\varepsilon)$. Let $\gamma$ be the lower incomplete gamma function. The Laplace transform (\ref{laplace_transform_R}) of $R^\varepsilon_s$ can be written as \begin{align*}
     \phi(\lambda) &= \exp\left(- I^{-2}s\left(e^{- \frac{2-\alpha}{\alpha}\lambda I^2}-1\right) - \left({\tfrac{2-\alpha}{\alpha}}\right)^{\frac{\alpha}{2}}I^{\alpha-2}\lambda^{\frac{\alpha}{2}} s\gamma\left(1-\tfrac{\alpha}{2}, \tfrac{2-\alpha}{\alpha}\lambda I^2\right) \right)\\
     &=  \exp\left(I^{-2}s\int_0^{\frac{2-\alpha}{\alpha}\lambda I^2}e^{-z}dz - \left({\tfrac{2-\alpha}{\alpha}}\right)^{\frac{\alpha}{2}}I^{\alpha-2}\lambda^{\frac{\alpha}{2}} s\int_0^{\frac{2-\alpha}{\alpha}\lambda I^2}z^{-\frac{\alpha}{2}}e^{-z}dz \right)\\
     &= \exp\left(\tfrac{2-\alpha}{\alpha}s\int_0^{\lambda }e^{-\frac{2-\alpha}{\alpha} I^2 z}\left(1- \lambda^{\frac{\alpha}{2}}z^{-\frac{\alpha}{2}}\right)dz \right).
\end{align*} 
 By \cite[Theorem 1.1]{schurger2002laplace}, if $X$ is a non-negative random variable with Laplace transform $\rho$, then for any $q>0$, $\EE\left[X^{-q}\right] = \frac{1}{q\Gamma(q)}\int_0^
 \infty \rho\left(t^{\frac{1}{q}}\right) dt.$ Therefore, using the above expression for the Laplace transform of $R^\varepsilon_s,$ for any $q>0$ 
\begin{align*}
    \EE\left[(R^\varepsilon_s)^{-q}\right] &= \frac{1}{q\Gamma(q)}\int_0^
 \infty \exp\left(\tfrac{2-\alpha}{\alpha}s\int_0^{\lambda^{\frac{1}{q}} }e^{-\frac{2-\alpha}{\alpha} I^2 z}\left(1- \lambda^{\frac{\alpha}{2q}}z^{-\frac{\alpha}{2}}\right)dz \right)d\lambda.
\end{align*}
When $\lambda \in [0,1],$ \begin{align*}
  \int_0^{\lambda^{\frac{1}{q}} }e^{-\frac{2-\alpha}{\alpha} I^2 z}\left(1- \lambda^{\frac{\alpha}{2q}}z^{-\frac{\alpha}{2}}\right)dz  \leq 1 %&\leq \int_0^{1 }e^{-\frac{2-\alpha}{\alpha} I^2 z}dz \leq 1,
\end{align*} and if $\lambda>1$, since the integrand is negative
\begin{align*}
  & \int_0^{\lambda^{\frac{1}{q}} }e^{-\frac{2-\alpha}{\alpha} I^2 z}\left(1- \lambda^{\frac{\alpha}{2q}}z^{-\frac{\alpha}{2}}\right)dz \leq  \int_0^{1 }1- \lambda^{\frac{\alpha}{2q}}z^{-\frac{\alpha}{2}} dz = 1-\tfrac{2}{2-\alpha}\lambda^{\frac{\alpha}{2q}}.
\end{align*} Therefore \begin{align*}
     \EE\left[(R^\varepsilon_s)^{-q}\right] &\leq \frac{1}{q\Gamma(q)}e^{\frac{2-\alpha}{\alpha}s}\left(1+\int_1^
 \infty \exp\left(- \tfrac{2}{\alpha}s \lambda^{\frac{\alpha}{2q}}\right)d\lambda\right).\end{align*} Let $\Gamma(s,x):= \int_x^\infty t^{s-1}e^{-t}dt $ be the upper incomplete gamma function. Then it is straightforward to show that the above is equivalent to \begin{align*}
  \EE\left[(R^\varepsilon_s)^{-q}\right] &\leq\frac{1}{q\Gamma(q)}e^{\frac{2-\alpha}{\alpha}s}\left(1+  \frac{2q}{\alpha}\left(\frac{\alpha}{2s}\right)^{\frac{2q}{\alpha}}\Gamma\left(\tfrac{2q}{\alpha}, \tfrac{2s}{\alpha}\right)\right)\\
  &\leq \frac{1}{q\Gamma(q)}e^{\frac{2-\alpha}{\alpha}s}\left(1+  \frac{2q}{\alpha}\left(\frac{\alpha}{2s}\right)^{\frac{2q}{\alpha}}\Gamma\left(\tfrac{2q}{\alpha}\right)\right).
\end{align*}
Since $\tfrac{2-\alpha}{\alpha}\leq 1,$ the result follows.
\end{proof}

\end{document}